\DeclareMathAlphabet{\mathpzc}{OT1}{pzc}{m}{it}
\DeclareMathAlphabet{\mathpzc}{OT1}{pzc}{m}{it}
\DeclareMathAlphabet{\mathdutchcal}{U}{dutchcal}{m}{n}
\SetMathAlphabet{\mathdutchcal}{bold}{U}{dutchcal}{b}{n}
\newcommand{\ts}[1]{{\color{red} #1}}
\newcommand{\ti}[1]{{\color{blue} #1}}
\newcommand{\matteo}[1]{{\color{brown} #1}}
\numberwithin{equation}{section}
\tikzset{
>=stealth',
help lines/.style={dashed, thick},
axis/.style={<->},
important line/.style={thick},
connection/.style={thick, dotted},
punkt/.style={
rectangle,
rounded corners,
draw=black, thick,
text width=4.5em,
minimum height=2em,
text centered,
},
pil/.style={
->,
thick,
gray,
shorten <=2pt,
shorten >=2pt,}
}
\newtheorem{proposition}{Proposition}[section]
\newtheorem{lemma}[proposition]{Lemma}
\newtheorem{corollary}[proposition]{Corollary}
\newtheorem{theorem}[proposition]{Theorem}
\newtheorem*{theorem*}{Theorem}
\theoremstyle{definition}
\newtheorem{definition}[proposition]{Definition}
\newtheorem{remark}[proposition]{Remark}
\newtheorem*{remark*}{Remark}
\newcommand{\Qbinomial}[3]{\binom{#1}{#2}_{\hspace{-2pt} #3}}
\renewcommand{\a}{\alpha}
\renewcommand{\i}{\infty}
\newcommand{\E}[1]{\widetilde{e}_{#1}}
\newcommand{\Ai}{\rm{Ai}}
\newcommand{\Pf}{\mathrm{Pf}}
\newcommand{\arccosh}{\mathrm{arccosh}}
\newcommand{\R}{\mathbb{R}}
\newcommand{\Z}{\mathbb{Z}}
\newcommand*\diff{\mathop{}\!\mathrm{d}}
\begin{document}
\title{Solvable models in the KPZ class: approach through periodic and free boundary Schur measures}

\author[T. Imamura]{Takashi Imamura}\address{T. Imamura, 
Department of Mathematics and Informatics, Chiba University, Chiba, 263-8522, Japan}\email{imamura@math.s.chiba-u.ac.jp}

\author[M. Mucciconi]{Matteo Mucciconi}\address{M. Mucciconi, 
Department of Mathematics,
University of Warwick, Coventry, CV4 7HP,  United Kingdom}\email{matteomucciconi@gmail.com}

\author[T. Sasamoto]{Tomohiro Sasamoto}\address{T. Sasamoto, 
Department of Physics,
Tokyo Institute of Technology, Tokyo, 152-8551, Japan}\email{sasamoto@phys.titech.ac.jp}

\date{}

\maketitle

\begin{abstract}
    We explore probabilistic consequences of correspondences between $q$-Whittaker measures and periodic and free boundary Schur measures established by the authors in the recent paper [arXiv:2106.11922]. The result is a comprehensive theory of solvability of stochastic models in the KPZ class where exact formulas descend from mapping to explicit determinantal and pfaffian point processes. We discover new variants of known results as determinantal formulas for the current distribution of the ASEP on the line and new results such as Fredholm pfaffian formulas for the distribution of the point-to-point partition function of the Log Gamma polymer model in half space. In the latter case, scaling limits and asymptotic analysis allow to establish Baik-Rains phase transition for height function of the KPZ equation on the half line at the origin.
\end{abstract}

\tableofcontents

\section{Introduction}
\label{sec:Intro}

\subsection{One dimensional KPZ equation and stochastic heat equation}

    The Kardar-Parisi-Zhang (KPZ) equation \cite{KPZ1986} is the default model of growth of random surfaces and in one dimension it reads
    \begin{equation} \label{eq:KPZ}
        \partial_t \mathcal{H} = \frac{1}{2} \partial_x^2 \mathcal{H} + \frac{1}{2} \left( \partial_x \mathcal{H}  \right)^2 + \dot{\mathscr{W}},
    \end{equation}
    where $\mathcal{H}=\mathcal{H}(x,t),x\in\mathbb{R},t>0,$ is the height function and $\dot{\mathscr{W}}$ is the space-time gaussian white noise with covariance $\mathbb{E} \left( \dot{\mathscr{W}} (x,t) \dot{\mathscr{W}} (y,s) \right) = \delta_{x-y} \delta_{t-s}$. This equation is notoriously ill posed due to the non linear term $\left( \partial_x \mathcal{H} \right)^2$ and making sense of the solution of \eqref{eq:KPZ} has been one of the recent major accomplishments in the theory of stochastic analysis \cite{bertiniGiacomin1997stochastic,Hairer11,gubinelli_imkeller_perkowski_2015}. In this paper
    we consider the Cole-Hopf solutions of the KPZ equation defined as 
    \begin{equation} \label{eq:Cole_Hopf}
        \mathcal{H}(x,t) = \log \mathscr{Z}(x,t),
    \end{equation}
    where $\mathscr{Z}$ is a \emph{mild solution} of the stochastic heat equation with multiplicative noise (abbreviated as ``mSHE" in the text)
    \begin{equation} \label{eq:SHE}
        \partial_t \mathscr{Z} = \frac{1}{2} \partial_x^2 \mathscr{Z} + \mathscr{Z} \dot{\mathscr{W}}.
    \end{equation}
    The correct prescription to interpret the noise term in the right hand side is discussed \cite{BertiniCancrini1995}, whereas for an introduction to mild solutions see \cite{BertiniCancrini1995,quastel_introduction_to_KPZ} and references therein.
    Throughout the text we will mostly discuss the solution of the mSHE $\mathscr{Z}$ rather than $\mathcal{H}$, as this allows to phrase our statements rigorously. In all cases the use of the Cole-Hopf transformation \eqref{eq:Cole_Hopf} will be justified.

    Interest in the KPZ equation has, in the last decades, been motivated also by the fact that certain geometries and initial conditions allow to write (and sometimes prove) exact formulas for the distribution of the height function $\mathcal{H}$ \cite{AmirCorwinQuastel2011,Calabrese_LeDoussal_Rosso,Dotsenko,SasamotoSpohn2010,Le_Doussal_2012,Gueudre_Le_Doussal_2012,imamura_stationaryKPZ,BorodinCorwinFerrariVeto2013,borodin_bufetov_corwin_nested,barraquand2018,krejenbrink_le_doussal_KPZ_half_space,Barraquand_le_doussal_krejenbrink_2020,De_Nardis_et_al_2020,Barraquand_le_doussal_half_space_flat}. In this paper we deal with two such choices of solvable geometries and initial data, which we introduce next.  

    \subsection{The KPZ equation in full space with narrow wedge initial conditions}
    
    The proper formulation of the problem of this case is given in terms of the mSHE which reads
        \begin{equation} \label{eq:SHE_narrow wedge}
            \begin{cases}
                \partial_t \mathscr{Z} = \frac{1}{2} \partial_x^2 \mathscr{Z} + \mathscr{Z} \dot{\mathscr{W}},
                \qquad x\in \mathbb{R}, t\in \mathbb{R}_+,
                \\
                \mathscr{Z}(x,0) = \delta_0(x),
            \end{cases}
        \end{equation}
    where $\delta_0$ is the Dirac delta function.
    Heuristically, for the function $\mathcal{H}$, the initial data in \eqref{eq:SHE_narrow wedge} represents a situation where the random KPZ growth is started from a wedge $\mathcal{H}(x,0) ``=" - |x|/\varepsilon$, with $\varepsilon$ infinitesimally small, which justifies the name. Under delta initial conditions the function $\mathscr{Z}$ can be interpreted as the point-to-point partition function of the \emph{continuum directed random polymer} \cite{AmirCorwinQuastel2011,AlbertsKhaninQuastel2012}, which is a model of a polymer in 1+1 dimension propagating through a space-time in uncorrelated random environment. This is evident expressing the solution of \eqref{eq:SHE_narrow wedge} by the Feyman-Kac formula 
        \begin{equation} \label{eq:feynman_kac}
            \mathscr{Z}(x,t) = \mathbb{E}\left[ :\exp: \left\{ \int_0^t \dot{\mathscr{W}}(\mathscr{B}(s),s) \diff s \right\} \bigg| \mathscr{B}(0)=0, \, \mathscr{B}(t)=x  \right],
        \end{equation}
    where $:\exp:$ indicates the Wick ordered exponential and the expectation is taken only over the brownian bridge $\mathscr{B}$. In this way the integral in \eqref{eq:feynman_kac} has the natural interpretation of the random energy of the polymer $\mathscr{B}$.
    
    The first exact solution of the mSHE \eqref{eq:SHE_narrow wedge} became available in 2010 after the breakthrough result of four different groups \cite{AmirCorwinQuastel2011,Dotsenko,Calabrese_LeDoussal_Rosso,SasamotoSpohn2010}. Famously, this solution expresses the Laplace transform of the probability distribution of $\mathscr{Z}$ as a Fredholm determinant. 

    \begin{theorem} \label{thm:solution_mSHE}
        Consider $\mathscr{Z}(x,t)$ the solution of the stochastic heat equation \eqref{eq:SHE_narrow wedge}. Then, we have
        \begin{equation} \label{eq:fredholm_det_SHE}
            \mathbb{E} \left[ e^{ - e^{ -s + \log \mathscr{Z}(0,t) +t/24 } } \right] = \det (1 - \mathscr{K}_{\mathrm{mSHE}})_{\mathbb{L}^2(s,+\infty)},
        \end{equation}
        where $\mathscr{K}_{\mathrm{mSHE}}$ is the linear operator with integral kernel
        \begin{equation} \label{eq:K_SHE}
            \mathscr{K}_{\mathrm{mSHE}}(x,y) = \int_{ \mathrm{i} \mathbb{R} -d} \frac{\diff Z}{2 \pi \mathrm{i} } \int_{\mathrm{i} \mathbb{R} +d} \frac{\diff W}{ 2 \pi \mathrm{i} } e^{ -\frac{t}{2} \left( \frac{Z^3}{3} - \frac{W^3}{3} \right) +Zx - Wy } \frac{ \pi }{ \sin[\pi(W-Z)] }
        \end{equation}
        and for the integration contours we choose $d\in (0,1/2)$.
    \end{theorem}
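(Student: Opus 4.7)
The plan is to derive \eqref{eq:fredholm_det_SHE} by descending from a solvable discrete model that is linked, via the correspondences referenced in the abstract, to a Schur measure determinantal point process. A natural choice is the $q$-Whittaker measure, whose leading-particle marginal coincides with the position of the top particle in $q$-TASEP, and which under weakly asymmetric scaling converges to $\log \mathscr{Z}(0,t) + t/24$. The roadmap is: (i) write a prelimit identity at the level of $q$-Laplace transforms; (ii) invoke the $q$-Whittaker to Schur correspondence to rewrite this expectation as a gap probability of an explicit determinantal point process on $\mathbb{Z}$; (iii) perform a steepest descent analysis of the resulting Schur kernel to recover \eqref{eq:K_SHE}.

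Concretely, I fix $q = e^{-\varepsilon}$ and an appropriately tuned $q$-Whittaker specialization so that, with $N$ scaling polynomially in $\varepsilon^{-1}$, the position $X_N$ of the leading $q$-TASEP particle satisfies
\begin{equation*}
    \varepsilon X_N - C_\varepsilon \;\xrightarrow[\varepsilon \to 0]{(d)}\; \log \mathscr{Z}(0,t) + \frac{t}{24},
\end{equation*}
for an explicit centering $C_\varepsilon$. The standard $q$-Laplace to Laplace transform convergence (cf.\ Borodin--Corwin) then reduces the left-hand side of \eqref{eq:fredholm_det_SHE} to the $\varepsilon \to 0$ limit of a prelimit $q$-Laplace expectation for $X_N$. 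The authors' correspondence rewrites this expectation as the gap probability $\mathbb{P}(\mathfrak{S} \cap (s_\varepsilon, +\infty) = \emptyset)$ for a Schur-measure determinantal point process $\mathfrak{S} \subset \mathbb{Z}$, which by Okounkov's theorem is a Fredholm determinant on $\ell^2(\mathbb{Z}_{>s_\varepsilon})$ with an explicit double-contour kernel $K_\varepsilon$ built from the Cauchy generating function of the specialization.

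The heart of the argument is the asymptotic analysis of $K_\varepsilon$. After the change of variables $i = s_\varepsilon + \varepsilon^{-1} x$, $j = s_\varepsilon + \varepsilon^{-1} y$ and a rescaling of the contour variables by $\varepsilon$ around a double critical point of the action determined by the specialization, a Laplace expansion of the exponent to cubic order yields the factor $e^{-\frac{t}{2}(Z^3/3 - W^3/3) + Zx - Wy}$ of \eqref{eq:K_SHE}. The trigonometric factor $\pi/\sin[\pi(W - Z)]$ arises as the scaling limit of a ratio of $q$-Pochhammers intrinsic to the Schur kernel, through a Mellin--Barnes type manipulation that converts a sum over $\mathbb{Z}$ into a contour integral. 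The prescription $d \in (0, 1/2)$ emerges naturally by requiring that the rescaled contours sit on opposite sides of the critical point while separating from the sine poles at $W - Z \in \mathbb{Z}_{\neq 0}$.

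The main technical obstacle is promoting pointwise kernel convergence to convergence of Fredholm determinants. This requires uniform trace-norm bounds on $K_\varepsilon$ after rescaling, which hinge on cubic decay along the vertical contours and controlled behavior near the sine singularities, together with a tightness argument for the prelimit distributions so as to exchange the limit with the Fredholm series. Once these estimates are secured, term-by-term convergence of the Fredholm expansion delivers \eqref{eq:fredholm_det_SHE}.
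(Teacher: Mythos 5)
Your overall philosophy matches the paper's: descend from a discrete model tied to the $q$-Whittaker measure, use the correspondence with a Schur-type determinantal point process to get a prelimit Fredholm determinant, then do asymptotics. But your specific route differs from the paper's in a way that creates a genuine gap, and one of your identifications is off.

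The gap is in your step (i). You assert that, under a weakly asymmetric scaling $q=e^{-\varepsilon}$ with $N$ polynomial in $\varepsilon^{-1}$, the leading particle satisfies $\varepsilon X_N - C_\varepsilon \to \log\mathscr{Z}(0,t)+t/24$ in distribution, and you treat this as a citable input. For the model actually governed by the $q$-Whittaker measure here (the geometric $q$-PushTASEP, not $q$-TASEP), no such direct convergence to the mSHE is established, and the paper deliberately avoids needing one. Instead it takes two separate limits, each backed by an existing theorem: first $q\to 1$ at fixed $N$ (Matveev--Petrov), turning the $q$-PushTASEP Fredholm determinant into a Fredholm determinant for the Log Gamma polymer partition function $Z(N,N)$ (\cref{thm:fredholm_det_log_gamma}); then the intermediate disorder limit $A=\sqrt{N}$, $N\to\infty$ (Alberts--Khanin--Quastel, \cref{prop:Alberts_khanin_quastel}), under which $Z(N,N)$ converges to $\mathscr{Z}(0,t)$. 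The second stage is where Stirling's approximation turns the ratio of Gamma functions $g_{A,A}(Z)e^{Z\varsigma}$ into $e^{-\frac{t}{2}\frac{Z^3}{3}+Zs}$; your proposed single-step route would have to produce the cubic action and the uniform kernel bounds in a doubly scaled regime ($q\to1$ and $N\to\infty$ simultaneously), which is precisely the hard analysis the intermediate Log Gamma step is designed to split into two manageable pieces.

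Separately, the determinantal point process in question is not Okounkov's Schur measure: \cref{thm:matching_qW_ps} matches $\mu_1+\chi$ (with $\chi$ an independent $q$-geometric) to $\lambda_1$ of the \emph{periodic} Schur measure, and the determinantal structure only appears for its \emph{shift-mixed} version $\lambda_1+S$ with $S\sim\mathrm{Theta}(\zeta;q)$ (Borodin's theorem, \cref{thm:periodic_Schur_det}). The extra randomizations $\chi+S$ are exactly what make the left-hand side a $q$-Laplace transform, and the factor $\pi/\sin[\pi(W-Z)]$ in \eqref{eq:K_SHE} descends from $\Gamma_q(Z-W)\Gamma_q(1+W-Z)$ together with the Jacobi theta ratio in the periodic kernel \eqref{eq:kernel_periodic_Schur} — i.e., it is a positive-temperature (Fermi-factor) effect absent from the zero-temperature Schur measure you invoke. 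Your Mellin--Barnes heuristic for the sine is in the right spirit, but attributing the structure to the ordinary Schur kernel misstates where it comes from and would not reproduce the correct prelimit identity.
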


    \begin{remark}    
    The kernel $\mathscr{K}_{\mathrm{mSHE}}$ 
    can be written as a ``positive temperature variant" of the Airy kernel,
    \begin{equation}\label{eq:Kff}
        \mathscr{K}_{\mathrm{mSHE}}(x,y)=
        \int_{\mathbb{R}} \frac{{\rm Ai}((x+\xi)/\gamma_t) {\rm Ai} ((y+\xi)/\gamma_t)}{1+e^{-\xi}} \diff \xi,
    \end{equation}
    with $\gamma_t=(t/2)^{1/3}$.
    \end{remark}

    From a physical standpoint the explicit solution of \eqref{eq:SHE_narrow wedge} is relevant as it has allowed to rigorously establish the large time asymptotic limit
    \begin{equation} \label{eq:asymptotics_KPZ}
            \mathcal{H}(0,t) \approx - \frac{t}{24} + \frac{t^{1/3}}{2^{1/3}} \chi_{\mathrm{GUE}},
    \end{equation}
    where $\chi_\mathrm{GUE}$ follows the GUE Tracy-Widom distrubution\cite{tracy_widom1994level_airy}. Historically, the anomalous scaling exponent $1/3$ had been predicted in \cite{KPZ1986,HuseHenleyPolymers1985}, whereas more precise conjectures about the universal fluctuations 
    for a large class of models (KPZ universality class)
    were formulated \cite{Praehofer2002} after the works \cite{baik1999distribution,johansson2000shape}. For the case of the KPZ equation \eqref{eq:asymptotics_KPZ} confirms the predictions from the KPZ scaling theory.
    
    In this paper we will present a derivation of the solution of \cref{thm:solution_mSHE} which is conceptually different from ideas used in literature since 2010. The advantage of our techniques, compared to pre-existing methods, is that they explain the emergence of Fredholm determinants, by establishing an a priori correspondence between the height function and the edge statistics of a free fermion at positive temperature.
    Moreover, methods developed in this paper can be applied not only to the study of the mSHE/KPZ equation, but also to a class of discrete models in the KPZ class. These are the ASEP, the $q$-PushTASEP, or the Log Gamma polymer and we will discuss this aspect more in \cref{subs:hieranchies}. More remarkably our methods can be naturally adapted to the more challenging task of treating the mSHE/KPZ equation in half space, as we discuss in the next subsection. See also \cref{subs:novelty} for more detailed account about novelty of our work.
    
        
        \subsection{The KPZ equation in half space with narrow wedge initial conditions.}
        
        In this case the problem is phrased in terms of the mSHE as
            \begin{equation} \label{eq:SHE_hs}
            \begin{cases}
                \partial_t \mathscr{Z} = \frac{1}{2} \partial_x^2 \mathscr{Z} + \mathscr{Z} \dot{\mathscr{W}},
                \qquad x\in \mathbb{R}_{\ge 0}, t\in \mathbb{R}_+,
                \\
                \mathscr{Z}(x,0) = \delta_0(x), \qquad
                \left(\partial_x - \omega\right) \mathscr{Z}(x,t)\bigr|_{x=0} = 0.
            \end{cases}
        \end{equation}
        The study of the well-posedness of \eqref{eq:SHE_hs} was done in \cite{Xuan_Wu_Intermediate_Disorder,Parekh2019} and in particular a mild solution of \eqref{eq:SHE_hs} exists, is unique and it is almost surely positive. This justifies the definition of $\mathcal{H}^\mathrm{hs} = \log \mathscr{Z}^\mathrm{hs}$ as a solution of the KPZ equation in half space with initial conditions consisting in a narrow wedge peaked at the origin. The Robin boundary conditions of \eqref{eq:SHE_hs} become, for the height function $\mathcal{H}^\mathrm{hs}$ the Neumann boundary conditions
        \begin{equation}
            \partial_x \mathcal{H}^\mathrm{hs}(x,t)\big|_{x=0} = \omega.
        \end{equation}
        A physical realization of such boundary condition was proposed in \cite{Takeuchi_ito_2018_half_space}.
        As in the full space case the solution $\mathscr{Z}^\mathrm{hs}$ of \eqref{eq:SHE_hs} can also be interpreted as a point-to-point partition function of the continuum random polymer propagating in the half line $[0,+\infty)$ with an interaction produced by a wall at the origin. Again, this is evident from the Feynman-Kac representation
        \begin{multline} \label{eq:feynman_kac_hs}
            \mathscr{Z}^{\mathrm{hs}}(x,t) \\= \mathbb{E} \left[ :\exp: \left\{ \int_0^t \left( \dot{\mathscr{W}}(\mathscr{B}^{\mathrm{hs}}(s),s) - \omega \delta_0(\mathscr{B}^{\mathrm{hs}}(s)) \right) \diff s \right\} \bigg| \mathscr{B}^{\mathrm{hs}}(0)=0, \, \mathscr{B}^{\mathrm{hs}}(t)=x \right],
        \end{multline}
        where the expectation is taken over the reflected Brownian bridge $\mathscr{B}^{\mathrm{hs}}$, i.e. a Brownian bridge conditioned to remain nonnegative at all times. From \eqref{eq:feynman_kac_hs} we observe that the interaction with the wall, modulated by the parameter $\omega$, is attractive when $\omega<0$ and repulsive for $\omega>0$.

        \subsubsection{Fredholm pfaffian solution.}
        
        The truly novel part of our results consists in the derivation of Fredholm pfaffian formulas for solvable models related to the half space KPZ equation. The typical approach to this problem has been that of leveraging integrable strucures of the continuum directed polymer model, which will be discussed in the next subsection, to obtain exact formulas for the partition funciton. This route involves the use of complicated and mathematically non-rigorous Bethe ansatz computations that only recently have led to manageable, although conjectural, pfaffian formulas; see \cite{Gueudre_Le_Doussal_2012,borodin_bufetov_corwin_nested,krejenbrink_le_doussal_KPZ_half_space,De_Nardis_et_al_2020}. 
        Alternatively, in the more general case of the Log Gamma polymer model, exact formulas for the distribution of the partition function have been found through combinatorial techniques in \cite{OSZ2012} or in \cite{barraquand_half_space_mac} using eigenrelations of Macdonald polynomials. These last results, although mathematically rigorous have the disadvantage to be hard to manage and did not lead to a proof of asymptotic results. The only mathematically rigorous derivation of a pfaffian solution of \eqref{eq:SHE_hs}, prior to this paper, was given in \cite{barraquand2018}, where the authors managed to study the case $\omega=-\frac{1}{2}$, through a matching between the Laplace transform of the KPZ equation and a nonlocal average of a pfaffian point process.
        In this paper we overcome these technical difficulties by taking a different point of view and establishing a direct and a priori corresondence between KPZ models in half space and explicit pfaffian point processes. This allows us to study the full range $\omega \in \mathbb{R}$.
    
    For the next theorem we recall the standard notation
    \begin{equation} \label{eq:J}
        J = \left( \begin{matrix} 0 & 1 \\ -1 & 0 \end{matrix} \right),
    \end{equation}
    and we use the convention that $J(x,y) = \mathbf{1}_{x=y} J$. 
    
    \begin{theorem} \label{thm:solution_mSHE_hs}
        Consider $\mathscr{Z}^{\mathrm{hs}}(x,t)$ the solution of the half space stochastic heat equation \eqref{eq:SHE_hs} with initial conditions $\mathscr{Z}^{\mathrm{hs}}(x,0) = \delta_{x}$ and boundary parameter $\omega > - 1/2$. Then, we have
        \begin{equation} \label{eq:fredholm_pfaffian_SHE}
            \mathbb{E} \left[ e^{ - e^{ -s +\log \mathscr{Z}^{\mathrm{hs}}(0,t) + t/24  }} \right] = \Pf \left[ J - \mathscr{L}_{\mathrm{mSHE}} \right]_{\mathbb{L}^2(s,+\infty)},
        \end{equation}
        where $\mathscr{L}_{\mathrm{mSHE}}$ is the $2 \times 2$ matrix linear operator with integral kernel
        \begin{equation}
            \mathscr{L}_{\mathrm{mSHE}}(X,Y) = \left( \begin{matrix} \mathscr{K}^\mathrm{hs} (X,Y) & -\partial_y \mathscr{K}^\mathrm{hs}(X,Y)
            \\
             - \partial_x \mathscr{K}^\mathrm{hs}(X,Y) & \partial_x \partial_y \mathscr{K}^\mathrm{hs}(X,Y)
            \end{matrix} \right),
        \end{equation}
        with
        \begin{equation} \label{eq:k_she_hs}
        \begin{split}
            \mathscr{K}^\mathrm{hs}(X,Y) = \int_{\mathrm{i} \mathbb{R} +d} \frac{\diff Z}{2 \pi \mathrm{i}} \int_{\mathrm{i} \mathbb{R} +d}
            \frac{\diff W}{2 \pi \mathrm{i}}
            &
            e^{\frac{t}{2} \left(\frac{Z^3}{3} +  \frac{W^3}{3} \right) - ZX - WY}
            \frac{\Gamma(\frac{1}{2} + \omega - Z)}{\Gamma (\frac{1}{2} + \omega+Z)} \frac{\Gamma(\frac{1}{2} + \omega -W)}{\Gamma (\frac{1}{2} + \omega+W)}
            \\
            &
            \times
            \Gamma(2Z) \Gamma(2W) \frac{\sin [\pi (Z-W)]}{\sin [\pi (Z+W)]}
        \end{split}
        \end{equation}
        and we assume $ 0 < d < \min( 1/2, 1/2+\omega)$.
    \end{theorem}
    
    A few remarks are in order.
    
    \begin{remark}
        In \cref{subs:krejenbrink_le_doussal} we report equivalent expressions for the Fredholm pfaffian in the right hand side of \eqref{eq:fredholm_pfaffian_SHE}. These had been previously derived in \cite{krejenbrink_le_doussal_KPZ_half_space}, although through non rigorous Bethe ansatz computations.
    \end{remark}
    
    \begin{remark}
        In \cref{sec:KPZpS} we will present pfaffian formulas analogous to that of \eqref{eq:fredholm_pfaffian_SHE} characterizing the distribution of the rightmost particle in a $q$-PushTASEP with particle creation and the free energy of the Log Gamma polymer model in half space. Unlike for the case of the mSHE, pfaffian formulas for these more general models had not appeared before in literature, not even at a conjectural level.
    \end{remark}
    
    \begin{remark}
        A theorem by Parekh \cite{Parekh_half_space_symmetry} relates the solution of the half space mSHE \eqref{eq:SHE_hs}, which we denote by $\mathscr{Z}^\mathrm{hs}$ and the solution of the half space mSHE with certain Brownian initial conditions and Dirichlet boundary conditions
        \begin{equation} \label{eq:mSHE_dirichlet}
            \begin{cases}
                \partial_t \mathscr{Z} = \frac{1}{2} \partial_x^2 \mathscr{Z} + \mathscr{Z} \dot{\mathscr{W}},
                \qquad x\in \mathbb{R}_{\ge 0}, t\in \mathbb{R}_+,
                \\
                \mathscr{Z}(x,0) = e^{\mathscr{B}(x) - (\omega + \frac{1}{2})x}, \qquad
                \mathscr{Z}(0,t) = 0,
            \end{cases}
        \end{equation}
        where $\mathscr{B}(x)$ is a standard Brownian motion. Denoting by $\bar{\mathscr{Z}}^\mathrm{hs}$ the solution of \eqref{eq:mSHE_dirichlet}, then \cite[Theorem 1.1]{Parekh_half_space_symmetry} states the equivalence in distribution
        \begin{equation}
            \mathscr{Z}^\mathrm{hs}(0,t) \stackrel{\mathcal{D}}{=} \lim_{x\to 0} \frac{1}{x} \bar{\mathscr{Z}}^\mathrm{hs}(x,t).
        \end{equation}
        By this equality, \cref{thm:solution_mSHE_hs} also provides an explicit formula for the probability distribution of $\bar{\mathscr{Z}}^\mathrm{hs}(x,t)$ around $x=0$, although we are not going to state such results in this paper.
    \end{remark}
    
    Although expression \eqref{eq:fredholm_pfaffian_SHE}, as stated, only holds for $\omega>-1/2$, Fredholm pfaffian expressions for the expectation $\mathbb{E} \left[ e^{ - e^{ -s  +\log \mathscr{Z}^{\mathrm{hs}}(0,t) + t/24  }} \right]$ are available for any $\omega \in \mathbb{R}$. They can be derived by analytic continuation, in the parameter $\omega$, although such procedure, especially for $\omega<-1/2$, turns out to be not completely trivial. Such results are presented in \cref{thm:SHE_gaussian}.

        \subsubsection{Limiting distribution and phase transition.}
        
        Characterizing the large scale behavior of the free energy of directed polymers in presence of a wall has been a problem considered in the physics literature since the work of Kardar \cite{kardar_depinning}. Varying the value of $\omega$, the partition function of the polymer is expected to undergo a phase transition, which needs to be characterized. This an example of the so called ``depinning transition"; see \cite[Section 7.3-7.4]{Praehofer2002}. Non-rigorous explicit computations \cite{krejenbrink_le_doussal_KPZ_half_space,barraquand_half_space_mac,De_Nardis_et_al_2020} have allowed to conjecture the large time behavior for the free energy of the polymer model, which stated in terms of the height function $\mathcal{H}^{\mathrm{hs}}$ reads
        \begin{equation} \label{eq:H_hs_asymptotics}
            \mathcal{H}^{\mathrm{hs}}(0,t) \approx - f_\omega t + \sigma_\omega t^{\zeta(\omega)} \chi_\omega,
        \end{equation}
        where $f_\omega, \sigma_\omega$ are explicit constants depending on $\omega$ and
        \begin{equation} \label{eq:phase_transition}
            \zeta(\omega) = 
            \begin{cases}
                \frac{1}{3} \qquad & \text{if } \omega \ge -\frac{1}{2},
                \\
                \frac{1}{2} \qquad & \text{if } \omega< -\frac{1}{2},
            \end{cases}
            \qquad
            \text{and}
            \qquad
            \chi_\omega = 
            \begin{cases}
                \chi_\mathrm{GSE} \qquad & \text{if } \omega > -\frac{1}{2},
                \\
                \chi_\mathrm{GOE} \qquad & \text{if } \omega = -\frac{1}{2},
                \\
                \mathcal{N}(0,1) \qquad & \text{if } \omega< -\frac{1}{2}.
            \end{cases} 
        \end{equation}
        Above $\chi_\mathrm{GSE}, \chi_\mathrm{GOE}$ follow respectively the GSE and the GOE Tracy-Widom distributions \cite{tracy1996orthogonal}, while $\mathcal{N}(0,1)$ is a standard Gaussian random variable. As mentioned above, the only rigorous previous study of this problem was carried out in \cite{barraquand2018} where the case $\omega=-\frac{1}{2}$ was confirmed. 
        
        In the next theorem we establish the phase transition \eqref{eq:H_hs_asymptotics}, \eqref{eq:phase_transition}. Scaling $\omega$ properly around the critical value $-\frac{1}{2}$ as a function of another parameter $\xi$ we find that $\log \mathscr{Z}^\mathrm{hs}(0,t)$ possesses fluctuations described by a crossover distribution $\mathrm{F}_\mathrm{cross}(\,\cdot\,|\xi)$. This interpolates between the GOE Tracy-Widom distribution, which is recovered setting $\xi=0$ and the GSE Tracy-Widom distribution, which is obtained as the limit of $F_\mathrm{cross}(\,\cdot\,|\xi)$ when $\xi\to +\infty$. The explicit expression of $F_\mathrm{cross}(\,\cdot\,|\xi)$ is given in \cref{subs:Tracy_Widom}, where we also prove, in \cref{cor:equivalence_Baik_Rains_crossover}, that it corresponds to a distribution described first by Baik and Rains in \cite{baik_rains2001asymptotics}.
    
    \begin{theorem} \label{thm:asymptotics_SHE}
        Let $\mathscr{Z}^{\mathrm{hs}}(x,t)$ be the solution of the stochastic heat equation in half space \eqref{eq:SHE_hs}. Then, the following limits hold:
        \begin{itemize}
            \item if $\omega \ge -1/2$, rescaling $\omega= -1/2 + \frac{\xi}{2^{-1/3} t^{1/3}}$ we have
            \begin{equation} \label{eq:SHE_crossover}
                 \lim_{t \to +\infty} \mathbb{P} \left[ \frac{\log \mathscr{Z}^{\mathrm{hs}}(0,t) +  t/24}{ 2^{-1/3} t^{1/3}} \le r \right] = F_{\mathrm{cross}}(r;\xi),
            \end{equation}
         
            where $F_{\mathrm{cross}}$ is the Baik-Rains crossover distribution of \cref{def:crossover};
            \item if $\omega<-1/2$, we have
            \begin{equation}
                \lim_{t \to +\infty} \mathbb{P} \left[ \frac{\log \mathscr{Z}^{\mathrm{hs}}(0, t) + f_\omega t}{ \sigma_\omega t^{1/2}} \le r \right] = \int_{-\infty}^r \frac{e^{-u^2/2}}{\sqrt{2 \pi}} \diff u,
            \end{equation}
            where $f_\omega=\frac{1}{6} + \frac{\omega}{2} + \frac{\omega^2}{2}$ and $\sigma_\omega =\sqrt{-\omega-\frac{1}{2}}$.
        \end{itemize}
    \end{theorem}

\subsection{Hierarchies of solvable models} \label{subs:hieranchies}

The solvability of the mSHE comes from the fact that the $n$ point average
    $$\mathbb{E}[\mathscr{Z}(x_1,t) \cdots \mathscr{Z}(x_n,t) ]$$ 
solves the imaginary time delta Bose gas with $n$ particles. This idea, first formulated in \cite{Kardar1987}, was used directly by \cite{Dotsenko,Calabrese_LeDoussal_Rosso}, to implement the celebrated ``replica method" and to derive an explicit formula for the moment generating function of $\mathscr{Z}(x,t)$. Since moments of $\mathscr{Z}(x,t)$ are highly diverging (see \cite[Section A.4]{BorodinCorwinSasamoto2012}), treating directly the mSHE through replica method yields expressions that cannot be justified mathematically and therefore other routes to produce explicit formulas are needed. The standard procedure to circumvent this problem is that of considering, in place of $\mathscr{Z}$ or $\mathcal{H}$, certain more regular discretizations which preserve integrability \cite{BorodinCorwinSasamoto2012}. Such regularized models usually possess a ``temperature" parameter\footnote{Here the use of the word ``temperature" is motivated by the appearance of the Fermi-Dirac distribution. See \cref{subs:alternative_det} The same Fermi-Dirac distribution is present already in the kernel \eqref{eq:Kff}.} $q$ and a height function $h^{(q)}$, which under a proper high temperature limit converges in distribution to the solution of the KPZ equation $h^{(q)} \to \mathcal{H}$.

The most well studied of these reguralizations of the KPZ equation is arguably the Asymmetric Simple Exclusion Process (ASEP), in which case the parameter $q$ modulates the asymmetry in the jump rate of particles; see \cref{subs:ASEP} for more details. In this case the height function $h^{(q)}$ is given by the integrated current through a location and as $q \to 1$ convergence to the Cole-Hopf solution of the KPZ equation was proven in \cite{bertiniGiacomin1997stochastic}. Considering such weak noise scaling was the approach adopted in \cite{AmirCorwinQuastel2011,SasamotoSpohn2010}, where authors were able to leverage Fredholm determinant formulas found earlier for the ASEP by Tracy and Widom \cite{TW_ASEP1,TW_ASEP2,TW_ASEP4}.

In the last decade, other discrete solvable models have been introduced in literature. The ones possessing the richest structure admit a formulation in terms of stochastic vertex models such as Corwin and Petrov's Higher Spin Vertex Model \cite{CorwinPetrov2015}. 
For all these models we can apply regularizations of the replica method, which consist in a combination of stochastic duality \cite{schutz1997dualityASEP,ImamuraSasamoto2011current,BorodinCorwinSasamoto2012,CorwinPetrov2015} and Bethe Ansatz \cite{BCG6V,BCPS2014_arXiv_v4,BorodinPetrov2016inhom}.
A more indirect approach is that of using Macdonald processes \cite{BorodinCorwin2011Macdonald}, where KPZ solvable models arise as certain marginals of a more general 2+1 growth process \cite{BorodinCorwin2011Macdonald,OConnellPei2012,BorodinPetrov2013NN,MatveevPetrov2014,BufetovPetrovYB2017,BufetovMucciconiPetrov2018,MucciconiPetrov2020,BorodinBufetovWheeler2016,barraquand_half_space_mac,barraquand2018,chen_ding_littlewood}. In this last case the common approach is that of leveraging combinatorial properties of Macdonald functions to derive informations about the model.

In this paper we will mainly consider a particular regularization of the KPZ equation, that is the $q$-PushTASEP \cite{BorodinPetrov2013NN,MatveevPetrov2014}. See Figure \ref{fig:q_PushTASEP} below for an illustration of the model in continuous time. In its most general formulation this is, admittedly, a rather artificial model, whose definition is postponed to \cref{subs:qPushTASEP}, which nevertheless turns out to be the right one to focus on for three key reasons enumerated below.

\medskip

    \emph{Multivariate model.} The $q$-PushTASEP depends on a number of parameters which in the text we denote by $a_1,a_2,\dots,b_1,b_2,\dots$. Considering proper scalings of $q,a_i,b_j$, we can transform the $q$-PushTASEP into several interesting other models, as schematically described in \cref{fig:hierarchy}. For instance taking a certain $q \to 1$ limit one obtains the Log Gamma polymer model introduced by Seppalainen \cite{Seppalainen2012}. This model is of interest on its own and in the last decade has received considerable attention; see \cite{zygouras_review} and references therein.
    
\medskip
    
    \emph{Half space variant.} The $q$-PushTASEP possesses a solvable variant that regularizes the mSHE in half space, called \emph{$q$-PushTASEP with particle creation}, introduced in \cite{barraquand_half_space_mac}. This is also a multivariate model depending on parameters denoted by $\gamma,a_1,a_2,\dots$, where $\gamma$ governs the creation of new particles and in the KPZ picture plays the role of boundary parameter $\omega$. As we will show in the text we will be able to solve, i.e. write down exact formulas, for this half space model, for general choices of these parameters. This is remarkable since exact formulas for the ASEP in half space, that also approximates the mSHE \eqref{eq:SHE_hs}, are only known for the particular choice $\omega=-\frac{1}{2}$ and were found in \cite{barraquand2018}.

\medskip
    
    \emph{Relation with $q$-Whittaker measures.} Both versions of the $q$-PushTASEP hinted above have joint law described respectively by the $q$-Whittaker measure \cite{BorodinCorwin2011Macdonald} and by the half space $q$-Whittaker measure \cite{barraquand_half_space_mac}, which are probability measures over partitions $\mu$ of the form respectively
    \begin{equation}
        \mathbb{P}(\mu) \propto \mathscr{P}_\mu(a;q) \mathscr{Q}_\mu(b;q),
        \qquad
        \mathbb{P}(\mu) \propto \mathbf{1}_{\mu' \, \text{even}} \mathscr{Q}_\mu(\gamma,a;q).
    \end{equation}
    Here $\mathscr{P}_\mu, \mathscr{Q}_\mu$ are the $q$-Whittaker polynomials;
    see \cref{sec:qW} for precise definitions. This is important since the $q$-Whittaker polynomials have a nice combinatorial structure. In particular adopting a bijective approach, we were recently able to discover and prove certain new summation identities \cite{IMS_skew_RSK} relating the $q$-Whittaker measures with the periodic Schur measure \cite{borodin2007periodic} and the Free boundary Schur measure \cite{Betea_et_al_free_boundary}; see \eqref{eq:qW_and_Schur_1}, \eqref{eq:qW_and_Schur_2} in the text. These are other measures over partitions $\lambda$ having respectively the form
    \begin{equation}
        \mathbb{P}(\lambda) \propto \sum_\rho q^{|\rho|} s_{\lambda/\rho}(a) s_{\lambda/\rho}(b),
        \qquad
        \mathbb{P}(\lambda) \propto \mathbf{1}_{ \lambda ' \, \text{even}} \sum_{\rho' \, \text{even}} q^{|\rho|/2} s_{\lambda/\rho}(a),
    \end{equation}
    where $s_{\lambda/\rho}$ are the skew Schur polynomials, defined below in \cref{sec:qW}. They have notable determinantal and pfaffian structures which we will recall in \cref{sec:pS}.

\begin{figure}
    \centering
    \includegraphics{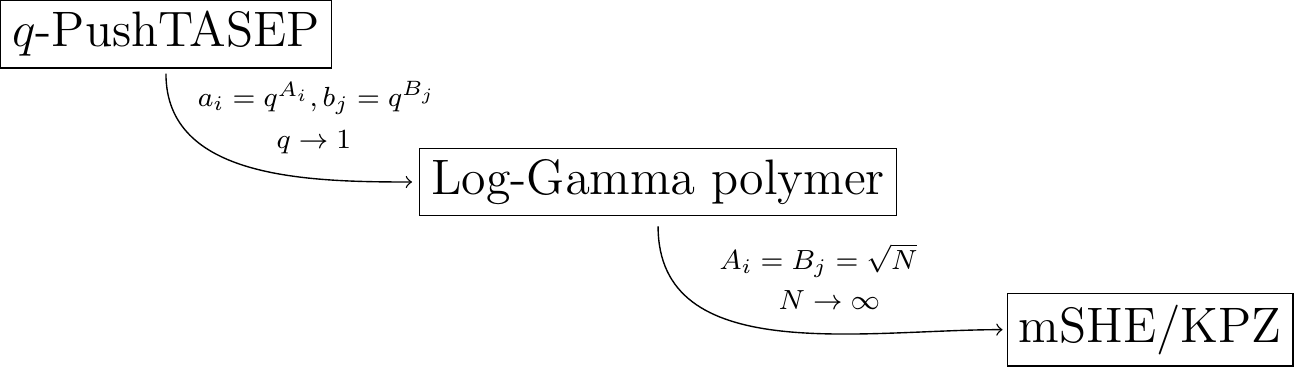}
    \caption{Schematic representation of hierarchy of models related to the $q$-PushTASEP.}
    \label{fig:hierarchy}
\end{figure}

\subsection{Methods}
    The location of a tagged particle $\mathsf{x}_N(T)$ in the $q$-PushTASEP is equivalent in distribution to the length of the first row of a random partition $\mu$ sampled according to the $q$-Whittaker measure \cite{MatveevPetrov2014}
    \begin{equation}
        \mathsf{x}_N(T) \stackrel{\mathcal{D}}{=} \mu_1 + N.
    \end{equation}
    A summation identity discovered in \cite{IMS_matching,IMS_skew_RSK} relates the first row in the $q$-Whittaker measure with the first row $\lambda_1$ in the periodic Schur measure, up to a shift
    \begin{equation}
        \mu_1+\chi \stackrel{\mathcal{D}}{=} \lambda_1.
    \end{equation}
    Here $\chi$ is an independent random variable with explicit distribution and this result is proven below in \cref{thm:matching_qW_ps}. The combination of the previous two relations immediately grants Fredholm determinant formulas describing the distribution of the tagged particle $\mathsf{x}_N(T)$, borrowing known results on the periodic Schur measure \cite{borodin2007periodic,betea_bouttier_periodic}.
        
    Once formulas are obtained for the most general model, i.e. the $q$-PushTASEP, we will consider various scaling limits. A change of specializations of symmetric functions in the $q$-Whittaker measure will give us Fredholm determinant formulas for the ASEP, thanks to a notable relation between the Hall-Littlewood measure (seen as a particular case of the $q$-Whittaker measure) and the ASEP (seen as a particular specialization of the Stochastic Six Vertex Model) discovered in \cite{BorodinBufetovWheeler2016}. A Fredholm determinant representation for the probability distribution of the free energy of the log Gamma polymer model will be derived through a $q\to 1$ limit of formulas for the $q$-PushTASEP. In doing so a number of delicate estimates of decay for (ratio of) $q$-Pochhammer symbols will be required and we will present them in \cref{sec:LG}. In literature, bounds for $q$-Pochhammer symbols are usually derived using certain complicated integral representations involving other special functions; see, for instance, the recent work \cite{corwin_knizel_open_KPZ} and references therein. In our case, all new results given in \cref{sec:LG} will have completely elementary and self contained proofs, which is also a novel, although somewhat technical, aspect of our work. Considering a scaling limit, called \emph{intermediate disorder regime} \cite{AlbertsKhaninQuastel2012}, of formula for the Log Gamma polymer model we will prove \cref{thm:solution_mSHE}. 
        
    \medskip
    
    As anticipated, we will also derive exact formulas for KPZ models in half space. Conceptually, the approach we follow mirrors that used to derive formulas in full space setting. Our starting point is the identity in distribution \cite{barraquand_half_space_mac}
        \begin{equation} \label{eq:intro_x_mu}
            \mathsf{x}^\mathrm{hs}_N(T) \stackrel{\mathcal{D}}{=} \mu_1^\mathrm{hs} + N,
        \end{equation}
        between the $q$-PushTASEP with particle creation on the left hand side and the half space $q$-Whittaker measure on the right hand side. We then relate, through a summation identity discovered in \cite{IMS_skew_RSK}, a particular case of the half space $q$-Whittaker measure with the free boundary Schur measure \cite{Betea_et_al_free_boundary}. In formulas our relation reads
        \begin{equation} \label{eq:intro_mu_lambda}
            \mu_1^\mathrm{hs} + \chi \stackrel{\mathcal{D}}{=} \lambda_1^\mathrm{hs}.
        \end{equation}
        Combining \eqref{eq:intro_x_mu} and \eqref{eq:intro_mu_lambda} we are able to relate the rightmost particle $\mathsf{x}^\mathrm{hs}_T(T)$ of the $q$-PushTASEP with particle creation with an explicit pfaffian point process. Strictly speaking such correspondence only holds when the parameter governing the creation of new particles, which plays the role of boundary parameter in the KPZ picture, is set to $\gamma=0$. To extend the correspondence to all non-negative values of $\gamma$ we will use a special symmetry of the half space $q$-Whittaker measure found in \cite{barraquand_half_space_mac}; see \cref{prop:symmmetry_half_space_qW} in the text. In this way we find a Fredholm pfaffian representation for the probability distribution of $\mathsf{x}_T^\mathrm{hs}(T)$ valid for a general choice of parameters. We then move to consider scaling limits of our formula for the $q$-PushTASEP. Taking the $q\to 1$ limit already considered in full space setting we derive a Fredholm pfaffian representation for the probability distribution of the point-to-point partition function of the Log Gamma polymer in half space. The asymptotic analysis in this case builds up on the study made for the full space model, although it ends up being slightly more involved; also in this case technical estimates are needed and they are reported in \cref{sec:LG}. Formulas we get for the Log Gamma polymer model are amenable to a subsequent scaling limit, considered in \cite{Xuan_Wu_Intermediate_Disorder}, under which the polymer partition function reduces to the solution $\mathscr{Z}^\mathrm{hs}$ of the half space mSHE \eqref{eq:SHE_hs} and in this way we prove \cref{thm:solution_mSHE_hs}.
        
        \medskip
        
        In \cref{sec:asymptotics} and \cref{sec:KPZ} we will perform the large time-space asymptotic limits of our Fredholm determinant and pfaffian formulas. In this way we recover GUE limiting fluctuations results for the Log Gamma polymer model in full space. The emphasis in the proof we provide should be on the simplicity of the asymptotic analysis, given by the ``free fermionic" nature of our Fredholm operator. This should be compared with other existing techniques (see e.g. \cite{Barraquand_Corwin_Dimitrov_log_gamma}), where a rigorous application of the saddle point method requires involved manipulations of integral contours of the Fredholm kernel. 
        
        The asymptotic analysis of our Fredholm pfaffian formulas will be performed employing a standard saddle point analysis. We will focus on three special cases. In \cref{subs:baik_rains_pfaffian_schur} we will consider the asymptotic limit of Fredholm pfaffian formulas for the half space $q$-Whittaker measure for $q=0$. Doing so we will lay out the generic structure of the asymptotic analysis of our pfaffian formulas and we will additionally prove a new representation of the Baik-Rains crossover distribution originally defined in \cite{baik_rains2001asymptotics}. Subsequently we will compute the thermodynamic limit of the polymer partition function for the half space Log Gamma polymer model, proving Baik-Rains phase transition. Analogous computations will be reported in \cref{subs:hsSHE_Baik_Rains}, where we report the proof of \cref{thm:asymptotics_SHE}.

        \subsection{Novelty of our approach.} \label{subs:novelty}
        Lastly, let us emphasize how our techniques differ from those available in the pre-existing literature and what are the advantages of our approach. In simple terms we have implemented a bijective approach to the study of the $q$-Whittaker measure, and considering symmetries of our construction we have established correspondences between solvable KPZ models in full and half space with explicit determinantal and pfaffian point processes of free fermionic origin.
        At the level of formulas,  correspondences between KPZ models and free fermions at positive temperature had been known for a long time, since the works of \cite{AmirCorwinQuastel2011,Dotsenko,Calabrese_LeDoussal_Rosso,SasamotoSpohn2010} in 2010, because the kernel \eqref{eq:Kff} for the solution of the KPZ equation \eqref{eq:SHE_narrow wedge} can be interpreted as the correlation kernel for a positive temperature free fermion (see for instance \cite{DLDMS_free_fermions_KPZ} for an early application).    
        The correspondence is formulated in a slightly different manner by Borodin and Gorin \cite{BG2016_Airy_moments}, who understood the Fredholm determinant solution of the KPZ equation as a \emph{multiplicative nonlocal statistics} of the Airy point process. 
        The latter holds more in general for solvable regularizations of the KPZ equation, as found by Borodin in \cite{borodin2016stochastic_MM} and was extended in half space setting, for the particular case of ASEP, by Barraquand, Borodin, Corwin, Wheeler \cite{barraquand2018} to study the KPZ equation on the half line with critical boundary parameter $\omega = -\frac{1}{2}$.
        
        
        The correspondence we establish and use in this paper are conceptually different. The bijection  connecting $q$-Whittaker measure and periodic Schur measure, which we found in \cite{IMS_skew_RSK}, translates to a connection between KPZ models and free fermion at positive tempereture, and gives precise relations between \emph{local observables} in both models. Once the connection is made, one immediately concludes the existence of Fredholm determinant and pfaffian formulas, without doing any further calculation. 
        
        Our new approach proposed in this paper to study KPZ models can be considered as ``positive temperature" version of the works of Johansson \cite{johansson2000shape} and of Baik and Rains \cite{baik_rains2001algebraic,baik_rains2001symmetrized,baik_rains2001asymptotics} around 2000. Johansson applied the Robinson-Schensted-Knuth (RSK) correspondence to map the problem of  directed polymer at zero temperature, or the totally asymmetric simple exclusion process (TASEP), in full space to a free fermion at zero temperature. Baik and Rains used the symmetries of RSK correspondence to study the above models in half space. The ideas and methods in these works are quite powerful and have been applied to many models in various settings but are not suited for studying positive temperature models, for which different methodologies using Bethe ansatz or Macdonald operators have been developed \cite{BorodinCorwin2011Macdonald,BorodinCorwinSasamoto2012}. In \cite{IMS_skew_RSK} we succeeded in constructing a generalization of the RSK algorithm,
        which allows us to 
        study positive temperature KPZ models by connecting them to free fermions at finite temperature, both in full and half space. Currently, our methods only work for the study of ``single point" functions, as in the works of \cite{johansson2000shape, baik_rains2001algebraic,baik_rains2001asymptotics,baik_rains2001symmetrized}. It would be of great interest to understand if these can be extended to extract information for ``multi point" observables. For this task a more careful analysis of our bijective construction seems to be required.

    \subsection{Outline}
    
    In \cref{sec:qW} we recall basic notions in combinatorics and symmetric functions. There, we also define the $q$-Whittaker measure and the half space $q$-Whittaker measure. In \cref{sec:KPZqW} we introduce the solvable models in the KPZ class we study in the rest of the paper, i.e. the $q$-PushTASEP, the ASEP and the Log Gamma polymer. In \cref{sec:pS} we define the periodic Schur measure and the free boundary Schur measure giving also Fredholm determinant and pfaffian formulas for them. In \cref{subs:matching_qW_schur} we state the fundamental identities relating $q$-Whittaker measures with periodic and free boundary Schur measure. In \cref{sec:KPZpS} we state Fredholm determinant and pfaffian formulas for all the models introduced in \cref{sec:KPZqW}. Their asymptotic limit is computed in \cref{sec:asymptotics} where we prove convergence to the GUE Tracy-Widom distribution and to the Baik-Rains crossover. In \cref{sec:KPZ} we will state and prove formulas for the solution of the KPZ/mSHE equation in full and half space. In \cref{subs:hsSHE_Baik_Rains} we will compute the large time fluctuations of the solution $\mathscr{Z}^{\mathrm{hs}}(0,t)$ of the half space mSHE \eqref{eq:SHE_hs} at the origin establishing Baik-Rains phase transition. In \cref{app:Fredholm} we recall basic definitions and properties of Fredholm determinants and pfaffians. Finally, in \cref{sec:LG} we will prove useful bounds needed to establish formulas for Log Gamma polymer model in full and half space.

    \subsection{Acknowledgments}
    The work of TS has been supported by JSPS KAKENHI Grant Nos. JP16H06338, JP18H03672, JP21H04432, No. JP22H01143. The work of TI has been supported by JSPS KAKENHI Grant Nos. JP16K05192, JP19H01793, JP20K03626, and JP22H01143. The work of MM has been supported by the European Union’s Horizon 2020 research and innovation programme under the Marie Skłodowska-Curie grant agreement No. 101030938.

\section{$q$-Whittaker Measure and its half space variant}
\label{sec:qW}

Here we define the $q$-Whittaker symmetric functions and recall some of their properties and notable summation identities. In \cref{subs:qW_measure,subs:hs_qW_measure} we define the $q$-Whittaker measures, which are important in the study of solvable models in the KPZ class.

\subsection{$q$-Whittaker and Schur polynomials}
    
    Symmetric functions we consider in this paper are labeled by partitions, i.e. weakly decreasing lists of nonnegative integers $\lambda = (\lambda_1 \ge \lambda_2 \ge \cdots \ge 0)$ such that  $\lambda_n=\lambda_{n+1} =\cdots =0$, for $n$ large enough. For a given partition $\lambda$ we define its transpose $\lambda' = (\lambda_1' \ge \lambda_2' \ge \cdots)$ setting $\lambda_i' = \# \{ j:\lambda_j\ge i \}$. Skew partitions, denoted as $\lambda/\rho$, are also used and these are pairs $\lambda,\rho$, satisfying $\lambda_i \ge \rho_i$ for all $i$. Two partitions $\lambda$ and $\rho$ are said to interlace if $\lambda_1 \ge \rho_1 \ge \lambda_2 \ge \rho_2 \ge \cdots$ and this condition is denoted by the notation $\rho \prec \lambda$.
    
    For the next definition we recall the notion of complete homogeneous symmetric polynomials in $n$ variables $x_1,\dots,x_n$,
    \begin{equation}
        h_\ell(x) = \sum_{1\le i_1 \le \cdots \le i_\ell \le n} x_{i_1} \cdots x_{i_\ell},
    \end{equation}
    where $\ell \in \mathbb{Z}_{\ge 0}$. By convention we set $h_0(x)=1$ and $h_{\ell}(x)=0$ for $\ell<0$.
    \begin{definition}[Schur polynomials]
        For any skew partition $\lambda/\rho$ and a set of $n$ variables $x=(x_1,\dots,x_n)$, we define the \emph{skew Schur polynomial}
    \begin{equation} \label{eq:Jacobi_Trudi}
        s_{\lambda/\rho}(x) = \det\left[ h_{\lambda_i - \rho_j - i+j}(x) \right]_{i,j=1}^N,
    \end{equation}
    where the number $N$ is large enough so that $\lambda_N=\rho_N=0$. Setting $\rho=(0,0,\dots)$ we obtain the Schur polynomial $s_\lambda(x)$.
    \end{definition}
    
    \medskip
    
    Next we present a notable $q$-deformation of the Schur polynomials. For a fixed parameter $q \in (0,1)$, we recall the notion of $q$-Pochhammer symbols
    \begin{equation}
        (z;q)_n = \prod_{i=0}^{n-1} (1-z q^i)
        \qquad
        \text{for } n=0,1,\dots,
        \qquad
        (z;q)_\infty = \prod_{i=0}^{\infty} (1-z q^i)
    \end{equation}
    and of $q$-binomials
    \begin{equation}
        \Qbinomial{n}{j}{q} = \frac{(q;q)_n}{(q;q)_j (q;q)_{n-j}}.
    \end{equation}
    
    \begin{definition}[$q$-Whittaker polynomials]
        Given a skew partition $\mu / \varkappa$, the \emph{skew $q$-Whittaker polynomial} in $n$ variables $\mathscr{P}_{\mu /\varkappa}(x_1,\dots,x_n;q)$ is defined by the recursive relation
        \begin{equation}
            \mathscr{P}_{\mu/\varkappa}(x_1,\dots,x_n;q) = \sum_{\eta} \mathscr{P}_{\eta / \varkappa}(x_1,\dots,x_{n-1};q) \mathscr{P}_{\mu/\eta}(x_n;q),
        \end{equation}
        where
        \begin{equation}
            \mathscr{P}_{\mu/\eta}(z;q) = \mathbf{1}_{\eta \prec \mu} \prod_{i \ge 1} z^{\mu_i - \eta_i} \Qbinomial{\mu_i - \mu_{i+1}}{\mu_i - \eta_i}{q}.
        \end{equation}
        We also define the \emph{dual $q$-Whittaker polynomials}
        \begin{equation}
            \mathscr{Q}_\mu(x;q) = \mathdutchcal{b}_\mu(q) \mathscr{P}_\mu(x;q) 
            \qquad
            \text{and}
            \qquad
            \mathscr{Q}_{\mu/\eta}(x;q) = \frac{\mathdutchcal{b}_\mu(q)}{\mathdutchcal{b}_\eta(q)} \mathscr{P}_{\mu/\eta}(x;q),
        \end{equation}
        where
        \begin{equation} \label{eq:b_mu}
            \mathdutchcal{b}_\mu(q) = \prod_{i\ge 1} \frac{1}{(q;q)_{\mu_i - \mu_{i+1}}}.
        \end{equation}    
    \end{definition}
    Setting $q=0$ both polynomials $\mathscr{P}_{\mu / \varkappa}$ and $\mathscr{Q}_{\mu / \varkappa}$ become the skew Schur polynomials $s_{\mu / \varkappa}$ and this gives a well known alternative definitions of $s_{\mu / \varkappa}$.
    
    The $q$-Whittaker polynomials can be seen as a special case of the more general Macdonald polynomials \cite[Section VI]{Macdonald1995}. These are special symmetric polynomials depending on two parameters $q,t$ and denoted commonly as $P_\mu(x;q,t)$. Setting $t=0$ we recover the $q$-Whittaker polynomial  $\mathscr{P}_\mu(x;q) = P_\mu(x;q,t=0)$. Another interesting particular case of the Macdonald polynomials is given setting $q=0$ and keeping $t$ positive. These are the Hall-Littlewood polynomials, which are also of interest in the context of integrable probability and we will introduce them in the next subsection, albeit through a different route. 

    \subsection{Specializations of symmetric functions}
    The $q$-Whittaker polynomials enjoy the stability property
    \begin{equation} \label{eq:stability_qW}
        \mathscr{P}_{\mu/\eta}(x_1,\dots,x_{n-1},x_n=0;q) = \mathscr{P}_{\mu/\eta}(x_1,\dots,x_{n-1};q)
    \end{equation}
    and this allows, through an inverse limit procedure, to define them in the algebra of symmetric functions $\mathbf{\Lambda}_\mathbb{C}$. In \cite[Section VI.4]{Macdonald1995}, this is explained in the more general case of Macdonald functions. Implications of this observation include the extension of the notion of ``variables" of a symmetric function, which then become evaluation homomorphisms of the algebra $\mathbf{\Lambda}_\mathbb{C}$.
        
    \begin{definition}
        A \emph{specialization} $\varrho$ of the algebra of symmetric functions is an algebra homomorphism $\varrho : \mathbf{\Lambda}_\mathbb{C} \mapsto \mathbb{C}$. If $f \in \mathbf{\Lambda}_\mathbb{C}$ we will use the notation $ f(\varrho) = \varrho(f)$.
    \end{definition}
    
    When dealing with explicit specializations it is convenient to define their action on the algebraic basis of $\mathbf{\Lambda}_\mathbb{C}$ of power sum symmetric functions $\{ p_n : n \in \mathbb{Z}_{\ge 0} \}$, setting the values $\varrho(p_n)$. Rules for addition and scalar multiplication of specializations are straightforward from the definition and we have
    \begin{equation}
        (\varrho_1+\varrho_2) (p_n) = \varrho_1 (p_n) + \varrho_2 (p_n),
        \qquad
        ( c \varrho) (p_n) = c^n \varrho(p_n),
    \end{equation}
    for all specializations $\varrho, \varrho_1, \varrho_2$ and $c\in \mathbb{C}$. Three types of specializations will be considered mainly in this paper. These are:
    \begin{enumerate}
        \item \emph{alpha specializations} of complex parameters $x=(x_1,x_2,\dots)$, where
        \begin{equation}
            \varrho: p_n \mapsto p_n(x) = x_1^n + x_2^n  + \cdots,   
        \end{equation}
        evaluates the power sum symmetric functions numerically at $x_1,x_2,\dots$ and we always assume that $\sum_{i\ge 1} |x_i|<\infty$;
        \item \emph{$q$-beta specializations} of complex parameters $x=(x_1,x_2,\dots)$, where one sets
        \begin{equation} \label{eq:q_beta_spec}
            \varrho(p_n) =(-1)^{n-1}  (1-q^n) p_n(x),
        \end{equation}
        assuming again $\sum_{i\ge 1} |x_i|<\infty$. For brevity we will denote the $q$-beta specialization of a symmetric function $f$ by $f(\widehat{x})$;
        \item \emph{Plancherel specializations} of parameter $\tau  \in \mathbb{C}$ defined by $\varrho(p_n) = \delta_{n,1} \tau$.
    \end{enumerate}
    
    Evaluating $q$-beta specializations of $q$-Whittaker functions yields another remarkable family of symmetric polynomials (or functions)
    \begin{equation} \label{eq:Hall_Littlewood}
        \mathscr{H}_{\mu' / \eta'}(x;q) =  \mathscr{Q}_{\mu / \eta}(\widehat{x};q),
    \end{equation}
    called \emph{Hall-Littlewood polynomials}; see \cite[Section VI eq. (5.1)]{Macdonald1995}. In literature the Hall-Littlewood polynomials usually depend on a parameter $t$, which is $q$ in the notation \eqref{eq:Hall_Littlewood}. Because of such simple relation between $\mathscr{P}$ and $\mathscr{H}$ functions, in this paper we will not emphasize their difference.
    
    \begin{remark}[Positive specializations] \label{rem:specializations}
        For probabilistic applications we will be interested in $q$-Whittaker-\emph{positive specializations} $\varrho$, which are defined by the property that $\varrho ( \mathscr{P}_{\mu} ) \ge 0$ for all partitions $\mu$. These were characterized by Matveev in \cite{Matveev_Kerov_conjecture} for the more general case of Macdonald functions. Adapting \cite[Theorem 1.4]{Matveev_Kerov_conjecture} to our case, we see that a specialization $\varrho$ is $q$-Whittaker-positive if and only if it can be expressed as
        \begin{equation} \label{eq:specializations}
            \varrho=\varrho_x + \varrho_{\widehat{y}} +\varrho^{\mathrm{pl}}_\tau
        \end{equation}
        where $\varrho_x,\varrho_{\widehat{y}},\varrho^{\mathrm{pl}}_\tau$ are respectively an alpha, $q$-beta and Plancherel specializations of parameters $x=(x_1,x_2,\dots)$, $y=(y_1,y_2,\dots)$ and $\tau$ such that $x_i, y_i, \tau \ge 0 $ for all $i$ and
        \begin{equation}
            \sum_{i=1}^\infty (x_i +y_i) <\infty.
        \end{equation}
        
        Analogously \emph{Schur-positive} specializations $\varrho$ are defined imposing $\varrho(s_\mu) \ge 0$ for all $\mu$. In this case the Edrei-Thoma's theorem \cite{ASW52,Edrei53,whitney_totally_positive}, says that a specialization is Schur positive if and only if it is of the form \eqref{eq:specializations}, setting $q=0$ in $\varrho_{\widehat{y}}$. 
    \end{remark}
    
    \subsection{Summation identities}
    
    The $q$-Whittaker functions possess a number of interesting properties and satisfy notable summation identities. Summing over products of two $q$-Whittaker functions one obtains the \emph{Cauchy identity} that reads
    \begin{equation} \label{eq:skew_CI}
        \sum_{\mu}  \mathscr{P}_{\mu/\nu}(a;q) \mathscr{Q}_{\mu/\varkappa}(b;q) = \Pi(a,b) \sum_{\eta}  \mathscr{P}_{\varkappa/\eta}(a;q) \mathscr{Q}_{\nu/\eta}(b;q),
    \end{equation}
    where
    \begin{equation} \label{eq:pi_and_H}
        \Pi(a,b) =  \prod_{k \ge 0} H(q^k a,b)
        \qquad
        \text{and}
        \qquad
        H(a,b) = \exp \left\{ \sum_{n \ge 1} \frac{ p_n(a) p_n(b) }{n} \right\}.
    \end{equation}
    Setting $\nu=\varkappa=(0,0,\dots)$, identity \eqref{eq:skew_CI} simplifies as
    \begin{equation} \label{eq:macdonald_CI}
        \sum_{\mu}  \mathscr{P}_\mu(a;q) \mathscr{Q}_\mu(b;q) = \Pi(a;b).
    \end{equation}
    Here $a$ and $b$ are generic specializations for which the function $H(a,b)$ is numerically convergent. In the simplest case, when $a$ and $b$ are alpha specializations of finitely many parameters $(a_1,\dots,a_n)$, $(b_1,\dots,b_t)$, such that $|a_i b_j|<1$ for all $i,j$, we have
    \begin{equation}
        \Pi(a,b) = \prod_{i=1}^n \prod_{j=1}^t \frac{1}{(a_i b_j;q)_\infty} .
    \end{equation}
        
    Summations over single $q$-Whittaker functions yield the \emph{Littlewood identity}
    \begin{equation}     \label{eq:macdonald_LI}
        \sum_{\mu} \mathdutchcal{b}^{\mathrm{el}}_\mu(q) \mathscr{P}_\mu(a;q) = \widetilde{\Pi} (a),
    \end{equation}
    where $\mathdutchcal{b}^{\mathrm{el}}_\mu$ is defined as
    \begin{equation}     \label{eq:b_q_z}
        \mathdutchcal{b}_\mu^{\mathrm{el}}(q) = \prod_{i=2,4,6,\dots} \frac{\mathbf{1}_{\mu_{i-1} = \mu_{i}}}{(q;q)_{\mu_{i} - \mu_{i+1}}},
    \end{equation}
    and
    \begin{equation} \label{eq:pi_tilde}
        \widetilde{\Pi}(a) = \prod_{k \ge 0} \widetilde{H}(q^{k} a),
        \qquad
        \qquad
        \widetilde{H}(a) = \exp \left\{ \sum_{n \ge 1} \frac{p_{2n-1}(a)}{2n-1} + \frac{p_{n}(a)^2}{2n} \right\}.
    \end{equation}
    In \eqref{eq:b_q_z} the superscript ${}^\mathrm{el}$ means ``even length" as $\mathdutchcal{b}^\mathrm{el}_\mu$ vanishes unless $\mu'$ is even, i.e. $\mu'_i$ is even for all $i$. 
    When $a$ is an alpha specialization of finitely many parameters $(a_1,\dots,a_n)$ we have
    \begin{equation}
        \widetilde{\Pi}(a) = 
        \prod_{1\le i < j \le n} \frac{1}{(a_i a_j ;q)_\infty},
    \end{equation}
    as it can be verified by evaluating the summations in the expression of $\widetilde{H}(a)$.
    
    \medskip
    
    Interestingly, using homogeneity of Schur functions and their skew Cauchy identities it is possible to reproduce $\Pi(a,b),\widetilde{\Pi}(a)$ appearing on the right hand side of \eqref{eq:macdonald_CI}, \eqref{eq:macdonald_LI}. We have \cite[Corollary 6.7]{sagan1990robinson}
    \begin{equation} \label{eq:skew_shur_CI}
        \sum_{\lambda,\rho} q^{|\rho|} s_{\lambda/\rho}(a) s_{\lambda/\rho}(b) = \frac{1}{(q;q)_\infty} \Pi(a;b) 
    \end{equation}
    and \cite[Corollary 6.10]{sagan1990robinson}
    \begin{equation} \label{eq:skew_shur_LI}
        \sum_{\substack{\lambda,\rho: \\ \lambda', \rho' \text{ even}}} q^{|\rho|/2} s_{\lambda/\rho}(a) = \frac{1}{(q;q)_\infty} \widetilde{\Pi}(a).
    \end{equation}
    Indeed, similarities between identities \eqref{eq:skew_shur_CI}, \eqref{eq:skew_shur_LI} and \eqref{eq:macdonald_CI}, \eqref{eq:macdonald_LI} are not merely a coincidence and their mutual relations were understood in \cite{IMS_matching,IMS_skew_RSK}. We recall these results in the following theorems.
    
    \begin{theorem} \label{thm:qW_and_Schur_1}
        Fix $|q|<1$ and specializations of symmetric functions $a$ and $b$. Then, for all $k=0,1,2,\dots$, we have
        \begin{equation} \label{eq:qW_and_Schur_1}
            \sum_{\ell=0}^k \frac{q^\ell}{(q;q)_\ell} \sum_{ \mu: \mu_1 = k - \ell} \mathscr{P}_\mu (a;q) \mathscr{Q}_\mu (b;q) = \sum_{\lambda,\rho : \lambda_1= k} q^{|\rho|} s_{\lambda / \rho}(a) s_{\lambda / \rho}(b).
        \end{equation}
    \end{theorem}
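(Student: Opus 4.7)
The plan is to prove \eqref{eq:qW_and_Schur_1} bijectively, after first recasting it as a single generating function identity. As a sanity check, summing both sides over $k \ge 0$ and using $\sum_{\ell \ge 0} q^\ell/(q;q)_\ell = 1/(q;q)_\infty$ collapses the claim into the combination of \eqref{eq:macdonald_CI} and \eqref{eq:skew_shur_CI}, both yielding $\Pi(a,b)/(q;q)_\infty$. This confirms consistency but not the refinement. To access the latter, I would multiply by $z^k$ and sum over $k \ge 0$; invoking $\sum_{\ell \ge 0} (qz)^\ell/(q;q)_\ell = 1/(qz;q)_\infty$, the identity becomes
\begin{equation*}
\sum_{\lambda,\rho} z^{\lambda_1} q^{|\rho|} s_{\lambda/\rho}(a) s_{\lambda/\rho}(b) = \frac{1}{(qz;q)_\infty} \sum_{\mu} z^{\mu_1} \mathscr{P}_\mu(a;q) \mathscr{Q}_\mu(b;q).
\end{equation*}
This reformulation exposes the probabilistic content: $\lambda_1$ under the periodic Schur measure should equal in distribution $\mu_1 + \chi$ under the $q$-Whittaker measure, where $\chi$ is independent with $\mathbb{P}(\chi = \ell) \propto q^\ell/(q;q)_\ell$.

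I would next specialize $a, b$ to alpha specializations of finitely many variables and interpret both sides as weighted tableau enumerations. Using $1/(qz;q)_\infty = \prod_{i \ge 1}(1-q^i z)^{-1}$, the right-hand side counts triples $(T, U, \pi)$, where $(T,U)$ is a pair of $q$-weighted column-strict tableaux of common straight shape $\mu$ with monomial weight $\mathscr{P}_\mu(a;q)\mathscr{Q}_\mu(b;q)$ tracked by $z^{\mu_1}$, and $\pi$ is an auxiliary partition weighted by $z^{\ell(\pi)} q^{|\pi|}$. The left-hand side counts pairs of semistandard tableaux of skew shape $\lambda/\rho$ weighted by $z^{\lambda_1} q^{|\rho|}$. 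The aim is then to construct a weight-preserving bijection between the two families via a skew RSK correspondence: starting from a biword encoded by the Schur-side pair, a cyclic column-insertion algorithm produces the $q$-Whittaker pair $(T,U)$ together with the auxiliary partition $\pi$, each completed cycle transferring cells from the inner shape $\rho$ into a new part of $\pi$.

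The main obstacle is verifying that the bijection simultaneously respects three statistics: (i) the equality $|\rho| = |\pi|$, which matches the factors $q^{|\rho|}$ and $q^{|\pi|}$ on the two sides; (ii) the agreement between the $q$-statistics of the bumping paths and those entering $\mathscr{P}_\mu(a;q)\mathscr{Q}_\mu(b;q)$; and (iii) the identity $\lambda_1 = \mu_1 + \ell(\pi)$, matching the markers $z^{\lambda_1}$ and $z^{\mu_1 + \ell(\pi)}$. Properties (i) and (ii) already require care in Sagan--Stanley's cylindric RSK framework, while (iii) is the truly new input and demands showing that each completed cycle of the skew insertion extends the top row of $\mu$ by exactly one and contributes exactly one new part to $\pi$. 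Once the bijection is established on alpha specializations, the identity \eqref{eq:qW_and_Schur_1} extends to general specializations by viewing both sides as elements of $\mathbf{\Lambda}_\mathbb{C} \otimes \mathbf{\Lambda}_\mathbb{C}$ whose coefficients on any monomial basis are determined by their values at sufficiently rich finite alphabets.
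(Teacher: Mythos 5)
Your reformulation is sound: multiplying by $z^k$, summing over $k$, and using the $q$-exponential identity $\sum_{\ell\ge 0} x^\ell/(q;q)_\ell = 1/(x;q)_\infty$ correctly converts \eqref{eq:qW_and_Schur_1} into the generating-function statement you display, and the probabilistic reading $\lambda_1 \stackrel{\mathcal{D}}{=} \mu_1 + \chi$ with $\chi\sim q\text{-}\mathrm{Geo}(q)$ is exactly the content of \cref{thm:matching_qW_ps}. Your closing remark — prove the identity for alpha specializations of finitely many variables and extend to general specializations by polynomiality/stability — is also precisely how the paper handles the general case (it invokes \eqref{eq:stability_qW} and the remarks in \cite{IMS_skew_RSK}). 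So the strategy you outline coincides with the route the paper actually takes, which is to cite the bijective proof of \cite{IMS_skew_RSK} for alpha specializations.

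The gap is that the core of the argument is not carried out: the three properties you list as ``the main obstacle'' are not technical verifications sitting on top of a known bijection — they \emph{are} the theorem. In particular, property (ii) is not available in the Sagan--Stanley cylindric RSK framework you invoke. That correspondence produces a pair of column-strict tableaux of straight shape weighted by ordinary Schur monomials $x^T y^U$; it does not produce the $q$-Whittaker weight $\mathscr{P}_\mu(a;q)\mathscr{Q}_\mu(b;q)$, which (via the Demazure/charge formula) requires attaching an intrinsic energy statistic to the output pair and proving that the exponent $|\rho|$ on the Schur side splits as (energy of the pair) plus (size of the auxiliary partition $\pi$). Establishing this is the main content of \cite{IMS_skew_RSK} and required constructing a new deterministic ``skew RSK dynamics'' with affine crystal symmetries, not merely iterating cyclic column insertion. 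Likewise, property (iii) — that $\lambda_1 = \mu_1 + \ell(\pi)$ under the map — is a nontrivial structural statement about the asymptotic shape of that dynamics. As written, your proposal correctly frames the identity and names the statistics that must match, but it does not supply the bijection or the proofs of (i)--(iii), so it is a plan rather than a proof; to complete it you would either have to reconstruct the machinery of \cite{IMS_skew_RSK} or switch to the alternative analytic route mentioned in the paper (matching the Fredholm determinant formulas for the shift-mixed periodic Schur measure with those for the $q$-Whittaker measure, as in \cite{IMS_matching}).
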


    Notice that equality \eqref{eq:qW_and_Schur_1} represents a mutual refinement of \eqref{eq:macdonald_CI} and \eqref{eq:skew_shur_CI}. A similar mutual refinement between \eqref{eq:macdonald_LI} and \eqref{eq:skew_shur_LI} is given below.

    \begin{theorem} \label{thm:qW_and_Schur_2}
        Fix $|q|<1$ and a specialization of symmetric functions $a$. Then, for all $k=0,1,2,\dots$, we have
        \begin{equation} \label{eq:qW_and_Schur_2}
            \sum_{\ell=0}^k \frac{q^{ \ell}}{(q;q)_\ell} \sum_{ \mu: \mu_1 = k - \ell} \mathdutchcal{b}_\mu^{\mathrm{el}}(q) \mathscr{P}_\mu (a;q) = \sum_{\substack{\lambda,\rho :\lambda',\rho' \, \mathrm{ even} \\ \lambda_1= k}} q^{|\rho|/2} s_{\lambda / \rho}(a).
        \end{equation}
    \end{theorem}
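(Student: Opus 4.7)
The plan is to mimic the bijective proof of Theorem~\ref{thm:qW_and_Schur_1} given in \cite{IMS_skew_RSK}, but applied to symmetric (rather than arbitrary) configurations, which is the standard combinatorial mechanism by which a Cauchy-type identity produces a Littlewood-type identity. As a preliminary step I would multiply both sides of \eqref{eq:qW_and_Schur_2} by $z^k$ and sum over $k \ge 0$; using $\sum_{\ell \ge 0} q^\ell z^\ell / (q;q)_\ell = 1/(qz;q)_\infty$, the claim reduces to the generating-function identity
\begin{equation*}
    \frac{1}{(qz;q)_\infty} \sum_\mu z^{\mu_1}\, \mathdutchcal{b}^{\mathrm{el}}_\mu(q)\, \mathscr{P}_\mu(a;q)
    \;=\; \sum_{\substack{\lambda, \rho :\\ \lambda', \rho' \text{ even}}} z^{\lambda_1}\, q^{|\rho|/2}\, s_{\lambda/\rho}(a).
\end{equation*}
Setting $z = 1$ and using $\sum_\ell q^\ell / (q;q)_\ell = 1/(q;q)_\infty$ collapses both sides to $\widetilde{\Pi}(a)/(q;q)_\infty$, recovering \eqref{eq:macdonald_LI} and \eqref{eq:skew_shur_LI}; what must be proven is the refinement according to the first-row statistic $\lambda_1 = k$.

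Theorem~\ref{thm:qW_and_Schur_1} is proven in \cite{IMS_skew_RSK} via a weight-preserving bijection between pairs of skew tableaux (encoding the Schur side $q^{|\rho|} s_{\lambda/\rho}(a) s_{\lambda/\rho}(b)$ with $\lambda_1 = k$) and pairs of Gelfand--Tsetlin data together with an extra parameter $\ell$ (encoding the $q$-Whittaker side $q^\ell \mathscr{P}_\mu(a;q) \mathscr{Q}_\mu(b;q)/(q;q)_\ell$ with $\mu_1 + \ell = k$). My plan is to restrict this bijection to its symmetric fixed-point set, i.e.\ to the configurations in which the two tableaux coincide, which in matrix-language corresponds to $A_{ij} = A_{ji}$. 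On the Schur side, the restriction forces the column-parity constraints $\lambda', \rho'$ even and turns the weight $q^{|\rho|}$ into $q^{|\rho|/2}$ after quotienting by the $\mathbb{Z}/2$ action exchanging off-diagonal pairs. On the $q$-Whittaker side, the two Gelfand--Tsetlin patterns collapse into a single symmetric one, and the product $\mathscr{P}_\mu(a;q)\mathscr{Q}_\mu(b;q)$ degenerates into $\mathdutchcal{b}^{\mathrm{el}}_\mu(q)\mathscr{P}_\mu(a;q)$. Since the restricting involution acts trivially on the common statistic $k = \lambda_1 = \mu_1 + \ell$, the Cauchy-type refinement is automatically inherited, yielding \eqref{eq:qW_and_Schur_2}.

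The principal obstacle, I expect, is the careful bookkeeping of the symmetric quotient: one must verify that the Plancherel-type factor $q^\ell/(q;q)_\ell$ on the left-hand side is correctly reproduced by the free columns sitting above the symmetric Gelfand--Tsetlin pattern, and that the halving $q^{|\rho|} \mapsto q^{|\rho|/2}$ comes out without spurious factors from fixed points on the diagonal. Because the bijection of \cite{IMS_skew_RSK} involves a genuinely non-trivial skew-RSK construction rather than classical row-insertion, adapting its symmetry behaviour is the step most likely to require detailed combinatorial work. If that route is too intricate, a purely analytic backup is to verify the displayed generating-function identity directly, by expanding the left-hand side through the branching rule $\mathscr{P}_\mu = \sum_\eta \mathscr{P}_{\mu/\eta}\,\mathscr{P}_\eta$ and the right-hand side through Jacobi--Trudi, then reducing to a one-variable $q$-series identity matched termwise against the product formula for $\widetilde{\Pi}(a)$ in \eqref{eq:pi_tilde}.
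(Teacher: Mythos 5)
The paper you are working from does not actually prove \cref{thm:qW_and_Schur_2}: it recalls the identity from the companion work \cite{IMS_skew_RSK}, where it is established bijectively in the case of alpha specializations and then extended to arbitrary specializations by stability considerations of the type \eqref{eq:stability_qW} (see the sentence following the theorem statement and \cite[Remark 10.8 and Remark 10.14]{IMS_skew_RSK}). So there is no in-paper argument to compare against line by line; the relevant comparison is with the strategy of the cited reference.

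Your proposed route — symmetrize the bijection behind \cref{thm:qW_and_Schur_1} and read off the Littlewood refinement from its fixed-point set — is the morally correct mechanism and is consistent in spirit with how \cite{IMS_skew_RSK} proceeds. But as written it is a plan, not a proof, and the gap sits exactly where you flag it. Everything hinges on showing that the skew-RSK construction of \cite{IMS_skew_RSK} genuinely intertwines with transposition of the input data and that the fixed-point analysis produces the precise weights: the simultaneous parity constraints on \emph{both} $\lambda'$ and $\rho'$, the halving $q^{|\rho|}\mapsto q^{|\rho|/2}$, and the survival of the factor $q^{\ell}/(q;q)_{\ell}$ together with the coefficient $\mathdutchcal{b}^{\mathrm{el}}_\mu(q)$ on the $q$-Whittaker side. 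None of these follow formally from a $\mathbb{Z}/2$-quotient heuristic; in particular, the treatment of diagonal entries (whether they must vanish, and how they contribute to $|\rho|/2$) is precisely the kind of place where the classical Burge/RSK symmetric theory requires a separate construction rather than a restriction of the Cauchy case, and the same is true here. Two smaller points: your generating-function reduction in $z$ is harmless but gains nothing beyond the $z=1$ sanity check, since the coefficient-of-$z^k$ statement is the theorem itself; and your argument, even if completed, would only cover alpha specializations, so you would still need the stability step to reach the statement as given for a general specialization $a$.
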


    In \cite{IMS_skew_RSK}, \cref{thm:qW_and_Schur_1,thm:qW_and_Schur_2} were proven bijectively only in the case where $a$ and $b$ are alpha specializations. Nevertheless, as pointed out in \cite[Remark 10.8 and Remark 10.14]{IMS_skew_RSK} the same identities hold for general specializations following simple stability considerations such as \eqref{eq:stability_qW}.

\subsection{$q$-Whittaker measure} \label{subs:qW_measure}
    The \emph{$q$-Whittaker measure} \cite{BorodinCorwin2011Macdonald} is the probability measure on the set of partitions given by  
    \begin{equation}
    \label{bqWM}
        \mathbb{W}^{(q)}_{a;b} (\mu) = \frac{1}{\Pi(a;b)} \mathdutchcal{b}_\mu(q) \mathscr{P}_\mu (a;q)\mathscr{P}_\mu (b;q),
    \end{equation}
    where $a, b$ are ($q$-Whittaker) positive specializations such that the function $H(a,b)$ is numerically convergent. In case $a$ and $b$ are $q$-beta specializations $\widehat{a}, \widehat{b}$ of parameters $(a_1,\dots ,a_n)$, $(b_1,\dots ,b_t)$, the $q$-Whittaker polynomials $\mathscr{P}_{\mu}(\widehat{a};q), \mathscr{P}_{\mu}(\widehat{b};q)$ turn into the \emph{dual} Hall-Littlewood polynomials labeled by the transposed partitions $\mu'$ and the measure $\mathbb{W}^{(q)}_{\widehat{a};\widehat{b}}$ turns into the Hall-Littlewood measure \cite[Chapter VI eq. (5.1)]{Macdonald1995}. This is another important measure in integrable probability and it is related to KPZ models as we will discuss below; for more on the subject see \cite{vuletic2009plane,BorodinBufetovWheeler2016,BufetovMatveev2017,barraquand2018,BufetovPetrovYB2017}. In this paper we will not introduce a different notation for the Hall-Littlewood measure and we will consider it as a particular specialization of the $q$-Whittaker measure.

\subsection{Half space $q$-Whittaker measure} \label{subs:hs_qW_measure}
    Introducing the function
    \begin{equation}
        \mathscr{E}_\mu(\gamma;q) = \sum_{\eta} \mathdutchcal{b}_\eta^{\mathrm{el}}(q) \mathscr{Q}_{\mu / \eta}(\gamma;q),
    \end{equation}
    we define, as in \cite{barraquand_half_space_mac}, the \emph{half space $q$-Whittaker measure}
    \begin{equation}
    \label{hqWM}
        \mathbb{HW}^{(q)}_{a;\gamma} (\mu) = \frac{1}{\widetilde{\Pi}(a) \Pi(a;\gamma)} \mathscr{E}_\mu (\gamma,q) \mathscr{P}_\mu (a,q).
    \end{equation}
    Here $a$ and $\gamma$ are $q$-Whittaker positive specializations such that functions $\widetilde{H}(a)$ and $H(a,\gamma)$ are numerically convergent. Combining Cauchy identities \eqref{eq:skew_CI} and \eqref{eq:macdonald_LI} one can verify that the normalization constant in the right hand side of \eqref{hqWM} is correct. 
    
    We will mostly be interested in the case when $\gamma$ is a single variable alpha specialization. In such situation, the half space $q$-Whittaker measure possesses a symmetry that allows to distribute the parameter $\gamma$ of the function $\mathscr{E}$ on the specialization of the $\mathscr{P}$ function.

    \begin{proposition}[\cite{barraquand_half_space_mac} Proposition 2.6] \label{prop:symmmetry_half_space_qW}
        Let $\boldsymbol{a}$ be a $q$-Whittaker positive specialization and $\gamma$ be a single variable alpha specialization. Consider a random partition $\mu$ distributed according to $\mathbb{HW}^{(q)}_{\boldsymbol{a};\gamma}$ and another random partition $\lambda$ distributed according to $\mathbb{HW}^{(q)}_{\boldsymbol{a}';0}$, where $\boldsymbol{a}'=(a,\gamma)$. Then $\{\mu_1,\mu_3,\dots\}$ and $\{ \lambda_1,\lambda_3,\dots \}$ have the same distribution.
    \end{proposition}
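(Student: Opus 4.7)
My plan is to couple $\mu$ and $\lambda$ via the Cauchy branching rule for $q$-Whittaker polynomials, $\mathscr{P}_\lambda(\boldsymbol{a},\gamma;q)=\sum_\mu \mathscr{P}_\mu(\boldsymbol{a};q)\,\mathscr{P}_{\lambda/\mu}(\gamma;q)$. Writing $\boldsymbol{a}'=(\boldsymbol{a},\gamma)$, I would consider the joint law
\[
\mathbb{P}(\mu,\lambda) \;=\; \frac{\mathdutchcal{b}^{\mathrm{el}}_\lambda(q)\,\mathscr{P}_\mu(\boldsymbol{a};q)\,\mathscr{P}_{\lambda/\mu}(\gamma;q)}{\widetilde{\Pi}(\boldsymbol{a}')},
\]
supported on pairs with $\mu\prec\lambda$ and $\lambda'$ even. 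By construction the $\lambda$-marginal is $\mathbb{HW}^{(q)}_{\boldsymbol{a}';0}$.

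The key combinatorial observation is a pinching argument: since $\lambda'$ even forces $\lambda_{2i-1}=\lambda_{2i}$ and $\mu\prec\lambda$ gives $\lambda_{2i-1}\ge\mu_{2i-1}\ge\lambda_{2i}$, necessarily $\mu_{2i-1}=\lambda_{2i-1}$ for every $i\ge 1$. Thus under this coupling the odd-indexed parts of $\mu$ and $\lambda$ agree almost surely, and it suffices to show that the $\mu$-marginal of $\mathbb{P}$ equals $\mathbb{HW}^{(q)}_{\boldsymbol{a};\gamma}$. The same pinching shows that, for each fixed $\mu$, there is a unique compatible $\lambda$, namely $\lambda_{2i-1}=\lambda_{2i}=\mu_{2i-1}$, so the sum over $\lambda$ in the marginal collapses to a single term. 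A dual pinching applied to $\eta\prec\mu$ with $\eta'$ even singles out the unique $\eta$ contributing to $\mathscr{E}_\mu(\gamma;q)=\sum_\eta\mathdutchcal{b}^{\mathrm{el}}_\eta(q)\mathscr{Q}_{\mu/\eta}(\gamma;q)$, namely $\eta_{2i-1}=\eta_{2i}=\mu_{2i}$.

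Plugging the explicit product expressions for $\mathscr{P}_{\lambda/\mu}(\gamma;q)$ and $\mathscr{Q}_{\mu/\eta}(\gamma;q)$ into both sides and collecting $q$-Pochhammer factors, each collapses to the common expression
\[
\gamma^{\sum_k(\mu_{2k-1}-\mu_{2k})}\prod_{k\ge 1}\frac{1}{(q;q)_{\mu_{2k-1}-\mu_{2k}}\,(q;q)_{\mu_{2k}-\mu_{2k+1}}},
\]
establishing the identity $\mathdutchcal{b}^{\mathrm{el}}_\lambda(q)\mathscr{P}_{\lambda/\mu}(\gamma;q)=\mathscr{E}_\mu(\gamma;q)$. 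Since $\gamma$ is a one-variable alpha specialization, the product formula for $\widetilde{\Pi}$ gives the empty product $\widetilde{\Pi}(\gamma)=1$, so $\widetilde{\Pi}(\boldsymbol{a}')=\widetilde{\Pi}(\boldsymbol{a})\Pi(\boldsymbol{a};\gamma)$. Combined with the identity above, the $\mu$-marginal becomes $\mathbb{P}(\mu)=\mathscr{P}_\mu(\boldsymbol{a};q)\mathscr{E}_\mu(\gamma;q)/[\widetilde{\Pi}(\boldsymbol{a})\Pi(\boldsymbol{a};\gamma)]=\mathbb{HW}^{(q)}_{\boldsymbol{a};\gamma}(\mu)$, completing the proof. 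The main obstacle is the bookkeeping identification $\mathdutchcal{b}^{\mathrm{el}}_\lambda(q)\mathscr{P}_{\lambda/\mu}(\gamma;q)=\mathscr{E}_\mu(\gamma;q)$, but once $\lambda$ and $\eta$ are both pinned down as functions of $\mu$, it is a routine $q$-binomial and $q$-Pochhammer manipulation.
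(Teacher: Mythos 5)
Your argument is correct and complete: the joint law is well defined (its $\lambda$-marginal is $\mathbb{HW}^{(q)}_{\boldsymbol{a}';0}$ by the branching rule), the pinching forced by $\lambda'$ even together with $\mu\prec\lambda$ does identify the odd-indexed parts almost surely, and the collapse of both $\sum_\lambda$ and $\sum_\eta$ to single terms yields the identity $\mathdutchcal{b}^{\mathrm{el}}_\lambda(q)\,\mathscr{P}_{\lambda/\mu}(\gamma;q)=\mathscr{E}_\mu(\gamma;q)$, whose common value I have checked equals $\gamma^{\sum_k(\mu_{2k-1}-\mu_{2k})}\mathdutchcal{b}_\mu(q)$ as you state. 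The paper gives no proof of this proposition — it only cites \cite{barraquand_half_space_mac} — and your coupling via the branching rule together with the interlacing/even-column pinching is essentially the argument given there, so there is nothing further to compare.
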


    We will be interested in the marginal $\mu_1$ of the half space $q$-Whittaker measure as it relates to solvable KPZ models; see \cref{sec:KPZqW}. For this purpose the symmetry reported in \cref{prop:symmmetry_half_space_qW} is very convenient as it reduces the study of the measure with specializations $\boldsymbol{a}$ and $\gamma$ to the simpler case when $\gamma=0$. 

\section{Models in the KPZ class and $q$-Whittaker measures }
\label{sec:KPZqW}
In this section, we explain the relation between KPZ models and $q$-Whittaker measure, both in the whole space case and in the half space. 

\subsection{$q$-PushTASEP} \label{subs:qPushTASEP}
    The \emph{geometric $q$-PushTASEP} is a stochastic interacting particle system introduced in \cite{MatveevPetrov2014}. In order to describe the random jumps of particles, we recall the \emph{$q$-deformed beta binomial distribution}
    \begin{equation}
        \boldsymbol{\varphi}_{q,\xi,\eta}( s | m) = \xi^s \frac{(\eta/\xi;q)_s (\xi;q)_{m-s}}{(\eta;q)_m} \frac{(q;q)_m}{(q;q)_s (q;q)_{m-s}},
    \end{equation}
    which is a probability distribution on $s \in \{0,\dots,m\}$, for any $m\in \mathbb{Z}_{\ge 0}$, whenever parameters $q,\xi,\eta$ make the above expression nonnegative. Two such choices are given by $q,\xi \in [0,1), \eta \in [0,\xi]$ and $q>1,\xi \in q^{\mathbb{Z}_{\le 0}},\eta=0$. Setting $\eta=0, m=\infty$ and taking $q,\xi \in [0,1)$, we obtain the \emph{$q$-Geometric} distribution of parameter $\xi$,
    \begin{equation}
        \boldsymbol{\varphi}_{q,\xi,0}( s | \infty) = \xi^s \frac{(\xi;q)_{\infty}}{(q;q)_{s}}, \qquad s\in \mathbb{Z}_{\ge 0},
    \end{equation}
    which we denote by $q$-Geo$(\xi)$.
    
    \medskip
    
    In the $q$-PushTASEP, a finite number $N$ of particles occupy, following the exclusion rule, sites of the lattice $\mathbb{Z}$ and their position is recorded in an array $\mathsf{X}(T) = (\mathsf{x}_1(T) < \cdots < \mathsf{x}_N(T) )$, where $T \in \mathbb{Z}_{\ge 0}$ denotes the time. During the time step $T \to T+1$, the array $\mathsf{X}(T)$ is updated sequentially from left to right and
    \begin{equation} \label{eq:q_PushTASEP}
        \mathsf{x}_k(T+1) = \mathsf{x}_k(T) + V_{k,T} + W_{k,T},
        \qquad 
        \text{for } k=1,\dots,N,
    \end{equation}
    where $V_{k,T} \sim q$-$\mathrm{Geo}(a_k b_{T+1})$ and 
    \begin{equation} \label{eq:W_kt}
        W_{k,T} \sim \boldsymbol{\varphi}_{q^{-1}, q^{\mathrm{gap}_k(T)},0} (\bullet \, | \, \mathsf{x}_{k-1}(T+1) - \mathsf{x}_{k-1}(T) ).
    \end{equation}
    In the previous expression we assume $\mathrm{gap}_j(T) = \mathsf{x}_j(T) - \mathsf{x}_{j-1}(T) -1$ and we set, by convention $\mathsf{x}_0(T)=-\infty$. In \eqref{eq:q_PushTASEP} we see that the random movement of a particle splits into two independent contributions: $W_{k,T}$ consists in a random pushing that the particle $\mathsf{x}_{k-1}$ impresses on $\mathsf{x}_k$, while $V_{k,T}$ is an independent jump of $\mathsf{x}_k$.
    
    Markov rules \eqref{eq:q_PushTASEP}, although explicit are far from intuitive or natural. An interesting simplification can be obtained considering a \emph{continuous time} limit, setting $b_i=\varepsilon$, $T=\tau/\varepsilon$ and $\varepsilon\to 0$. This procedure yields the continuous time $q$-PushTASEP first defined in \cite{BorodinPetrov2013NN}; for a visualization see \cref{fig:q_PushTASEP}.

    \medskip
    
    The $q$-PushTASEP is deeply connected to marginals of the $q$-Whittaker measure, as recalled in the next proposition.
    
    \begin{proposition}[\cite{MatveevPetrov2014}, Section 6.3] \label{prop:qPushTASEP_qW}
        Consider $\boldsymbol{a},\boldsymbol{b}$ alpha specializations of parameters respectively $(a_1,\dots,a_N) \in (0,1)^N$ and $(b_1,\dots,b_T) \in (0,1)^T$. Let $\mu \sim \mathbb{W}_{\boldsymbol{a};\boldsymbol{b}}^{(q)}$ and let $\mathsf{X}(T)$ be a $q$-PushTASEP under initial conditions $\mathsf{x}_k(0)=k$ for $k=1,\dots,N$. Then, we have the following equality in distribution
        \begin{equation}
            \mathsf{x}_N(T) \stackrel{\mathcal{D}}{=} \mu_1 + N.
        \end{equation}
    \end{proposition}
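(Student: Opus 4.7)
My plan is to lift the statement to the level of a Markov dynamics on triangular arrays of interlacing partitions whose top row projects to the $q$-PushTASEP, following the scheme of Borodin-Corwin and Matveev-Petrov for $q$-Whittaker processes. Consider the $q$-Whittaker process supported on sequences $\emptyset = \mu^{(0)} \prec \mu^{(1)} \prec \cdots \prec \mu^{(N)}$ with weight
\begin{equation*}
\mathbb{W}^{(q),\mathrm{proc}}_{a;b}(\mu^{(1)},\ldots,\mu^{(N)}) = \frac{1}{\Pi(a;b)} \prod_{k=1}^{N} \mathscr{P}_{\mu^{(k)}/\mu^{(k-1)}}(a_k;q) \cdot \mathscr{Q}_{\mu^{(N)}}(b;q).
\end{equation*}
Iterated application of the skew Cauchy identity \eqref{eq:skew_CI} (with $\nu=\varkappa=\emptyset$) shows that summing out $\mu^{(1)},\ldots,\mu^{(N-1)}$ yields $\mathbb{W}^{(q)}_{a;b}(\mu^{(N)})$, so it suffices to construct a Markov dynamics on arrays whose top first part reproduces $\mathsf{x}_N(T)-N$ in law.

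Next, I would construct a time-inhomogeneous Markov chain on such arrays, starting from the empty array at $T=0$. At each step $T \to T+1$ one sweeps sequentially from $k=1$ up to $k=N$ and updates $\mu^{(k)} \to \nu^{(k)}$ using a transition kernel built from $\mathscr{Q}_{\nu^{(k)}/\mu^{(k)}}(b_{T+1};q)$ together with an intertwining factor involving $\mathscr{P}_{\nu^{(k)}/\nu^{(k-1)}}(a_k;q)/\mathscr{P}_{\mu^{(k)}/\nu^{(k-1)}}(a_k;q)$. The algebraic content of the construction is that one application of this sweep maps the array law from $\mathbb{W}^{(q),\mathrm{proc}}_{a;(b_1,\ldots,b_T)}$ to $\mathbb{W}^{(q),\mathrm{proc}}_{a;(b_1,\ldots,b_{T+1})}$, which follows by an inductive application of \eqref{eq:skew_CI} at each level of the array.

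Setting $\mathsf{x}_k(T) := \mu^{(k)}_1(T) + k$, the step initial condition $\mathsf{x}_k(0)=k$ is matched at $T=0$. Using the explicit one-variable formula $\mathscr{P}_{\mu/\eta}(z;q) = \mathbf{1}_{\eta \prec \mu} \prod_i z^{\mu_i - \eta_i} \Qbinomial{\mu_i-\mu_{i+1}}{\mu_i-\eta_i}{q}$, one expects the first-part increment $\mu^{(k)}_1(T+1)-\mu^{(k)}_1(T)$ to decompose into two independent pieces: a $q$-geometric random variable $V_{k,T} \sim q$-$\mathrm{Geo}(a_k b_{T+1})$ coming from free growth above $\nu^{(k-1)}_1 = \mathsf{x}_{k-1}(T+1)-(k-1)$, and a $q$-beta-binomial $W_{k,T}$ as in \eqref{eq:W_kt} implementing the interlacing constraint between $\mu^{(k)}$ and $\nu^{(k-1)}$, i.e.\ the pushing inherited from the left neighbor. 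This reproduces the $q$-PushTASEP rules \eqref{eq:q_PushTASEP}, and combining with the first paragraph gives $\mathsf{x}_N(T) \stackrel{\mathcal{D}}{=} \mu_1+N$ with $\mu \sim \mathbb{W}^{(q)}_{a;b}$.

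The main obstacle is the last step: explicitly projecting the array-level kernel onto first parts and disentangling the $V_{k,T}$ and $W_{k,T}$ contributions. Concretely, one has to sum out the subleading coordinates of $\mu^{(k)}$ against the skew $\mathscr{Q}$ weights and identify the resulting ratio of $q$-Pochhammer symbols with the beta-binomial kernel $\boldsymbol{\varphi}_{q^{-1}, q^{\mathrm{gap}_k(T)},0}(\,\cdot\,|\,\mathsf{x}_{k-1}(T+1)-\mathsf{x}_{k-1}(T))$; the splitting into the free $V$-piece and the pushing $W$-piece reflects the split between the unconstrained growth above $\nu^{(k-1)}_1$ and the constrained growth inside the interlacing region. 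The remaining verifications are essentially bookkeeping of $q$-binomial coefficients via \eqref{eq:skew_CI}.
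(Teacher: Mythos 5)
The paper offers no proof of this proposition: it is imported verbatim from \cite{MatveevPetrov2014}, Section 6.3, and your plan is precisely the strategy of that reference (and of the Borodin--Corwin/Borodin--Petrov framework it builds on): construct a Markov dynamics on interlacing arrays that preserves the $q$-Whittaker process, then show that the first parts form an autonomous chain with the $q$-PushTASEP transition rule. So there is nothing internal to the paper to compare against, and your route is the standard one.

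Two remarks on the sketch itself. First, marginalizing $\mu^{(1)},\dots,\mu^{(N-1)}$ out of the process weight uses the branching rule for $\mathscr{P}$ (its defining recursion), not the skew Cauchy identity; \eqref{eq:skew_CI} enters at the next stage, to verify that one sweep of the update maps the process with parameters $(b_1,\dots,b_T)$ to the one with $(b_1,\dots,b_{T+1})$ and to produce the normalization $\Pi(a;b)$. Second, and more importantly, the step you defer is not bookkeeping but the entire content of the proposition: for a generic intertwining (``push-block''-type) dynamics the top coordinates $(\mu^{(1)}_1,\dots,\mu^{(N)}_1)$ are \emph{not} Markov on their own. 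One needs the specific $q$-randomized column insertion of \cite{MatveevPetrov2014}, whose defining feature is that the conditional law of $\nu^{(k)}_1$ given $(\mu^{(k)},\nu^{(k-1)},\mu^{(k-1)})$ depends only on $\mu^{(k)}_1$, $\mu^{(k-1)}_1$ and $\nu^{(k-1)}_1$; it is exactly this closure property that lets the increment split into the independent pieces $V_{k,T}\sim q\text{-}\mathrm{Geo}(a_k b_{T+1})$ and $W_{k,T}$ as in \eqref{eq:W_kt}, reproducing \eqref{eq:q_PushTASEP}. Until you exhibit that kernel and verify the closure, the argument remains a plan rather than a proof; with it, the remaining identification of $q$-Pochhammer ratios with the $q^{-1}$-beta-binomial is indeed routine.
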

    
    \begin{figure}
        \centering
        \includegraphics{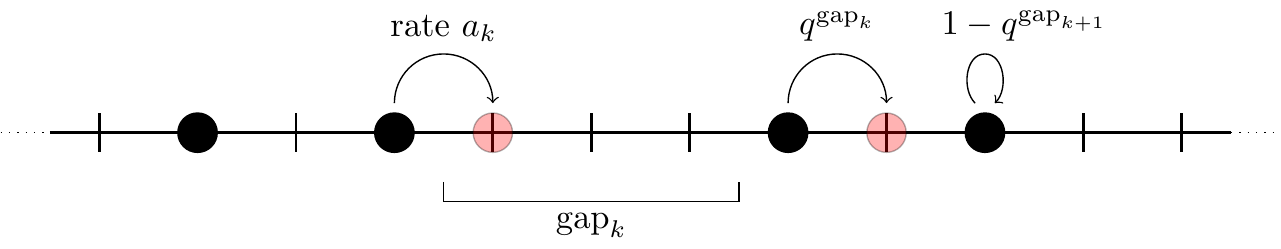}
        \caption{Visualization of the dynamics in the continuous time $q$-PushTASEP. Each particle $\mathsf{x}_k$ possesses an exponential clock with parameter $a_k$ and once the clock rings at time $\tau$ it performs a jump to its right $\mathsf{x}_k(\tau_+) = \mathsf{x}_k(\tau)+1$. The movement of the $k$-th particle triggers an instantaneous pushing mechanism and $\mathsf{x}_{j}(\tau_+) = \mathsf{x}_{j}(\tau) + 1$ with probability
        $q^{\mathrm{gap}_j(\tau)} \mathbf{1}_{\mathsf{x}_{j-1}(\tau_+) = \mathsf{x}_{j-1}(\tau) + 1}$, for all $j>k$.}
        \label{fig:q_PushTASEP}
    \end{figure}
    
\subsection{$q$-PushTASEP with particle creation} \label{subs:hs_qPushTASEP}

The $q$-PushTASEP admits a solvable variant where the stochastic dynamics 
do not preserve the number of particles, which are added to the system at each time step. This model was introduced in \cite{barraquand_half_space_mac} and we will refer to it as \emph{$q$-PushTASEP with particle creation}. At any time $T \in \mathbb{Z}_{\ge 0}$, the system consists of exactly $T+1$ particles located in the lattice $\mathbb{Z}_{\ge 0}$ and whose positions are
\begin{equation}
   0 = \mathsf{x}_0^{\mathrm{hs}}(T) < \mathsf{x}_1^{\mathrm{hs}}(T) < \cdots < \mathsf{x}_T^{\mathrm{hs}}(T) < \infty,
\end{equation}
recorded in the array $\mathsf{X}^{\mathrm{hs}}(T)$. During the time step from $T$ to  $T+1$, the configuration gets updated sequentially from the particle with label 1 to the right following the $q$-PushTASEP rules described by \eqref{eq:q_PushTASEP}, where we assume $V_{k,T} \sim q\text{-}\mathrm{Geo}(a_k a_{T+1})$. Once the position of the rightmost particle $\mathsf{x}_T^{\mathrm{hs}}(T+1)$ has been determined a new particle is added to its right and $\mathsf{x}_{T+1}^{\mathrm{hs}}(T+1) = \mathsf{x}_T^{\mathrm{hs}}(T+1) +1 + \widetilde{V}_{T+1}$, where $\widetilde{V}_{T+1} \sim q$-$\mathrm{Geo}(\gamma a_{T+1})$ is an independent increment. Notice that the $0$-th particle never moves. Here we have assumed that $a_i,\gamma$ are positive numbers such that $a_ia_j,\gamma a_i < 1$ for all $1 \le i<j$. By construction, we will only consider \emph{empty initial conditions}, that is $\mathsf{X}^{\mathrm{hs}}(0) = \{ \mathsf{x}_0(0)=0 \}$. 

\medskip 

The salient feature of this model is the fact that during the evolution, new particles are added at the right of the system with spacing modulated by the parameter $\gamma$. Such property, makes the $q$-pushTASEP with particle creation a regularization of the KPZ equation in half space, although to see this several non trivial scalings are necessary. As for the ``full space" version of the $q$-PushTASEP, this is, admittedly not necessarily the most intuitive model to consider. More canonical models of growth with boundary effects have been considered in the past. These include the Asymmetric Simple Exclusion Process \cite{barraquand2018,Tracy_Widom_half_space_asep,tracy_widom_open_asep_delta_bose} or other discrete polymer models \cite{OSZ2012}, but unfortunately their solvability, for generic values of boundary strength, does not lead directly to manageable or rigorous exact formulas. 

\medskip

The next proposition provides the relation between the $q$-PushTASEP with particle creation and the half space $q$-Whittaker measure.

\begin{proposition}[\cite{barraquand_half_space_mac} Proposition 4.24] \label{prop:matching_hs_qPushTASEP_qW}
    Consider parameters $a_1,\dots,a_N,\gamma \in (0,1)$ and let $\boldsymbol{a}$  be the alpha specialization with parameters $(a_1,\dots,a_N)$. Let $\mu \sim \mathbb{HW}^{(q)}_{\boldsymbol{a};\gamma}$ and let $\mathsf{X}^{\mathrm{hs}}(T)$ be the $q$-PushTASEP with particle creation and empty initial conditions with parameters $\boldsymbol{a},\gamma$. Then, we have the following equality in distribution
    \begin{equation}
        \mathsf{x}^{\mathrm{hs}}_N(N) = \mu_1 + N.
    \end{equation}
\end{proposition}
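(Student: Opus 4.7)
The plan is to proceed by induction on $N$, constructing a Markov coupling between the half space $q$-Whittaker measure and the $q$-PushTASEP with particle creation. The base case $N=0$ is immediate: $\mu=\varnothing$ almost surely and $\mathsf{x}^{\mathrm{hs}}_0(0)=0$. For the inductive step I would construct a Markov kernel $K_{a_{N+1}}$ on partitions that transports $\mathbb{HW}^{(q)}_{(a_1,\dots,a_N);\gamma}$ into $\mathbb{HW}^{(q)}_{(a_1,\dots,a_{N+1});\gamma}$ and then show that under $K_{a_{N+1}}$ the first row $\mu_1$ is itself Markovian with transition law matching one step of the $q$-PushTASEP with particle creation. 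A natural candidate, forced by the skew Cauchy identity \eqref{eq:skew_CI} combined with the Littlewood identity \eqref{eq:macdonald_LI}, is
\begin{equation*}
K_{a_{N+1}}(\eta \to \mu) \;=\; \mathscr{P}_{\mu/\eta}(a_{N+1};q)\cdot \frac{\mathscr{E}_\mu(\gamma;q)}{\mathscr{E}_\eta(\gamma;q)} \cdot \frac{1}{\Pi(a_{N+1};\gamma)\,R_{N,N+1}(a)},
\end{equation*}
where $R_{N,N+1}(a)$ is the multiplicative increment $\widetilde{\Pi}(a_1,\dots,a_{N+1})/\widetilde{\Pi}(a_1,\dots,a_N)$. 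That this is a probability kernel transporting the measure forward is a direct consequence of \eqref{eq:skew_CI} together with \eqref{eq:macdonald_LI}.

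The crucial step is the Markov projection on the first row. Using the explicit one-variable formula
\begin{equation*}
\mathscr{P}_{\mu/\eta}(z;q) = \mathbf{1}_{\eta \prec \mu}\prod_{i \ge 1} z^{\mu_i-\eta_i} \Qbinomial{\mu_i-\mu_{i+1}}{\mu_i-\eta_i}{q},
\end{equation*}
one sums out $\mu_2,\mu_3,\ldots$ at fixed $\mu_1 = m$ and verifies that the resulting marginal for $\mu_1-\eta_1$ factorizes as a convolution of two independent contributions: a \emph{pushing} term coming from the $q$-binomial in the $(\mu_1,\mu_2)$ slot, which evaluates to the $q$-deformed beta binomial distribution $\boldsymbol{\varphi}_{q^{-1},q^{\mathrm{gap}_N(N)},0}$ of \eqref{eq:W_kt}, and an independent \emph{free jump} contribution from the unconstrained growth of the first row, which evaluates to a $q$-$\mathrm{Geo}(a_N a_{N+1})$ random variable matching $V_{N,N+1}$. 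The particle-creation rule at the rightmost site is handled by \cref{prop:symmmetry_half_space_qW}: the symmetry lets one redistribute $\gamma$ as an additional single-variable alpha specialization, so that the factor $\mathscr{E}_\mu(\gamma;q)/\mathscr{E}_\eta(\gamma;q)$ becomes an extra $\mathscr{P}_{\lambda/\mu}(\gamma;q)$ increment which, on its top row, produces an independent $q$-$\mathrm{Geo}(\gamma a_{N+1})$ contribution together with the $+1$ shift dictated by the particle labeling.

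I expect the principal obstacle to be precisely this Markov projection step, since projections of Markov chains are generically not Markov; here it succeeds only because of the multiplicative factorization of $\mathscr{P}_{\mu/\eta}(z;q)$ across rows in one variable. Concretely, one must verify an intertwining relation $K_{a_{N+1}}\Lambda = \Lambda P_{a_{N+1}}$, where $\Lambda$ is the conditional law of $(\eta_2,\eta_3,\dots)$ given $\eta_1$ under the half space $q$-Whittaker measure and $P_{a_{N+1}}$ is the $q$-PushTASEP transition kernel from \eqref{eq:q_PushTASEP}. This is the half space analogue of the Matveev-Petrov RSK-type construction underpinning \cref{prop:qPushTASEP_qW}, and once it is in place, iterating over $N$ with the empty configuration matched against the empty partition closes the induction and delivers the claimed equality in distribution.
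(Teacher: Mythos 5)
The paper does not actually prove this proposition: it is imported verbatim from \cite{barraquand_half_space_mac} (Proposition 4.24), whose proof constructs Markov dynamics on a whole interlacing array of partitions (a half-space Macdonald \emph{process}), not on a single partition. Your proposal is in the right general spirit — a Rogers--Pitman intertwining argument — but two of its concrete steps are genuinely broken.

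First, your candidate kernel $K_{a_{N+1}}(\eta\to\mu)\propto \mathscr{P}_{\mu/\eta}(a_{N+1};q)\,\mathscr{E}_\mu(\gamma;q)/\mathscr{E}_\eta(\gamma;q)$ does transport the measure forward (summing over $\eta$ uses only the branching rule for $\mathscr{P}$), but it is not a stochastic kernel for each fixed $\eta$. The relevant skew Cauchy--Littlewood identity gives
\begin{equation*}
\sum_{\mu}\mathscr{E}_\mu(\gamma;q)\,\mathscr{P}_{\mu/\eta}(a_{N+1};q)\;=\;\widetilde{\Pi}(a_{N+1})\,\Pi(a_{N+1};\gamma)\,\sum_{\nu}\mathscr{E}_\nu(\gamma;q)\,\mathscr{Q}_{\eta/\nu}(a_{N+1};q),
\end{equation*}
and the right-hand side is \emph{not} a constant multiple of $\mathscr{E}_\eta(\gamma;q)$; it is a ``down'' convolution of $\eta$. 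This is exactly the feature of the half-space setting that forces the dynamics in \cite{barraquand_half_space_mac} to be built from alternating up/down (reflection) steps rather than a single Doob-type transform, so your normalization claim ``a direct consequence of \eqref{eq:skew_CI} together with \eqref{eq:macdonald_LI}'' does not go through.

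Second, and more fundamentally, the Markov projection you describe is the wrong one. In the correspondence underlying both \cref{prop:qPushTASEP_qW} and the present proposition, the particle positions satisfy $\mathsf{x}^{\mathrm{hs}}_k = \mu^{(k)}_1+k$ where $\mu^{(1)}\prec\mu^{(2)}\prec\cdots$ are the partitions at the successive \emph{levels} of an interlacing array; the pushing variable $W_{k,T}$ in \eqref{eq:W_kt} depends on $\mathrm{gap}_k$ and on the jump of particle $k-1$, i.e.\ on data at level $k-1$ of the array. None of this is encoded in the rows $\mu_1,\mu_2$ of a single partition: the $q$-binomial $\Qbinomial{\mu_1-\mu_2}{\mu_1-\eta_1}{q}$ couples $\mu_1$ to $\mu_2$, not to $\mu^{(N-1)}_1$, and the law of $\mu_1-\eta_1$ after summing out $\mu_2,\mu_3,\dots$ depends on $\eta_2$ through the interlacing constraints, so the projection of $K_{a_{N+1}}$ onto $\mu_1$ alone is not Markov and does not reproduce the convolution of $V_{N,T}$ with $W_{N,T}$. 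The intertwining $K\Lambda=\Lambda P$ as you state it also does not typecheck: $P_{a_{N+1}}$ from \eqref{eq:q_PushTASEP} acts on the full particle configuration, while your $\Lambda$ conditions only on $\eta_1$. To repair the argument you would need to set up the dynamics on the full array (with a new level created at each step, which is where the $q$-$\mathrm{Geo}(\gamma a_{T+1})$ increment and the $+1$ shift genuinely come from), which is precisely the content of the cited proof.
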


\subsection{ASEP} \label{subs:ASEP}
    The asymmetric simple exclusion process is a canonical model of transport of particles in one dimension. It was introduced in \cite{Spitzer1970} and its stochastic integrable stucture has been investigated intensively in the last two decades \cite{schutz1997dualityASEP,tracy2008fredholm,TW_ASEP1,TW_ASEP2,ImamuraSasamoto2011current,BorodinCorwinSasamoto2012,BO2016_ASEP}. On the lattice $\mathbb{Z}$ at time $\tau \ge 0$ we consider a set of particles, with locations\footnote{Notice that in this case the particle ordering is \emph{opposite} to the one adopted for the $q$-PushTASEP in \cref{subs:qPushTASEP,subs:hs_qPushTASEP}.} $\mathsf{x}_1(\tau) > \mathsf{x}_2(\tau) > \cdots$. Each particle possesses two exponential clocks of rate respectively $q \in [0,1) $ and $1$. When the first clock rings the particle attempts a jump to its left neighboring site and when the second one rings it attempts a jump to its right. Jumps are successful provided the target location is empty; see \cref{fig:ASEP}. Particle configurations we consider always possess a rightmost particle, whose location is $\mathsf{x}_1$. For this we can define the integrated current through a location $x$ as 
    \begin{equation}
        \mathsf{J}(x,\tau) = \sum_{k \ge 1} \mathbf{1}_{\mathsf{x}_k(\tau) > x} - \sum_{k \ge 1} \mathbf{1}_{\mathsf{x}_k(0) > x}.
    \end{equation}
    The ASEP is deeply related to the Hall-Littlewood measure. This was essentially discovered in \cite{BorodinBufetovWheeler2016}, where authors established a more general correspondence between a marginal of the Hall-Littlewood measure and the stochastic six vertex model. The convergence of the stochastic six vertex model to the ASEP was proven in \cite{Aggarwal_6v_to_ASEP}.
    
    \begin{proposition} \label{prop:ASEP_matching_mu_1}
        Consider the $q$-Whittaker measure $\mathbb{W}_{\widehat{a};\widehat{b}}^{(q)}$ where $\widehat{a},\widehat{b}$ are $q$-beta specializations of parameters $(a_1,\dots,a_N)$ and $(b_1,\dots,b_T)$. Let $\mu \sim \mathbb{W}_{\widehat{a};\widehat{b}}^{(q)}$ and consider the scaling 
        \begin{equation} \label{eq:ASEP_scaling}
            T= \frac{\tau}{\varepsilon},
            \qquad
            N= \frac{\tau}{\varepsilon} + x,
            \qquad
            a_i=b_j=1-\frac{(1-q)\varepsilon}{2},
            \qquad
            \mu_1 = N - \mathsf{J}^{(\varepsilon)}(x,\tau).
        \end{equation}
        Then, as $\varepsilon \to 0$, the random variable $\mathsf{J}^{(\varepsilon)}(x,\tau)$ converges in law to the integrated current $\mathsf{J}(x,\tau)$ of the ASEP with initial conditions $\mathsf{x}_k(0) = -k$ for $k=1,2\dots$.
    \end{proposition}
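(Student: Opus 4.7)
The plan is to chain together three known correspondences: first the identification between the $q$-beta specialized $q$-Whittaker measure and the Hall--Littlewood measure, then the Borodin--Bufetov--Wheeler matching between Hall--Littlewood marginals and the stochastic six-vertex model (S6V), and finally Aggarwal's convergence of the S6V to the ASEP.

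First, I would use the identity \eqref{eq:Hall_Littlewood}, namely $\mathscr{H}_{\mu'/\eta'}(x;q) = \mathscr{Q}_{\mu/\eta}(\widehat{x};q)$, to reformulate $\mathbb{W}_{\widehat{a};\widehat{b}}^{(q)}$. Viewed as a probability measure on $\lambda=\mu'$, it becomes a genuine Hall--Littlewood measure with specializations $(a_1,\dots,a_N)$ and $(b_1,\dots,b_T)$ and Hall--Littlewood parameter $q$. Under this transposition, the observable $\mu_1$ becomes the number of positive parts $\ell(\lambda)$ of the Hall--Littlewood distributed partition $\lambda$.

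Next, I would invoke the main construction of Borodin--Bufetov--Wheeler \cite{BorodinBufetovWheeler2016}, which identifies marginals of the Hall--Littlewood measure with explicit statistics of a stochastic higher-spin vertex model specialized to the S6V. Under this identification, the parameters $a_i$ and $b_j$ play the role of row and column weights of the vertex model, and $\ell(\lambda)$ corresponds to the height function of the S6V at lattice site $(N,T)$ with domain-wall boundary conditions. Then I would apply Aggarwal's theorem: under the scaling $a_i=b_j=1-(1-q)\varepsilon/2$ and $T=\tau/\varepsilon$, the height function of the S6V at the site $(\tau/\varepsilon+x,\tau/\varepsilon)$ converges in distribution to the integrated current $\mathsf{J}(x,\tau)$ of the ASEP with jump rates $1$ to the right and $q$ to the left, started from the step initial configuration $\mathsf{x}_k(0)=-k$, $k\ge 1$. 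Setting $\mathsf{J}^{(\varepsilon)}(x,\tau):=N-\mu_1$ and tracking the identifications through the three steps produces the claimed distributional limit.

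The main obstacle is bookkeeping: aligning the S6V height function convention (which corner is the origin, which axis counts crossings) with the statistic $\ell(\lambda)$ coming out of the BBW correspondence, and with the ASEP current convention via step initial conditions. In particular one must check that the shift $N=\tau/\varepsilon+x$ in the lattice coordinate matches the spatial location $x$ of the current, and that the domain-wall boundary condition of the S6V reduces precisely to the step initial data of the ASEP. Once this dictionary is fixed, each of the three ingredients is off-the-shelf; the work lies in matching the reference frames so that $\mathsf{J}^{(\varepsilon)}(x,\tau)$ is exactly the S6V height function at the point prescribed by Aggarwal's limit.
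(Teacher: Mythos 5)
Your proposal follows exactly the route the paper takes: the paper states this proposition without a formal proof, justifying it by the same chain of identifications — the $q$-beta specialization turning $\mathbb{W}^{(q)}_{\widehat{a};\widehat{b}}$ into the Hall--Littlewood measure on $\mu'$, the Borodin--Bufetov--Wheeler matching of the marginal $\mu_1=\ell(\mu')$ with the stochastic six-vertex height function, and Aggarwal's convergence of the six-vertex model to the ASEP under precisely the scaling \eqref{eq:ASEP_scaling}. Your outline, including the bookkeeping caveats about aligning height-function and current conventions, is correct and essentially identical to the paper's argument.
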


    \begin{figure}
        \centering
        \includegraphics{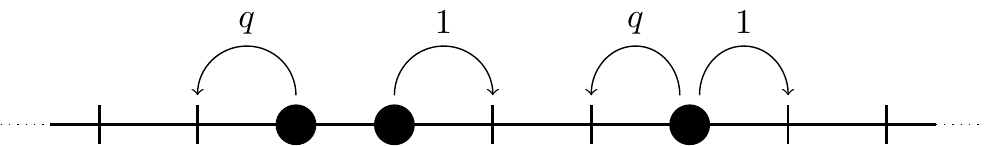}
        \caption{Visualization of the dynamics in the ASEP.}
        \label{fig:ASEP}
    \end{figure}
    
    As hinted earlier, the ASEP can be seen as a particular case of a more general stochastic solvable model, i.e. the stochastic six vertex model \cite{GwaSpohn1992,BCG6V}. In fact, combinatorial techniques allow to transform the stochastic six vertex model into many more solvable systems, including the $q$-PushTASEP; see \cite{BorodinPetrov2016inhom,CorwinPetrov2015,BufetovMucciconiPetrov2018}. Since the stochastic six vertex model is related to the Hall-Littlewood measure \cite{BorodinBufetovWheeler2016}, correspondences as those of \cref{prop:qPushTASEP_qW} or \cref{prop:ASEP_matching_mu_1}, leading to results we will elaborate in the following sections, could be given for a full hierarchy of solvable models. To keep our argument concise we will not report such results here.
    
\subsection{Log gamma polymer} \label{subs:log_gamma}
    The \emph{Log Gamma polymer} model is a discrete solvable random polymer model introduced by Seppal{\"a}inen in \cite{Seppalainen2012}.
    We recall that an \emph{inverse gamma random variable of parameter $\alpha$}, which we will denote as $\mathrm{Gamma}^{-1}(\alpha)$, has probability density function
    \begin{equation}
        p_{\mathrm{Gamma}^{-1}(\alpha)}(x) = \frac{x^{-\alpha-1} e^{-1/x}}{\Gamma(\alpha)}.
    \end{equation}
    On each site $(i,j)$ of the planar lattice $\Lambda=\mathbb{Z}_{\ge 0} \times \mathbb{Z}_{\ge 0}$ we place a random variable $w_{i,j} \sim \mathrm{Gamma}^{-1}(A_i + B_j)$, where $A_i,B_j$ are positive real numbers.
    For a fixed point $(T,N)$, we consider the set $\Pi_{(1,1)\to(T,N)}$ of up-right paths $\varpi = (\varpi_1,\dots,\varpi_{N+T-1})$ on $\Lambda$ with endpoints $\varpi_1=(1,1)$ and $\varpi_{N+T-1} = (T,N)$. Here by up-right path we mean that the increments $\varpi_{i+1} - \varpi_i$ are either $(1,0)$ or $(0,1)$ for any $i$; see \cref{fig:Log_Gamma_polymer}, left panel. To any path $\varpi$ we associate the random weight
    \begin{equation} \label{eq:weight_path}
        w(\varpi) = \prod_{\ell=1}^{N+T-1} w_{\varpi_{\ell}}
    \end{equation}
    and we define the partition function
    \begin{equation} \label{eq:Z(N,T)}
        Z(N,T) = \sum_{\varpi \in \Pi_{(1,1)\to(T,N)}} w(\varpi).
    \end{equation}
    
    \begin{remark}
        Let us explain the origin of the name ``Log Gamma" polymer.
        Canonically the partition function of a directed polymer with fixed endpoints at $(1,1)$ and $(T,N)$, has the form
    \begin{equation}
        Z(N,T) = \sum_{\varpi \in \Pi_{(1,1) \to (T,N)}} e^{\beta E(\varpi)},
    \end{equation}
    where $E$ is the energy of the polymer $\varpi$ and $\beta$ is the inverse temperature of the system. In case the energy is taken as a sum of independent terms $E(\varpi) = \sum_{\ell} E_{\varpi_\ell}$ and $E_{(i,j)} \sim - \log \mathrm{Gamma}(A_i+B_j)$ and the parameter $\beta=1$ we recover the partition function \eqref{eq:Z(N,T)}.  
    \end{remark}

\begin{figure}[ht]
    \centering
    \subfloat[]{ 
    \includegraphics[width=.35\linewidth]{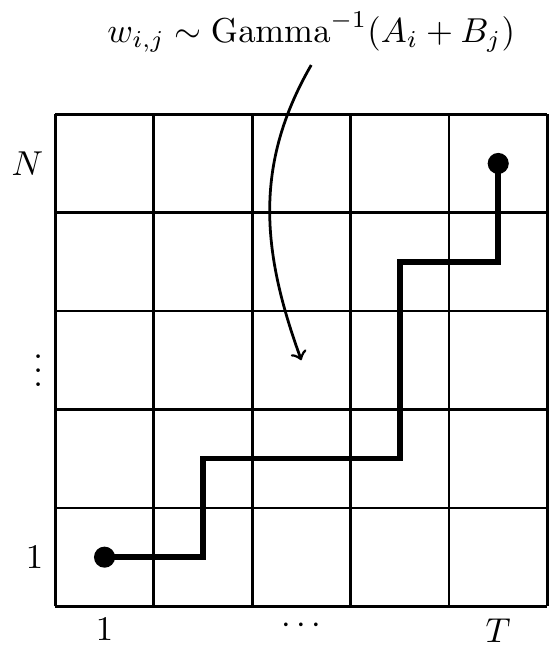} }%
    \hspace{.5cm}
    \subfloat[]{\includegraphics[width=.58\linewidth]{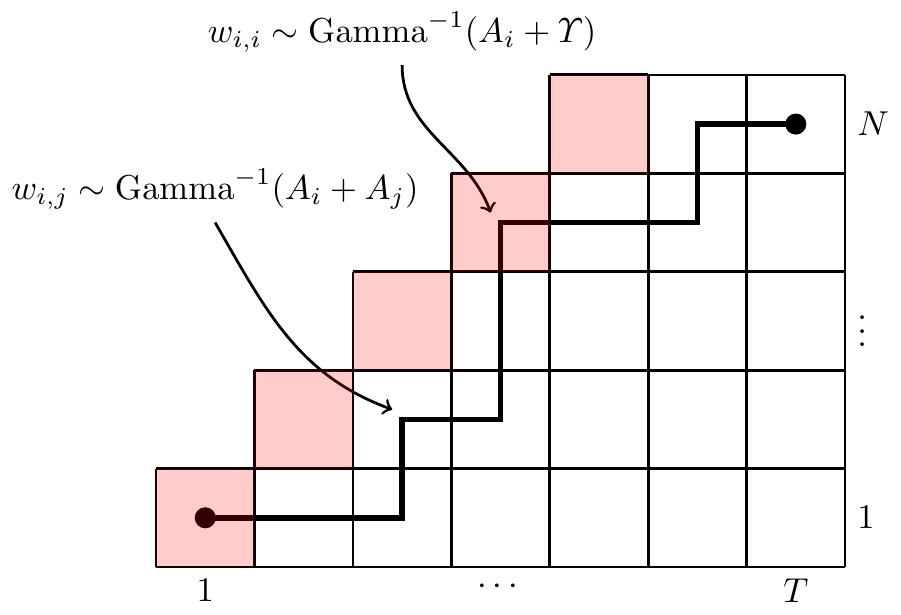} }%
    \caption{Visualizations of the Log Gamma polymer model in full space (a) and in half space (b).}
    \label{fig:Log_Gamma_polymer}
\end{figure}

    The Log Gamma polymer model arises as a scaling limit of the $q$-PushTASEP, as it was proven by Matveev and Petrov in \cite[Theorem 8.7]{MatveevPetrov2014}. A particular case of such result is stated in the next proposition.
    
    \begin{proposition} \label{prop:qPushTASEP_convergence}
        Let $\mathsf{X}(T)$ be a $q$-PushTASEP with initial conditions $\mathsf{x}_k(0)=k$ for $k=1,\dots,N$. Consider the scaling $q = e^{-\varepsilon}$, $a_i=e^{-\varepsilon A_i}$, $b_j=e^{-\varepsilon B_j}$ and 
        \begin{equation}
            \mathsf{x}_N(T) = (N+T-1) \varepsilon^{-1} \log \varepsilon^{-1} + \varepsilon^{-1} \log Z^{(\varepsilon)}(T,N).
        \end{equation}
        Then, as $\varepsilon \to 0$, the random variable $Z^{(\varepsilon)}(T,N)$ converges in distribution to $Z(T,N)$.
    \end{proposition}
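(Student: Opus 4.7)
The plan is to reduce the convergence statement to an asymptotic analysis of the $q$-Whittaker measure in the $q \to 1$ regime, and then invoke the classical identification of the resulting Whittaker measure with the log partition function of the log gamma polymer. By \cref{prop:qPushTASEP_qW}, one has $\mathsf{x}_N(T) - N \stackrel{\mathcal{D}}{=} \mu_1$ for $\mu$ distributed according to $\mathbb{W}^{(q)}_{\boldsymbol{a};\boldsymbol{b}}$, with $\boldsymbol{a}, \boldsymbol{b}$ the alpha specializations of $(a_1,\dots,a_N)$ and $(b_1,\dots,b_T)$. Therefore it is enough to show that, under the scaling $q = e^{-\varepsilon}$, $a_i = e^{-\varepsilon A_i}$, $b_j = e^{-\varepsilon B_j}$, the variable
\[
Z^{(\varepsilon)}(T,N) \ := \ \exp\!\bigl( \varepsilon(\mu_1 + N) - (N+T-1)\log\varepsilon^{-1} \bigr)
\]
converges in distribution to $Z(T,N)$.

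I would lift this to the full $q$-Whittaker process on triangular arrays $\emptyset = \mu^{(0)} \prec \mu^{(1)} \prec \cdots \prec \mu^{(N)} = \mu$, whose joint weight is proportional to $\prod_{k=1}^N \mathscr{P}_{\mu^{(k)}/\mu^{(k-1)}}(a_k;q)\, \mathscr{Q}_{\mu^{(N)}}(\boldsymbol{b};q) / \Pi(\boldsymbol{a},\boldsymbol{b})$. Setting $\mu^{(k)}_j = c(k,j)\,\varepsilon^{-1}\log\varepsilon^{-1} + \varepsilon^{-1} x^{(k)}_j$, with integer shifts $c(k,j)$ determined by matching leading orders, turns the discrete distribution into a candidate density in the continuous variables $x^{(k)}_j$. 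The pointwise limit is computed factor by factor using the $q \to 1$ asymptotics
\[
(e^{-\varepsilon A}; e^{-\varepsilon})_\infty \ \sim \ \frac{\varepsilon^{A}}{\Gamma(A)},\qquad \varepsilon \to 0,
\]
in the quantitative form established in \cref{sec:LG}, together with the well-known fact that each rescaled skew factor $\mathscr{P}_{\mu^{(k)}/\mu^{(k-1)}}(a_k;q)$ converges to the Givental-type integral kernel representing the class-one $\mathfrak{gl}_N$ Whittaker function, and similarly for the dual $\mathscr{Q}$ factor. The normalization $\Pi(\boldsymbol{a},\boldsymbol{b}) = \prod_{i,j}(a_i b_j;q)_\infty^{-1}$ contributes the expected $\prod_{i,j}\Gamma(A_i+B_j)$ up to the $\varepsilon$-power balancing the Jacobian of the change of variables.

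The limiting joint law of $(x^{(k)}_j)$ is the Whittaker measure with parameters $A, B$, and by the classical geometric RSK theorem of O'Connell its marginal in the coordinate $x^{(N)}_1$ coincides with $\log Z(T,N)$ for the log gamma polymer with weights $w_{i,j} \sim \mathrm{Gamma}^{-1}(A_i + B_j)$. Tracking the shifts in the definition of $Z^{(\varepsilon)}(T,N)$ then yields the claimed convergence in distribution.

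The main obstacle is upgrading pointwise convergence of discrete densities to weak convergence of the continuous random variable $\varepsilon \mu_1$. This requires locally uniform control of the $q$-Pochhammer asymptotics on compact sets, with errors that remain small once composed over $O(NT)$ factors, together with tail estimates yielding tightness of the rescaled partitions. These are precisely the elementary but delicate bounds on $q$-Pochhammer symbols and $q$-binomials collected in \cref{sec:LG}, after which the argument concludes by a standard Scheff\'e-type comparison of densities.
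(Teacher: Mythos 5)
Your argument is sound in outline, but it is a genuinely different route from the one the paper relies on. The paper does not prove this proposition itself: it simply invokes Matveev--Petrov \cite[Theorem 8.7]{MatveevPetrov2014}, whose proof is \emph{dynamical} -- one shows that the local update rules of the geometric $q$-PushTASEP (the $q$-geometric jumps $V_{k,T}$ and the $q$-deformed beta-binomial pushing $W_{k,T}$) degenerate, under $q=e^{-\varepsilon}$, to the local recursion $Z(i,j)=w_{i,j}\bigl(Z(i-1,j)+Z(i,j-1)\bigr)$ of the Log Gamma polymer, i.e.\ to geometric RSK applied to inverse-Gamma input. You instead work \emph{statically}: you pass through \cref{prop:qPushTASEP_qW} to the fixed-time $q$-Whittaker measure, prove its convergence to the ($\alpha$-parameter) Whittaker measure via pointwise asymptotics of the branching weights and the Givental integrand, and then quote the Corwin--O'Connell--Sepp\"al\"ainen--Zygouras identification of the Whittaker marginal $x^{(N)}_1$ with $\log Z(T,N)$ (this last theorem is COSZ rather than O'Connell's Brownian/Toda result, a citation worth getting right). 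The dynamical route buys process-level convergence with only elementary distributional limits of the jump variables, and it transfers verbatim to the half-space variant (\cref{prop:hs_qPushTASEP_convergence}); your route buys nothing dynamical but fits the measure-theoretic philosophy of the paper and reduces the statement to two substantial but established external inputs (Borodin--Corwin's $q$-Whittaker $\to$ Whittaker limit and COSZ).

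One inaccuracy to correct: the estimates collected in \cref{sec:LG} are bounds on ratios of $q$-Gamma functions and theta functions along vertical contours, designed for the convergence of Fredholm kernels; they are not the uniform controls on the combinatorial weights $\Qbinomial{\mu_i-\mu_{i+1}}{\mu_i-\eta_i}{q}$ and on $\mathdutchcal{b}_\mu(q)$ that your Scheff\'e/tightness step requires. Those controls exist (they are the content of the $q$-Whittaker-to-Whittaker convergence in the Macdonald processes literature), but you cannot borrow them from \cref{sec:LG}; as written, that step of your plan points to the wrong toolbox.
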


\subsection{Half space Log Gamma Polymer}
    Here we consider a variant of the Log Gamma polymer model in the restricted lattice $\Lambda^{\mathrm{hs}} = \{ (i,j) \in \mathbb{Z}_{\ge 1} \times \mathbb{Z}_{\ge 1}: i \ge j  \}$. In this case to each site $(i,j)$ of $\Lambda^{\mathrm{hs}}$ we associate a random variable 
    \begin{equation} \label{eq:random_environment}
        w_{i,j} \sim 
        \begin{cases}
            \mathrm{Gamma}^{-1}(A_i + \varUpsilon) \qquad &\text{if } i=j,
            \\
            \mathrm{Gamma}^{-1}(A_i + A_j) \qquad &\text{if } i > j,
        \end{cases}
    \end{equation}
    where $\varUpsilon> - \min \{A_i :i=1,2,\dots \}$ is the \emph{boundary strength}. For any $N\le T$, define, as in the previous subsection, the set $\Pi^{\mathrm{hs}}_{(1,1) \to (T,N)}$ of up-right paths $\varpi = (\varpi_1 ,\dots , \varpi_{T+N-1})$ on $\Lambda^{\mathrm{hs}}$ with endpoints $\varpi_1=(1,1)$ and $\varpi_{T+N-1}=(T,N)$. The weight of a path $\varpi$ is defined as in \eqref{eq:weight_path} and the partition function of the half space Log Gamma polymer is the random function
    \begin{equation}
        Z^{\mathrm{hs}}(N,T) = \sum_{\varpi \in  \Pi^{\mathrm{hs}}_{(1,1) \to (T,N)} } w(\varpi).
    \end{equation}
    
    As in the whole space case, the random function $Z^\mathrm{hs}$ can be recovered as a scaling limit of the $q$-PushTASEP with particle creations, as recalled next.
    
\begin{proposition} \label{prop:hs_qPushTASEP_convergence}
    Let $\mathsf{X}^{\mathrm{hs}}(T)$ be a $q$-PushTASEP with particle creation and empty initial conditions. Consider the scaling $q=e^{-\varepsilon}$, $a_i = e^{-\varepsilon A_i}, \gamma = e^{-\varepsilon \varUpsilon}$ and
    \begin{equation} \label{eq:Z_hs_epsilon}
        \mathsf{x}^{\mathrm{hs}}_N(T) = (N+T-1) \varepsilon^{-1} \log \varepsilon^{-1} + \varepsilon^{-1} \log Z^{\mathrm{hs}}(T,N;\varepsilon),
    \end{equation}
    with $T \ge N$. Then, as $\varepsilon \to 0$, the random variable $Z^{\mathrm{hs}}(T,N;\varepsilon)$ converges in distribution to $Z^{\mathrm{hs}}(T,N)$.
\end{proposition}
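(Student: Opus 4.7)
The plan is to mirror the proof of the full space analog \cref{prop:qPushTASEP_convergence} established by Matveev and Petrov, extending it to accommodate the particle creation mechanism that gives rise to the diagonal boundary weights of the half space Log Gamma polymer. The starting point is the standard scaling limit: if $V \sim q$-$\mathrm{Geo}(\xi)$ with $q = e^{-\varepsilon}$ and $\xi = e^{-\varepsilon \theta}$, then $\varepsilon \cdot e^{\varepsilon V}$ converges in distribution to $\mathrm{Gamma}^{-1}(\theta)$, and the joint version of this convergence for mutually independent $q$-Geometric families produces the independent inverse Gamma environment $\{w_{i,j}\}$ indexed by $\Lambda^{\mathrm{hs}}$.

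Substituting the ansatz \eqref{eq:Z_hs_epsilon} into the update rule \eqref{eq:q_PushTASEP}, I would translate each particle increment $\mathsf{x}_N^{\mathrm{hs}}(T+1) - \mathsf{x}_N^{\mathrm{hs}}(T) = V_{N,T+1} + W_{N,T+1}$ into a multiplicative relation for the ratio $Z^{\mathrm{hs}}(T+1,N;\varepsilon)/Z^{\mathrm{hs}}(T,N;\varepsilon)$. For $1 \le N \le T$, the jump and the pushing term combine, in the limit $\varepsilon \to 0$, to yield the interior Log Gamma recursion
\begin{equation*}
    Z^{\mathrm{hs}}(T+1,N) = w_{T+1,N} \bigl[ Z^{\mathrm{hs}}(T,N) + Z^{\mathrm{hs}}(T+1,N-1) \bigr], \qquad w_{T+1,N} \sim \mathrm{Gamma}^{-1}(A_{T+1}+A_N),
\end{equation*}
exactly as in Matveev-Petrov's analysis of the full space case; the dynamics of the interior particles of the $q$-PushTASEP with particle creation are identical to those of the standard $q$-PushTASEP, so their argument applies verbatim. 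The genuinely new ingredient is the diagonal site $N = T+1$, where $\mathsf{x}_{T+1}^{\mathrm{hs}}(T+1) = \mathsf{x}_T^{\mathrm{hs}}(T+1) + 1 + \widetilde{V}_{T+1}$ with $\widetilde{V}_{T+1} \sim q$-$\mathrm{Geo}(\gamma a_{T+1})$. Under the scaling $\gamma = e^{-\varepsilon \varUpsilon}$ and $a_{T+1} = e^{-\varepsilon A_{T+1}}$, the quantity $\varepsilon \cdot e^{\varepsilon \widetilde{V}_{T+1}}$ converges in distribution to $\mathrm{Gamma}^{-1}(A_{T+1}+\varUpsilon)$, and a direct rewriting of the creation rule through \eqref{eq:Z_hs_epsilon} (with the additive shift $+1$ absorbed by the degree-counting factor $(N+T-1)\varepsilon^{-1}\log\varepsilon^{-1}$) produces the diagonal recursion
\begin{equation*}
    Z^{\mathrm{hs}}(T+1,T+1) = w_{T+1,T+1} \cdot Z^{\mathrm{hs}}(T+1,T), \qquad w_{T+1,T+1} \sim \mathrm{Gamma}^{-1}(A_{T+1}+\varUpsilon),
\end{equation*}
matching the diagonal prescription of \eqref{eq:random_environment}.

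A straightforward induction on $T$, together with the joint convergence of the families $\{V_{k,T}\}, \{\widetilde{V}_T\}, \{W_{k,T}\}$ to the inverse Gamma environment $\{w_{i,j}\}_{(i,j) \in \Lambda^{\mathrm{hs}}}$, then yields the claimed convergence in distribution of $Z^{\mathrm{hs}}(T,N;\varepsilon)$ to the half space Log Gamma polymer partition function. The main technical obstacle is controlling the pushing contribution $W_{k,T+1}$, whose $q$-deformed beta binomial parameter $q^{\mathrm{gap}_k(T)}$ depends on random gaps that diverge as $\varepsilon^{-1}\log\varepsilon^{-1}$; one must verify that these combine with the independent jumps $V_{k,T+1}$ to produce, in the limit, a single inverse Gamma factor and the additive Log Gamma recursion above. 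This is precisely the delicate step carried out by Matveev-Petrov in the full space setting, and since the pushing mechanism for interior particles is unaffected by the presence of the wall, their analysis transfers to the half space setting without modification; only the creation of the rightmost particle, whose dynamics are decoupled from the pushing, requires the separate boundary argument described above.
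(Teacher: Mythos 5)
Your argument is correct in outline, but it is a genuinely different route from the one the paper takes: the paper does not prove this proposition dynamically at all, it simply invokes \cite{barraquand_half_space_mac}, combining their Proposition 4.24 (the distributional identity between the particle system and the half space $q$-Whittaker measure, restated here as \cref{prop:matching_hs_qPushTASEP_qW}) with their Proposition 6.27 (the degeneration of the half space $q$-Whittaker measure to the Whittaker measure whose marginal is the half space Log Gamma free energy). That algebraic route buys a clean proof with no need to control the pushing mechanism, at the cost of passing through the symmetric-function machinery. Your route is the direct Matveev--Petrov-style one: convert the sequential update into the polymer recursion $Z^{\mathrm{hs}}(N,T)=w_{T,N}\bigl[Z^{\mathrm{hs}}(N,T-1)+Z^{\mathrm{hs}}(N-1,T)\bigr]$ site by site, treat the creation step as the diagonal boundary recursion, and induct on $T$ with a joint-convergence hypothesis for the whole configuration. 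This is sound: the interior one-step analysis of $V_{k,T}+W_{k,T}$ is local (it only uses that gaps are of order $\varepsilon^{-1}\log\varepsilon^{-1}$), so it transfers from the full space case, and the creation step reduces to the elementary fact that $\varepsilon e^{\varepsilon \widetilde V}$ converges to $\mathrm{Gamma}^{-1}(A_{T+1}+\varUpsilon)$, matching \eqref{eq:random_environment}. Two small points to tighten: the inductive hypothesis must be the \emph{joint} convergence in the appropriate scaling form of the entire array at time $T$ (not just of $\mathsf{x}_N^{\mathrm{hs}}$), since the particle created at time $N$ starts from a random, not a step, initial position; and the additive ``$+1$'' in the creation rule is not absorbed by the $\varepsilon^{-1}\log\varepsilon^{-1}$ counting term (that term accounts for the prefactor $\varepsilon$ in $\varepsilon e^{\varepsilon\widetilde V}$) but contributes a harmless factor $e^{\varepsilon}\to 1$ to the ratio of partition functions.
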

\begin{proof}
    This is proven in \cite{barraquand_half_space_mac}, combining Proposition 6.27 and Proposition 4.24 of the same paper.
\end{proof}


\section{Periodic and free boundary Schur measures}
\label{sec:pS}

In this section we define two important objects: the periodic and the free boundary Schur measures. These are discrete models of free fermions at positive temperature and as such their law is described in terms of determinantal and pfaffian point processes. The fundamental identities \eqref{eq:qW_and_Schur_1} and \eqref{eq:qW_and_Schur_2} relate them precisely to the KPZ models discussed in the previous section. 

\subsection{Periodic Schur measure}
The \emph{periodic Schur measure} was first introduced by Borodin in \cite{borodin2007periodic} as a generalization of Okounkov's Schur measure \cite{okounkov2001infinite}. It is a measure on the set of partitions given by
\begin{equation}
\label{pS}
    \mathbb{PS}_{a,b}^{(q)}(\lambda) = \frac{1}{ (q;q)_\infty^{-1} \Pi(a;b) } \sum_{\rho} q^{|\rho|} s_{\lambda/\rho}(a) s_{\lambda/\rho}(b), 
\end{equation}
where $s_{\lambda/\rho}(x)$ is the skew Schur function \eqref{eq:Jacobi_Trudi}. Here $a,b$ are Schur-positive specializations and the normalization constant is a consequence of the Cauchy identity \eqref{eq:skew_shur_CI}. While the Schur measure is a model of free fermions at \emph{zero temperature}, the periodic Schur measure, after a simple trasformation describes the grand canonical ensemble of free fermions at \emph{positive temperature}, modulated by the parameter $q$; see \cite{betea_bouttier_periodic}.
As found in \cite{borodin2007periodic} a certain independent shift of coordinates of $\lambda$ transforms the periodic Schur measure into a determinantal point process. We say that an integer valued random variable $S$ has \emph{theta} distribution of parameters $q\in (0,1), \zeta>0$, in short $S \sim \mathrm{Theta}(\zeta;q)$, if
\begin{equation}
    \mathbb{P} (S = k)  = \frac{1}{\vartheta_3(\zeta; q)} q^{k^2/2} \zeta^k,
    \qquad
    k\in \mathbb{Z},
\end{equation}
where
\begin{equation} \label{eq:jacobi_theta}
    \vartheta_3(\zeta;q) = (q,- \sqrt{q} \zeta, -\sqrt{q}/\zeta ; q)_\infty    
\end{equation}
is the Jacobi theta function. The factorized form of the normalizaztion constant is the celebrated Jacobi triple product identity \cite[Theorem 12.3.2]{ismailBook}. We can now define the \emph{shift-mixed periodic Schur measure} as the point process
\begin{equation}
    \mathfrak{S}(\lambda, S) = \left( \lambda_i - i +\frac{1}{2} + S \right)_{i = 1,2,\dots } \subset \mathbb{Z}',
\end{equation}
where we used the notation $\mathbb{Z}' = \mathbb{Z} + \frac{1}{2}$, and $\lambda \sim \mathbb{PS}_{a,b}^{(q)}$, $S\sim \mathrm{Theta}(\zeta;q)$ are independent.
\begin{theorem}[\cite{borodin2007periodic}] \label{thm:periodic_Schur_det}
    The shift-mixed periodic Schur measure is a determinantal point process with correlation kernel
    \begin{equation} \label{eq:kernel_periodic_Schur}
        K(x,y) = \frac{1}{(2 \pi \mathrm{i})^2} \oint_{|z|=r} \frac{\diff z}{z^{x+1}} \oint_{|w|=r'} \frac{\diff w}{w^{-y+1}} \frac{F(z)}{F(w)} \kappa(z,w),
    \end{equation}
    where $x,y \in \mathbb{Z}'$ and 
    \begin{equation}
        F(z) = \prod_{\ell \ge 0} \frac{H(q^\ell a;z)}{H(q^\ell b;z^{-1})},
    \end{equation}
    \begin{equation}
        \kappa(z,w) = \sqrt{\frac{w}{z}} \frac{(q;q)_\infty^2}{(w/z,qz/w;q)_\infty} \frac{\vartheta_3 (\zeta z/w;q)}{\vartheta_3 (\zeta ;q)}.
    \end{equation}
    The function $H$ was defined in \eqref{eq:pi_and_H} and the integration contour's radii satisfy $1<r/r'<q^{-1}$ and $R^{-1} \le r,r' \le R$, where $R$ is the minimum of convergence radii of $H(a;z)$ and $H(b;z)$.
\end{theorem}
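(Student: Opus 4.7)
The plan is to follow the free fermion / boson correspondence in the spirit of Okounkov's treatment of the Schur measure, suitably extended to handle the sum over $\rho$ weighted by $q^{|\rho|}$. Recall that the Schur functions admit the vertex operator representation $s_{\lambda/\rho}(x) = \langle \lambda | \Gamma_+(x) | \rho \rangle = \langle \rho | \Gamma_-(x) | \lambda \rangle$, where $\Gamma_\pm(x)$ act on the charge zero sector $\mathcal{F}_0$ of the fermionic Fock space $\mathcal{F} = \bigoplus_{n} \mathcal{F}_n$, the states $|\lambda\rangle$ forming an orthonormal basis indexed by partitions. Letting $H$ denote the energy operator with eigenvalue $|\rho|$ on $|\rho\rangle$, the weighted sum becomes
\begin{equation*}
    \sum_\rho q^{|\rho|} s_{\lambda/\rho}(a)\, s_{\lambda/\rho}(b) = \langle \lambda | \Gamma_-(b)\, q^H\, \Gamma_+(a) | \lambda \rangle.
\end{equation*}
This rewrites the periodic Schur measure as a diagonal matrix element of a thermal-like operator.

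Next I would incorporate the theta shift. The translation $\lambda_i - i + \tfrac12 \mapsto \lambda_i - i + \tfrac12 + S$ corresponds, in the Fock space picture, to moving between the charge sectors $\mathcal{F}_n$. Summing over $S$ with weight $q^{S^2/2}\zeta^S/\vartheta_3(\zeta;q)$ converts the projection onto $\mathcal{F}_0$ (implicit in the above matrix element) into a grand canonical trace over all of $\mathcal{F}$ with fugacity $\zeta$ and Gaussian weight $q^{C^2/2}$ in the charge $C$. Concretely, the shift-mixed measure is realized as the state
\begin{equation*}
    \langle\,\cdot\,\rangle = \frac{\mathrm{Tr}_{\mathcal{F}}\bigl( \zeta^C\, q^{H+C^2/2}\, \Gamma_-(b)\, \Gamma_+(a)\, \cdot \bigr)}{\mathrm{Tr}_{\mathcal{F}}\bigl( \zeta^C\, q^{H+C^2/2}\, \Gamma_-(b)\, \Gamma_+(a) \bigr)}.
\end{equation*}
Because the operator inside the trace is an exponential of an operator quadratic in free fermion modes, a standard Wick/Gaussian-state argument shows that the positions $\mathfrak{S}(\lambda,S)$ form a determinantal point process whose kernel is the two-point function $K(x,y) = \langle \psi_x^*\, \psi_y \rangle$.

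The remaining task is to compute this two-point function explicitly. Introducing the fermion generating series $\psi(z) = \sum_{k\in\mathbb{Z}'} z^{k-1/2} \psi_k$ and $\psi^*(w) = \sum_k w^{-k-1/2} \psi^*_k$, I would use the commutation rules $\Gamma_+(a)\psi(z) = H(a;z)\,\psi(z)\,\Gamma_+(a)$ and $\Gamma_-(b)\psi(z) = H(b;z^{-1})^{-1}\,\psi(z)\,\Gamma_-(b)$ (with analogous rules for $\psi^*$) together with $q^H \psi(z) q^{-H} = \psi(qz)$. Applying these inside a cyclic trace and iterating along the infinite geometric sequence of $q^\ell$ conjugations produces the factor $F(z)/F(w)$ in the double contour integral. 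What remains is the trace over a pure fermion bilinear of the form $\mathrm{Tr}_\mathcal{F}(\zeta^C q^{H+C^2/2} \psi(z)\psi^*(w))$, which is evaluated by expanding in charge sectors and summing via the Jacobi triple product identity; this produces the $\kappa(z,w)$ factor, with the $(w/z,qz/w;q)_\infty^{-1}$ denominator coming from summing the geometric series of commutators $\psi(q^\ell z)\psi^*(w)$ and the theta ratio $\vartheta_3(\zeta z/w;q)/\vartheta_3(\zeta;q)$ arising from the Gaussian charge sum.

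The main obstacle is the explicit computation in this last step: tracking normal-ordering constants, ensuring that the various geometric series in $q^\ell$ converge, and justifying the contour constraints $R^{-1}\le r,r'\le R$ and $1 < r/r' < q^{-1}$ as the simultaneous domains of convergence of the vertex operators acting on $\psi(z)$ and $\psi^*(w)$. The separation $|z|>|w|$ (within a factor of $q^{-1}$) is dictated by the order in which $\psi(z)$ and $\psi^*(w)$ appear in the trace and by the need to distinguish the pole contributions that build $\kappa(z,w)$. Once the kernel is written as a double contour integral in the prescribed annuli, all identifications with the statement follow directly.
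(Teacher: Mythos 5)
The paper does not prove this theorem: it is imported verbatim from \cite{borodin2007periodic} (with the Betea--Bouttier reformulation), so there is no in-paper argument to compare against. Your sketch is the standard free-fermionic derivation --- realizing the shift-mixed measure as a grand-canonical trace of $\zeta^C q^{H+C^2/2}\Gamma_-(b)\Gamma_+(a)$ over the full Fock space, invoking Wick's theorem for the determinantal structure, and extracting $F(z)/F(w)$ from the commutation of $\Gamma_\pm$ and $q^H$ with the fermion fields and $\kappa(z,w)$ from the charge sum via the triple product / Ramanujan ${}_1\psi_1$ identity (note $\kappa(z,w)=\sum_{m\in\mathbb{Z}'}(z/w)^m(1+\zeta^{-1}q^{-m})^{-1}$ by \eqref{eq:ramanujan_psi}) --- and this is essentially the proof in the cited reference. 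The outline is sound; the remaining work you flag (normal-ordering constants, convergence of the $q^\ell$ products, and the contour constraints from the operator ordering in the trace) is exactly where the honest labor lies.
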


We will be interested in the rightmost coordinate of the shift-mixed periodic Schur measure $\lambda_1 + S$. Using basic notions of determinantal point processes its probability distribution can be written in terms of the Fredholm determinant of the kernel $K$ of \eqref{eq:kernel_periodic_Schur}. We have, for any $s\in \mathbb{R}$,
\begin{equation} \label{pSdet}
    \mathbb{P}[\lambda_1 + S \le s] = \det (1-K)_{\ell^2( \mathbb{Z}_{> s}' )},
\end{equation}
where $\mathbb{Z}_{> s}'$ denotes the set of half integers larger than to $s$.

\subsection{Alternative representation of the Fredholm determinant via Ramanujan's bilateral sum.} \label{subs:alternative_det}

To state the main result of the subsection we recall the identity
\begin{equation} \label{eq:ramanujan_psi}
    \sum_{m \in \mathbb{Z}'} \frac{z^m}{1+\zeta^{-1} q^{-m}} = z^{-1/2} \frac{(q;q)_\infty^2}{(1/z,qz;q)_\infty} \frac{\vartheta_3(\zeta z;q)}{\vartheta_3(\zeta;q)}, 
\end{equation}
which holds for $1<|z|<q^{-1}$ and it is a particular case of Ramanujan's ${}_1\psi_1$ sum \cite[Theorem 12.3.1]{ismailBook}.

\begin{proposition} \cite[Proposition 2.3]{IMS_matching} \label{prop:fredholm_det_M}
    Let $\lambda$ be distributed according to the periodic Schur measure \eqref{pS} and let $S\sim\mathrm{Theta}(\zeta;q)$. Then, for any $s \in \mathbb{Z}$, we have
    \begin{equation} \label{eq:fredholm_det_M}
        \mathbb{P}\left[ \lambda_1+S \le s \right] = \det\left[1 - f_{\zeta,s} \cdot M \right]_{\ell^2(\mathbb{Z}')},
    \end{equation}
    where $M$ is given by
    \begin{equation}
        M(m,n) = \oint_{|z|=r} \frac{\diff z}{2 \pi \mathrm{i}} \oint_{|w|=r'} \frac{\diff w}{2 \pi \mathrm{i}} \frac{F(z)}{F(w)} \frac{z^{n-1/2}}{w^{m+1/2}} \frac{1}{z-w},
    \end{equation}
    and $f_{\zeta,s}$ is the Fermi-Dirac distribution\footnote{Setting $q=e^{-\varepsilon/k_B T}$ and $\zeta = e^{\mu/k_B T}$, where $T$ is the temperature, $\varepsilon$ is the quantized energy, $k_B$ is Boltzmann constant and $\mu$ is the chemical potential, then $f_{\zeta,s}(m)$ assumes the canonical form of the Fermi-Dirac distribution.}
    \begin{equation} \label{eq:fermi_dirac}
        f_{\zeta,s} (m) = \frac{1}{1+\zeta^{-1}q^{-s-m}}.
    \end{equation}
\end{proposition}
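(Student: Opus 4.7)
The starting point is \cref{thm:periodic_Schur_det}, which identifies the shift-mixed periodic Schur measure as a determinantal point process on $\mathbb{Z}'$ with correlation kernel $K$, yielding
\begin{equation*}
    \mathbb{P}[\lambda_1+S \le s] = \det(1-K)_{\ell^2(\mathbb{Z}'_{>s})} = \det\bigl(1 - P_{>s} K P_{>s}\bigr)_{\ell^2(\mathbb{Z}')},
\end{equation*}
where $P_{>s}$ is the projection onto $\mathbb{Z}'_{>s}$. The plan is to factor $K$ as a product of three simpler operators, one of which carries the Fermi-Dirac weight, apply the cyclic identity $\det(1-AB)=\det(1-BA)$ to swap the projection inside, and then absorb the $s$-dependence into the Fermi-Dirac factor by an index shift.

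First I would apply Ramanujan's bilateral sum \eqref{eq:ramanujan_psi}, with $z$ replaced by $z/w$ and exploiting $(z/w)^{-1/2}=\sqrt{w/z}$, to rewrite
\begin{equation*}
    \kappa(z,w) = \sum_{m\in \mathbb{Z}'} \frac{(z/w)^m}{1+\zeta^{-1}q^{-m}},
\end{equation*}
valid on the annulus $1<|z/w|<q^{-1}$ prescribed by \cref{thm:periodic_Schur_det}. Plugging this expansion into \eqref{eq:kernel_periodic_Schur} factorizes the kernel as $K=\tilde A\, g\, \tilde B$, where $g(m)=(1+\zeta^{-1}q^{-m})^{-1}$ acts diagonally on $\ell^2(\mathbb{Z}')$ and
\begin{equation*}
    \tilde A(x,m) = \frac{1}{2\pi \mathrm{i}}\oint_{|z|=r} F(z)\, z^{m-x-1}\,\diff z,
    \qquad
    \tilde B(m,y) = \frac{1}{2\pi \mathrm{i}}\oint_{|w|=r'} \frac{w^{y-m-1}}{F(w)}\,\diff w.
\end{equation*}
Using the cyclic property then gives $\det(1-P_{>s}\tilde A g\tilde B P_{>s})_{\ell^2(\mathbb{Z}')}=\det(1-g\tilde B P_{>s}\tilde A)_{\ell^2(\mathbb{Z}')}$, reducing matters to computing the kernel $(m,n)\mapsto g(m)\sum_{x>s}\tilde B(m,x)\tilde A(x,n)$.

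Carrying out the inner sum is a geometric series: since $|w/z|<1$, one has $\sum_{x\in \mathbb{Z}'_{>s}}(w/z)^x=\frac{z^{1/2-s}w^{s+1/2}}{z-w}$, which upon substitution produces
\begin{equation*}
    \sum_{x>s}\tilde B(m,x)\tilde A(x,n) = \frac{1}{(2\pi \mathrm{i})^2}\oint\oint \frac{F(z)}{F(w)}\frac{z^{n-s-1/2}}{w^{m-s+1/2}}\frac{\diff z\, \diff w}{z-w} = M(m-s,\,n-s).
\end{equation*}
Relabeling $m\to m+s$, $n\to n+s$ in the Fredholm determinant (which only permutes the summation indices over $\mathbb{Z}'$) converts $g(m)M(m-s,n-s)$ into $g(m+s)M(m,n)=f_{\zeta,s}(m)M(m,n)$, proving \eqref{eq:fredholm_det_M}. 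The main technical point to verify is that the operators involved are genuinely trace class on $\ell^2(\mathbb{Z}')$ so that the cyclic identity and the exchange of sum and integral are justified; the rapid decay of $f_{\zeta,s}$ as $m\to -\infty$ (exponentially in $|m|$) together with the estimates on $F(z)/F(w)$ implicit in \cref{thm:periodic_Schur_det} should suffice, although some care is needed because $g$ without the $s$-shift tends to $1$ as $m\to +\infty$, so the rearrangement should be checked at the level of the already-projected operator $P_{>s}K P_{>s}$ before appealing to cyclicity.
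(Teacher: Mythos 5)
Your proposal is correct and follows essentially the same route as the paper: expand $\kappa(z,w)$ via Ramanujan's ${}_1\psi_1$ sum to factor $K$ through the diagonal Fermi--Dirac weight, apply the cyclic identity for Fredholm determinants, and absorb the $s$-dependence by an index shift. The only (cosmetic) difference is the order of operations — the paper shifts $x,y$ by $s$ first and then cycles, whereas you cycle first, evaluate the geometric series over $\mathbb{Z}'_{>s}$ explicitly, and shift afterwards.
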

\begin{proof}
    Using \eqref{eq:ramanujan_psi}, the kernel $K$ can be rewritten as
    \begin{equation}
        K (x,y) = \sum_{m \in \mathbb{Z}'} \frac{1}{1+\zeta^{-1} q^{-m}} \oint_{|z|=r} \frac{\diff z}{2\pi \mathrm{i}} \oint_{|w|=r'} \frac{\diff w}{2\pi \mathrm{i}} \frac{w^{y-1-m}}{z^{x+1-m}} \frac{F(z)}{F(w)}.
    \end{equation}
    Defining
    \begin{equation}
        A(x,m) = \oint_{|z|=r} \frac{\diff z}{2\pi \mathrm{i}} \frac{F(z)}{z^{x+1-m}}
        \qquad
        \text{and}
        \qquad
        B(n,y) = \oint_{|w|=r'} \frac{\diff w}{2\pi \mathrm{i}} \frac{w^{y-1-m}}{F(w)},
    \end{equation}
    we observe that
    \begin{equation}
        \begin{split}
            K(x+s,y+s) &= \sum_{m\in \mathbb{Z}'} \frac{1}{1+\zeta^{-1}q^{-m}} A(x+s,m) B(m,y+s)
            \\
            & = \sum_{m\in \mathbb{Z}'} \frac{1}{1+\zeta^{-1}q^{-m}} A(x,m-s) B(m-s,y)
            \\
            & = \sum_{m' \in \mathbb{Z}'} f_{\zeta,s} (m') A(x,m') B(m',y).
        \end{split}
    \end{equation}
    In this way, from the notable symmetry of the Fredholm determinant $\det[1-A f B] = \det[1-fBA]$, we find that
    \begin{equation}
        \det[1-K]_{\ell^2(\mathbb{Z}'_{> s})} = \det\left[1 - f_{\zeta,s} \cdot M \right]_{\ell^2(\mathbb{Z}')},
    \end{equation}
    where we noticed that
    \begin{equation}
        M(m,n) = \sum_{x = 0}^\infty B(m,x)A(x,n).
    \end{equation}
    Relation \eqref{pSdet} now concludes the proof.
\end{proof}

The determinantal expression stated in \cref{prop:fredholm_det_M} relates the probability distribution of the rightmost particle in the shift mixed periodic Schur measure to an average of a multiplicative observable of a determinantal point process with correlation kernel $M$.

\begin{corollary} \label{cor:multiplicative_expectation_det}
    Under the hypothesis of \cref{prop:fredholm_det_M}, we have
    \begin{equation} \label{eq:multiplicative_expectation_determinant}
        \mathbb{P}\left[ \lambda_1+ S \le s \right] = \mathbb{E}_M \left[ \prod_{i=1}^\infty \frac{1}{1+\zeta q^{s+\mathfrak{a}_i}} \right],
    \end{equation}
    where $\{ \mathfrak{a}_i:i \in \mathbb{Z}_{\ge 1}\}$ is a determinantal point process with correlation kernel $M$.
\end{corollary}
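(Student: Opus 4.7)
The plan is to read the corollary as a direct reformulation of \cref{prop:fredholm_det_M} using the standard gap probability / multiplicative functional identity for determinantal point processes. Recall that if $\{\mathfrak{a}_i\}$ is a determinantal point process on $\mathbb{Z}'$ with correlation kernel $M$, and $g:\mathbb{Z}'\to [0,1]$ is a function for which $gM$ is trace class on $\ell^2(\mathbb{Z}')$, one has the classical identity
\begin{equation*}
    \mathbb{E}_M\!\left[\prod_{i} \bigl(1 - g(\mathfrak{a}_i)\bigr)\right] = \det\bigl[1 - g \cdot M\bigr]_{\ell^2(\mathbb{Z}')}.
\end{equation*}

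The first step is to rewrite the Fermi--Dirac weight \eqref{eq:fermi_dirac} in a form that matches the factor $1-g$. A trivial algebraic manipulation yields
\begin{equation*}
    f_{\zeta,s}(m) = \frac{1}{1+\zeta^{-1}q^{-s-m}} = \frac{\zeta q^{s+m}}{1+\zeta q^{s+m}},
    \qquad
    1 - f_{\zeta,s}(m) = \frac{1}{1+\zeta q^{s+m}}.
\end{equation*}
Setting $g = f_{\zeta,s}$ in the multiplicative functional identity and invoking \cref{prop:fredholm_det_M} then gives
\begin{equation*}
    \mathbb{P}\bigl[\lambda_1 + S \le s\bigr] = \det\bigl[1 - f_{\zeta,s}\cdot M\bigr]_{\ell^2(\mathbb{Z}')} = \mathbb{E}_M\!\left[\prod_{i\ge 1} \frac{1}{1+\zeta q^{s+\mathfrak{a}_i}}\right],
\end{equation*}
which is precisely \eqref{eq:multiplicative_expectation_determinant}.

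The only genuine task is to check that the technical hypotheses of the multiplicative functional identity hold: namely that $M$ is the correlation kernel of a bona fide determinantal point process and that $f_{\zeta,s}\cdot M$ is trace class. The kernel $M$ is the one appearing in \cref{prop:fredholm_det_M} and inherits its determinantal structure from $K$ (it is essentially $K$ conjugated by the factorization $M = B A$ introduced in the proof of \cref{prop:fredholm_det_M}), while $f_{\zeta,s}(m)$ decays like $\zeta q^{s+m}$ for $m\to+\infty$ and is bounded by $1$ for $m\to-\infty$, so absolute convergence of the Fredholm series and of the infinite product on the right-hand side is standard. I would relegate these trace-class/convergence verifications to \cref{app:Fredholm}, since the substantive content of the corollary is the algebraic identity above.
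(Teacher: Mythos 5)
Your proof is correct and follows essentially the same route as the paper: both apply the multiplicative functional identity $\mathbb{E}_M[\prod_i(1+g(\mathfrak{a}_i))]=\det[1+g\cdot M]$ with $g=-f_{\zeta,s}$, using the algebraic identity $1-f_{\zeta,s}(m)=\frac{1}{1+\zeta q^{s+m}}$ together with \cref{prop:fredholm_det_M}. The paper simply states the identity for $g$ "decaying sufficiently well" without elaborating on the trace-class verification you sketch.
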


\begin{proof}
    Notable properties of determinantal point process imply that, for any function $g$ decaying ``sufficiently well", we have
    \begin{equation}
        \mathbb{E}_M\left[ \prod_{i=1}^\infty (1+g(\mathfrak{a}_i)) \right] = \det[1+g \cdot M]_{\ell^2(\mathbb{Z}')},
    \end{equation}
    so that setting $g=-f_{\zeta,s}$ and using \eqref{eq:fredholm_det_M} we prove relation \eqref{eq:multiplicative_expectation_determinant}.
\end{proof}

\begin{remark} \label{rem:multiplicative_expectation}
    Comparing kernels $M$ with $K$, evaluated at $q=0$, we see that $M$ is the correlation kernel of ``holes" in the Schur measure $\mathbb{PS}^{(q=0)}_{a,b}$ (see \cite{BO2016_ASEP}), with specializations $a,b$ consisting of geometric progressions of single parameter alpha specializations of ratio $q$, i.e. $a=(a_1,qa_1,q^2a_1,\dots, a_2, q a_2, q^2a_2, \dots)$, $b=(b_1,qb_1,q^2b_1,\dots, b_2, q b_2, q^2b_2, \dots)$.
\end{remark}

\subsection{Free boundary Schur measure}
The \emph{free boundary Schur measure} was introduced by Betea, Bouttier, Nejjar and Vuletic in \cite{Betea_et_al_free_boundary}. Rather than the general process considered in the original paper \cite{Betea_et_al_free_boundary}, we will only focus on a particular case that turns out to be very important in connection to KPZ models in half space. We define the probability measure on the set of partitions with \emph{even column} length
\begin{equation}
\label{fbS}
    \mathbb{FBS}_{a}^{(q)} (\lambda) 
    = \frac{
    \mathbf{1}_{\lambda' \text{ is even}}}{(q ; q)_\infty^{-1} \widetilde{\Pi}(a) } \sum_{\rho : \rho' \text{ is even}} q^{|\rho|/2} s_{\lambda/\rho}(a).
\end{equation}
Here $a$ is a Schur positive specialization and the normalization term $\widetilde{\Pi}(a)$ was introduced in \eqref{eq:pi_tilde}. As in the case of the periodic Schur measure, the free boundary Schur measure (\ref{fbS}) itself does not possess a nice determinantal nor pfaffian structure, but its shift-mixed version is shown to be a pfaffian point process. Let $S\sim \mathrm{Theta}(\zeta^2;q^2)$ be independent of $\lambda$. Then, the \emph{shift-mixed free boundary Schur measure} is the point process
\begin{equation}
    \mathfrak{S}(\lambda,2S) = \left( \lambda_i - i + \frac{1}{2} + 2S \right)_{i=1,2,\dots} \subset \mathbb{Z}'.
\end{equation}
In the next theorem we give a characterization of $\mathfrak{S}(\lambda,2S)$. For this we need to define the function $\Delta'$ as 
\begin{equation}
   \Delta'(x, y) = \mathbf{1}_{y=x+1} - \mathbf{1}_{y=x-1}
\end{equation}
and the difference operator $\nabla_x$ as
\begin{equation} \label{eq:nabla}
    \nabla_x f(x) = \frac{1}{2} [f(x+1) - f(x-1)].
\end{equation}

\begin{theorem}[\cite{Betea_et_al_free_boundary,Betea_etal2019}] \label{thm:free_boundary_pfaffian}
    The shift-mixed free boundary Schur measure is a pfaffian point process with $2 \times 2$ matrix correlation kernel
    \begin{equation} \label{eq:K_2x2_hs}
        K^{\mathrm{hs}}(x,y) = \left( \begin{matrix} k(x,y) & -2 \nabla_y k (x,y) \\ -2 \nabla_x k(y,x) & 4 \nabla_x \nabla_y k(x,y) + \Delta'(x,y) \end{matrix} \right),
    \end{equation}
    where
\begin{equation}
    k(x,y) 
    =
    \frac{1}{(2\pi \mathrm{i})^2}
    \oint_{|z|=r} \frac{\diff z}{z^{x+3/2}} \oint_{|w|=r} \frac{\diff w}{w^{y+5/2}} F(z)F(w) \kappa^{\mathrm{hs}}(z,w), 
    \label{eq:k_fbs}
\end{equation}
\begin{equation}
    F(z) 
    =
    \prod_{\ell \ge 0} \frac{ H(q^{ \ell} a ; z) }{  H(q^{ \ell} a ; z^{-1}) }
\end{equation}
and
\begin{equation} \label{eq:kappa_hs}
    \kappa^{\mathrm{hs}}(z,w) = \frac{(q,q,w/z,q z/w;q  )_\infty}{ (1/z^2, 1/w^2 , 1 / z w , q w z ;q)_\infty} \frac{\vartheta_3(\zeta^2 z^2 w^2 ; q^2)}{ \vartheta_3(\zeta^2; q^2) }.
\end{equation}
The function $H(a,z)$ was defined in \eqref{eq:pi_and_H}, while the integration contours are positively oriented and their radius satisfies $1 < r < \min(q^{-1/2},R)$, where $R$ is the convergence radius of $H(a;z)$. 
\end{theorem}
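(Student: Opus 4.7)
The plan is to realize the shift-mixed free boundary Schur measure as a correlation function on the fermionic Fock space and extract the pfaffian structure via Wick's theorem for Gaussian states. First I would set up the standard formalism: let $\psi_n,\psi^*_n$ for $n\in\mathbb{Z}'$ denote the free fermion operators, with vacuum $|0\rangle$ and basis states $|\lambda\rangle$ indexed by partitions $\lambda$ (identified with the particle configuration $\{\lambda_i-i+1/2\}$), and let $\Gamma_\pm(a)$ be the half-vertex operators satisfying $\langle\lambda|\Gamma_-(a)|\rho\rangle = s_{\lambda/\rho}(a)$. With this the free boundary Schur measure is proportional to $\langle\lambda|\Gamma_-(a)|B_q\rangle$, where the \emph{boundary state}
$$
|B_q\rangle \;=\; \sum_{\rho\,:\,\rho'\text{ even}} q^{|\rho|/2}\,|\rho\rangle
$$
is known to admit a Gaussian representation $|B_q\rangle = \exp\bigl(\sum_{i,j>0} c_{i,j}(q)\,\psi^*_{-i+1/2}\psi^*_{-j+1/2}\bigr)|0\rangle$ for coefficients $c_{i,j}(q)$ determined by the pfaffian Littlewood-type identity that enforces the even-column constraint.

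Second, I would absorb the independent theta-distributed shift $S\sim\mathrm{Theta}(\zeta^2;q^2)$ by summing over the charge sectors of the Fock space. The weight $q^{S^2}\zeta^{2S}$ combines with $|B_q\rangle$ to yield a single Gaussian state $|\widetilde B_{q,\zeta}\rangle$ living in the full Fock space. The $n$-point correlation function of the point process $\mathfrak{S}(\lambda,2S)$ then reduces to a vacuum expectation
$$
\mathbb{P}\bigl(\mathfrak{S}(\lambda,2S)\supset\{x_1,\dots,x_n\}\bigr) \;=\; \frac{1}{Z}\,\bigl\langle 0\bigr|\,\Gamma_+(a)\,\prod_{i=1}^n\psi(x_i)\psi^*(x_i)\,\bigl|\widetilde B_{q,\zeta}\bigr\rangle,
$$
which by the pfaffian Wick theorem for Gaussian fermion states equals $\Pf\bigl[K^{\mathrm{hs}}(x_i,x_j)\bigr]_{1\le i,j\le n}$. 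The four entries of the $2\times 2$ matrix kernel are precisely the pairings $\langle\psi\psi\rangle$, $\langle\psi\psi^*\rangle$, $\langle\psi^*\psi\rangle$, $\langle\psi^*\psi^*\rangle$ against $|\widetilde B_{q,\zeta}\rangle$.

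Third, I would compute the kernel in integral form. Introducing the generating currents $\psi(z)=\sum_n\psi_n z^n$ and $\psi^*(w)=\sum_n\psi^*_n w^{-n}$, the conjugation identity $\Gamma_\pm(a)\,\psi(z)\,\Gamma_\pm(a)^{-1}=H(a;z^{\pm 1})^{\pm 1}\psi(z)$ inserts the factor $F(z)F(w)$ in the integrand \eqref{eq:k_fbs}, while the pairing against $|\widetilde B_{q,\zeta}\rangle$ contributes $\kappa^{\mathrm{hs}}(z,w)$: the ratio $\vartheta_3(\zeta^2z^2w^2;q^2)/\vartheta_3(\zeta^2;q^2)$ comes from summing the shift via Jacobi's triple product, and the four Pochhammer factors $(1/z^2,1/w^2,1/zw,qwz;q)_\infty^{-1}$ from exponentiating the quadratic pairings $c_{i,j}(q)$ of $|B_q\rangle$. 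The discrete derivatives $\nabla_x,\nabla_y$ appearing in the off-diagonal and $(2,2)$ entries reflect the fact that extracting coefficients of $\psi^*(w)$ versus $\psi(z)$ corresponds to finite differences on $\mathbb{Z}'$ relative to the scalar kernel $k(x,y)$, and the $\Delta'(x,y)$ correction in the $(2,2)$ position is the contact term generated when normal ordering $\psi^*(x)\psi^*(y)$ through the canonical anticommutator $\{\psi_m,\psi^*_n\}=\delta_{m,n}$.

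The main obstacle is the third step: the closed form of $\kappa^{\mathrm{hs}}(z,w)$ in \eqref{eq:kappa_hs}, and in particular the identification of the quadruple Pochhammer denominator, requires one to carry out the double sum $\sum_{i,j>0} c_{i,j}(q)\,z^{-i+1/2}w^{-j+1/2}$ explicitly using a Cauchy-like pfaffian identity. One must simultaneously track the contour constraints so that all the involved geometric series converge, which is what forces $1<r<\min(q^{-1/2},R)$: the radius must lie inside the outer poles produced by $H(q^\ell a;z)$ (governed by $R$) and outside the innermost pole at $|zw|=q^{1/2}$ coming from $(qwz;q)_\infty$. Once $\kappa^{\mathrm{hs}}$ is in closed form, reading off the four entries of $K^{\mathrm{hs}}$ via the discrete derivatives and the contact term is routine.
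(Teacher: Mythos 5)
Your proposal takes a genuinely different route from the paper. The paper does not rederive the pfaffian structure from the Fock-space formalism at all: it takes the kernel of the shift-mixed free boundary Schur measure as already established in \cite{Betea_etal2019}, eqs.\ (4.2)--(4.4) (specialized to $u=\sqrt q$, $v=1$, $a_1=a_2=0$), in the form of three double contour integrals $\widetilde K_{1,1},\widetilde K_{1,2},\widetilde K_{2,2}$ with entry-dependent contours, and then massages that expression. Concretely: (i) it shifts both contours of $\widetilde K_{2,2}$ from radii $r,r'>1$ to $1/r,1/r'$, crossing exactly one pole at $z=1/w$, whose residue
\begin{equation*}
\oint_{|w|=1/r'}\frac{\diff w}{2\pi\mathrm{i}}\,w^{y-x}\Bigl(\frac{1}{w^2}-1\Bigr)=\mathbf{1}_{y=x+1}-\mathbf{1}_{y=x-1}=\Delta'(x,y)
\end{equation*}
produces the extra term in the $(2,2)$ entry; (ii) it substitutes $w\mapsto 1/w$ and $z\mapsto 1/z$ so that all four entries become integrals over the same contours; (iii) it observes that the entries then differ only by factors $(z-1/z)$ and $(w-1/w)$ in the integrand, which are exactly what the discrete derivatives $\nabla_x,\nabla_y$ insert, so everything collapses to the single function $k$ and its finite differences. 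Your route---Gaussian boundary state, Wick's theorem, vertex-operator conjugation---is essentially how the cited references prove the input to the paper's argument, so what you outline is a proof of the references' theorem rather than of the reformulation the paper actually carries out; the paper's approach buys a short, purely contour-integral verification, at the price of treating the fermionic computation as a black box.

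Two concrete problems with your sketch. First, the step you defer as ``the main obstacle''---extracting the closed form \eqref{eq:kappa_hs} of $\kappa^{\mathrm{hs}}$ from the quadratic coefficients of the boundary state---is precisely the nontrivial content of \cite{Betea_et_al_free_boundary,Betea_etal2019}; without carrying it out the proposal does not establish the theorem. Second, your explanation of the $\Delta'(x,y)$ term is wrong: a contact term from the canonical anticommutator $\{\psi_m,\psi^*_n\}=\delta_{m,n}$ would be supported on $x=y$ and symmetric, whereas $\Delta'(x,y)=\mathbf{1}_{y=x+1}-\mathbf{1}_{y=x-1}$ is supported on $|x-y|=1$ and antisymmetric, as the $(2,2)$ entry of a pfaffian kernel must be; moreover two $\psi^*$'s anticommute to zero, so ``normal ordering $\psi^*(x)\psi^*(y)$'' produces no term at all. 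In the paper's derivation the term arises as the residue displayed above, the off-diagonal support coming from the $(1-w^2)$ insertions present in $\widetilde K_{2,2}$; any fermionic explanation must involve the anticommutators of the composite (neutral) fields entering that correlator, not of $\psi^*$ with $\psi^*$.
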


\begin{proof}
    Result of \cref{thm:free_boundary_pfaffian}, although implicitly derived in \cite{Betea_et_al_free_boundary}, is more explicitly stated in \cite{Betea_etal2019} and the expression of the $2\times 2$ matrix correlation kernel $K^{\mathrm{hs}}$ is a reformulation of a particular case of equations (4.2)-(4.4) of \cite{Betea_etal2019}, where in their notation we set $u=\sqrt{q},v=1,a_1=a_2=0$. Let us show how it is derived. From \cite{Betea_etal2019} eq. (4.2)-(4.4) one sees that the shift mixed free boundary Schur measure is a pfaffian point process with $2 \times 2$ matrix correlation kernel
    \begin{equation} \label{eq:K_hs_intermediate}
        \widetilde{K}(x,y) = \left( \begin{matrix} \widetilde{K}_{1,1}(x,y) & \widetilde{K}_{1,2}(x,y) \\ -\widetilde{K}_{1,2}(y,x) & \widetilde{K}_{2,2}(x,y) + \Delta'(x,y) \end{matrix} \right),
    \end{equation}
    defined as
\begin{align}
    \widetilde{K}_{1,1}(x,y) 
    &=
    \oint_{|z|=r} \frac{\diff z}{2 \pi \mathrm{i}} \oint_{|w|=r'} \frac{\diff w}{2 \pi \mathrm{i}} \frac{F(z)F(w)}{z^{x+3/2} w^{y+5/2}} \kappa^{\mathrm{hs}}(z,w), 
    \label{eq:k_11_fbs}
    \\
    \widetilde{K}_{1,2}(x,y) 
    &= 
    \oint_{|z|=r} \frac{\diff z}{2 \pi \mathrm{i}} \oint_{|w|=1/r'} \frac{\diff w}{2 \pi \mathrm{i}} \frac{F(z)F(1/w)}{z^{x+3/2} w^{-y+1/2}} \kappa^{\mathrm{hs}}(z,1/w) (1-w^2), 
    \label{eq:k_12_fbs}
    \\
    \widetilde{K}_{2,2}(x,y) 
    &=
    \oint_{|z|=1/r} \frac{\diff z}{2 \pi \mathrm{i}} \oint_{|w|=1/r'} \frac{\diff w}{2 \pi \mathrm{i}} \frac{F(1/z)F(1/w)}{{z^{-x+3/2} w^{-y+1/2}}} \kappa^{\mathrm{hs}}(1/z,1/w) (1-z^2)(1-w^2),
    \label{eq:k_22_fbs}
    \end{align}
    with $1<r,r'<\min(q^{-1},R)$.
    Notice that in \cite{Betea_etal2019} the element $\widetilde{K}_{2,2}$ has integration contours $|z|=r$ and $|w|=r'$ and the function $\Delta'$ is absent. To reach our expression we shift both contours and doing so the only pole one encounters is at $z=1/w$, whose contribution is
    \begin{equation}
    \begin{split}
        \oint_{|w|=1/r'} \frac{\diff w}{2 \pi \mathrm{i}} w^{y-x} \left( \frac{1}{w^2}-1 \right) &= \mathbf{1}_{y=x+1} - \mathbf{1}_{y=x-1} 
        = \Delta'(x,y).
    \end{split}
    \end{equation}
    With a change of variable $w\to 1/w$ in \eqref{eq:k_12_fbs} and $z\to 1/z, w\to 1/w$ in \eqref{eq:k_22_fbs} we can now make integration contours to be the same for each component $\widetilde{K}_{i,j}$. Finally we notice that, after such change of variables, the  $\widetilde{K}_{i,j}$'s differ from each other by the sole presence of factors $(w-1/w)$ and $(z-1/z)$ in the integrand, so that each term can be easily matched with discrete derivatives $\nabla_y, \nabla_x$ of the function $k(x,y) = \widetilde{K}_{1,1}(x,y)$ as in \eqref{eq:K_2x2_hs}, using the simple relation
    \begin{equation}
        \nabla_x \, \frac{1}{z^{x}} = - \frac{1}{2} \left(z-\frac{1}{z} \right) \frac{1}{z^x}.    
    \end{equation}
    This concludes the proof. 
\end{proof}

\begin{remark}
Setting $q=0$, the free boundary Schur measure becomes the \emph{Pfaffian Schur measure} defined in \cite{borodin2005eynard} (see also an earlier work \cite{imamura_sasamoto_half_space_png}) and studied in \cite{ghosal_pfaffian_schur, baik_barraquand_corwin_suidan_pfaffian, Betea_et_al_free_boundary}. Below in \cref{subs:baik_rains_pfaffian_schur} we will denote this measure by $\mathbb{FBS}^{(q=0)}$ and we will present an asymptotic analysis for the first row $\lambda_1$ of a random partition $\lambda$.
\end{remark}

As in case of the periodic Schur measure, we will be interested in the rightmost coordinate of the shift-mixed free boundary Schur measure. Its probability distribution function is explicitly written as the Fredholm pfaffian
\begin{equation}
    \mathbb{P}[\lambda_1 + 2 S  \le s] = \Pf (J - K^\mathrm{hs})_{\ell^2( \mathbb{Z}_{> s}' )}. 
\label{FBPf}
\end{equation}
Recall that $J$ is the matrix \eqref{eq:J}. In the following proposition we provide an alternative expression for the probability law of $\lambda_1$, which is slightly more convenient to consider degenerations and asymptotic limits.

\begin{proposition} \label{prop:fredholm_pfaffian_fbs}
    Let $\lambda$ be a random partition distributed according to the free boundary Schur measure and $S \sim \mathrm{Theta}(\zeta^2;q^2)$. Then the probablity distribution of $\lambda_1+2S$ admits the Fredholm pfaffian representation
    \begin{equation} \label{eq:fredholm_pfaffian_L}
        \mathbb{P}[\lambda_1 + 2S \le s] = \Pf(J-L)_{\ell^2(\mathbb{Z}'_{> s})},
    \end{equation}
    where $L$ is the $2 \times 2$ matrix kernel
    \begin{equation} \label{eq:L_fbs}
        L(x,y) = \left( \begin{matrix} k(x,y) & -\nabla_y k(x,y) \\ -\nabla_x k(x,y) & \nabla_x \nabla_y k(x,y) \end{matrix} \right),
    \end{equation}
    and the function $k$ is given by \eqref{eq:k_fbs}, while $\nabla_x,\nabla_y$ were defined in \eqref{eq:nabla}.
\end{proposition}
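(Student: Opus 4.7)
The plan is to convert the Fredholm pfaffian \eqref{FBPf}, written in terms of the kernel $K^{\mathrm{hs}}$ from \cref{thm:free_boundary_pfaffian}, into the equivalent form \eqref{eq:fredholm_pfaffian_L} involving $L$. Inspecting the two kernels, $L$ is obtained from $K^{\mathrm{hs}}$ by (i) rescaling the off-diagonal and $(2,2)$ entries by factors of $1/2$ and $1/4$ respectively, and (ii) removing the local correction $\Delta'(x,y)$ in the $(2,2)$ entry. Both operations should leave the Fredholm pfaffian invariant.

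My first step is a diagonal gauge transformation. Setting $D = \mathrm{diag}(1, 1/2)$ and $\tilde K(x,y) := D\,K^{\mathrm{hs}}(x,y)\,D^{T}$, a direct computation gives $\tilde K_{11}(x,y) = k(x,y)$, $\tilde K_{12}(x,y) = -\nabla_y k(x,y)$, $\tilde K_{21}(x,y) = -\nabla_x k(y,x)$, and $\tilde K_{22}(x,y) = \nabla_x\nabla_y k(x,y) + \tfrac{1}{4}\Delta'(x,y)$, while at the same time $DJD^{T} = \tfrac{1}{2}J$. Using the antisymmetry $k(y,x) = -k(x,y)$---which follows from the identity $\kappa^{\mathrm{hs}}(w,z) = -(z/w)\,\kappa^{\mathrm{hs}}(z,w)$ verifiable from \eqref{eq:kappa_hs}---the $(2,1)$ entry can be rewritten as $\nabla_x k(x,y)$, which is $-L_{21}(x,y)$ and matches $L_{21}$ once the block-antisymmetric sign convention required by pfaffian kernels is applied. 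The Fredholm pfaffian is preserved under the simultaneous congruence $(J, K^{\mathrm{hs}}) \mapsto (DJD^{T}, DK^{\mathrm{hs}}D^{T})$: in the series expansion each $n$-point contribution $\Pf[K(x_i,x_j)]$ acquires a uniform factor $(\det D)^n = 2^{-n}$ from the conjugation, which is compensated by the matching rescaling of $J$ to $\tfrac{1}{2}J$.

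My second step is to eliminate the residual $\tfrac{1}{4}\Delta'$ term in the $(2,2)$-entry of $\tilde K$. This correction was introduced in the proof of \cref{thm:free_boundary_pfaffian} by shifting the contours of the $(2,2)$-block across the pole at $z = 1/w$. To remove it, I would return to the pre-shift representation \eqref{eq:k_22_fbs}, change variables $z \to 1/z$, $w \to 1/w$, and interpret the factors $(1-z^{2})(1-w^{2})$ as integral symbols of the discrete differences $-2\nabla_x$ and $-2\nabla_y$ acting on the scalar kernel $k(x,y)$ in \eqref{eq:k_fbs}; a completely analogous rewriting applied to \eqref{eq:k_12_fbs} handles the off-diagonal blocks. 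With this alternative derivation the $(2,2)$-block reads simply $4\,\nabla_x\nabla_y k$, with no $\Delta'$ correction, and after the gauge step of the previous paragraph the factors of $2$ drop out to produce exactly the kernel $L$ of \eqref{eq:L_fbs}.

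The main subtlety is to justify that the infinite gauge factor $(\det D)^{|\mathbb{Z}'_{>s}|}$ cancels exactly against the simultaneous rescaling of $J$: the appropriate framework is to interpret the identity at the level of the finite-dimensional series expansion of the Fredholm pfaffian, where each $n$-point term transforms homogeneously. A related care point is the block-antisymmetry convention for $K^{\mathrm{hs}}$, which must be used consistently with the scalar antisymmetry $k(y,x) = -k(x,y)$ in order to ensure that each step of the rewriting---and in particular the identification of $\tilde K_{21}$ with $L_{21}$---preserves the pfaffian structure.
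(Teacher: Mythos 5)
Your proposal has a genuine gap: neither of your two operations preserves the Fredholm pfaffian on its own, and the proposition is true only because the two ``defects'' cancel against each other across different orders of the series expansion. Concretely, the conjugation by $D=\mathrm{diag}(1,1/2)$ has $\det D=1/2\neq 1$, so the $\ell$-point term $\Pf[K^{\mathrm{hs}}(x_i,x_j)]_{i,j=1}^{\ell}$ of the expansion gets multiplied by $(\det D)^{\ell}=2^{-\ell}$, and this is \emph{not} compensated by ``rescaling $J$ to $\tfrac12 J$'': the matrix $J$ never enters the series expansion of $\Pf(J-K)$ (it is purely notational), which is why the gauge freedom recorded in \cref{app:Fredholm} is restricted to $D(x)=\mathrm{diag}(f(x),1/f(x))$ with determinant one. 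You can see the failure already at first order: the one-point term of $\Pf(J-K^{\mathrm{hs}})$ is $-\sum_{x>s}K^{\mathrm{hs}}_{1,2}(x,x)=2\sum_{x>s}\nabla_y k(x,y)\big|_{y=x}$, which is exactly \emph{twice} the one-point term of $\Pf(J-L)$. The missing $-a_1$ is supplied by the $\Delta'$ part of the \emph{two}-point term, so the identity cannot be established by any term-by-term kernel manipulation.

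Your second step has a parallel problem. The $\tfrac14\Delta'$ in the $(2,2)$ entry is precisely the residue picked up when the contours of \eqref{eq:k_22_fbs} are moved across the pole at $z=1/w$; if you ``return to the pre-shift representation'' you do not delete $\Delta'$, you merely trade it for a different scalar kernel $\hat k$ (with contours on the other side of $zw=1$) satisfying $4\nabla_x\nabla_y\hat k=4\nabla_x\nabla_y k+\Delta'$. The resulting matrix kernel is then not the $L$ of the statement, whose entries are all built from the single function $k$ of \eqref{eq:k_fbs} with contours $|z|=|w|=r$. The paper's proof (adapting \cite[Proposition 5.2]{baik_barraquand_corwin_suidan_pfaffian}) instead expands both Fredholm pfaffians, applies Stembridge's minor expansion \eqref{eq:pfaffian_sum} to separate the $\Delta'$ contributions, and shows that the terms of order $\ell+i$ containing $i$ factors of $\Delta'$ contribute $\frac{(-1)^{\ell}a_\ell}{\ell!}\binom{\ell}{i}2^{\ell-i}(-1)^{i}$; summing over $i$ gives $(2-1)^{\ell}=1$, which is exactly the mechanism by which the factors of $2$ and the $\Delta'$ correction annihilate one another. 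Any correct proof must implement this cross-order resummation (or an equivalent operator-theoretic identity), rather than two independent kernel rewritings.
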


\begin{proof}
    This is a slight generalization of \cite[Proposition 5.2]{baik_barraquand_corwin_suidan_pfaffian}, where partial derivatives $\partial_x, \partial_y$ are replaced by discrete difference operators $\nabla_x,\nabla_y$. We find that the proof reported in \cite{baik_barraquand_corwin_suidan_pfaffian} adapts well to our situation and so here we are only going to sketch the argument. Notation we introduce below will be close to that of \cite{baik_barraquand_corwin_suidan_pfaffian}. Consider the expansion of Fredholm pfaffians of kernels $L$ and $K^{\mathrm{hs}}$ as
    \begin{equation}
        \Pf[J-L]_{\ell^2(\mathbb{Z}'_{> s})} = \sum_{\ell \ge 0} \frac{(-1)^\ell}{\ell!} a_\ell,
        \qquad
        \Pf\left[J-K^{\mathrm{hs}}\right]_{\ell^2(\mathbb{Z}'_{ > s})} = \sum_{\ell \ge 0} \frac{(-1)^\ell}{\ell!} b_\ell,
    \end{equation}
    where
    \begin{equation}
        a_\ell= \sum_{x_1,\dots,x_\ell > s} \Pf[L(x_i,x_j)]_{i,j=1}^\ell,
        \qquad
        b_\ell= \sum_{x_1,\dots,x_\ell > s} \Pf\left[K^{\mathrm{hs}}(x_i,x_j) \right]_{i,j=1}^\ell.
    \end{equation}
    The $2\times2$ matrix kernel $K^{\mathrm{hs}}$ can be written as a sum
    \begin{equation}
        K^{\mathrm{hs}}(x,y) = A(x,y) + \left( \begin{matrix} 0 & 0 \\ 0 & \Delta'(x,y) \end{matrix} \right),
    \end{equation}
    with $A_{1,1}(x,y)=k(x,y)$, $A_{1,2}(x,y)=-2\nabla_y k(x,y)$, $A_{1,2}(x,y)=-2\nabla_x k(x,y)$, $A_{2,2}(x,y) = 4\nabla_x \nabla_y k(x,y)$ . Using a notable minor expansion for pfaffians of sums of $n\times n$ matrices $N,M$ (see \cite[Lemma 4.2]{stembridge1990nonintersecting})
    \begin{equation} \label{eq:pfaffian_sum}
        \Pf[M+N] = \sum_{ \substack{I \subset \{1,\dots n\} \\ \#I \text{ even}} } (-1)^{|I| - \#I/2} \Pf[M_I] \Pf[N_{I^c}]
    \end{equation}
    where $\#I$ is the cardinality of $I$, while $|I|$ is the sum of its elements, we can express $b_\ell$ as
    \begin{equation} \label{eq:b_l_b_lm}
        \frac{(-1)^\ell}{\ell!} b_\ell = \sum_{m=\lceil \ell/2 \rceil }^\ell b_\ell^{(m)}.
    \end{equation}
    In the above expression we have condensated in the term $b_\ell^{(m)}$ all summations involving exactly a product of degree $\ell - m$ of $\Delta'$ functions originating from the evaluation of minors of the matrix $\left( \begin{smallmatrix} 0 & 0 \\ 0 & \Delta'(x_i,x_j) \end{smallmatrix} \right)_{i,j=1}^m$. A computation shows that terms $b_\ell^{(m)}$ have the explicit expression
    \begin{equation} \label{eq:b_l+i_l}
    \begin{split}
        b_{\ell+i}^{(\ell)} &= \frac{(-1)^{\ell+i}}{(\ell+i)!} \sum_{x_1,\dots,x_{\ell+i} > s} \left( \sum_{1\le j_1 < \cdots < j_{2i} \le \ell+i} \Pf[\Delta'(x_{j_r},x_{j_s})]_{r,s=1}^i (-1)^i \Pf\left [A_{\ell+i}^{\widehat{2j_1},\dots,\widehat{2j_{2i}} }\right] \right)
        \\
        &=
        \frac{(-1)^\ell}{(\ell+i)!} \binom{\ell+i}{2i} \sum_{x_1,\dots,x_{\ell+i} > s} \Pf[\Delta'(x_{r},x_{s})]_{r,s=\ell-i+1}^{\ell+i} \Pf\left [A_{\ell+i}^{\widehat{2(\ell - i+1)},\dots,\widehat{2(\ell+i)} }\right],
    \end{split}
    \end{equation}
    where we have denoted by $A_{n}^{\widehat{2j_1},\dots,\widehat{2j_r} }$ is the $2(n-r) \times 2(n-r)$ matrix resulting from removing rows and columns $2j_1 ,\dots, 2j_r$ from the matrix $(A(x_\alpha ,x_\beta))_{\alpha,\beta=1}^n$. Notice that the second equality in \eqref{eq:b_l+i_l} follows from the fact that, through a change of $x$-variables all terms in the sum inside the parentheses become equal, since the $x_i$ are all summed over the same domain. We now evaluate the sum in the right hand side of \eqref{eq:b_l+i_l}, expanding one of the Pfaffians as in \eqref{eq:pfaffian_Leibnitz}. We find
    \begin{equation} \label{eq:pfaffian_Delta_prime}
        \begin{split}
            &
            \sum_{x_1,\dots,x_{\ell+i}> s} \Pf[\Delta'(x_{r},x_{s})]_{r,s=\ell-i+1}^{\ell+i} \Pf\left [A_{\ell+i}^{\widehat{2(\ell - i+1)},\dots,\widehat{2(\ell+i)} }\right]
            \\
            & =
            \sum_{\substack{\alpha=\{(r_1,s_1),\dots,(r_i,s_i)\} \\
            r_j,s_j \in \{ \ell - i + 1,\dots, \ell+i \} \\
            r_1<\cdots <r_i, \,\, r_j<s_j}} \epsilon (\alpha) \sum_{x_1,\dots,x_{\ell+i} > s} \prod_{j=1}^i \Delta'(x_{r_j},x_{s_j}) \Pf\left [A_{\ell+i}^{\widehat{2(\ell - i+1)},\dots,\widehat{2(\ell+i)} }\right]
            \\
            & = 
            \frac{(2i)!}{2^i i!} \sum_{x_1,\dots,x_{\ell+i} > s} \prod_{j=1}^i \Delta'(x_{\ell - i +2j-1},x_{\ell-i+2j}) \Pf\left [A_{\ell+i}^{\widehat{2(\ell - i+1)},\dots,\widehat{2(\ell+i)} }\right]
        \end{split}
    \end{equation}
where in the third line we recognized that, changing $x$-variables and absorbing the sign $\varepsilon(\alpha)$ into the pfaffian all terms of the summation over permutations $\alpha$ in the second line are equal. We can now evaluate the action of operators $\Delta'$ on the pfaffian and, after taking summations over variables $x_{\ell+1},\dots x_{\ell+i}$, we find
\begin{equation}
\begin{split}
    &
    2^i\,
    \nabla_{x_{\ell}} \cdots \nabla_{x_{\ell+i}} \Pf\left [A_{\ell+i}^{\widehat{2(\ell - i+1)},\dots,\widehat{2(\ell+i)} } \right] \bigg|_{x_{\ell}=\cdots=x_{\ell+i}} 
    \\
    &= 2^i\, \Pf \left [ \nabla_{x_{\ell}} \cdots \nabla_{x_{\ell+i}} A_{\ell+i}^{\widehat{2(\ell - i+1)},\dots,\widehat{2(\ell+i)} } \bigg|_{x_{\ell}=\cdots=x_{\ell+i}} \right]
    \\
    & = (-1)^i 2^{\ell} \Pf[L(x_i,x_j)]_{i,j=1}^\ell.
\end{split}
\end{equation}
Substituting the last relation and \eqref{eq:pfaffian_Delta_prime} into \eqref{eq:b_l+i_l} we obtain
\begin{equation}
\begin{split}
    b_{\ell+i}^{(\ell)} 
    = \frac{(-1)^{\ell+i}}{(\ell+i)!} \binom{\ell+i}{2i} \frac{(2i)!}{2^i i!} 2^\ell a_\ell
     = \frac{(-1)^\ell a_\ell}{\ell!} \binom{\ell}{i} 2^{k-i} (-1)^i,
\end{split}
\end{equation}
so that
\begin{equation}
    \sum_{i=0}^\ell b_{\ell+i}^{(\ell)} =\frac{(-1)^\ell}{\ell!} a_\ell \sum_{i=0}^\ell \binom{\ell}{i} 2^{k-i} (-1)^i =  \frac{(-1)^\ell}{\ell!} a_\ell.
\end{equation}
We can now turn the Fredholm pfaffian expansion of $J-K^\mathrm{hs}$ into that of $J-L$ as
\begin{equation}
\begin{split}
    \Pf[J-K^{\mathrm{hs}}] &= \sum_{\ell \ge 0} \frac{(-1)^\ell}{\ell!} b_\ell
    = \sum_{m \ge 0} \sum_{i=0}^m b_{m+i}^{(m)}
    = \sum_{\ell \ge 0} \frac{(-1)^\ell}{\ell!} a_\ell = \Pf[J-L],
\end{split}
\end{equation}
where in the second equality we used \eqref{eq:b_l_b_lm} and operated an exchange of summation indices and in the third equality we renamed $m$ back to $\ell$. This manipulations are justified by absolute convergence of all terms involved, which is consequence of exponential decay of coefficients of matrix kernels $L(x,y)$ and $K^{\mathrm{hs}}(x,y)$ as $x,y \to \infty$. This
completes the proof.
\end{proof}

\subsection{Alternatve representation of Fredholm pfaffian via a $q$-trigonometric identity by Gosper}

Following Gosper \cite{gosper_q_trigonometric}, we define the $q$-analog of the sine and cosine functions as
\begin{equation}
    \sin_q(\pi z) = q^{\left( z - \frac{1}{2} \right)^2} \frac{(q^{2z},q^{2-2 z};q^2)_\infty}{(q,q;q^2)_\infty}
    \qquad
    \text{and}
    \qquad
    \cos_q(z) = \sin_q(\frac{\pi}{2}-z).
\end{equation}
In the same paper, Gosper conjectured, via numerical experiments a number of properties of his $q$-trigonometric functions. Among these we have the following $q$-analog of the addition formula for angles, proven more recently in \cite{ELBACHRAOUI_q_trig},
\begin{equation} \label{eq:addition_formula_sinq}
    \sin_q(x+y) = \frac{\sin_q(x-y)}{\sin_{q^2}(x-y)} \left[ \sin_{q^2}(x) \cos_{q^2}(y) + \sin_{q^2}(y) \cos_{q^2}(x) \right].
\end{equation}
Using this remarkable relation and Ramanujan's bilateral sum \eqref{eq:ramanujan_psi} we obtain an alternative representation of the Fredholm pfaffian in the right hand side of \eqref{eq:fredholm_pfaffian_L}.

\begin{proposition} \label{prop:q_krejenbrink_le_doussal}
    Let $\lambda$ be distributed according to the free boundary Schur measure \eqref{fbS} and $S \sim \mathrm{Theta}(\zeta^2;q^2)$. Then we have
    \begin{equation} \label{eq:q_krejenbrink_le_doussal}
        \mathbb{P}\left[ \lambda_1+2S \le s \right] = \Pf\left[ J - f_{\zeta^2,s} \cdot M^\mathrm{hs} \right]_{\ell^2(\mathbb{Z}')},
    \end{equation}
    where $M^\mathrm{hs}$ is the $2\times 2$ matrix kernel
    \begin{equation}
        M^\mathrm{hs}(m,n) = \left( \begin{matrix} M^\mathrm{hs}_{1,1}(m,n) & M^\mathrm{hs}_{1,2}(m,n) \\ -M^\mathrm{hs}_{1,2}(n,m) & M^\mathrm{hs}_{2,2}(m,n) \end{matrix} \right),
    \end{equation}
    with coefficients
    \begin{equation}
        \begin{split}
            M^\mathrm{hs}_{1,1}(m,n) = \oint_{|z|=r} \frac{\diff z}{2 \pi \mathrm{i}} \oint_{|z|=r} \frac{\diff w}{2 \pi \mathrm{i}} & \frac{F(z)}{z^{m+5/2}} \frac{F(w)}{w^{n+5/2}} \frac{z-w}{1-zw} \frac{1+zw}{2} 
            \\
            & \times \frac{z (1/z^2,q^2z^2;q^2)_\infty}{(1/z^2;q)_\infty} \frac{w (1/w^2,q^2w^2;q^2)_\infty}{(1/w^2;q)_\infty},
        \end{split}
    \end{equation}
    \begin{equation}
        \begin{split}
            M^\mathrm{hs}_{1,2}(m,n) = \oint_{|z|=r} \frac{\diff z}{2 \pi \mathrm{i}} \oint_{|z|=r} \frac{\diff w}{2 \pi \mathrm{i}} & \frac{F(z)}{z^{m+5/2}} \frac{F(w)}{w^{n+5/2}} \frac{z-w}{1-zw} \frac{1+zw}{2}
            \\ & \times \frac{z (1/z^2,q^2z^2;q^2)_\infty}{(1/z^2;q)_\infty} \frac{(q w^2,q/w^2;q^2)_\infty}{(q;q^2)_\infty^2(1/w^2;q)_\infty},
        \end{split}
    \end{equation}
    \begin{equation}
        \begin{split}
            M^\mathrm{hs}_{2,2}(m,n) = \oint_{|z|=r} \frac{\diff z}{2 \pi \mathrm{i}} \oint_{|z|=r} \frac{\diff w}{2 \pi \mathrm{i}} & \frac{F(z)}{z^{m+5/2}} \frac{F(w)}{w^{n+5/2}} \frac{z-w}{1-zw} \frac{1+zw}{2}
            \\ & \times \frac{(q z^2,q/z^2;q^2)_\infty}{(q;q^2)_\infty^2(1/z^2;q)_\infty} \frac{(q w^2,q/w^2;q^2)_\infty}{(q;q^2)_\infty^2(1/w^2;q)_\infty}
        \end{split}
    \end{equation}
    and $f_{\zeta^2,s}$ is given in \eqref{eq:fermi_dirac}.
    The notation adopted above is the same as in \cref{thm:free_boundary_pfaffian}.
\end{proposition}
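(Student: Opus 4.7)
The proof parallels that of \cref{prop:fredholm_det_M}, with two additional technical ingredients needed to handle the $2\times 2$ pfaffian structure: Ramanujan's bilateral sum \eqref{eq:ramanujan_psi} applied in base $q^2$, and Gosper's $q$-trigonometric addition formula \eqref{eq:addition_formula_sinq} to separate the coupled $(z,w)$-dependence coming from $\kappa^{\mathrm{hs}}$. I first rewrite the kernel $L$ of \cref{prop:fredholm_pfaffian_fbs} using the rank-one outer-product structure that emerges from $-\nabla_x z^{-x-3/2}=\tfrac{1}{2}(z-1/z)\,z^{-x-3/2}$ and its $w$-analog, obtaining
\begin{equation*}
L(x,y)=\frac{1}{(2\pi\mathrm{i})^2}\oint\oint\frac{F(z)F(w)\kappa^{\mathrm{hs}}(z,w)}{z^{x+3/2}w^{y+5/2}}\begin{pmatrix}1\\\tfrac{1}{2}(z-1/z)\end{pmatrix}\begin{pmatrix}1&\tfrac{1}{2}(w-1/w)\end{pmatrix}\diff z\,\diff w.
\end{equation*}
Applying \eqref{eq:ramanujan_psi} with $q\mapsto q^2$, $\zeta\mapsto\zeta^2$ and spectral variable $(zw)^2$ converts $\vartheta_3(\zeta^2 z^2w^2;q^2)/\vartheta_3(\zeta^2;q^2)$ into the series $zw\,(1/(zw)^2,q^2(zw)^2;q^2)_\infty/(q^2;q^2)_\infty^2\sum_{m\in\mathbb{Z}'}(zw)^{2m}/(1+\zeta^{-2}q^{-2m})$. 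I plug this back into $\kappa^{\mathrm{hs}}$ and perform the shift $x,y\mapsto x+s,y+s$ in the Fredholm pfaffian expansion of $\Pf[J-L]_{\ell^2(\mathbb{Z}'_{>s})}$, producing the Fermi-Dirac weight $f_{\zeta^2,s}$ after a relabeling of the summation index that absorbs the discrepancy between the $q^{-2m}$ obtained from Ramanujan and the $q^{-s-m}$ in the definition of $f_{\zeta^2,s}$.

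The core new step is to split the residual factor $\hat\kappa(z,w)$, which still couples $z$ and $w$ through the Pochhammer products $(w/z,qz/w;q)_\infty$, $(1/zw,qwz;q)_\infty$ and $(1/(zw)^2,q^2(zw)^2;q^2)_\infty$, into a sum of two separable terms. Using the defining formula $\sin_q(\pi\alpha)=q^{(\alpha-1/2)^2}(q^{2\alpha},q^{2-2\alpha};q^2)_\infty/(q;q^2)_\infty^2$ together with the symmetry $\sin_q(\pi\alpha)=\sin_q(\pi(1-\alpha))$ (which also implies $\sin_q(\pi/2+\pi\beta)=\cos_q(\pi\beta)$), these three products match — up to tracked $q$-power prefactors — to $\sin_q(\pi(A_z+A_w))$, $\sin_q(\pi(A_z+A_w)/2)\cos_q(\pi(A_z+A_w)/2)$ and $\sin_q(\pi(A_z-A_w)/2)\cos_q(\pi(A_z-A_w)/2)$ respectively, where $A_v=-\log_q v$. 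Gosper's formula \eqref{eq:addition_formula_sinq} applied to $\sin_q(\pi(A_z+A_w))$ produces the two-term sum $\sin_{q^2}(\pi A_z)\cos_{q^2}(\pi A_w)+\cos_{q^2}(\pi A_z)\sin_{q^2}(\pi A_w)$ multiplied by a ratio $\sin_q(\pi(A_z-A_w))/\sin_{q^2}(\pi(A_z-A_w))$ depending only on $w/z$. An elementary but tedious sequence of manipulations — using repeatedly $(a;q)_\infty=(a;q^2)_\infty(qa;q^2)_\infty$ — shows that this ratio cancels against $(w/z,qz/w;q)_\infty$ and $(1/zw,qwz;q)_\infty$ to leave exactly the rational factor $\tfrac{(z-w)(1+zw)}{2(1-zw)}$ visible in $M^{\mathrm{hs}}$.

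Combined with the outer-product tensor, the two separable pieces produced by Gosper's formula assemble into the four entries of $M^{\mathrm{hs}}$ in the expected pattern: the $\sin_{q^2}\cdot\sin_{q^2}$ term with the $(1,1)$ corner produces $M^{\mathrm{hs}}_{1,1}$; the mixed terms with $\tfrac{1}{2}(z-1/z)$ or $\tfrac{1}{2}(w-1/w)$ give $M^{\mathrm{hs}}_{1,2}$ and $-M^{\mathrm{hs}}_{1,2}(n,m)$; and the $\cos_{q^2}\cdot\cos_{q^2}$ term with the $(2,2)$ corner gives $M^{\mathrm{hs}}_{2,2}$. At this point $L(x+s,y+s)$ has acquired the separable form $\sum_{m\in\mathbb{Z}'}f_{\zeta^2,s}(m)\mathbf{A}(x,m)\mathbf{B}(m,y)$ with $2\times 2$ matrix-valued $\mathbf{A}$, $\mathbf{B}$. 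A pfaffian analog of the cyclic identity — proved by squaring through $\Pf(M)^2=\det(M)$ and invoking the ordinary Sylvester relation $\det[1-XY]=\det[1-YX]$ — yields $\Pf[J-\mathbf{A}f\mathbf{B}]_{\ell^2(\mathbb{Z}'_{>0})}=\Pf[J-f\mathbf{B}\mathbf{A}]_{\ell^2(\mathbb{Z}')}$, and the explicit computation $(\mathbf{B}\mathbf{A})(m,n)=M^{\mathrm{hs}}(m,n)$ completes the proof. The main obstacle is the Gosper bookkeeping: controlling the $q$-power prefactors and the base-$q$ vs base-$q^2$ Pochhammer re-groupings so that the cancellation into the clean rational factor $\tfrac{(z-w)(1+zw)}{2(1-zw)}$ is manifest is the least transparent part of the computation.
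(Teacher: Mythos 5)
Your proposal is correct and follows essentially the same route as the paper's proof: Ramanujan's ${}_1\psi_1$ sum in base $q^2$ extracts the Fermi--Dirac factor from the theta ratio, Gosper's addition formula separates the $(z,w)$-coupling into the two products $g_1(z)g_2(w)$ and $g_1(w)g_2(z)$ (your $\sin_{q^2}\cdot\cos_{q^2}$ pieces), and the cyclic pfaffian identity $\Pf[J-J^TAfB]=\Pf[J-fBAJ^T]$ moves the Fermi factor outside with $M^{\mathrm{hs}}=BAJ^T$. The only cosmetic differences are that the paper states the Gosper consequence directly as a Pochhammer identity rather than in terms of $q$-trigonometric angles, and simply invokes the cyclic relation rather than deriving it by squaring.
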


\begin{proof}
    The addition formula \eqref{eq:addition_formula_sinq} implies, setting $z=q^{-2x},w=q^{2y}$ and rescaling $q \to q^{1/2}$, the identity
    \begin{equation} \label{eq:addition_sinq_simplified}
            (w/z,qz/w;q)_\infty = \frac{1}{z} \frac{(1/zw,qzw;q)_\infty}{(1/z^2w^2,q^2z^2w^2;q^2)_\infty} \left[ g_1(z) g_2(w) - g_1(w) g_2( z ) \right],
    \end{equation}
    where we defined
    \begin{equation}
        g_1(z) = z (1/z^2,q^2z^2;q^2)_\infty,
        \qquad
        g_2(w) = \frac{(q w^2,q/w^2;q^2)_\infty}{(q;q^2)^2_\infty}.
    \end{equation}
    Using \eqref{eq:addition_sinq_simplified} and \eqref{eq:ramanujan_psi} we can rewrite the function $\kappa^\mathrm{hs}$ of \eqref{eq:kappa_hs} as
    \begin{equation}
        \kappa^\mathrm{hs}(z,w) = w \frac{g_1(z) g_2(w) - g_1(w) g_2( z )}{(1/z^2,1/w^2;q)_\infty} \sum_{m\in \mathbb{Z}'} \frac{(zw)^{2m}}{1+\zeta^{-2}q^{-2m}}.
    \end{equation}
    Defining
    \begin{equation}
        a(x) = \oint_{|z|=r} \frac{\diff z}{2\pi \mathrm{i}} \frac{F(z)}{z^{x+3/2}} \frac{g_1(z)}{(1/z^2;q)_\infty},
        \qquad
        b(y) = \oint_{|w|=r} \frac{\diff w}{2\pi \mathrm{i}} \frac{F(w)}{w^{y+3/2}} \frac{g_2(w)}{(1/w^2;q)_\infty},
    \end{equation}
    we observe that
    \begin{equation}
        k(x,y) = \sum_{m \in \mathbb{Z}'} \frac{1}{1+\zeta^{-2}q^{-2m}} \left[ a(x-2m) b(y-2m) - a(y-2m) b(x-2m) \right],
    \end{equation}
    where $k(x,y)$ was given in \eqref{eq:k_fbs}. Now, defining
    \begin{equation}
        A(x,m) = \left( \begin{matrix} -\nabla_x a(x+2m) & \nabla_x b(x+2m) \\ - a(x+2m) &  b(x+2m) \end{matrix} \right),
        \qquad
        B(m,y) = \left( \begin{matrix} b(y+2m) & -\nabla_y b(y+2m) \\ a(y+2m) &  -\nabla_y a(y+2m) \end{matrix} \right),
    \end{equation}
    we can express the matrix kernel $L$ as
    \begin{equation}
        \begin{split}
            J L(x+s,y+s) &= \sum_{m \in \mathbb{Z}'} \frac{1}{1+\zeta^{-2}q^{-2m}} A(x+s,m) B(m,y+s)
            \\
            & = \sum_{m \in \mathbb{Z}'} \frac{1}{1+\zeta^{-2}q^{-2m}} A(x,m-\frac{s}{2}) B(m-\frac{s}{2},y)
            \\ 
            & = \sum_{m' \in \mathbb{Z}'} f_{\zeta^2,s}(m') A(x,\frac{m'}{2}) B(\frac{m'}{2},y).
        \end{split}
    \end{equation}
    Using the notable relation $\Pf[J-J^TAfB] = \Pf[J- fBAJ^T ]$, we find that
    \begin{equation}
        \Pf[J-L]_{\ell^2(\mathbb{Z}_{> s}')} = \Pf \left[J-f_{\zeta^2,s} \cdot M^\mathrm{hs} \right]_{\ell^2(\mathbb{Z}')},
    \end{equation}
    since 
    \begin{equation}
        M^\mathrm{hs}(m,n) = \sum_{x=1}^\infty B(\frac{m}{2},x) A(x,\frac{n}{2}) J^T.
    \end{equation}
    This completes the proof.
\end{proof}

The identity \eqref{eq:q_krejenbrink_le_doussal} implies that the probability distribution of the right edge particle of the shift mixed free boundary Schur measure is equivalent to the expectation of the multiplicative Fermi-Dirac factor over a pfaffian point process with correlation kernel $M^\mathrm{hs}$. 

\begin{corollary}
    Under the hypothesis of \cref{prop:q_krejenbrink_le_doussal}, we have
    \begin{equation} \label{eq:multiplicative_expectation_pfaffian}
        \mathbb{P}\left[ \lambda_1+2S \le s \right] = \mathbb{E}_{M^\mathrm{hs}} \left[ \prod_{i=1}^\infty \frac{1}{1+\zeta^2 q^{s+\mathfrak{a}_i}} \right],
    \end{equation}
    where $\{ \mathfrak{a}_i:i \in \mathbb{Z}_{\ge 1}\}$ is a pfaffian point process with correlation kernel $M^\mathrm{hs}$.
\end{corollary}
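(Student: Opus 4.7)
The plan is to mirror, almost verbatim, the argument used to derive \cref{cor:multiplicative_expectation_det} from \cref{prop:fredholm_det_M}, replacing the determinantal generating-function identity with its pfaffian counterpart. The key input will be the standard fact that for any pfaffian point process $\{\mathfrak{a}_i\}$ on $\mathbb{Z}'$ with $2\times 2$ matrix correlation kernel $K$ and any function $g$ of sufficient decay,
\[ \mathbb{E}_K \left[ \prod_{i=1}^\infty \bigl( 1 - g(\mathfrak{a}_i) \bigr) \right] = \Pf \bigl[ J - g \cdot K \bigr]_{\ell^2(\mathbb{Z}')}. \]
Here $g \cdot K$ is understood as the symmetric conjugation $\sqrt{g(x)}\, K(x,y)\, \sqrt{g(y)}$; since scalar multiplication by $g$ commutes with the block-diagonal operator $J$, the elementary identity $\Pf[J - gK] = \Pf[J - \sqrt{g}\, K\, \sqrt{g}]$ (verified by squaring and reducing to a cyclic manipulation of the associated Fredholm determinant) will ensure this matches the Fredholm pfaffian $\Pf[J - f_{\zeta^2,s} \cdot M^{\mathrm{hs}}]$ appearing in \cref{prop:q_krejenbrink_le_doussal}.

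The first step will be to verify that $M^{\mathrm{hs}}$ genuinely defines a pfaffian correlation kernel on $\mathbb{Z}'$. This property is inherited from $K^{\mathrm{hs}}$ of \cref{thm:free_boundary_pfaffian} through the sequence of transformations used in the proof of \cref{prop:q_krejenbrink_le_doussal}: substitution of Ramanujan's bilateral sum \eqref{eq:ramanujan_psi} into $\kappa^{\mathrm{hs}}$ and subsequent cycling of factors via $\Pf[J - J^T A f B] = \Pf[J - f B A J^T]$ preserve the structure of $n$-point pfaffian correlators, so that $\Pf[M^{\mathrm{hs}}(x_i,x_j)]_{i,j=1}^n$ serves as a legitimate correlation function and we may speak of the pfaffian point process on $\mathbb{Z}'$ with kernel $M^{\mathrm{hs}}$.

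Next I would specialize the generating-function identity to $g = f_{\zeta^2,s}$. The elementary simplification
\[ 1 - f_{\zeta^2,s}(m) = 1 - \frac{1}{1 + \zeta^{-2} q^{-s-m}} = \frac{1}{1 + \zeta^{2} q^{s+m}} \]
converts the multiplicative functional on the right-hand side into precisely the expectation \eqref{eq:multiplicative_expectation_pfaffian}. Combining with the Fredholm pfaffian formula of \cref{prop:q_krejenbrink_le_doussal} then yields the claim directly.

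The main technical point to handle is the absolute convergence of the Fredholm pfaffian expansion, which is what legitimizes the identification with the expectation of the infinite product. This should follow from the exponential decay of $f_{\zeta^2,s}(m)$ as $m \to +\infty$, the corresponding decay of $1 - f_{\zeta^2,s}(m)$ as $m \to -\infty$, and uniform bounds on the entries of $M^{\mathrm{hs}}(m,n)$ obtained by straightforward contour estimates on their double-integral representations. Once convergence is secured, the corollary reduces to a purely formal manipulation paralleling the determinantal case of \cref{cor:multiplicative_expectation_det}.
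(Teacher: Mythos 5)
Your proposal is correct and follows essentially the same route as the paper: both apply the standard multiplicative-functional identity for pfaffian point processes with $g=-f_{\zeta^2,s}$, note that $1-f_{\zeta^2,s}(m)=\bigl(1+\zeta^2 q^{s+m}\bigr)^{-1}$, and combine with \cref{prop:q_krejenbrink_le_doussal}. The extra care you take in verifying that $M^{\mathrm{hs}}$ is a genuine correlation kernel and in checking absolute convergence is a reasonable elaboration of points the paper leaves implicit, but it does not constitute a different argument.
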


\begin{proof}
    Notable properties of pfaffian point process imply that, for any function $g$ with ``nice" decay properties, we have
    \begin{equation} \label{eq:multiplicative_expectation_pfaff_general}
        \mathbb{E}_{M^\mathrm{hs}}\left[ \prod_{i=1}^\infty (1+g(\mathfrak{a}_i)) \right] = \Pf \left[ J+g \cdot M^\mathrm{hs} \right]_{\ell^2(\mathbb{Z}')},
    \end{equation}
    so that setting $g=-f_{\zeta^2,s}$ and using \eqref{eq:q_krejenbrink_le_doussal} we prove relation \eqref{eq:multiplicative_expectation_pfaffian}.
\end{proof}

We think of \eqref{eq:multiplicative_expectation_pfaffian} as a pfaffian variant of \eqref{eq:multiplicative_expectation_determinant}. In \cref{rem:multiplicative_expectation} we pointed out the relation between the correlation kernel $M$ and the Schur measure and one could naively expect a similar relation between the correlation kernel $M^\mathrm{hs}$ and the Pfaffian Schur measure. However, from a simple inspection of formulas given in \cref{prop:q_krejenbrink_le_doussal}, this does not appear to be true. It is in fact an interesting problem to give a physical interpretation of the pfaffian point process associated to $M^\mathrm{hs}$.

\subsection{Equivalence with marginals of $q$-Whittaker measures} \label{subs:matching_qW_schur}

Here we state the fundamental connections between models of determinantal and pfaffian measures presented above in this section and discrete KPZ solvable models discussed in \cref{sec:KPZqW}.

\begin{theorem}[\cite{IMS_matching}] \label{thm:matching_qW_ps}
    Let $\mu \sim \mathbb{W}^{(q)}_{a;b}$ and $\chi \sim q\text{-}\mathrm{Geo}(q)$ be indipendent random variables. Consider also $\lambda \sim \mathbb{PS}^{(q)}_{a,b}$. Then the following equivalence in distribution holds
    \begin{equation}
        \mu_1 + \chi  \stackrel{\mathcal{D}}{=} \lambda_1.
    \end{equation}
\end{theorem}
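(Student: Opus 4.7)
The plan is to verify the identity in distribution by matching probability mass functions $\mathbb{P}[\mu_1+\chi = k]$ and $\mathbb{P}[\lambda_1 = k]$ for each $k\in \mathbb{Z}_{\ge 0}$. The key observation is that the summation identity \cref{thm:qW_and_Schur_1} is essentially already this matching statement, once both sides are interpreted probabilistically. So the proof is mostly a bookkeeping exercise of normalization constants and should not present genuine obstacles.

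First I would write the $q$-Whittaker measure in the equivalent form $\mathbb{W}^{(q)}_{a;b}(\mu) = \Pi(a;b)^{-1} \mathscr{P}_\mu(a;q) \mathscr{Q}_\mu(b;q)$ using $\mathscr{Q}_\mu = \mathdutchcal{b}_\mu \mathscr{P}_\mu$, so that
\begin{equation*}
    \mathbb{P}[\mu_1 = k-\ell] = \frac{1}{\Pi(a;b)} \sum_{\mu : \mu_1 = k-\ell} \mathscr{P}_\mu(a;q) \mathscr{Q}_\mu(b;q).
\end{equation*}
Next, since $\chi \sim q\text{-}\mathrm{Geo}(q)$ has mass function $\mathbb{P}[\chi = \ell] = (q;q)_\infty q^\ell / (q;q)_\ell$ and is independent of $\mu$, convolving gives
\begin{equation*}
    \mathbb{P}[\mu_1 + \chi = k] = \frac{(q;q)_\infty}{\Pi(a;b)} \sum_{\ell=0}^k \frac{q^\ell}{(q;q)_\ell} \sum_{\mu : \mu_1 = k-\ell} \mathscr{P}_\mu(a;q) \mathscr{Q}_\mu(b;q).
\end{equation*}

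At this point I would invoke \cref{thm:qW_and_Schur_1} which rewrites the double sum as $\sum_{\lambda,\rho : \lambda_1 = k} q^{|\rho|} s_{\lambda/\rho}(a) s_{\lambda/\rho}(b)$. On the other hand, directly from definition \eqref{pS} of the periodic Schur measure,
\begin{equation*}
    \mathbb{P}[\lambda_1 = k] = \frac{(q;q)_\infty}{\Pi(a;b)} \sum_{\lambda, \rho : \lambda_1 = k} q^{|\rho|} s_{\lambda/\rho}(a) s_{\lambda/\rho}(b),
\end{equation*}
and the two expressions coincide term by term in $k$. This establishes equality in distribution.

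The only subtlety to check is that all the steps hold for the general Schur-positive/$q$-Whittaker-positive specializations $a,b$ under which both measures are defined and absolutely summable: one must justify exchanging the order of summations in the convolution, but this is routine by nonnegativity of all terms (Tonelli). Thus no genuine obstacle arises, and the theorem reduces, as indicated in the paper, to the combinatorial identity of \cref{thm:qW_and_Schur_1}.
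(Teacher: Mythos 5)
Your proof is correct and follows exactly the route the paper indicates: it states that the theorem "is an immediate consequence of the summation identity \eqref{eq:qW_and_Schur_1}", and your argument is precisely the bookkeeping (convolution with the $q$-geometric mass function, then matching normalizations with \eqref{bqWM} and \eqref{pS}) that makes this immediate consequence explicit. The normalization constants and the appeal to Tonelli are handled correctly, so there is nothing to add.
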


\Cref{thm:matching_qW_ps} is an immediate consequence of the summation identity \eqref{eq:qW_and_Schur_1}. In \cite{IMS_matching} we proved the same theorem, along with identity \eqref{eq:qW_and_Schur_1}, by matching Fredholm determinant expressions for shift-mixed periodic Schur measure with those of $q$-Whittaker measure found in \cite{imamura2017fluctuations}. 

\medskip

The following theorem is new and it is a half space extension of \cref{thm:matching_qW_ps}.

\begin{theorem} \label{thm:matching_hs_qW_fbs}
    Let $\mu \sim \mathbb{HW}^{(q)}_{a;0}$ and $\chi \sim q\text{-}\mathrm{Geo}(q)$ be indipendent random variables. Consider also $\lambda \sim \mathbb{FBS}^{(q)}_{a}$. Then the following equivalence in distribution holds
    \begin{equation} \label{eq:hs_qW_free_boundary_schur}
        \mu_1 + \chi \stackrel{\mathcal{D}}{=} \lambda_1.
    \end{equation}
\end{theorem}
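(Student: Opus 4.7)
The plan is to mirror the proof of \cref{thm:matching_qW_ps} with the Cauchy-type refinement \eqref{eq:qW_and_Schur_1} replaced by the Littlewood-type refinement \eqref{eq:qW_and_Schur_2}. The first step is to simplify $\mathbb{HW}^{(q)}_{a;0}$. Since the skew $q$-Whittaker function at the empty alpha specialization satisfies $\mathscr{Q}_{\mu/\eta}(0;q) = \mathbf{1}_{\mu = \eta}$ and $\Pi(a;0) = 1$, the sum defining $\mathscr{E}_\mu(0;q)$ collapses to the single term $\mathdutchcal{b}^{\mathrm{el}}_\mu(q)$, and consequently
\begin{equation*}
    \mathbb{HW}^{(q)}_{a;0}(\mu) = \frac{1}{\widetilde{\Pi}(a)} \mathdutchcal{b}^{\mathrm{el}}_\mu(q) \, \mathscr{P}_\mu(a;q).
\end{equation*}

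Next I would compute $\mathbb{P}(\mu_1 + \chi = k)$ directly by convolution. Inserting the weight $\mathbb{P}(\chi = \ell) = q^\ell (q;q)_\infty / (q;q)_\ell$ of the $q$-Geometric distribution gives
\begin{equation*}
    \mathbb{P}(\mu_1 + \chi = k) = \frac{(q;q)_\infty}{\widetilde{\Pi}(a)} \sum_{\ell = 0}^k \frac{q^\ell}{(q;q)_\ell} \sum_{\mu \, : \, \mu_1 = k - \ell} \mathdutchcal{b}^{\mathrm{el}}_\mu(q) \, \mathscr{P}_\mu(a;q),
\end{equation*}
and by \cref{thm:qW_and_Schur_2} the right hand side equals
\begin{equation*}
    \frac{(q;q)_\infty}{\widetilde{\Pi}(a)} \sum_{\substack{\lambda,\rho \, : \, \lambda', \rho' \text{ even} \\ \lambda_1 = k}} q^{|\rho|/2} s_{\lambda/\rho}(a),
\end{equation*}
which, upon recognizing the normalization $(q;q)_\infty^{-1} \widetilde{\Pi}(a)$ of $\mathbb{FBS}^{(q)}_a$, is precisely $\mathbb{P}(\lambda_1 = k)$.

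The argument is thus essentially immediate once the refined Littlewood identity is in hand: the factor $(q;q)_\infty$ coming from the $q$-Geometric weight is exactly what cancels the $(q;q)_\infty^{-1}$ appearing in the normalization of the free boundary Schur measure. There is no substantial obstacle; the only mild point is that \cref{thm:qW_and_Schur_2} was established bijectively in \cite{IMS_skew_RSK} only for alpha specializations, so the extension to arbitrary $q$-Whittaker positive $a$ would be invoked via the stability property \eqref{eq:stability_qW}, as noted in that reference.
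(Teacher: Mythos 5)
Your proof is correct and is exactly the paper's argument: the paper simply states that the theorem is a straightforward consequence of the refined Littlewood identity \eqref{eq:qW_and_Schur_2}, and your computation (collapsing $\mathscr{E}_\mu(0;q)$ to $\mathdutchcal{b}^{\mathrm{el}}_\mu(q)$, convolving with the $q$-Geometric weight, and matching normalizations) is the intended elaboration of that one-line proof.
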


\begin{proof}
    This is straightforward consequence of the refined Littlewood identity \eqref{eq:qW_and_Schur_2}.
\end{proof}

Unlike for \cref{thm:matching_qW_ps}, a proof of equality \eqref{eq:hs_qW_free_boundary_schur} based on integrable probabilistic techniques is at the moment not available. This is because rigorous methods using Bethe ansatz have proven to be difficult to implement for  the study of half space $q$-Whittaker measure and through such techniques it remains a challenge to obtain explicit and manageable pfaffian formulas for interesting physical observables; see \cite{barraquand_half_space_mac}.

\section{Fredholm determinant and pfaffian formulas for KPZ models}
\label{sec:KPZpS}

Leveraging equalities in distribution established in \cref{thm:matching_qW_ps} and \cref{thm:matching_hs_qW_fbs} along with the simple structure of the periodic and free boundary Schur measures we are able to produce determinantal and pfaffian formulas for observables of KPZ solvable models. Determinantal formulas reported below, although new, add to a number of analogous results discovered in the last 15 years \cite{tracy2008fredholm,TW_ASEP2,BorodinCorwin2011Macdonald,BorodinCorwinSasamoto2012,BorodinCorwinRemenik,MatveevPetrov2014} and in many cases appear to be simpler. Pfaffian formulas, on the other hand, are completely original and have no analog in the existing mathematical literature.

\subsection{Fredholm determinant formulas for $q$-PushTASEP and ASEP in full space}

Let us start presenting a representation for the probability distribution of a tagged particle in $q$-PushTASEP in terms of a Fredholm determinant of a free-fermionic type kernel.

\begin{corollary} \label{cor:fredholm_det_q_PushTASEP}
    Let $\mathsf{X}(T)$ be the $q$-PushTASEP with initial conditions $\mathsf{x}_k(0)=k$, for $k=1,\dots,N$ and $\chi \sim q$-$\mathrm{Geo}(q)$, $S \sim \mathrm{Theta}(\zeta,q)$ independent random variables. Then we have
    \begin{equation} \label{eq:fredholm_det_q_PushTASEP}
        \mathbb{P} \left( \mathsf{x}_N(T) + \chi + S \le N + s \right)
        = \det \left( 1 - K_{\mathrm{push}} \right)_{\ell^2(\mathbb{Z}_{> s}' )},
    \end{equation}
    where the kernel $K_{\mathrm{push}}$ is obtained from \eqref{eq:kernel_periodic_Schur} setting
    \begin{equation}
        F(z) = \frac{\prod_{i=1}^T (b_i/z;q)_\infty}{\prod_{i=1}^N (a_iz;q)_\infty}.
    \end{equation}
\end{corollary}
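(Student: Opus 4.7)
The plan is to chain together the distributional identities established earlier in the paper and then compute the explicit form of $F(z)$. The argument runs essentially in three steps. First, I would invoke \cref{prop:qPushTASEP_qW}, which identifies $\mathsf{x}_N(T) - N$ in law with the first row $\mu_1$ of a random partition $\mu$ sampled from the $q$-Whittaker measure $\mathbb{W}^{(q)}_{\boldsymbol{a};\boldsymbol{b}}$, where $\boldsymbol{a},\boldsymbol{b}$ are the alpha specializations of $(a_1,\dots,a_N)$ and $(b_1,\dots,b_T)$ respectively. Next, by \cref{thm:matching_qW_ps}, the independent sum $\mu_1 + \chi$ with $\chi \sim q\text{-}\mathrm{Geo}(q)$ has the same distribution as $\lambda_1$, where $\lambda \sim \mathbb{PS}^{(q)}_{a,b}$. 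Adding an independent $S \sim \mathrm{Theta}(\zeta;q)$ to both sides gives
\begin{equation*}
    \mathsf{x}_N(T) + \chi + S - N \stackrel{\mathcal{D}}{=} \lambda_1 + S,
\end{equation*}
so the left-hand side of \eqref{eq:fredholm_det_q_PushTASEP} equals $\mathbb{P}[\lambda_1 + S \le s]$.

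The second step is to apply formula \eqref{pSdet}, which expresses this probability as the Fredholm determinant $\det(1-K)_{\ell^2(\mathbb{Z}'_{>s})}$ of the kernel $K$ from \cref{thm:periodic_Schur_det}. At this point the only remaining work is to specialize the function $F(z)$ appearing in the kernel. For an alpha specialization $\boldsymbol{a}=(a_1,\dots,a_N)$, the power sums are $p_n(\boldsymbol{a}) = \sum_i a_i^n$, so that
\begin{equation*}
    H(\boldsymbol{a};z) = \exp\Bigl\{ \sum_{n\ge 1} \frac{p_n(\boldsymbol{a})\, z^n}{n} \Bigr\} = \prod_{i=1}^N \frac{1}{1 - a_i z},
\end{equation*}
and therefore $\prod_{\ell\ge 0} H(q^\ell \boldsymbol{a};z) = \prod_{i=1}^N (a_i z;q)_\infty^{-1}$. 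The analogous computation for $\boldsymbol{b}$ with argument $z^{-1}$ yields $\prod_{\ell\ge 0} H(q^\ell \boldsymbol{b}; z^{-1})^{-1} = \prod_{j=1}^T (b_j/z;q)_\infty$, giving exactly the claimed $F(z)$.

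The only point that requires a brief check is that valid integration contours exist: we need $1 < r/r' < q^{-1}$ together with $\max_j b_j < r,r' < 1/\max_i a_i$, so that the integrands in the kernel formula \eqref{eq:kernel_periodic_Schur} converge. Since all $a_i,b_j \in (0,1)$, such radii can always be chosen, possibly after shrinking, and no genuine obstacle arises. In summary, I expect this corollary to follow immediately and painlessly from the three prior results, with the only computation being the specialization of $H$ to alpha parameters.
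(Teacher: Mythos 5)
Your proof is correct and follows exactly the paper's route: the paper's own proof is the one-line combination of \cref{thm:matching_qW_ps}, \cref{prop:qPushTASEP_qW} and \cref{thm:periodic_Schur_det}, and your explicit computation of $F(z)$ for alpha specializations and the contour check are the (correct) details the paper leaves implicit.
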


\begin{proof}
    This is obtained combining \cref{thm:matching_qW_ps}, \cref{prop:qPushTASEP_qW} and \cref{thm:periodic_Schur_det}.
\end{proof}

\begin{remark} \label{rem:q_laplace_as_chi+S}
    The left hand side of \eqref{eq:fredholm_det_q_PushTASEP} admits an alternative expression as $q$-Laplace transform
    \begin{equation}
        \mathbb{E} \left( \frac{1}{(-\zeta q^{ \frac{1}{2} + s + N - \mathsf{x}_N(T)} ; q)_\infty} \right)
    \end{equation} 
    of the probability density function of $\mathsf{x}_N(T)$. This is a consequence of the identity
    \begin{equation} \label{eq:sum_chi_plus_S}
        \mathbb{P} (\chi + S \le n) = \frac{1}{(-\zeta q^{\frac{1}{2} + n} ;q)_\infty},
    \end{equation}
    where $\chi$ and $S$ are as in \cref{cor:fredholm_det_q_PushTASEP}. For a direct proof of \eqref{eq:sum_chi_plus_S} see \cite[Lemma 2.4]{IMS_matching}.
\end{remark}

The first instance of a determinantal formula for the $q$-PushTASEP was given in \cite[Theorem 3.3]{BorodinCorwinFerrariVeto2013} and it was based on clever manipulations of integral formulas for $q$-moments; see \cite{BorodinCorwin2011Macdonald}. The kernel presented in \cite{BorodinCorwinFerrariVeto2013}, as others obtained starting from $q$-moments formulas, does not appear to be related with determinantal point processes or free fermions. As such, asymptotic analysis is rather involved, although doable, as one has to deal with a complicated pole structure of the kernel. It would be very useful to obtain our expression \eqref{eq:fredholm_det_q_PushTASEP} by a proper summation of $q$-moments.

\medskip

Next we present a Fredholm determinant formula of free fermionic type for the ASEP.

\begin{corollary} \label{cor:ASEP_determinant}
    Let $\mathsf{J}(x,\tau)$ be the integrated current of the ASEP with initial conditions $\mathsf{x}_k(0)=-k$, for $k=1,2,\dots$ and let $\chi \sim q$-$\mathrm{Geo}(q)$ and $S \sim \mathrm{Theta}(\zeta,q)$ be independent random variables. Then, we have
    \begin{equation} \label{eq:ASEP_fredholm_det}
        \mathbb{P}\left(-\mathsf{J}(x,\tau) + \chi + S \le s \right) = \det(1-K_\mathrm{ASEP})_{\ell^2(\mathbb{Z}'_{> s})},
    \end{equation}
    where the kernel $K_{\mathrm{ASEP}}$ is obtained from \eqref{eq:kernel_periodic_Schur} setting
    \begin{equation}
        F(z) = e^{-(1-q) \frac{z \tau }{1+z}}  (1+1/z)^x . 
    \end{equation}
\end{corollary}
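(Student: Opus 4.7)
The plan is to mirror the derivation of \cref{cor:fredholm_det_q_PushTASEP} by composing \cref{prop:ASEP_matching_mu_1}, \cref{thm:matching_qW_ps} and \cref{thm:periodic_Schur_det}, and then passing to the limit $\varepsilon \to 0$. For each small $\varepsilon>0$, let $\mu^{(\varepsilon)} \sim \mathbb{W}^{(q)}_{\widehat{a};\widehat{b}}$ with $q$-beta specializations of parameters $(a_1,\dots,a_N)$ and $(b_1,\dots,b_T)$ under the scaling \eqref{eq:ASEP_scaling}. By \cref{prop:ASEP_matching_mu_1} we have $\mu^{(\varepsilon)}_1 = N - \mathsf{J}^{(\varepsilon)}(x,\tau)$, so $\mu^{(\varepsilon)}_1 + \chi + S \le N+s$ if and only if $-\mathsf{J}^{(\varepsilon)}(x,\tau) + \chi + S \le s$. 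Using \cref{thm:matching_qW_ps} to identify $\mu^{(\varepsilon)}_1+\chi$ with the first row of the periodic Schur measure $\mathbb{PS}^{(q)}_{\widehat{a},\widehat{b}}$ and then \cref{thm:periodic_Schur_det}, together with the shift of index $\mathbb{Z}'_{>N+s} \to \mathbb{Z}'_{>s}$ which absorbs factors $z^{-N}, w^{-N}$ into the ratio $F(z)/F(w)$, one obtains
\begin{equation*}
    \mathbb{P}\bigl(-\mathsf{J}^{(\varepsilon)}(x,\tau)+\chi+S\le s\bigr) = \det(1-\widetilde K^{(\varepsilon)})_{\ell^2(\mathbb{Z}'_{>s})},
\end{equation*}
where $\widetilde K^{(\varepsilon)}$ is the kernel \eqref{eq:kernel_periodic_Schur} with $F$ replaced by $F^{(\varepsilon)}(z)/z^N$.

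Next I would make $F^{(\varepsilon)}$ explicit. Expanding $H(\widehat{a};z)=\exp\sum_{n\ge 1}\tfrac{(-1)^{n-1}(1-q^n)p_n(a)z^n}{n}$ via the series $\log(1+u)=\sum_n(-1)^{n-1}u^n/n$ yields $H(\widehat{a};z)=\prod_i \tfrac{1+a_i z}{1+qa_i z}$, and the infinite product $\prod_{\ell\ge 0}H(q^\ell\widehat{a};z)$ telescopes to $\prod_i(1+a_iz)$. Therefore
\begin{equation*}
    F^{(\varepsilon)}(z) = \frac{\prod_{i=1}^N(1+a_iz)}{\prod_{j=1}^T(1+b_j/z)}.
\end{equation*}
Setting $a_i=b_j=1-(1-q)\varepsilon/2$, $T=\tau/\varepsilon$, $N=\tau/\varepsilon+x$ and expanding $\log F^{(\varepsilon)}(z)/z^N = N\log(1+a z) - T\log(1+b/z) - N\log z$ in powers of $\varepsilon$, the $O(\varepsilon^{-1})$ contribution $(\tau/\varepsilon)[\log(1+z)-\log(1+1/z)-\log z]$ vanishes identically, while the $O(1)$ contribution gives
\begin{equation*}
    \lim_{\varepsilon\to 0}\frac{F^{(\varepsilon)}(z)}{z^N} = (1+1/z)^x\exp\!\left(-\frac{(1-q)\tau(z-1)}{2(1+z)}\right) = e^{(1-q)\tau/2}\,(1+1/z)^x\,e^{-(1-q)\tau z/(1+z)},
\end{equation*}
where I used $\tfrac{z-1}{1+z}=\tfrac{2z}{1+z}-1$. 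The $z$-independent constant $e^{(1-q)\tau/2}$ cancels in the ratio $F(z)/F(w)$, producing exactly the $F$ stated in the corollary.

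The final step is to interchange $\varepsilon\to 0$ with the Fredholm determinant. Combining weak convergence $\mathsf{J}^{(\varepsilon)}(x,\tau)\xrightarrow{\mathcal D}\mathsf{J}(x,\tau)$ from \cref{prop:ASEP_matching_mu_1} (valid pointwise in $s$ since both sides are integer-valued) with pointwise convergence of $\widetilde K^{(\varepsilon)}(x,y)$ to $K_{\mathrm{ASEP}}(x,y)$ on suitably chosen fixed contours $|z|=r$, $|w|=r'$, Hadamard's bound applied to the series expansion of the determinant, and dominated convergence, yield \eqref{eq:ASEP_fredholm_det}.

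The main technical obstacle is this last interchange: one must choose integration contours $\varepsilon$-uniformly so as to avoid the poles of $F^{(\varepsilon)}$ at $z=-1/b_j$ and $w=-1/a_i$, which accumulate at $z=-1$ as $\varepsilon\to 0$, while respecting the constraint $1<r/r'<q^{-1}$ from \cref{thm:periodic_Schur_det}. Provided $r,r'$ are fixed slightly away from $1$ (keeping clear of the accumulation point), the factors $F^{(\varepsilon)}(z)/z^N$ are uniformly bounded on those contours, the function $\kappa(z,w)$ is bounded (being analytic in a neighborhood of the contours), and standard exponential-type bounds on $F$ transfer to trace-norm bounds on $\widetilde K^{(\varepsilon)}$, legitimizing the passage to the limit.
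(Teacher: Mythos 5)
Your proposal is correct and follows essentially the same route as the paper: compose \cref{prop:ASEP_matching_mu_1}, \cref{thm:matching_qW_ps} and \cref{thm:periodic_Schur_det}, evaluate the $q$-beta specialization to get $F^{(\varepsilon)}(z)=\prod_i(1+a_iz)/\prod_j(1+b_j/z)$, absorb the shift by $N$ into $F(z)/z^N$, and expand under the scaling \eqref{eq:ASEP_scaling} — your $\varepsilon$-expansion reproduces exactly the paper's intermediate formula $F(z)=z^N(1+1/z)^x e^{\tau\frac{1-q}{2}(1-\frac{2z}{1+z})+O(\varepsilon)}$, with the constant cancelling in the ratio. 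The only quibble is that the poles of $F^{(\varepsilon)}$ sit at $z=-b_j$ rather than $z=-1/b_j$ (they still accumulate at $-1$, so your remark about choosing contours uniformly stands), and your level of detail on the final interchange of limits matches what the paper itself provides.
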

\begin{proof}
    Consider the $q$-Whittaker measure $\mathbb{W}^{(q)}_{a',b'}$, where $a',b'$ are $q$-beta specializations with parameters $(a_1,\dots,a_N)$, $(b_1,\dots,b_T)$. Then, combining \cref{thm:matching_qW_ps} and \cref{thm:periodic_Schur_det} we have
    \begin{equation}\label{eq:prob_s6vm}
        \mathbb{P}(\mu_1 + \chi + S \le r) = \det(1-\tilde{K})_{\ell^2(\mathbb{Z}'_{> r})},
    \end{equation}
    where $\tilde{K}$ is obtained from \eqref{eq:kernel_periodic_Schur} setting 
    \begin{equation}
        F(z) = \frac{\prod_{i=1}^N (1+z a_i)}{\prod_{i=1}^T (1+b_i/z)}.
    \end{equation}
    This is simply a result of the definition of $q$-beta specialization \eqref{eq:q_beta_spec}. Considering the scaling  \eqref{eq:ASEP_scaling}, under which $\mu_1-N$ approaches in law the current of the ASEP with step initial data, the left hand side of \eqref{eq:prob_s6vm} becomes
    \begin{equation}
        \mathbb{P}(-\mathsf{J}^{(\varepsilon)}(x,\tau) + \chi + S \le r - N),
    \end{equation}
    while the function $F(z)$ becomes
    \begin{equation}
        F(z) = z^N (1+1/z)^x e^{\tau \frac{(1-q)}{2}\left( 1- \frac{2z}{1+z} \right)  +O(\varepsilon) }.
    \end{equation}
    From the explicit expression \eqref{eq:kernel_periodic_Schur} we see that rescaling $\tilde{K}(x+N,y+N)$ and $r=s+N$ one can prove the convergence
    \begin{equation}
        \det(1-\tilde{K})_{\ell^2(\mathbb{Z}'_{> r})} \xrightarrow[\varepsilon \to 0]{} \det (1-K_\mathrm{ASEP})_{\ell^2(\mathbb{Z}'_{> s})},
    \end{equation}
    which combined with \cref{prop:ASEP_matching_mu_1}, proves \eqref{eq:ASEP_fredholm_det}.
\end{proof}

Using \cref{cor:ASEP_determinant}, along with \cref{cor:multiplicative_expectation_det} one can recover results found by Borodin and Olshanski in \cite[Theorem 10.2]{BO2016_ASEP} relating the integrated current of the ASEP with the discrete Laguerre ensemble. Similarly to the observation made at the end of \cref{subs:ASEP}, one could use the correspondence between the $q$-Whittaker measure and the periodic Schur measure to deduce determinantal formula for the full hierarchy of solvable models descending from the stochastic six vertex model. Nevertheless we will not take this route in this paper having decided to treat only the most representative cases.

\subsection{Fredholm determinant representation for Log Gamma polymer in full space}

We report a Fredholm determinant formula for the Laplace transform of the probability density function of the Log Gamma polymer partition function $Z(N,N)$. In literature a resembling expression was derived originally in \cite{BorodinCorwinRemenik}, where the integral kernel possessed a single contour integral representation; see also \cite{Barraquand_Corwin_Dimitrov_log_gamma}. Compared to such knows formulas, the one we present below is noticeably simpler and the kernel possesses a double contour integral representation. This feature is a result of the direct relation between Log Gamma polymers, seen as a scaling limit of $q$-PushTASEP as in \cref{prop:qPushTASEP_convergence}, with free fermions at positive temperature.

\begin{theorem} \label{thm:fredholm_det_log_gamma}
    Let $Z(N,N)$ be the partition function of the Log Gamma polymer model on the quadrant. Then we have
    \begin{equation} \label{eq:fredholm_det_log_gamma}
        \mathbb{E} \left[ e^{ - e^{-\varsigma + \log Z(N,N)}} \right] = \det (1 - \mathbf{K})_{\mathbb{L}^2(\varsigma,+\infty)},
    \end{equation}
    where the kernel is given by
    \begin{equation} \label{eq:K_LG}
        \mathbf{K} (X,Y) = \int_{\mathrm{i} \mathbb{R} - d} \frac{\diff Z}{2 \pi \mathrm{i}} \int_{\mathrm{i} \mathbb{R} + d'} \frac{\diff W}{2 \pi \mathrm{i}} \frac{\pi}{\sin (\pi (W-Z))} \frac{g_{A,B}(Z)}{ g_{A,B}(W)} e^{Z X- W Y},
    \end{equation}
    where $d,d'>0$, such that $\frac{1}{2 N}<d'+  d<1$ and
    \begin{equation}
        g_{A,B}(Z) = \prod_{i=1}^N \frac{\Gamma(A_i +Z)}{\Gamma(B_i - Z)}.
    \end{equation}
\end{theorem}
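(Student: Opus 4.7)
The plan is to derive \cref{thm:fredholm_det_log_gamma} as the $q \to 1$ scaling limit of the Fredholm determinant formula for the $q$-PushTASEP given in \cref{cor:fredholm_det_q_PushTASEP}, specialized to $T = N$. Combined with the Matveev--Petrov scaling of \cref{prop:qPushTASEP_convergence} ($q = e^{-\varepsilon}$, $a_i = e^{-\varepsilon A_i}$, $b_j = e^{-\varepsilon B_j}$), we have $\mathsf{x}_N(N) = (2N-1)\varepsilon^{-1}\log\varepsilon^{-1} + \varepsilon^{-1}\log Z^{(\varepsilon)}(N,N)$ with $Z^{(\varepsilon)}(N,N) \to Z(N,N)$ in distribution. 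The auxiliary parameters $\zeta$ and $s$ in \eqref{eq:fredholm_det_q_PushTASEP} must be coupled to $\varepsilon$ so that both sides converge simultaneously to the two sides of \eqref{eq:fredholm_det_log_gamma}.

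\textbf{Left-hand side.} By \cref{rem:q_laplace_as_chi+S}, the left-hand side equals the $q$-Laplace transform $\mathbb{E}\bigl[(-\zeta q^{1/2+s+N-\mathsf{x}_N(N)};q)_\infty^{-1}\bigr]$. Setting $s = \lfloor \varsigma/\varepsilon \rfloor$ and $\zeta = \varepsilon^{2N}(1+o(1))$, the substitution of the scaling yields
\begin{equation*}
-\zeta q^{1/2+s+N-\mathsf{x}_N(N)} = -(1-q) e^{-\varsigma} Z^{(\varepsilon)}(N,N)(1+O(\varepsilon)),
\end{equation*}
using $1-q = \varepsilon + O(\varepsilon^2)$ and $\varepsilon \mathsf{x}_N(N) = -(2N-1)\log\varepsilon + \log Z^{(\varepsilon)}$. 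The classical $q$-exponential limit $(x(1-q);q)_\infty^{-1} \to e^{x}$ as $q \to 1$, combined with convergence in distribution of $Z^{(\varepsilon)}(N,N)$ to $Z(N,N)$ and the fact that $y \mapsto \exp(-e^{-\varsigma}y)$ is bounded and continuous on $\mathbb{R}_{\ge 0}$, yields that the LHS converges to $\mathbb{E}[e^{-e^{-\varsigma+\log Z(N,N)}}]$.

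\textbf{Right-hand side.} On the kernel side, I would parametrize $z = e^{-\varepsilon Z}$, $w = e^{-\varepsilon W}$ with contours $\mathrm{Re}(Z) = -d$ and $\mathrm{Re}(W) = d'$, and rescale the index $x \in \mathbb{Z}'_{>s}$ to the continuous variable $X = \varepsilon x \in (\varsigma,+\infty)$, so that the Fredholm expansion on $\ell^2(\mathbb{Z}'_{>s})$ Riemann-sums to the expansion on $\mathbb{L}^2(\varsigma,+\infty)$. Three key asymptotic inputs drive the pointwise limit of the kernel. First, using $(q^a;q)_\infty = \Gamma(a)^{-1}(q;q)_\infty(1-q)^{1-a}(1+O(\varepsilon))$, the ratio $F(z)/F(w)$ tends to $g_{A,B}(Z)/g_{A,B}(W)$, up to a factor $(1-q)^{2N(Z-W)}$ absorbed by the choice of $\zeta$. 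Second, since $w/z = q^{W-Z}$, the reflection formula $\Gamma(u)\Gamma(1-u) = \pi/\sin(\pi u)$ applied to the $q$-Gamma identity gives
\begin{equation*}
\frac{(q;q)_\infty^2}{(w/z,\,qz/w;q)_\infty} = \Gamma_q(W-Z)\,\Gamma_q(1-(W-Z))\,(1-q)^{-1} \xrightarrow[\varepsilon \to 0]{} \frac{1}{\varepsilon}\,\frac{\pi}{\sin(\pi(W-Z))},
\end{equation*}
which is the mechanism producing the $\pi/\sin$ factor in \eqref{eq:K_LG}. Third, the theta ratio $\theta_3(\zeta z/w;q)/\theta_3(\zeta;q)$ tends to $1$ after one exploits the quasi-periodicity $\theta_3(q\zeta;q) = \theta_3(\zeta;q)/(\sqrt{q}\zeta)$ to neutralize the apparent singularity as $\zeta \to 0$. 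Combining these with the measure change $dz\,dw = \varepsilon^2 zw\,dZ\,dW$ and the Riemann-sum factor $\varepsilon^{-n}$ in the $n$-th term of the Fredholm expansion, all powers of $\varepsilon$ cancel exactly, and the monomial factor $z^{-x-1}w^{y-1}$ reduces to $e^{ZX-WY}$.

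\textbf{Main obstacle.} The central technical difficulty is justifying the exchange of the $\varepsilon \to 0$ limit with the Fredholm series. This requires uniform-in-$\varepsilon$ bounds on the rescaled kernel providing rapid decay as $|\mathrm{Im}(Z)|, |\mathrm{Im}(W)| \to \infty$ along the contours and as $X,Y \to +\infty$ along the summation domain, so that dominated convergence can be applied term by term. Such bounds rest on delicate uniform estimates for ratios of $q$-Pochhammer symbols of the form $(q^{a+\mathrm{i}t};q)_\infty/(q^{b+\mathrm{i}t};q)_\infty$, and the paper develops these elementary estimates in \cref{sec:LG}. A secondary difficulty is the non-uniform behaviour of the theta ratio when $\zeta \to 0$, which must be handled by a careful shift of contours absorbing the quasi-periodic factor. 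The lower bound $d + d' > 1/(2N)$ in the contour condition emerges precisely from ensuring that the $g_{A,B}$ factors remain integrable and that the pre-limit $q$-Pochhammer ratios admit the required uniform bounds.
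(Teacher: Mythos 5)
Your proposal is correct and follows essentially the same route as the paper: the $q\to1$ limit of the $q$-PushTASEP Fredholm determinant from \cref{cor:fredholm_det_q_PushTASEP} under the Matveev--Petrov scaling, with the $\pi/\sin$ factor arising from $\Gamma_q(Z-W)\Gamma_q(1+W-Z)$, the $q$-Gamma and theta-ratio limits on the kernel side, and dominated convergence justified by the uniform $q$-Pochhammer estimates of \cref{sec:LG}. The only (equivalent) local differences are that you place the shift $\varsigma$ in $s$ rather than in $\zeta=(1-q)^{2N}e^{-\varsigma}$, and you handle the left-hand side via the $q$-Laplace transform and the $q$-exponential limit, whereas the paper works probabilistically with $\chi+S$ converging to a Gumbel plus a vanishing Gaussian and the identity $\mathbb{P}[U+\mathcal{G}\le\varsigma]=\mathbb{E}[e^{-e^{-\varsigma+U}}]$ — two sides of the same coin via \eqref{eq:sum_chi_plus_S}.
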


The proof of \cref{thm:fredholm_det_log_gamma} is based on a scaling limit of the Fredholm determinant formula \eqref{eq:fredholm_det_q_PushTASEP}. We will present it below in the subsection.

\begin{remark} \label{rem:Z(T_N)}
    A Fredholm determinant representation analogous to \eqref{eq:fredholm_det_log_gamma} should be also available for the partition function $Z(T,N)$ for generic $T,N$. In this case the kernel $\mathbf{K}$ gets modified by changing the factor $g_{A,B}(Z)$ into $\prod_{i=1}^N \Gamma(A_i+Z) \prod_{i=1}^T \Gamma(B_i-Z)^{-1}$. This claim should follow from an adaptation of arguments presented in the proof of \cref{thm:fredholm_det_log_gamma} and technical results developed in \cref{sec:LG}. We will not discuss any further such generalizations in this paper.
\end{remark}

\begin{remark} \label{rem:Laplace_Gumbel}
    The Laplace transform in the left hand side of \eqref{eq:fredholm_det_log_gamma} admits an alternative expression using the identity
    \begin{equation} \label{eq:Laplace_Gumbel}
        \mathbb{E}\left[ e^{-e^{ -\varsigma + U }} \right] = \mathbb{P} \left[ U+ \mathcal{G} \leq \varsigma \right],
    \end{equation}
    which holds for any random variable $U$ and $\mathcal{G}$ is an independent standard Gumbel random variable. This can be seen as a continuous variant of the identity shown in \cref{rem:q_laplace_as_chi+S}.
\end{remark}

\begin{lemma} \label{lem:K_tau_hilbert_schmidt}
    The Fredholm determinant of the kernel $\mathbf{K}$ is absolutely convergent.
\end{lemma}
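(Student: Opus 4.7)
The plan is to show that $\mathbf{K}$ is of trace class on $L^2(\varsigma,+\infty)$; absolute convergence of the Fredholm determinant then follows from standard theory (recalled in \cref{app:Fredholm}).

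The key identity I would exploit is
\begin{equation*}
\frac{\pi}{\sin(\pi(W-Z))} = \int_\mathbb{R} \frac{e^{(W-Z)t}}{1+e^t} \diff t,
\end{equation*}
valid whenever $0 < \mathrm{Re}(W-Z) < 1$, which is satisfied since $d+d' \in (1/(2N), 1)$ by hypothesis. Substituting this into \eqref{eq:K_LG} and interchanging integrals (justified a posteriori by the Hilbert-Schmidt bounds below) produces a ``Fermi-Dirac factorization''
\begin{equation*}
\mathbf{K}(X,Y) = \int_\mathbb{R} \frac{\phi(X-t)\,\psi(t-Y)}{1+e^t} \diff t,
\end{equation*}
where $\phi(u) = \int_{\mathrm{i}\mathbb{R}-d} \frac{\diff Z}{2\pi \mathrm{i}} g_{A,B}(Z) e^{Zu}$ and $\psi(v) = \int_{\mathrm{i}\mathbb{R}+d'} \frac{\diff W}{2\pi \mathrm{i}} e^{Wv}/g_{A,B}(W)$.

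Splitting $(1+e^t)^{-1}$ symmetrically as $(1+e^t)^{-1/2}(1+e^t)^{-1/2}$, I would then write $\mathbf{K}=A B$ as the composition of two integral operators $A\colon L^2(\mathbb{R}) \to L^2(\varsigma,\infty)$ and $B\colon L^2(\varsigma,\infty) \to L^2(\mathbb{R})$ with kernels $\phi(X-t)(1+e^t)^{-1/2}$ and $\psi(t-Y)(1+e^t)^{-1/2}$. To bound the Hilbert-Schmidt norms of $A$ and $B$ I would appeal to Plancherel's theorem, reducing each estimate to an $L^2$-bound for $g_{A,B}$ along a vertical line. Stirling's asymptotics give $|g_{A,B}(c+\mathrm{i}y)| \asymp |y|^{\sum_i(A_i-B_i)+2Nc}$ as $|y|\to\infty$, since the $N$ exponential decay factors $e^{-\pi|y|/2}$ from numerator Gamma functions cancel those from the denominator Gamma functions. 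The combined polynomial exponent across the two contours is $-2N(d+d') < -1$, which is precisely the integrability condition encoded by the hypothesis $d+d' > 1/(2N)$.

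The main obstacle is that, although the combined exponent is integrable, the individual exponents on each contour depend on $\sum_i (A_i - B_i)$ and may fail to be integrable in isolation. To resolve this I would deform $d$ and $d'$ within the admissible strip, avoiding the poles of $g_{A,B}$ at $Z=-A_i-k$ ($k\ge 0$), and control $\phi(u)$ and $\psi(v)$ separately on the two half-lines $u,v>0$ and $u,v<0$: on $u>0$ use the contour $\mathrm{Re}\,Z=-d$ to gain exponential decay $e^{-du}$, while for $u<0$ shift the contour rightward into the region of analyticity to obtain rapid decay $e^{Du}$ for $D>0$. Combining these pointwise bounds with the slowly-growing weight $\log(1+e^{t-\varsigma})$ produced when computing the iterated integrals in $\|A\|_{HS}^2$ and $\|B\|_{HS}^2$ yields $\|A\|_{HS}, \|B\|_{HS} < \infty$; hence $\mathbf{K}=AB$ is trace class and $\sum_n (n!)^{-1}\int |\det[\mathbf{K}(x_i,x_j)]_{i,j=1}^n|\,\diff^n x$ converges.
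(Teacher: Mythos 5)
Your strategy is genuinely different from the paper's: the paper never factorizes the operator, but instead bounds $|\mathbf{K}(X,Y)|\le C e^{-d(X+Y)}$ directly from the double contour integral — using $\left|1/\sin[\pi(2d+\mathrm{i}(v-u))]\right|\le C e^{-\pi|u-v|}$ together with the Gamma-ratio estimate \eqref{eq:bound_ratio_gamma} so that the exponents $\sum_i(A_i-B_i)$ coming from the $Z$- and $W$-contours cancel against each other — and then applies Hadamard's inequality \eqref{eq:Hadamard_bound} to the Fredholm series. Your Fermi–Dirac factorization $\mathbf{K}=AB$ is natural (it is the discrete-parameter analogue of \eqref{eq:Kff}), and the final step (trace class via a Hilbert–Schmidt factorization, then the Gram–Hadamard bound) is sound. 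The problem lies in establishing that $A$ and $B$ are separately Hilbert–Schmidt.

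Concretely: along $\mathrm{Re}\,Z=-d$ one has $|g_{A,B}(-d+\mathrm{i}y)|\asymp |y|^{\sum_i(A_i-B_i)-2Nd}$ (the $e^{-\pi|y|/2}$ factors from numerator and denominator Gammas cancel exactly), so your Plancherel reduction requires $\sum_i(A_i-B_i)-2Nd<-\tfrac12$, and symmetrically $-\sum_i(A_i-B_i)-2Nd'<-\tfrac12$ for the $B$-factor. The \emph{sum} of these two conditions is precisely $d+d'>\tfrac{1}{2N}$, but each must hold individually, and the contour deformation you invoke is confined by the poles of $\Gamma(A_i+Z)$ to $d<\min_i A_i$ (and likewise $d'<\min_i B_i$). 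For instance with $N=2$, $A=(0.3,\,10)$, $B=(0.1,\,0.1)$ one has $\sum_i(A_i-B_i)=10.1$ while $2Nd<4\min_i A_i=1.2$, so no admissible contour makes $g_{A,B}(-d+\mathrm{i}\cdot)$ square-integrable; $\phi$ is then not a locally square-integrable function and $\|A\|_{HS}=\infty$, even though admissible contours for the kernel itself exist ($d=0.29$, $d'=0.09$). Shifting the contour rightward for $u<0$ only worsens matters, since the polynomial exponent is $\sum_i(A_i-B_i)+2N\,\mathrm{Re}\,Z$. The cancellation of $\sum_i(A_i-B_i)$ between the two contour integrals is essential and is destroyed by the factorization; in the unfactorized kernel it is rescued by the factor $e^{-\pi|u-v|}$ from the sine, which in your decomposition has already been spent producing the weight $(1+e^t)^{-1}$. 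Your argument does go through when $\sum_i(A_i-B_i)$ is small compared with $N\min_i A_i$ — in particular in the homogeneous case $A_i\equiv B_i$ used for the asymptotics and the KPZ limit — but it does not prove the lemma in the generality in which it is stated and used.
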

\begin{proof}
    Using the parameterization of integration variables $Z=-d + \mathrm{i} u$, $W=d + \mathrm{i} v$, with $d\in(\frac{1}{4},\frac{1}{2})$, we easily find the estimate
    \begin{equation} \label{eq:K_tau}
        \begin{split}
            \left| \mathbf{K} (X,Y) \right| \le e^{-d (X + Y)} \int_{\mathbb{R}} \frac{\diff u}{2\pi} \int_{\mathbb{R}} \frac{\diff v}{2\pi} & \left| \frac{\pi}{ \sin\left[ \pi ( 2d + \mathrm{i}(v-u) ) \right] } \right|
            \\
            &
            \times
            \prod_{i=1}^N \left| \frac{\Gamma(A_i - d +\mathrm{i}u )  }{\Gamma(B_i + d -\mathrm{i}u )} \frac{\Gamma(B_i - d -\mathrm{i}v )  }{\Gamma(A_i + d +\mathrm{i}v )} \right|.
        \end{split}
    \end{equation}
    The integral in the previous expression is bounded by a constant. To show this we use the estimates
    \begin{equation}
        \left| \frac{1}{\sin [ \pi ( \alpha + \mathrm{i} u ) ]} \right| \le C_\alpha e^{-\pi|u|},
    \end{equation}
    and 
    \begin{equation} \label{eq:bound_ratio_gamma}
        \left| \frac{\Gamma(a+ \mathrm{i} u)}{\Gamma(b- \mathrm{i} u)} \right| \le C_{a,b} \left( 1+ |u| \right)^{a-b}.
    \end{equation}
    In the first inequelity $C_\alpha$ is a constant depending on $\alpha \notin \mathbb{Z}$, while the second one holds for $a \notin \mathbb{Z}_{\le 0}$ and is a consequence of the following adaptation of the Stirling's approximation
    \begin{equation} \label{eq:decay_Gamma}
        | \Gamma (a + \mathrm{i} u) | = \sqrt{2 \pi} \, | u |^{a-\frac{1}{2}} e^{-a - \frac{|u| \pi}{2}} \left(1+ O(|u|^{-1}) \right).
    \end{equation}
    Then, the integral in the right hand side of \eqref{eq:K_tau} is bounded by
    \begin{equation}
        C \int_\mathbb{R} \diff u \int_\mathbb{R} \diff v e^{-\pi |u-v|} \left(\frac{1+|u|}{1+|v|} \right)^{\sum_{i=1}^N A_i - B_i} (1+|u|)^{-2Nd} (1+|v|)^{-2Nd},
    \end{equation}
    which is again bounded by a constant since $4Nd>1$, as it can be clearly seen after a change of variables $u \mapsto \frac{\varphi + \theta}{2} , v \mapsto \frac{\varphi - \theta}{2}$. We have then shown that
    \begin{equation} \label{eq:exponential_bound_log_gamma_kernel}
        \left| \mathbf{K} (X,Y) \right| \le C e^{-d (X + Y)},
    \end{equation}
    where $C$ is a constant independent of $X,Y$. We can now estimate the series expansion of the Fredholm determinant of $\mathbf{K}$ as
    \begin{equation} \label{eq:fredholm_det_absolute_conv}
        \begin{split}
            &
            \sum_{\ell \ge 0} \frac{1}{\ell!} \int_{[\varsigma,+\infty)^\ell} \diff X_1 \cdots \diff X_\ell \left| \det[\mathbf{K}(X_i,X_j)]_{i,j=1}^\ell \right|
            \\
            & \le \sum_{\ell \ge 0} \frac{1}{\ell!} \int_{[\varsigma,+\infty)^\ell} \diff X_1 \cdots \diff X_\ell \, \ell^{\ell/2} C^\ell e^{-d(X_1+\cdots + X_\ell)}
            \\
            &= \sum_{\ell \ge 0} \frac{1}{\ell!} \ell^{\ell/2} \left( \frac{C e^{-\varsigma d} }{d} \right)^\ell < \infty,
        \end{split}
    \end{equation}
    where in the first inequality we used the Hadamard's bound \eqref{eq:Hadamard_bound}. This completes the proof.
\end{proof}

We now present the proof of \cref{thm:fredholm_det_log_gamma}. For this we introduce the \emph{$q$-Gamma function} of a complex variable $X\in \mathbb{C} \setminus \mathbb{Z}_{\le 0}$
\begin{equation} \label{eq:q_gamma}
    \Gamma_q(X) = (1-q)^{1-X} \frac{(q;q)_\infty}{(q^X;q)_\infty}.
\end{equation}
Arguments used to prove our formula \eqref{eq:fredholm_det_log_gamma} use a number of technical bounds on ratio of $q$-Pochhammer symbols and $q$-Gamma functions, which for the sake of readability are reported in \cref{sec:LG}.

\begin{proof}[Proof of \cref{thm:fredholm_det_log_gamma}]
    Our starting point is the Fredholm determinant representation for the probability distribution of a tagged particle $\mathsf{x}_N(T)$ in the $q$-PushTASEP \eqref{eq:fredholm_det_q_PushTASEP}. 
    Adopting the scaling 
    \begin{equation} \label{eq:scaling_convergence_log_full_space}
        q=e^{-\varepsilon}, 
        \qquad a_i=e^{-\varepsilon A_i}, \qquad b_i=e^{-\varepsilon B_i}, \qquad \zeta=(1-q)^{2N} e^{-\varsigma},
    \end{equation}
    we find that, assuming $\chi \sim q$-$\mathrm{Geo}(q)$ and $S \sim \mathrm{Theta}(\zeta;q)$ and using \cref{prop:qPushTASEP_convergence}, \cref{lem:convergence_gumbel,lem:convergence_theta},
    \begin{equation}
        \mathsf{x}_N(N) + \chi + S = \varepsilon^{-1} \log Z(N,N) + \varepsilon^{-1} \mathcal{G} - \varepsilon^{-1} \varsigma + \varepsilon^{-1/2} \mathcal{N}(0,1) + O(1),
    \end{equation}
    where $\mathcal{G} \sim \mathrm{Gumbel}$ and $\mathcal{N}(0,1)$ is a standard normal random variable.
    In this way, the left hand side of \eqref{eq:fredholm_det_q_PushTASEP} becomes
    \begin{equation}
    \begin{split}
        \lim_{\varepsilon\to 0} \mathbb{P}(\mathsf{x}_N(N) + \chi + S < N+s) &= \mathbb{P} ( \log Z(N,N) + \mathcal{G} < \varsigma ) 
        \\
        & = \mathbb{E}\left( e^{-e^{-\varsigma + \log Z(N,N)}} \right),
    \end{split}
    \end{equation}
    where the second equality was discussed in \cref{rem:Laplace_Gumbel} and comes from a direct evaluation of the probability in the middle term using the definition of the Gumbel distribution.
    
    We now move our attention to the right hand side of \eqref{eq:fredholm_det_q_PushTASEP}. Under the scaling \eqref{eq:scaling_convergence_log_full_space} the integral kernel $K_\mathrm{push}$, using the notion of $q$-Gamma function \eqref{eq:q_gamma}, can be written as
    \begin{equation} \label{eq:kernel_K_push_rescaled}
    \begin{split}
        K_\mathrm{push}( \varepsilon^{-1} X, \varepsilon^{-1} Y ) = \frac{\varepsilon}{(2\pi \mathrm{i})^2} & \int_{ \substack{ Z \in \mathcal{D}(-d,\varepsilon) \\ W \in \mathcal{D}(d,\varepsilon) }} \frac{\diff Z \diff W}{q^{(Z-W)/2}} e^{XZ-YW}
        \\
        &
        \times 
        \prod_{i=1}^N  \frac{\Gamma_q(A_i+Z) \Gamma_q(B_i - W)}{\Gamma_q(B_i-Z) \Gamma_q(A_i+W)}
        \Gamma_q(Z-W) \Gamma_q(1+W-Z) 
        \\
        &\times (1-q)^{2N(Z-W)} \frac{\vartheta_3(q^{Z-W}e^\varsigma (1-q)^{2N};q)}{\vartheta_3(e^\varsigma (1-q)^{2N};q)},
    \end{split}
    \end{equation}
    where, compared to \eqref{eq:kernel_periodic_Schur}, we operated a change of variables $z=q^Z, w=q^W$, taking as radii of the integration contours $r=q^{d}, r'=q^{-d}$. For the integration contours we have used the notation
    \begin{equation} \label{eq:integration_contours}
        \mathcal{D}(d,\varepsilon) = d+\mathrm{i} [-\frac{\pi}{\varepsilon}, \frac{\pi}{\varepsilon}].
    \end{equation}
    By virtue of \cref{lem:convergence_ratio_theta,lem:convergence_qGamma}, the integrand in \eqref{eq:kernel_K_push_rescaled} converges pointwise to the integrand of \eqref{eq:K_LG}. To prove the pointwise convergence $\varepsilon^{-1}K_\mathrm{push}(\varepsilon^{-1}X,\varepsilon^{-1}Y) \xrightarrow[]{\varepsilon \to 0} \mathbf{K}(X+\varsigma,Y+\varsigma)$ we notice that the integrand in \eqref{eq:kernel_K_push_rescaled} is bounded in absolute value by 
    \begin{equation} \label{eq:upper_bound_K_push}
        C_1 e^{-d (X + Y)} \prod_{i=1}^N (1+|u|)^{-(B_i-A_i)-2d} (1+|v|)^{-(A_i-B_i)-2d} e^{-C_2 |u-v|},
    \end{equation}
    where $u=\Im\{Z\},v=\Im\{W\}$ and $C_1,C_2$ are constants independent of $q,Z,W,X,Y$; this can be shown combining \cref{lem:convergence_ratio_theta,lem:decay_qGamma,lem:bound_ratio_qgamma}. The function in \eqref{eq:upper_bound_K_push} is summable, as can be proven following the same ideas used in \cref{lem:K_tau_hilbert_schmidt}. 
    Again bounding the integrand of \eqref{eq:kernel_K_push_rescaled} with \eqref{eq:upper_bound_K_push} we obtain the bound
    \begin{equation}
        \left| \varepsilon^{-1} K_\mathrm{push} (\varepsilon^{-1}X, \varepsilon^{-1}Y) \right| \le C e^{-d (X + Y)},
    \end{equation}
    which hold for a constant $C$ independent of $\varepsilon,X,Y$. This implies that
    \begin{equation}
        \lim_{\varepsilon \to 0} \det( 1 - K_\mathrm{push})_{\ell^{2}(\mathbb{Z}_{> s}')} = \det (1 - \mathbf{K})_{\mathbb{L}^2(\varsigma,+\infty)},
    \end{equation}
    by dominated convergence, concluding the proof.
\end{proof}

\subsection{Fredholm pfaffian representation for $q$-PushTASEP in half space}

Here we present a Fredholm pfaffian formula for the distribution of the rightmost particle in the $q$-PushTASEP with particle creation. With the exception of \cite{barraquand2018}, where a Pfaffian representation was found for the current distribution of the ASEP with an open boundary kept at critical density, \cref{cor:pfaffian_q_PushTASEP} below represents the first result of this kind for discrete solvable models and it holds for any admissible choice of parameters $\gamma,a_1,a_2,\dots$.

\begin{corollary} \label{cor:pfaffian_q_PushTASEP}
    Let $\mathsf{X}^{\mathrm{hs}}(T)$ be the geometric $q$-PushTASEP with particle creation with parameters $\gamma,a_1,a_2,\dots \in (0,1)$ and empty initial conditions. Consider also $\chi \sim q$-$\mathrm{Geo}(q)$ and $S\sim \mathrm{Theta}(\zeta^2,q^2)$ to be independent random variables. Then we have
    \begin{equation} \label{eq:qpush_TASEP_pfaffian}
        \mathbb{P} \left( \mathsf{x}^{\mathrm{hs}}_N(N) - N + \chi + 2 S \le s \right) = \Pf \left( J - L_\mathrm{push} \right)_{\ell^2 (\mathbb{Z}'_{> s} )},
    \end{equation}
    where $L_\mathrm{push}$ is the  $2 \times 2$ matrix kernel obtained from \eqref{eq:L_fbs}, setting
    \begin{equation}
        F(z)= \frac{(\gamma/z;q)_\infty}{(\gamma z;q)_\infty} \prod_{i=1}^N \frac{(a_i/z;q)_\infty}{(a_i z;q)_\infty}.
    \end{equation}
\end{corollary}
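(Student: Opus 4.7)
The strategy is to chain together four results established earlier in the paper so as to reduce the distribution of $\mathsf{x}_N^{\mathrm{hs}}(N)$ to that of the top row of a shift-mixed free boundary Schur measure, and then invoke the Fredholm pfaffian formula of Proposition \ref{prop:fredholm_pfaffian_fbs}. Concretely, I would proceed as follows.

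First, I would invoke Proposition \ref{prop:matching_hs_qPushTASEP_qW}, which identifies $\mathsf{x}_N^{\mathrm{hs}}(N) - N$ in distribution with $\mu_1$, where $\mu$ is drawn from the half space $q$-Whittaker measure $\mathbb{HW}^{(q)}_{\boldsymbol{a};\gamma}$ with $\boldsymbol{a}$ the alpha specialization in parameters $(a_1,\dots,a_N)$. The boundary parameter $\gamma$ appears here as a single variable alpha specialization, which is exactly the setting in which Proposition \ref{prop:symmmetry_half_space_qW} applies. Using that symmetry, the marginal distribution of $\mu_1$ under $\mathbb{HW}^{(q)}_{\boldsymbol{a};\gamma}$ coincides with that of $\lambda_1$ under $\mathbb{HW}^{(q)}_{\boldsymbol{a}';0}$, where $\boldsymbol{a}' = (a_1,\dots,a_N,\gamma)$. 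This is the crucial step that removes the boundary parameter from the $\mathscr{E}$-factor of the measure and absorbs it into the $\mathscr{P}$-specialization.

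Next, I would apply Theorem \ref{thm:matching_hs_qW_fbs} to this reduced measure: since the top row of $\mathbb{HW}^{(q)}_{\boldsymbol{a}';0}$ plus an independent $q$-geometric variable $\chi \sim q\text{-}\mathrm{Geo}(q)$ is equidistributed with $\lambda_1$ where $\lambda \sim \mathbb{FBS}^{(q)}_{\boldsymbol{a}'}$, we obtain
\begin{equation*}
    \mathsf{x}_N^{\mathrm{hs}}(N) - N + \chi \stackrel{\mathcal{D}}{=} \lambda_1,
    \qquad \lambda \sim \mathbb{FBS}^{(q)}_{\boldsymbol{a}'}.
\end{equation*}
Adding the independent $2S$ with $S \sim \mathrm{Theta}(\zeta^2;q^2)$ and applying Proposition \ref{prop:fredholm_pfaffian_fbs} then yields the Fredholm pfaffian representation $\Pf(J-L)_{\ell^2(\mathbb{Z}'_{>s})}$, where $L$ is the $2\times 2$ matrix kernel defined in \eqref{eq:L_fbs} with the function $F$ determined by the specialization $\boldsymbol{a}'$.

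The last task is a short computation showing that $F$ matches the claim. For an alpha specialization with parameters $(x_1,\dots,x_k)$ one has $H(x;z) = \prod_i (1-x_i z)^{-1}$ directly from the definition \eqref{eq:pi_and_H}, so that
\begin{equation*}
    \prod_{\ell \ge 0} \frac{H(q^\ell \boldsymbol{a}';z)}{H(q^\ell \boldsymbol{a}';z^{-1})}
    = \frac{(\gamma/z;q)_\infty}{(\gamma z;q)_\infty}\prod_{i=1}^N \frac{(a_i/z;q)_\infty}{(a_iz;q)_\infty},
\end{equation*}
which is precisely the $F(z)$ stated in the corollary. No obstacle of substance is expected: the only point requiring care is that Proposition \ref{prop:symmmetry_half_space_qW} gives equality in law of the whole odd-indexed subsequence $\{\mu_1,\mu_3,\dots\}$, of which we use only the $\mu_1$-marginal, and that Theorem \ref{thm:matching_hs_qW_fbs} and Proposition \ref{prop:fredholm_pfaffian_fbs} apply to general $q$-Whittaker positive specializations $\boldsymbol{a}'$, which here is manifestly satisfied since all $a_i,\gamma \in (0,1)$.
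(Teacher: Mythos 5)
Your proposal is correct and follows exactly the same route as the paper's proof: Proposition \ref{prop:matching_hs_qPushTASEP_qW} combined with the symmetry of Proposition \ref{prop:symmmetry_half_space_qW} to pass to $\mathbb{HW}^{(q)}_{\boldsymbol{a}';0}$ with $\boldsymbol{a}'=(a_1,\dots,a_N,\gamma)$, then Theorem \ref{thm:matching_hs_qW_fbs} and Proposition \ref{prop:fredholm_pfaffian_fbs}. Your explicit verification of the form of $F(z)$ is a detail the paper leaves implicit, but otherwise the arguments coincide.
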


\begin{proof}
    As in \cref{subs:hs_qPushTASEP}, we call $\gamma,a_1,a_2,\dots$ the parameters describing the jump process in the $q$-PushTASEP with particle creation. Defining $a'=(a_1,\dots,a_N,\gamma)$ and taking $\tilde{\mu} \sim \mathbb{HW}^{(q)}_{a';0}$ we have the equality in distribution
    \begin{equation}
        \mathsf{x}^{\mathrm{hs}}_N(N) = \tilde{\mu}_1 + N,
    \end{equation}
    which follows combining \cref{prop:matching_hs_qPushTASEP_qW} with the symmetry of the half space $q$-Whittaker measure of \cref{prop:symmmetry_half_space_qW}. We can now use \cref{thm:matching_hs_qW_fbs} to express the law of $\tilde{\mu}_1+ \chi + 2S$ in terms of the rightmost particle in the Free boundary Schur measure whose Fredholm pfaffian representation is reported in \cref{prop:fredholm_pfaffian_fbs}, concluding the proof. 
\end{proof}

\begin{remark}
    Notice that, unlike in the case of $q$-PushTASEP in full space, the left hand side of \eqref{eq:qpush_TASEP_pfaffian} is not the $q$-Laplace transform of the probability distribution of $\mathsf{x}^{\mathrm{hs}}_N(N)$.
\end{remark}

The $q$-PushTASEP with particle creation was introduced and studied in \cite{barraquand_half_space_mac}. In the same paper authors, using Macdonald difference operators, recovered integral representations for joint $q$-moments of the model and a representation of the $q$-Laplace transform of the distribution of a tagged particle as a series of nested integrals; see \cite[Corollary 4.7]{barraquand_half_space_mac}. Due to the presence of certain cross terms it was not possible for the authors to rearrange their expression as the expansion of a Fredholm pfaffian or determinant. Our result \eqref{eq:qpush_TASEP_pfaffian} indicates that an observable different than the $q$-Laplace transform of $\mathsf{x}^{\mathrm{hs}}_N(N)$ has the potential to be studied through $q$-moment computations.

\subsection{Fredholm pfaffian representation for Log Gamma polymer in half space} \label{subs:pfaffian_log_gamma}

Here we consider the point-to-point partition function $Z^{\mathrm{hs}}(N,N)$ in the Log Gamma polymer in half space and we present Fredholm pfaffian representations for the Laplace transform of its probability distribution. First, in \cref{thm:fredholm_pfaff_log_gamma}, we will consider the case when the boundary parameter $\varUpsilon$ is positive. To extend such solution to negative values of $\varUpsilon$ requires an analytic continuation, which will lead to the result of \cref{thm:fredholm_pfaff_log_gamma_gaussian}, presented in the next subsection. Formulas reported below are ameanable of asymptotic analysis, which will be performed in \cref{sec:asymptotics}. 

\begin{theorem} \label{thm:fredholm_pfaff_log_gamma}
    Let $Z^{\mathrm{hs}}(N,N)$ be the partition function of the Log Gamma polymer model in half space with parameters $A_1,\dots,A_N>0$ and boundary strength $\varUpsilon>0$. Take a number $d$ 
    such that
    \begin{equation} \label{eq:conditions_delta}
        0 < d < \min \left\{ \frac{1}{2}, \varUpsilon,A_1,\dots,A_N \right\}
        \qquad
        \text{and}
        \qquad
        d >\frac{1}{2N}.
    \end{equation}
    Then, we have
    \begin{equation} \label{eq:fredholm_pfaff_log_gamma}
        \mathbb{E}\left[ e^{-e^{-\varsigma+ \log Z^{\mathrm{hs}}(N,N)} } \right] = \Pf (J - \mathbf{L})_{\mathbb{L}^2(\varsigma,+\infty)},
    \end{equation}
    where $\mathbf{L}$ is the $2 \times 2$ matrix kernel
    \begin{equation} \label{eq:L_log_gamma}
        \mathbf{L}(X,Y) = \left( \begin{matrix} \mathbf{k}(X,Y) & - \partial_Y \mathbf{k}(X,Y) \\ - \partial_X \mathbf{k}(X,Y) & \partial_X \partial_Y \mathbf{k}(X,Y) \end{matrix} \right)
    \end{equation} 
    with
    \begin{equation}
            \mathbf{k}(X,Y) = \int_{\mathrm{i} \mathbb{R} - d} \frac{\diff Z}{2 \pi \mathrm{i}} \int_{\mathrm{i} \mathbb{R} - d}
            \frac{\diff W}{2 \pi \mathrm{i}} e^{ X Z + Y W} g_{\varUpsilon;A}(Z) g_{\varUpsilon;A}(W) \mathsf{k}^{\mathrm{hs}}(Z,W),
            \label{eq:k_hs_log_gamma}
    \end{equation}
    \begin{equation}
        g_{\varUpsilon;A}(Z) =
        \frac{\Gamma(\varUpsilon + Z)}{ \Gamma(\varUpsilon - Z)}\prod_{i=1}^N \frac{\Gamma(A_i + Z)}{ \Gamma(A_i - Z)}
    \end{equation}
    and
    \begin{equation}
    \mathsf{k}^{\mathrm{hs}}(Z,W) = \Gamma(-2Z) \Gamma(-2W) \frac{\sin\left[\pi( Z-W )\right]}{\sin\left[\pi( Z+W )\right]}.
    \end{equation}
\end{theorem}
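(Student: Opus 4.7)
The proof strategy closely parallels the derivation of \cref{thm:fredholm_det_log_gamma} for the full space case. The identity \eqref{eq:fredholm_pfaff_log_gamma} will be obtained as the $q \to 1$ limit of the Fredholm pfaffian formula \eqref{eq:qpush_TASEP_pfaffian} of \cref{cor:pfaffian_q_PushTASEP}, combined with the convergence of the $q$-PushTASEP with particle creation to the half space Log Gamma polymer given in \cref{prop:hs_qPushTASEP_convergence}.

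First, I would adopt the scaling
\[
q = e^{-\varepsilon}, \qquad a_i = e^{-\varepsilon A_i}, \qquad \gamma = e^{-\varepsilon \varUpsilon}, \qquad \zeta^2 = c(\varepsilon) \, e^{-\varsigma},
\]
where the explicit prefactor $c(\varepsilon)$ is chosen so as to absorb the accumulated powers of $(1-q)$ appearing in the kernel (in direct analogy with the choice $\zeta = (1-q)^{2N} e^{-\varsigma}$ used in the proof of \cref{thm:fredholm_det_log_gamma}). Invoking \cref{prop:hs_qPushTASEP_convergence} for the convergence of $\mathsf{x}^{\mathrm{hs}}_N(N)$, together with Gumbel-type and Gaussian-type convergence for $\chi \sim q\text{-Geo}(q)$ and $S \sim \mathrm{Theta}(\zeta^2; q^2)$ analogous to that invoked in the full space proof (using results from \cref{sec:LG}), the shift $\mathsf{x}^{\mathrm{hs}}_N(N) - N + \chi + 2S$, once properly rescaled, converges in law to $\varepsilon^{-1}[\log Z^{\mathrm{hs}}(N,N) + \mathcal{G} - \varsigma]$ up to lower order terms, where $\mathcal{G}$ is an independent standard Gumbel. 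By \cref{rem:Laplace_Gumbel} (or rather its continuous analogue noted after \cref{thm:fredholm_det_log_gamma}), the left-hand side of \eqref{eq:qpush_TASEP_pfaffian} then converges to $\mathbb{E}[\exp(-\exp(-\varsigma + \log Z^{\mathrm{hs}}(N,N)))]$, matching the left-hand side of \eqref{eq:fredholm_pfaff_log_gamma}.

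For the right-hand side, I would change variables $z = q^Z$, $w = q^W$ in the double contour integral \eqref{eq:k_fbs} for $k(x,y)$, taking radii $r = q^{-d}$. Using $(q^X;q)_\infty = (1-q)^{1-X}(q;q)_\infty / \Gamma_q(X)$, the function $F$ of \cref{cor:pfaffian_q_PushTASEP} transforms into a product of $\Gamma_q$-ratios which, by $\Gamma_q \to \Gamma$, produces the factor $g_{\varUpsilon;A}(Z) \, g_{\varUpsilon;A}(W)$ of \eqref{eq:k_hs_log_gamma}. The same substitution applied to $\kappa^{\mathrm{hs}}(q^Z,q^W)$ of \eqref{eq:kappa_hs} converts each Pochhammer factor into $\Gamma_q$ functions; after the limit, the denominator factors $(1/z^2,1/w^2,1/zw,qzw;q)_\infty$ together with $(w/z,qz/w;q)_\infty$ yield, through the reflection identity $\Gamma(X)\Gamma(1-X) = \pi/\sin(\pi X)$, precisely the combination $\Gamma(-2Z)\Gamma(-2W)\sin[\pi(Z-W)]/\sin[\pi(Z+W)]$ defining $\mathsf{k}^{\mathrm{hs}}(Z,W)$, while the theta ratio $\vartheta_3(\zeta^2 q^{2Z+2W};q^2)/\vartheta_3(\zeta^2;q^2)$ combines with the residual $(1-q)$ powers and (by the half space version of the limiting lemmas of \cref{sec:LG}) contributes only an overall multiplicative factor absorbed into the normalization. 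Finally, the discrete difference operators $\nabla_x,\nabla_y$ entering the $2\times 2$ kernel $L_{\mathrm{push}}$ of \eqref{eq:L_fbs} become, under the rescaling $x = \varepsilon^{-1}X+s_\varepsilon$, the continuous partial derivatives $\partial_X,\partial_Y$ featured in \eqref{eq:L_log_gamma}.

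The main technical obstacle is to promote the pointwise convergence of the rescaled kernel to convergence of the Fredholm pfaffian. Mirroring the argument of \cref{lem:K_tau_hilbert_schmidt}, one has to establish a uniform exponential bound of the form $|\varepsilon^{-1}[L_{\mathrm{push}}]_{i,j}(\varepsilon^{-1}X+s_\varepsilon,\varepsilon^{-1}Y+s_\varepsilon)| \le C\,e^{-d(X+Y)}$ valid for all sufficiently small $\varepsilon$, which, combined with the Hadamard-type inequality applied term-by-term to the pfaffian series and dominated convergence, yields convergence of the Fredholm pfaffians. This bound is more delicate than its determinantal analogue both because of the $2\times 2$ pfaffian block structure (forcing one to control simultaneously $k$ and its discrete derivatives) and because $\kappa^{\mathrm{hs}}$ carries extra singular factors $(1/z^2;q)_\infty^{-1}(1/w^2;q)_\infty^{-1}(1/zw;q)_\infty^{-1}$ absent from the periodic Schur setting. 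It relies crucially on the technical $q$-Pochhammer estimates of \cref{sec:LG} and on keeping the shifted contour $\Re Z = \Re W = -d$ strictly inside the admissible range prescribed by \eqref{eq:conditions_delta}, so that all the Gamma factors in $g_{\varUpsilon;A}$ and $\mathsf{k}^{\mathrm{hs}}$ remain regular and the double integral in $(\Im Z,\Im W)$ retains enough polynomial decay to be absolutely convergent---the lower bound $d > 1/(2N)$ being exactly what is required for this integrability after accounting for the $2(N+1)$ Gamma ratios present in $g_{\varUpsilon;A}$ paired with the $\Gamma(-2Z)\Gamma(-2W)$ factors of $\mathsf{k}^{\mathrm{hs}}$.
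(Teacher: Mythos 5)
Your proposal follows essentially the same route as the paper: the $q\to1$ limit of the Fredholm pfaffian for the $q$-PushTASEP with particle creation (\cref{cor:pfaffian_q_PushTASEP}), with the scaling $q=e^{-\varepsilon}$, $a_i=e^{-\varepsilon A_i}$, $\gamma=e^{-\varepsilon\varUpsilon}$, $\zeta=(1-q^2)^N e^{-\varsigma/2}$, the change of variables $z=q^Z$, $w=q^W$, and dominated convergence justified by the uniform $q$-Pochhammer and $\Gamma_q$ estimates of \cref{sec:LG} (in particular \cref{prop:bound_qgamma_k11}) yielding the bound $Ce^{-d(X+Y)}$. The argument is correct and matches the paper's proof in all essentials, including the role of the condition $d>1/(2N)$ for integrability.
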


In literature explicit formulas for the expectation $\mathbb{E}\left[ e^{-e^{-\varsigma} Z^\mathrm{hs}(N,N)} \right]$ were derived in \cite[Corollary 5.6]{OSZ2012} and \cite[Corollary 6.40]{barraquand_half_space_mac}, although in none of these cases they were expressed as Fredholm pfaffians. It would be interesting to understand if it is possible to prove our formula \eqref{eq:pfaff_L_hat_log_gamma} starting from these other explicit results. Moreover, recently in \cite{barraquand_rychnovky_half_space} authors were able to prove, using Bethe Ansatz, pfaffian formulas for a polymer model in half space with certain Beta weights. It would be interesting to understand if our techniques can be extended to prove identities for the Beta polymer model in half space.

\medskip

The proof of \cref{thm:fredholm_pfaff_log_gamma} will be presented at the end of this subseciton. It is based on a scaling limit of the Fredholm pfaffian formula of \cref{cor:pfaffian_q_PushTASEP} to establish which technical results reported in \cref{sec:LG} will be used. In the next proposition, we confirm that the right hand side of \eqref{eq:fredholm_pfaff_log_gamma} defines a numerically convergent series and hence the relation is well posed.

\begin{proposition} \label{prop:fredholm_pfaffian_absolutely_convergent}
    The Fredholm pfaffian of the $2\times 2$ matrix kernel $\mathbf{L}$ is absolutely convergent.
\end{proposition}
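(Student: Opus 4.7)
The plan is to mimic the proof of \cref{lem:K_tau_hilbert_schmidt}, establishing a pointwise exponential bound for each entry of $\mathbf{L}(X,Y)$ in $X,Y$ and then applying a Hadamard-type estimate for pfaffians to the expansion of $\Pf(J-\mathbf{L})$. Concretely, I would first show that each of the four entries $\mathbf{L}_{ab}(X,Y)$ satisfies
\begin{equation}
|\mathbf{L}_{ab}(X,Y)| \leq C\, e^{-d(X+Y)}
\end{equation}
for some constant $C=C(d,\varUpsilon,A_1,\dots,A_N)$ independent of $X,Y$. Parametrizing the contours as $Z=-d+\mathrm{i}u$, $W=-d+\mathrm{i}v$ gives $|e^{XZ+YW}|=e^{-d(X+Y)}$, and differentiating $\mathbf{k}$ in $X$ or $Y$ only inserts polynomial factors in $u,v$ which are absorbed into the decay estimates below.

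Next I would estimate the double integral defining $\mathbf{k}$ and its derivatives. The $g$-factors satisfy, by \eqref{eq:bound_ratio_gamma},
\begin{equation}
|g_{\varUpsilon;A}(-d+\mathrm{i}u)| \leq C\,(1+|u|)^{-2d(N+1)},
\end{equation}
and similarly for $v$. The two Gamma factors in $\mathsf{k}^{\mathrm{hs}}$ decay as $|u|^{2d-1/2}e^{-\pi|u|}$ and $|v|^{2d-1/2}e^{-\pi|v|}$ by \eqref{eq:decay_Gamma}. The trigonometric quotient, which is the delicate ingredient, becomes
\begin{equation}
\left|\frac{\sin[\pi(Z-W)]}{\sin[\pi(Z+W)]}\right| = \frac{\sinh(\pi|u-v|)}{\sqrt{\sin^2(2\pi d)+\sinh^2(\pi(u+v))}},
\end{equation}
well-defined for all $u,v$ since $0<2d<1$. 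Pairing the $\sinh$ numerator with the Gamma exponential decay via $|u-v|\leq|u|+|v|$ yields $e^{-\pi(|u|+|v|)}\sinh(\pi|u-v|)\leq 1/2$, so the full integrand (including the polynomial factors coming from derivatives) is dominated by a constant multiple of $(1+|u|)^{-2dN-1/2+c}(1+|v|)^{-2dN-1/2+c}$ times an $L^1$ function of $u+v$, with $c$ a finite integer accounting for the differentiations. The change of variables $\varphi=(u+v)/2,\theta=(u-v)/2$ confirms integrability under the hypotheses $d>1/(2N)$ and $d<\min\{1/2,\varUpsilon,A_1,\dots,A_N\}$, the latter ensuring no poles of the Gamma functions are crossed.

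With the entrywise bound in hand, applying Hadamard's inequality as in \eqref{eq:fredholm_det_absolute_conv} to the $2\ell\times 2\ell$ antisymmetric matrix $[\mathbf{L}(X_i,X_j)]_{i,j=1}^\ell$, combined with $|\Pf(A)|^2\leq|\det(A)|$, gives
\begin{equation}
|\Pf[\mathbf{L}(X_i,X_j)]_{i,j=1}^\ell| \leq (2C)^\ell\,\ell^{\ell/2}\,\prod_{i=1}^\ell e^{-dX_i}.
\end{equation}
Integrating over $[\varsigma,+\infty)^\ell$ produces a factor $(e^{-d\varsigma}/d)^\ell$, and summing over $\ell$ gives a series of the form $\sum_\ell c^\ell\,\ell^{\ell/2}/\ell!$, which converges by Stirling's formula. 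The main obstacle is the trigonometric quotient $\sin[\pi(Z-W)]/\sin[\pi(Z+W)]$: it is not uniformly bounded on the contour, growing exponentially as $u\approx -v$, and becomes tractable only after combining it with the exponential decay inherent in $\Gamma(-2Z)\Gamma(-2W)$, so the estimates must be done jointly rather than factor by factor.
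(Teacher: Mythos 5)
Your proof is correct and follows essentially the same route as the paper's: a pointwise bound $|\mathbf{L}_{ab}(X,Y)|\le C e^{-d(X+Y)}$ obtained by estimating the Gamma ratios via \eqref{eq:bound_ratio_gamma}, the factors $\Gamma(-2Z)\Gamma(-2W)$ via \eqref{eq:decay_Gamma}, and absorbing the $\sinh(\pi|u-v|)$ growth of the sine quotient into the $e^{-\pi(|u|+|v|)}$ decay of the Gammas, followed by Hadamard's bound and the standard $\sum_\ell \ell^{\ell/2}c^\ell/\ell!$ estimate. You merely make explicit the joint treatment of the trigonometric quotient that the paper compresses into the bound \eqref{eq:bound_k}, so no further comment is needed.
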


\begin{proof}
    We make use of the asymptotic expansion \eqref{eq:decay_Gamma} to estimate the integrand in \eqref{eq:k_hs_log_gamma}. Following ideas already elaborated in the proof of \cref{lem:K_tau_hilbert_schmidt} we obtain the bound
    \begin{equation} \label{eq:bound_k}
    \begin{split}
        \left| \mathbf{k}(X,Y) \right| &\le C e^{-d (X + Y)} \int_\mathbb{R} \diff u \int_\mathbb{R} \diff v [(1+|u|) (1+|v|)]^{-2Nd -1/2} e^{-\pi|u+v|}
        \\
        &
        \le \frac{C}{2Nd} e^{-d(X+Y)},
    \end{split}
    \end{equation}
    where $C$ is a constant independent of $X,Y$.
    Conditions \eqref{eq:conditions_delta} on parameter $d$ imply that the double integral in the right hand side of the inequality above is convergent and bounded by a constant independent of $X,Y$. Through similar argument we easily get
    \begin{equation} \label{eq:exponential_estimates_k}
        | \mathbf{k}(X,Y) |, | \partial_X \mathbf{k}(X,Y) |,| \partial_Y \mathbf{k}(X,Y) |, | \partial_X \partial_Y \mathbf{k}(X,Y) | \le D e^{-d (X + Y)},
    \end{equation}
    where $D$ is another constant. We can now estimate each term of the expansion of the Fredholm pfaffian of $\mathbf{L}$ and, using the Hadamard's bound \eqref{eq:Hadamard_bound}, we have 
    \begin{equation} \label{eq:fredholm_pfaffian_absolute_conv}
        \begin{split}
            &
            \sum_{\ell \ge 0} \frac{1}{\ell!} \int_{[\varsigma,+\infty)^\ell} \diff X_1 \cdots \diff X_\ell \left| \Pf[\mathbf{L}(X_i,X_j)]_{i,j=1}^\ell \right|
            \\
            & \le \sum_{\ell \ge 0} \frac{1}{\ell!} \int_{[\varsigma,+\infty)^\ell} \diff X_1 \cdots \diff X_\ell (2\ell )^{\ell/2} D^\ell e^{-d(X_1+\cdots + X_\ell)}
            \\
            &= \sum_{\ell \ge 0} \frac{1}{\ell!} \ell^{\ell/2} \left( \frac{\sqrt{2} D e^{-\varsigma d} }{d} \right)^\ell < \infty,
        \end{split}
    \end{equation}
    which proves the result.
\end{proof}

\begin{proof}[Proof of \cref{thm:fredholm_pfaff_log_gamma}]
    We proceed as in \cref{thm:fredholm_det_log_gamma}. Our starting point in this case is the Fredholm pfaffian representation \eqref{eq:qpush_TASEP_pfaffian} of the probability distribution for the particle $\mathsf{x}^{\mathrm{hs}}_N(N)$ of a $q$-PushTASEP with particle creation. Using the convergence result of \cref{prop:hs_qPushTASEP_convergence}, along with \cref{lem:convergence_gumbel,lem:convergence_theta}, imposing the scaling,
    \begin{equation}
        q=e^{-\varepsilon}, 
        \qquad
        a_i=e^{-\varepsilon A_i}, 
        \qquad
        \gamma = e^{-\varepsilon \varUpsilon},
        \qquad
        \zeta= (1-q^2)^N e^{-\varsigma/2},
    \end{equation}
    we have
    \begin{equation}
        \mathsf{x}_N^{\mathrm{hs}}(N) - N + \chi +2S = \varepsilon^{-1} \left( \log Z^{\mathrm{hs}}(N,N) + \mathcal{G} -\varsigma + \sqrt{2 \varepsilon} \mathcal{N}(0,1) \right) + O(1).
    \end{equation}
    This shows that
    \begin{equation}
    \begin{split}
        \lim_{\varepsilon \to 0} \mathbb{P}(\mathsf{x}_N^{\mathrm{hs}}(N) - N + \chi +2S < s) &= \mathbb{P}(\log Z^{\mathrm{hs}}(N,N) + \mathcal{G} < \varsigma ) 
        \\
        & = \mathbb{E} \left( e^{-e^{ -\varsigma + \log Z^{\mathrm{hs}} (N,N)}} \right),
    \end{split}
    \end{equation}
    and it takes care of the left hand side of \eqref{eq:qpush_TASEP_pfaffian}. Let us now take the same limit on the right hand side. We can rewrite the $2\times 2$ matrix kernel prescribed by \cref{cor:pfaffian_q_PushTASEP} as
    \begin{equation}
      L_{\mathrm{push}} ( \varepsilon^{-1} X, \varepsilon^{-1} Y ) = \left( \begin{matrix} \varepsilon  & - \nabla_y \\ - \nabla_x & \varepsilon^{-1} \nabla_x \nabla_y  \end{matrix} \right) k( \varepsilon^{-1} X, \varepsilon^{-1} Y ),
    \end{equation}
    with
    \begin{equation}
        k ( \varepsilon^{-1} X, \varepsilon^{-1} Y ) = \frac{1}{(2\pi \mathrm{i})^2} \frac{\varepsilon^2}{(1-q)^2} \int_{ \substack{ Z \in \mathcal{D}(-d,\varepsilon) \\ W \in \mathcal{D}(-d,\varepsilon) }} \frac{\diff Z \diff W}{q^{(Z+3W)/2}} e^{XZ+YW} f(Z)f(W) \tilde{\kappa}^{\mathrm{hs}}(Z,W).     
    \end{equation}
    Here, with respect to \eqref{eq:k_fbs} we operated the change of variables $z=q^Z,w=q^W$ and we made use of auxiliary functions
    \begin{gather*}
        \tilde{\kappa}^{\mathrm{hs}}(Z,W) = \frac{\Gamma_q(-2Z)\Gamma_q(-2W) \Gamma_q(1+Z+W) \Gamma_q(-Z-W)}{\Gamma_q(W-Z) \Gamma_q(1+Z+W)} E_q(\varsigma,Z+W),
        \\
        f(Z) = \frac{\Gamma_q(\varUpsilon+Z)}{\Gamma_q(\varUpsilon-Z)} \prod_{i=1}^N \frac{\Gamma_q(A_i+Z)}{\Gamma_q(A_i-Z)},
        \\
        E_q(\varsigma,U) = (1-q)^{2N U} \frac{ \vartheta_3((1-q)^{2N} e^{-\varsigma} q^{2U}; q^2 )}{ \vartheta_3((1-q)^{2N} e^{-\varsigma}; q^2 ) }.
    \end{gather*}
    For the integration contours we used again the notation \eqref{eq:integration_contours}. Notice that the discrete derivatives $\nabla_x$ and $\nabla_y$ modify the function $k$ by inserting respectively factors $\frac{1}{2}(q^{-Z}-q^Z) \approx \varepsilon Z$ and $\frac{1}{2}(q^{-W}-q^W) \approx \varepsilon W$ in the integrand. By means of estimates reported in \cref{lem:convergence_ratio_theta,lem:bound_ratio_qgamma} and \cref{prop:bound_qgamma_k11} we can bound the absolute value of the integrand in the expression of $k$ with summable functions proving that
    \begin{equation}
        \lim_{\varepsilon \to 0} k(\varepsilon^{-1} X, \varepsilon^{-1}Y)  = \mathbf{k}(X+\varsigma,Y+ \varsigma ),
    \end{equation}
    by the bounded convergence theorem and moreover that 
    \begin{equation}
        |k(\varepsilon^{-1} X, \varepsilon^{-1}Y)| \le C e^{-d (X + Y)},
    \end{equation}
    for some constant $C$ independent of $\varepsilon,X,Y$. Analogous arguments are applicable to other elements of the matrix kernel $L_{\mathrm{push}}$ and we find
    \begin{multline*}
        \lim_{\varepsilon \to 0} \left( \varepsilon^{-1} \nabla_y k(\varepsilon^{-1} X, \varepsilon^{-1}Y), \varepsilon^{-1} \nabla_x k(\varepsilon^{-1} X, \varepsilon^{-1}Y) , \varepsilon^{-2} \nabla_x \nabla_y k(\varepsilon^{-1} X, \varepsilon^{-1}Y)  \right) ,
        \\ = \left( \partial_Y \mathbf{k}(X+ \varsigma,Y+ \varsigma ), \partial_X \mathbf{k}(X+ \varsigma ,Y+ \varsigma ), \partial_X \partial_Y \mathbf{k}(X+ \varsigma ,Y+ \varsigma) \right)
    \end{multline*}
    and moreover
    \begin{equation*}
        |\varepsilon^{-1} \nabla_y k(\varepsilon^{-1} X, \varepsilon^{-1}Y)|, |\varepsilon^{-1} \nabla_x k(\varepsilon^{-1} X, \varepsilon^{-1}Y)|, |\varepsilon^{-2} \nabla_x \nabla_y k(\varepsilon^{-1} X, \varepsilon^{-1}Y)| \le C e^{-d (X + Y)},
    \end{equation*}
    where the constant $C$ can be chosen to be the same for all terms. These results imply, by bounded convergence, that
    \begin{equation}
        \lim_{\varepsilon \downarrow 0 }\Pf(J-L_{\mathrm{push}})_{\ell^2(\mathbb{Z}'_{> s})} = \Pf(J - \mathbf{L})_{\mathbb{L}^2( \varsigma,+\infty)},    
    \end{equation}
    proving the desired identity \eqref{eq:fredholm_pfaff_log_gamma}.
\end{proof}

\subsection{Analytic extension of Fredholm pfaffian}

We now want to characterize the distribution of the half space polymer partition function $Z^{\mathrm{hs}}(N,N)$ for choices of boundary parameter $\varUpsilon \le 0$. This will require an analytic continuation of the relation \eqref{eq:fredholm_pfaff_log_gamma}. From the explicit representation of the function $\mathbf{k}$, we see that moving the parameter $\varUpsilon$ from the positive to the negative real semi-axis several singularities for the $Z$ and $W$ variables cross the integration contours corresponding to values $-\varUpsilon,-\varUpsilon-1,\dots,-\varUpsilon-m+1$, if we assume that $\varUpsilon$ ends up in the interval $(-m,-m+1]$. Defining functions
\begin{equation} \label{eq:p_j}
    \mathbf{A}_j(X) = \frac{(-1)^j}{j!} e^{-(j+\varUpsilon)X} \frac{\Gamma(2\varUpsilon+2j)}{\Gamma(2\varUpsilon+j) } \prod_{i=1}^N \frac{\Gamma(A_i-\varUpsilon-j)}{\Gamma(A_i + \varUpsilon +j)},
\end{equation}
\begin{equation} \label{eq:q}
        \mathbf{B}(Y) = \int_{\mathrm{i}\mathbb{R} - d} \frac{\diff W}{2\pi \mathrm{i}} e^{YW} \Gamma(-2W)
        \frac{\Gamma(1-\varUpsilon+W)}{\Gamma(1-\varUpsilon -W)}
        \prod_{i=1}^N \frac{\Gamma(A_i+W)}{\Gamma(A_i-W)},
\end{equation}
we evaluate the $Z$-residue at $-\varUpsilon-j$ of the integrand of $\mathbf{k}$ and from \eqref{eq:k_hs_log_gamma}, we find it to be
\begin{equation}
    \mathrm{Res}_{Z=-\varUpsilon-j} \left\{ \parbox{2.2cm}{\centering $Z$-integrand in \eqref{eq:k_hs_log_gamma}} \right\} =
    \mathbf{A}_j(X) \mathbf{B}(Y).
\end{equation}
Analogously the $W$-residue at $-\varUpsilon-j$ of the integrand of $\mathbf{k}$ is
\begin{equation}
    \mathrm{Res}_{W=-\varUpsilon-j} \left\{ \parbox{2.2cm}{\centering $W$-integrand in \eqref{eq:k_hs_log_gamma}} \right\} = - \mathbf{A}_j(Y) \mathbf{B}(X). 
\end{equation}
These computations allow us to identify the analytic continuation of the $2\times 2$ matrix kernel $\mathbf{L}$, by adding residues $\mathbf{A}_j(X) \mathbf{B}(Y) - \mathbf{A}_j(Y) \mathbf{B}(X)$ to the function $\mathbf{k}$ in each component of \eqref{eq:L_log_gamma}. However, when $\varUpsilon$ is negative and $j+\varUpsilon<0$ we see that functions $\mathbf{A}_j(X)$'s are exponentially diverging as $X$ grows. This means that the analytic continuation of $\mathbf{L}$ naively proposed above produces an operator which is not well defined on $\mathbb{L}^2(\varsigma,\infty) \times \mathbb{L}^2(\varsigma,\infty)$ and therefore we cannot immediately make sense of its Fredholm pfaffian. In the next theorem we resolve this issue and present the mathematically correct extension of identity \eqref{eq:fredholm_pfaff_log_gamma}. To state our result we introduce the scalar product $\langle (\cdot, \cdot) | (\cdot, \cdot) \rangle $ on the Hilbert space $\mathbb{L}^2(\varsigma,\infty) \times \mathbb{L}^2(\varsigma,\infty)$ as
\begin{equation}
    \langle (f_1,f_2) | (g_1,g_2) \rangle =\int_{\varsigma}^{+\infty} \left[ f_1(x)g_1(x) + f_2(x)g_2(x) \right]  \diff x.
\end{equation}

\begin{theorem} \label{thm:fredholm_pfaff_log_gamma_gaussian}
    Let $Z^{\mathrm{hs}}$ be the partition function of the Log Gamma polymer model in half space with parameters $A_1,\dots,A_N>0$ and boundary strength $\varUpsilon$ such that $-\min\{A_1,\dots,A_N\}<\varUpsilon \le 0$. Let $m\in\mathbb{Z}_{\ge 0}$ and $\varepsilon\in \{0,1\}$ be such that $\varUpsilon \in (-m+\frac{1-\varepsilon}{2},-m+\frac{2-\varepsilon}{2}]$ and take a number $d$ satisfying
    \begin{equation}
        0 < d < \min\left\{ \frac{1}{2}, \varUpsilon + m, A_1,\dots,A_N \right\}
        \qquad
        \text{and}
        \qquad
        d>\frac{1}{2N}.
    \end{equation}
    Then, there exists $N^*$ such that, for all $N \ge N^*$ we have
    \begin{equation} \label{eq:pfaff_L_hat_log_gamma}
    \begin{split}
        \mathbb{E}\left[ e^{-e^{-\varsigma + \log Z^{\mathrm{hs}}(N,N)}} \right] = & \Pf \left[ J - \widehat{\mathbf{L}} \right]_{\mathbb{L}^2(\varsigma,+\infty)}
        \\
        & \qquad
        \times \bigg( 1-  \langle (\mathbf{B},-\mathbf{B}') | \left(1-J^T \widehat{\mathbf{L}}\right)^{-1} | (\mathbf{A}',\mathbf{A} )\rangle \bigg),
    \end{split}
    \end{equation}
    where 
    \begin{equation} \label{eq:L_hat}
        \widehat{\mathbf{L}}(X,Y) = \mathbf{L}(X,Y) - \left( \begin{matrix} 1 & -\partial_Y \\ -\partial_X & \partial_X \partial_Y \end{matrix} \right) \sum_{j=m}^{2m+\varepsilon-2} \left[ \mathbf{A}_j(X) \mathbf{B}(Y) - \mathbf{A}_j(Y) \mathbf{B}(X) \right],
    \end{equation}
    \begin{equation} \label{eq:p}
        \mathbf{A}(X) = \sum_{j=0}^{2m+\varepsilon-2} \mathbf{A}_j(X).
    \end{equation}
    Notice that when $m=0$ we have $\widehat{\mathbf{L}} = \mathbf{L}$, since the summations in \eqref{eq:L_hat} is empty.
\end{theorem}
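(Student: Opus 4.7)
The plan is to establish the formula by analytic continuation in the boundary parameter $\varUpsilon$, starting from the case $\varUpsilon>0$ settled in \cref{thm:fredholm_pfaff_log_gamma}. The left-hand side of \eqref{eq:pfaff_L_hat_log_gamma} is analytic in $\varUpsilon$ throughout the strip $\varUpsilon\in(-\min_i A_i,+\infty)$ (it depends on $\varUpsilon$ only through the diagonal weights $w_{i,i}$), so it will suffice to identify the analytic continuation of the right-hand side of \eqref{eq:fredholm_pfaff_log_gamma} and check that it matches the formula claimed for $\varUpsilon\le 0$.

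First I would carry out a residue analysis: as $\varUpsilon$ decreases across the thresholds $0,-\tfrac{1}{2},-1,\dots$, the poles of $\Gamma(\varUpsilon+Z)/\Gamma(\varUpsilon-Z)$ at $Z=-\varUpsilon-j$ (and symmetrically in $W$) cross the contour $\mathrm{i}\mathbb{R}-d$. A direct computation gives $\mathrm{Res}_{Z=-\varUpsilon-j}\{\text{$Z$-integrand of }\mathbf{k}\}=\mathbf{A}_j(X)\,\mathbf{B}(Y)$, and $-\mathbf{A}_j(Y)\,\mathbf{B}(X)$ from the $W$-integrand. Thus the analytic continuation of $\mathbf{L}$ differs from the naive expression \eqref{eq:L_log_gamma} by the antisymmetric rank-two correction $D\sum_{j=0}^{2m+\varepsilon-2}[\mathbf{A}_j(X)\mathbf{B}(Y)-\mathbf{A}_j(Y)\mathbf{B}(X)]$, where $D$ denotes the $2\times 2$ matrix differential operator appearing in \eqref{eq:L_hat}.

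The main obstacle is that the residue functions $\mathbf{A}_j$ with $j<-\varUpsilon$ grow exponentially in $X$, so the naively continued kernel is not Hilbert--Schmidt on $\mathbb{L}^2(\varsigma,+\infty)$ and its Fredholm pfaffian is not directly well-defined. The definition of $\widehat{\mathbf{L}}$ in \eqref{eq:L_hat} absorbs into the kernel only the \emph{decaying} rank-two perturbations (those with $j\ge m$), which preserves exponential bounds of the form \eqref{eq:exponential_estimates_k} and makes $\Pf[J-\widehat{\mathbf{L}}]$ absolutely convergent by the argument of \cref{prop:fredholm_pfaffian_absolutely_convergent}. The remaining, exponentially growing rank-two antisymmetric perturbation is handled via the operator analogue of the classical finite-dimensional identity $\Pf[A+uv^{T}-vu^{T}]=\Pf[A](1+v^{T}A^{-1}u)$, which in our setting reads
\[
\Pf\bigl[J-K-(u v^{T}-v u^{T})\bigr]=\Pf[J-K]\bigl(1-\langle v\,,\,(1-J^{T}K)^{-1}u\rangle\bigr).
\]
Applying this with $K=\widehat{\mathbf{L}}$ and with $u,v$ built from $\mathbf{A},\mathbf{A}',\mathbf{B},\mathbf{B}'$ (the derivatives appear after transferring the $\partial_{X},\partial_{Y}$ entries of $D$ onto the test vectors via integration by parts) produces precisely the scalar-product correction factor in \eqref{eq:pfaff_L_hat_log_gamma}.

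The hardest point is then to ensure that the right-hand side of \eqref{eq:pfaff_L_hat_log_gamma} is actually well-defined: the resolvent $(1-J^{T}\widehat{\mathbf{L}})^{-1}$ must exist as a bounded operator, and it must send the non-$\mathbb{L}^{2}$ vector $(\mathbf{A}',\mathbf{A})$ to a function whose pairing with $(\mathbf{B},-\mathbf{B}')$ converges. Both requirements force the operator norm of $\widehat{\mathbf{L}}$ to be sufficiently small; the gamma-ratio bounds \eqref{eq:bound_ratio_gamma} show that this smallness is achieved once $N$ is large, since the product $\prod_{i=1}^{N}\Gamma(A_i+Z)/\Gamma(A_i-Z)$ contributes $N$-fold polynomial suppression along the contours. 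Defining $N^{*}$ as the threshold at which these estimates become uniform on a neighbourhood of $\varUpsilon$ in the target strip yields analyticity of the right-hand side of \eqref{eq:pfaff_L_hat_log_gamma} on $\varUpsilon\in(-\min_i A_i,0]$, and the theorem follows by uniqueness of analytic continuation from the identity at $\varUpsilon>0$.
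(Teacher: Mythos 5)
Your overall architecture matches the paper's: analytic continuation in $\varUpsilon$ from \cref{thm:fredholm_pfaff_log_gamma}, residue bookkeeping as the poles at $Z,W=-\varUpsilon-j$ cross the contour, the rank-two Pfaffian perturbation identity (\cref{prop:pfaffian_L+R}) to peel off the scalar-product factor, and operator-norm smallness for large $N$. (One ordering remark: the paper applies the rank-two identity while still at $\varUpsilon>0$, where $\mathbf{R}$ is genuinely a bounded rank-two operator, and only then continues both factors; applying it directly at $\varUpsilon\le 0$ with an exponentially growing perturbation would itself need justification.)

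There is, however, a genuine gap at the step you yourself flag as the hardest. You correctly observe that the resolvent must send the non-$\mathbb{L}^2$ vector $(\mathbf{A}',\mathbf{A})$ to something whose pairing with $(\mathbf{B},-\mathbf{B}')$ converges, but you then assert that "both requirements force the operator norm of $\widehat{\mathbf{L}}$ to be sufficiently small." Norm smallness gives you a convergent Neumann series on $\mathbb{L}^2\times\mathbb{L}^2$; it says nothing about applying $(1-J^T\widehat{\mathbf{L}})^{-1}$ to a vector that grows like $e^{-\varUpsilon X}$ with $\varUpsilon<0$ — even the single term $\langle(\mathbf{B},-\mathbf{B}')|J^T\widehat{\mathbf{L}}|(\mathbf{A}',\mathbf{A})\rangle$ is not obviously finite from a symmetric bound $\widehat{\mathbf{L}}(X,Y)=O(e^{-d(X+Y)})$. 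What the paper actually needs is an \emph{anisotropic} bound $\widehat{\mathbf{L}}(X,Y)=O(e^{(-R-1)X+RY})$ for $R\in[\varUpsilon-\delta,-\varUpsilon]$, obtained by shifting both contours of $\mathbf{k}$ to $\mathrm{i}\mathbb{R}+\varUpsilon-\delta$ and decomposing $\widehat{\mathbf{L}}_{1,1}=\mathbf{k}_{\varUpsilon-\delta}+\sum_j S_j$ as in \eqref{eq:L_11_2}; this is also the real reason the terms $j=m,\dots,2m+\varepsilon-2$ are built into $\widehat{\mathbf{L}}$ (they cancel residues produced by that deformation), not merely that they happen to decay. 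With this bound each application of $J^T\widehat{\mathbf{L}}$ lowers the exponential growth rate of $(\mathbf{A}',\mathbf{A})$ by $1-\delta$, so after finitely many iterations the vector lands in $\mathbb{L}^2$ and only then does the norm bound control the tail of the Neumann series, giving $|\langle(\mathbf{B},-\mathbf{B}')|(J^T\widehat{\mathbf{L}})^\ell|(\mathbf{A}',\mathbf{A})\rangle|\le\mathrm{const.}\,\|\widehat{\mathbf{L}}\|^{\ell-M}$. Without this iterated contour-shift argument the scalar product in \eqref{eq:pfaff_L_hat_log_gamma} is not known to be well-defined, so your sketch is incomplete at precisely this point. (A minor further omission: the paper temporarily assumes $A_1,\dots,A_N>2m$ so the deformations do not meet the poles of $\Gamma(A_i+Z)$, removing this hypothesis afterwards by a second analytic continuation.)
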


\begin{lemma} \label{lem:norm_L}
    Consider parameters $A_1,\dots, A_N > 0$ and $\varUpsilon > - \min\{A_1,\dots,A_N \}$. Then there exists $N^*$, depending on $\varUpsilon$ such that for each $N\ge N^*$ the operator $\mathbf{L}$ has norm $\| \mathbf{L} \|<1$.
\end{lemma}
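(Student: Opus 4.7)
The plan is to control the operator norm of $\mathbf{L}$ via its Hilbert--Schmidt norm, and then to show that the Hilbert--Schmidt norm can be made arbitrarily small by choosing $N$ large. Since $\mathbf{L}$ is a $2\times 2$ matrix integral operator on $\mathbb{L}^2(\varsigma,+\infty)\oplus\mathbb{L}^2(\varsigma,+\infty)$, the inequality $\|\mathbf{L}\|_{\mathrm{op}} \le \|\mathbf{L}\|_{\mathrm{HS}}$ reduces the problem to estimating the four $L^2$-norms of the entries of $\mathbf{L}$.

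The starting point is the exponential bound \eqref{eq:exponential_estimates_k} already established in the proof of \cref{prop:fredholm_pfaffian_absolutely_convergent}: each of the four entries of $\mathbf{L}$ satisfies $|\mathbf{L}_{ij}(X,Y)| \le D\, e^{-d(X+Y)}$ with some constant $D=D(N)$ depending on the parameters. Integrating pointwise gives directly
\begin{equation*}
\|\mathbf{L}\|_{\mathrm{HS}}^2 \le 4D^2 \int_\varsigma^{+\infty}\!\!\int_\varsigma^{+\infty} e^{-2d(X+Y)}\,\diff X\,\diff Y \;=\; \frac{D^2\, e^{-4d\varsigma}}{d^2},
\end{equation*}
so the task is reduced to proving that $D(N)\,e^{-2d\varsigma}/d < 1$ for all $N$ sufficiently large. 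Since $\varsigma$, $\varUpsilon$, and the $A_i$'s are fixed, it suffices to show that $D(N)\to 0$ as $N\to\infty$.

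To establish the decay of $D(N)$, I would revisit the estimate derived in the proof of \cref{prop:fredholm_pfaffian_absolutely_convergent}. Parameterizing the contours by $Z=-d+\mathrm{i}u$, $W=-d+\mathrm{i}v$ and combining Stirling's formula \eqref{eq:decay_Gamma} with the bounds on $|\Gamma(-2Z)\Gamma(-2W)|$ and on $|\sin[\pi(Z-W)]/\sin[\pi(Z+W)]|$, the integrand defining $\mathbf{k}(X,Y)$ is controlled by a constant times $e^{-d(X+Y)}\cdot [(1+|u|)(1+|v|)]^{-2Nd-1/2}\,e^{-\pi|u+v|}$ (see the derivation of \eqref{eq:bound_k}). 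The double integral of this function over $(u,v)\in\mathbb{R}^2$ tends to zero as $N\to\infty$ by dominated convergence: for $N\ge N_0$ large enough to make the integrals convergent, the integrand is pointwise bounded by the $N=N_0$ integrand and converges to zero pointwise off $(u,v)=(0,0)$. The same argument applies to $\partial_X\mathbf{k}$, $\partial_Y\mathbf{k}$, and $\partial_X\partial_Y\mathbf{k}$, where the extra factors of $|Z|$ or $|W|$ in the integrand only reduce the exponent of $(1+|u|)$ or $(1+|v|)$ by one (so a slightly larger $N^*$ is required). Combining the four decays yields $D(N)\to 0$ and hence $\|\mathbf{L}\|_{\mathrm{op}}<1$ for $N\ge N^*$.

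The main obstacle will be in carrying out the third step rigorously: one must verify that the various multiplicative constants arising from the Stirling asymptotics of the $2(N+1)$ Gamma functions in $g_{\varUpsilon;A}(Z)g_{\varUpsilon;A}(W)$ do not grow so fast with $N$ as to swamp the polynomial decay $(1+|u|)^{-2Nd-1/2}(1+|v|)^{-2Nd-1/2}$ after integration. Bounds of the type contained in \cref{lem:bound_ratio_qgamma} and the remarks leading to \eqref{eq:bound_ratio_gamma}---applied not just individually but collectively, tracking the product over $i=1,\dots,N$---are precisely what is needed. Once this quantitative control is in place, the dominated-convergence argument closes the proof.
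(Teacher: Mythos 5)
Your proposal is correct and follows essentially the same route as the paper: the paper's entire proof is a one-line remark that the lemma is "a straightforward consequence" of the bound \eqref{eq:bound_k} (whose prefactor $\tfrac{C}{2Nd}$ plays exactly the role of your $D(N)\to 0$) together with the analogous estimates for the derivatives of $\mathbf{k}$, extended to negative $\varUpsilon$. Your version is in fact more explicit, supplying the Hilbert--Schmidt step and the dominated-convergence argument that the paper leaves implicit. The "main obstacle" you flag --- uniformity in $N$ of the constants coming from the Stirling bounds on $g_{\varUpsilon;A}$ --- is real but is equally unaddressed in the paper's own proof, so you are not missing an ingredient that the authors supply.
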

\begin{proof}
    This is a straightforward consequence of bound \eqref{eq:bound_k} and of analogous estimates for partial derivatives of $\mathbf{k}$, which one can easily produce also when the parameter $\varUpsilon$ becomes negative.
\end{proof}

\begin{proof}[Proof of \cref{thm:fredholm_pfaff_log_gamma_gaussian}]
    As hinted above before the statement of the theorem, identity \eqref{eq:pfaff_L_hat_log_gamma} will follow from \eqref{eq:fredholm_pfaff_log_gamma} after an analytic continuation in the parameter $\varUpsilon$ to the region $\varUpsilon \le 0$. We first observe that the Laplace transform $\mathbb{E}\left( e^{-e^{-\varsigma} Z^{\mathrm{hs}}(N,N) } \right)$ is clearly an analytic function of the variable $\varUpsilon$ in the region $\varUpsilon > - \min\{A_1,\dots,A_N\}$, since the $\varUpsilon$ dependence is given by boundary random variables \eqref{eq:random_environment}. To show that the same is true for the Fredholm pfaffian in the right hand side of \eqref{eq:fredholm_pfaff_log_gamma}, we show that the right hand side of \eqref{eq:pfaff_L_hat_log_gamma} is an analytic function of the variable $\varUpsilon$ that is well defined for $\varUpsilon>-m$ and that it equals $\Pf[J-\mathbf{L}]$, of \eqref{eq:fredholm_pfaff_log_gamma} when $\Re\{\varUpsilon\}\in(0,1)$.
    
    Let us start by assuming that parameters $A_1,\dots,A_N>2m$. This hypothesis can be removed at any time through analytic continuation and it only serves to ensure that through the procedure we describe below we do not have to worry about possible singularities coming from factors $\Gamma(A_i+Z),\Gamma(A_i+W)$ in the function $\mathbf{k}$ in \eqref{eq:k_hs_log_gamma}.  
    As discussed above, the problem with the argument presented prior to the statement of the theorem is that perturbations of the kernel $\mathbf{L}$ given by combinations of functions $\mathbf{A}_j,\mathbf{B}$ and their derivatives define an unbounded operator when $\varUpsilon<0$. Let us then assume that $\varUpsilon>0$ and define the kernel $\mathbf{R}(X,Y)$
    \begin{equation}
        \mathbf{R} (X,Y) = \left( \begin{matrix} \mathbf{A}(X) \mathbf{B}(Y) - \mathbf{A}(Y) \mathbf{B}(X) & -\mathbf{A}(X) \mathbf{B}'(Y) + \mathbf{A}'(Y) \mathbf{B}(X) \\ -\mathbf{A}'(X) \mathbf{B}(Y) + \mathbf{A}(Y) \mathbf{B}'(X) & \mathbf{A}'(X) \mathbf{B}'(Y) - \mathbf{A}'(Y) \mathbf{B}'(X) \end{matrix} \right).
    \end{equation}
    By simple estimates 
    \begin{equation} \label{eq:bound_p,q_1}
        |\mathbf{A}(X)|,|\mathbf{A}'(X)| \le D e^{-\varUpsilon X},
        \qquad
        |\mathbf{B}(Y)|,|\mathbf{B}'(Y)| \le D e^{-d Y},
    \end{equation}
    holding for a constant $D>0$, which can be made arbitrarily small choosing $N$ large, we deduce that, in case $\varUpsilon>0$, $\mathbf{R}(X,Y)$ defines a rank 2 operator $\mathbf{R}:\mathbb{L}^2(-\varsigma,+\infty) \times \mathbb{L}^2(-\varsigma,+\infty) \to \mathbb{L}^2(-\varsigma,+\infty) \times \mathbb{L}^2(-\varsigma,+\infty)$. Applying \cref{prop:pfaffian_L+R} we produce the identity
    \begin{equation} \label{eq:pfaffian_L-R+R}
    \begin{split}
        \Pf(J-\mathbf{L}) & = \Pf(J-(\mathbf{L}-\mathbf{R}) -\mathbf{R})  
        \\
        & = \Pf(J-(\mathbf{L}-\mathbf{R}))\bigg(1-\langle(\mathbf{B},-\mathbf{B}') | (1-J^T(\mathbf{L} - \mathbf{R}))^{-1} | (\mathbf{A}',\mathbf{A}) \rangle \bigg).
    \end{split}
    \end{equation}
    In the right hand side the inverse operator is defined by the Neumann series $(1-J^T(\mathbf{L} - \mathbf{R}))^{-1} = \sum_{\ell \ge 0} \left(J^T(\mathbf{L} - \mathbf{R}) \right)^\ell $ and such series is absolutely convergent for $N$ large enough as a result of \cref{lem:norm_L} and \eqref{eq:bound_p,q_1}. We now show that the right hand side of \eqref{eq:pfaffian_L-R+R} remains well defined and analytic even when we drag the parameter $\varUpsilon$ in the region $\Re\{\varUpsilon\}<0$. First we focus on the kernel $\mathbf{L}-\mathbf{R}$. Moving $\varUpsilon$ inside the segment $(-m+\frac{1-\varepsilon}{2},-m+\frac{2-\varepsilon}{2}]$, poles at $-\varUpsilon,-\varUpsilon-1,\dots,-\varUpsilon-m+1$ cross the integration contours of the function $\mathbf{k}$ so that the kernel $\mathbf{L}$ becomes
    \begin{equation}
        \mathbf{L} + \left( \begin{matrix} 1 & -\partial_Y \\ -\partial_X & \partial_X \partial_Y \end{matrix} \right) \sum_{j=0}^{m-1} \left[ \mathbf{A}_j(X) \mathbf{B}(Y) - \mathbf{A}_j(Y) \mathbf{B}(X) \right]
    \end{equation}
    and hence the first $m$ terms in the sum over $j$ of $\mathbf{A}$ \eqref{eq:p} in $\mathbf{R}$ cancel and $\mathbf{L}-\mathbf{R}$ gets transformed into $\widehat{\mathbf{L}}$. This proves that,
    for the parameter $\varUpsilon$ in this parameter region, the right hand side of \eqref{eq:pfaffian_L-R+R} is represented as
    \begin{equation} \label{eq:pfaffian_L_hat+R}
      \Pf(J-\widehat{\mathbf{L}})\bigg(1-\langle(\mathbf{B},-\mathbf{B}') | (1-J^T \widehat{\mathbf{L}})^{-1} | (\mathbf{A}',\mathbf{A}) \rangle \bigg),
    \end{equation}
    as long as we can show that both factors in the right hand side are absolutely convergent. The kernel $\widehat{\mathbf{L}}$ can be assumed to have operator norm $\| \widehat{\mathbf{L}} \| <1$, by choosing $N$ large. This follows from \cref{lem:norm_L} and from estimates
    \begin{equation} \label{eq:bound_p,q_2}
        |\mathbf{A}_j(X)|,|\mathbf{A}_j'(X)| \le D' e^{-(\varUpsilon+j)X}, 
        \qquad
        |\mathbf{B}(Y)|,|\mathbf{B}'(Y)| \le D' e^{(\varUpsilon-\delta) Y},
    \end{equation}
    where $D'>0$ can be taken small if $N$ is large and the bound for functions $\mathbf{B},\mathbf{B}'$ is derived from \eqref{eq:q} choosing integration contour $\mathrm{i} \mathbb{R} +\varUpsilon -\delta$, with $\delta \in (0,1)$. Thus, the first factor $\Pf(J-\widehat{\mathbf{L}})$ is well defined. Showing that the scalar product $\langle(\mathbf{B},-\mathbf{B}') | (1-J^T \widehat{\mathbf{L}})^{-1} | (\mathbf{A}',\mathbf{A}) \rangle$ is well posed for $\varUpsilon<0$ is a slightly more laborious task. As noted in \eqref{eq:bound_p,q_1}, the functions $\mathbf{A},\mathbf{A}'$ diverge exponentially for $\varUpsilon<0$ and hence we have to prove that for any $\ell \ge 0$ the quantity
    \begin{equation} \label{eq:scalar_product_q_JL_p}
        \langle(\mathbf{B},-\mathbf{B}') | (J^T \widehat{\mathbf{L}})^\ell | (\mathbf{A}',\mathbf{A}) \rangle
    \end{equation}
    is finite and exponentially decaying in $\ell$. When $\ell=0$ the scalar product,
   $     \langle(\mathbf{B},-\mathbf{B}') | (\mathbf{A}',\mathbf{A}) \rangle, $   
    is convergent since 
    \begin{equation}
        \mathbf{B}(X) \mathbf{A}'(X) , \mathbf{B}'(X) \mathbf{A}(X) = O(e^{-\delta X}),
    \end{equation}
    which is a consequence of estimates \eqref{eq:bound_p,q_1}, \eqref{eq:bound_p,q_2} respectively for functions $\mathbf{A},\mathbf{A}'$ and $\mathbf{B},\mathbf{B}'$. Let us now consider the case $\ell=1$, which is illuminating to treat the general $\ell$ case. We need to evaluate the asymptotic behavior of the function
    \begin{equation} \label{eq:J^TL.P}
        \left( J^T \widehat{\mathbf{L}} \right) \left( \begin{matrix} \mathbf{A}' \\ \mathbf{A}  \end{matrix} \right) (X) = \int_{\varsigma}^{+\infty} J^T \widehat{\mathbf{L}}(X,Y) \left( \begin{matrix} \mathbf{A}' (Y) \\ \mathbf{A} (Y) \end{matrix} \right) \diff Y.
    \end{equation}
    For this we will deform the integration contours of function $\mathbf{k}$ and its derivatives, entering coefficients of $\widehat{\mathbf{L}}$. Let us focus on the (1,1) element,
    \begin{equation} \label{eq:L_11}
        \widehat{\mathbf{L}}_{1,1}(X,Y) = \mathbf{k}(X,Y) - \sum_{j=m}^{2m+\varepsilon-2} \left[ \mathbf{A}_j(X) \mathbf{B}(Y) - \mathbf{A}_j(Y) \mathbf{B}(X) \right].
    \end{equation}
    Moving the $Z$ and $W$ integration contours for the integral expression of $\mathbf{k}$ in \eqref{eq:k_hs_log_gamma} from $\mathrm{i}\mathbb{R}-d$ to $\mathrm{i} \mathbb{R} + \varUpsilon -\delta$ we cross a number of poles for the $Z$ and $W$ variables. These occur at points $-\varUpsilon-m,\dots, -\varUpsilon-2m+\varepsilon-2$ and at points $Z=-W-1,\dots-W-2m-\varepsilon+2$, as a result of presence of functions $\Gamma(\varUpsilon+Z),\Gamma(\varUpsilon+W)$ and $\frac{1}{\sin[\pi(W+Z)]}$. The crossing of the first set of poles produces residues that cancel out with summation in the right hand side of \eqref{eq:L_11} and this is the motivation for including such terms in the kernel $\widehat{\mathbf{L}}$ in the first place. Residues coming from poles $Z=-W-j$ for $j=1,\dots, 2m+\varepsilon-2$ do not cancel out and their value is
    \begin{equation}
        S_j(X,Y)=-\int_{\mathrm{i} \mathbb{R} + R } \frac{\diff W}{2\pi \mathrm{i}} 
        e^{YW - X(W+j)} g_{\varUpsilon;A}(W)g_{\varUpsilon;A}(-W-j) \frac{\Gamma(2W+2j)}{\Gamma(2W+1)},
    \end{equation}
    where in the integration contour $R$ can be taken in the interval $(\varUpsilon-1,-\varUpsilon-j+1).$
    We obtain the equality
    \begin{equation} \label{eq:L_11_2}
        \widehat{\mathbf{L}}_{1,1}(X,Y) =\mathbf{k}_{\varUpsilon-\delta}(X,Y) + \sum_{j=1}^{2m+\varepsilon-2} S_j(X,Y),
    \end{equation}
    where $\mathbf{k}_{\varUpsilon-\delta}$ denotes the function $\mathbf{k}$ of \eqref{eq:k_hs_log_gamma} where contours are taken to be $\mathrm{i} \mathbb{R} + \varUpsilon-\delta$. Straightforward arguments lead to the following bounds for each term appearing in the right hand side of \eqref{eq:L_11_2}
    \begin{equation}
        | \mathbf{k}_{\varUpsilon-\delta}(X,Y) | \le C e^{(\varUpsilon-\delta)(X+Y)},
        \qquad
        |S_j(X,Y)| \le C \, e^{ (-R-j)X + RY},
    \end{equation}
    which we can combine to obtain
    \begin{equation}
        | \widehat{\mathbf{L}}_{1,1}(X,Y) | \le C' e^{ (-R-1)X + RY}.
    \end{equation}
    Similar arguments apply to all other components of the kernel $\widehat{\mathbf{L}}$, so that we get the estimate
    \begin{equation} \label{eq:L_hat_exponential_decay_1}
        \widehat{\mathbf{L}}(X,Y) = O \left(e^{ (-R-1)X + R Y} \right),
    \end{equation}
    which holds for $\varUpsilon-\delta \le R \le -\varUpsilon$.
    This allows us to evaluate expression \eqref{eq:J^TL.P} and choosing $R=\varUpsilon -\delta$ we find
    \begin{equation}
        \left( J^T \widehat{\mathbf{L}} \right) \left( \begin{matrix} \mathbf{A}' \\ \mathbf{A}  \end{matrix} \right) (X) = O \left(e^{ (-\varUpsilon+\delta-1)X} \right).
    \end{equation}
    The argument produced above shows that the application of the operator $J^T \widehat{\mathbf{L}}$ to the pair of functions $\left( \begin{matrix} \mathbf{A}' \\ \mathbf{A}  \end{matrix} \right)$ lowers their exponential growth by a factor $e^{-(1-\delta)X}$. Iteratively, by properly choosing the value of $R$ from time to time, we can then show that
    \begin{equation} \label{eq:J^TL_r_times}
        \left( J^T \widehat{\mathbf{L}} \right)^r \left( \begin{matrix} \mathbf{A}' \\ \mathbf{A}  \end{matrix} \right) (X) = O \left(e^{ -\varUpsilon X - r(1-\delta)X} \right)
    \end{equation}
    for $r=1,2,\dots,M$, where $M$ is the smallest integer such that $-\varUpsilon-M(1-\delta)<0$. Thanks to estimates \eqref{eq:J^TL_r_times} we can now prove that the scalar product \eqref{eq:scalar_product_q_JL_p} consists of absolutely convergent integrals for any $\ell \ge 0$  and that
    \begin{equation}
        |\langle(\mathbf{B},-\mathbf{B}') | (J^T \widehat{\mathbf{L}})^\ell | (\mathbf{A}',\mathbf{A}) \rangle| \le \mathrm{const.} \|\widehat{\mathbf{L}}\|^{\ell-M}.
    \end{equation}
    Since $\| \widehat{\mathbf{L}} \|<1$ for $N$ chosen properly large we conclude that the scalar product in the right hand side of \eqref{eq:pfaffian_L_hat+R} is well defined, and analytic in the parameter $\varUpsilon$ whenever $\Re\{\varUpsilon\}>-m+\frac{1-\varepsilon}{2}$.
    To conclude the proof we now like to remove the assumption $A_1,\dots,A_N>2m$. This again can be done by analytic continuation of \eqref{eq:pfaff_L_hat_log_gamma}, noticing that, as long as $A_i +\varUpsilon > 0$ the expression in the right hand side does not possess any singularity. Taking parameters $A_i$ uniformly larger than $-\varUpsilon$, say $A_i+\varUpsilon>\tilde{\delta}$ for all $i\ge 1$ and $\tilde{\delta}$ fixed, we can choose $N$ large enough so that the norm of $\widehat{\mathbf{L}}$ stays smaller than one, condition necessary to make sense of terms in the right hand side of \eqref{eq:pfaff_L_hat_log_gamma}. This concludes the proof.
\end{proof}

\section{Asymptotic results} \label{sec:asymptotics}

In this section we establish asymptotic limits of (some of) the solvable models in the KPZ class discussed in \cref{sec:KPZqW}. Most of our attention will be on models in half space as such results are new.

\subsection{Tracy-Widom distributions} \label{subs:Tracy_Widom}
    
    Here we give the definition of Tracy-Widom distributions and of the so called Baik-Rains crossover distribution which will appear in subsequent subsections in the characterization of limiting fluctuations of KPZ models. Their expression is given in terms of Fredholm determinants or Fredholm pfaffians. 
    
    \begin{definition} \label{def:contour_C_p_theta}
        For any $p\in \mathbb{R}$ and $\theta\in[0,2\pi]$, we define the curve $C_p^{\theta} \subset \mathbb{C}$ as
        \begin{equation}
            C_p^{\theta} = \{ p + |r| e^{ \mathrm{sign}(r) \mathrm{i}  \theta  } : r\in \mathbb{R} \}.
        \end{equation}
        We will use the notation $C_p^\theta$ to denote integration contours and in these situations we will always assume the contour is positively oriented.
    \end{definition}    
    
    We recall the definition of the Airy function
    \begin{equation}
        \mathrm{Ai}(u) = \int_{C_1^{\pi/3}} e^{\frac{\alpha^3}{3} - u \alpha} \frac{\diff \alpha}{2 \pi \mathrm{i}},
    \end{equation}
    of which we make extensive use below.
    
    \begin{definition}
        The GUE Tracy-Widom distribution is given by
        \begin{equation}
            F_\mathrm{GUE}(s) = \det\left( 1 - \mathcal{K}_\mathrm{Airy} \right)_{\mathbb{L}^2(s,+\infty)},
        \end{equation}
        where $s\in \mathbb{R}$ and $\mathcal{K}_\mathrm{Airy}$ is the Airy kernel 
        \begin{equation} \label{eq:Airy_kernel}
            \mathcal{K}_\mathrm{Airy}(u,v) = \int_{C^{2\pi/3}_{-1} } \frac{\diff \alpha}{2\pi i} \int_{C^{\pi/3}_{1} } \frac{\diff \beta}{2\pi i} \frac{e^{-\frac{\alpha^3}{3} + \alpha u + \frac{\beta^3}{3} - \beta v } }{\beta - \alpha}.
        \end{equation}
        Notice that the integration contours do not intersect.
    \end{definition}
    
    Frequently in literature the Airy kernel is defined as $\mathcal{K}_\mathrm{Airy}(u,v) = \int_0^{\infty} \mathrm{Ai} (u+ \lambda) \mathrm{Ai}(v+\lambda) d \lambda$, but it is possible to see such representation is equivalent to \eqref{eq:Airy_kernel}.
    
    We now define a family of probability distributions parameterized by a parameter $\xi \in [0,+\infty]$. This is the crossover distribution $F_\mathrm{cross} (s;\xi)$ introduced for the first time in \cite[Definition 4]{baik_rains2001asymptotics}.
    
    \begin{definition} \label{def:crossover}
        The Baik-Rains crossover distribution is the family of probability distributions on $s\in \mathbb{R}$ given by 
        \begin{equation} \label{eq:crossover}
            F_\mathrm{cross} (s;\xi) = \Pf \left( J - \mathcal{K}_\mathrm{cross}^{(\xi)}\right)_{\mathbb{L}^2(s,+\infty)}. 
        \end{equation}
        Here we take $\xi>0$ and $\mathcal{K}_\mathrm{cross}^{(\xi)}$ is the $2\times 2$ matrix kernel
        \begin{equation} \label{eq:kernel_K_cross}
            \mathcal{K}_\mathrm{cross}^{(\xi)} (u,v) = \left( \begin{matrix} \mathpzc{k}^{(\xi)}(u,v) & -\partial_v \mathpzc{k}^{(\xi)}(u,v) \\ -\partial_u \mathpzc{k}^{(\xi)}(u,v) & \partial_u \partial_v \mathpzc{k}^{(\xi)}(u,v) \end{matrix} \right),
        \end{equation}
        where
        \begin{equation} \label{eq:k_xi}
            \mathpzc{k}^{(\xi)}(u,v)
            = \frac{1}{4} \int_{C_\delta^{\pi/3}} \frac{\diff \alpha}{2\pi \mathrm{i}} \int_{C_\delta^{\pi/3}} \frac{\diff\beta}{2\pi \mathrm{i}} \frac{(\xi+\alpha) (\xi+\beta)}{(\xi-\alpha)(\xi-\beta)} \frac{\alpha-\beta}{\alpha \beta (\alpha+\beta)} e^{\frac{\alpha^3}{3} - \alpha u + \frac{\beta^3}{3} - \beta v }
        \end{equation}
        and in the integration contour we chose $0<\delta<\xi$. 
    \end{definition}
    
    In literature, several equivalent representations for $F_\mathrm{cross} (s;\xi)$ have appeared \cite{baik_rains2001symmetrized,baik_barraquand_corwin_suidan_pfaffian}, although compared to these existing expressions \eqref{eq:crossover} is new. The matrix kernel $\mathcal{K}_{\mathrm{cross}}^{(\xi)}$ was first written in \cite[eq. (71)]{krejenbrink_le_doussal_KPZ_half_space}, where authors conjectured the Fredholm pfaffian in the right hand side of \eqref{eq:crossover} to be equal to the Baik-Rains crossover. Below, in \cref{cor:equivalence_Baik_Rains_crossover}, we will prove such conjecture, hence justifying the definition of $F_\mathrm{cross} (s;\xi)$ as Baik-Rains crossover.
    
    The distribution $F_\mathrm{cross} (s;\xi)$ is well defined also for $\xi=+\infty$ or $\xi=0$, where it becomes respectively $F_{\mathrm{GSE}}(s)$ and $F_{\mathrm{GOE}}(s)$, i.e. the Tracy-Widom distributions corresponding to the symplectic and orthogonal gaussian ensembles \cite{tracy1996orthogonal}. The limit $\xi \to + \infty$ can be taken  straightforwardly from \eqref{eq:crossover} since the limiting form of $\mathcal{K}_{\mathrm{cross}}^{(\xi)}$ is readily computed setting
    \begin{equation} \label{eq:k_infty}
            \mathpzc{k}^{(\infty)}(u,v) = \lim_{\xi\to \infty} \mathpzc{k}^{(\xi)}(u,v) = \frac{1}{4} \int_{C_1^{\pi/3}} \frac{\diff \alpha}{2\pi \mathrm{i}} \int_{C_1^{\pi/3}} \frac{\diff \beta}{2\pi \mathrm{i}} \frac{\alpha-\beta}{\alpha \beta (\alpha+\beta)} e^{\frac{\alpha^3}{3} - \alpha u + \frac{\beta^3}{3} - \beta v }.
        \end{equation}
        
    \begin{definition} \label{def:GSE}
        The GSE Tracy-Widom distribution has the expression
        \begin{equation}
            F_{\mathrm{GSE}}(s) = \Pf \left( J - \mathcal{K}_{\mathrm{GSE}} \right)_{\mathbb{L}^2(s,+\infty)},
        \end{equation}
        where $\mathcal{K}_{\mathrm{GSE}}(u,v)=\lim_{\xi \to \infty} \mathcal{K}_{\mathrm{cross}}^{(\xi)}(u,v)$.
    \end{definition}
    
    In order to specialize the Fredholm pfaffian expression of $F_{\mathrm{cross}}$ to $\xi=0$, it requires to perform an analytic continuation of the kernel $\mathcal{K}_{\mathrm{cross}}^{(\xi)}$. When $\xi \to 0$, from the integral formula of function $\mathpzc{k}^{(\xi)}$ we see that $\alpha$ and $\beta$ poles at $\xi$ cross the integration contour and we have
    \begin{equation} \label{eq:k_0_crossover}
            \mathpzc{k}^{(0)}(u,v) = \lim_{\xi\to 0} \mathpzc{k}^{(\xi)}(u,v)
            = \mathpzc{k}^{(\infty)}(u,v) + \mathpzc{B}(u)  - \mathpzc{B}(v),
    \end{equation}
    where 
    \begin{equation} \label{eq:q_Airy}
        \mathpzc{B}(u) =\int_{C_1^{\pi/3}} \frac{\diff \alpha}{2\pi \mathrm{i}} \frac{e^{\frac{\alpha}{3}-\alpha u}}{2\alpha}
        =
        \frac{1}{2} \int_{0}^{+\infty} \mathrm{Ai}(\lambda+u) \diff\lambda.
    \end{equation}
    
    \begin{definition} \label{def:GOE}
        The GOE Tracy Widom distributions is given by
        \begin{equation} \label{eq:GOE}
            F_{\mathrm{GOE}}(s) = \Pf(J-\mathcal{K}_{\mathrm{GOE}})_{\mathbb{L}^2(s,+\infty)},
        \end{equation}
        where $\mathcal{K}_{\mathrm{GOE}}(u,v)= \lim_{\xi \to 0} \mathcal{K}_{\mathrm{cross}}^{(0)}(u,v)$.
    \end{definition}
    
    The expression given in \cref{def:GSE} for $F_{\mathrm{GSE}}$ had appeared in \cite{baik_barraquand_corwin_suidan_pfaffian}, whereas the expression of \cref{def:GOE} for $F_{\mathrm{GOE}}$ is new. As for the case of $F_{\mathrm{cross}}^{(\xi)}$, in \cite{krejenbrink_le_doussal_KPZ_half_space} authors found the distribution in the right hand side of \eqref{eq:GOE},  conjecturing it to be the GOE Tracy-Widom distribution and we confirm this in \cref{cor:equivalence_Baik_Rains_crossover} below.

    \subsection{GUE Tracy-Widom asymptotics for models in full space}
    
    We recall the digamma function and the polygamma functions
    \begin{equation} \label{eq:digamma_polygamma}
        \psi(z) = \frac{1}{\Gamma(z)}\frac{\diff}{\diff z} \Gamma(z),
        \qquad
        \text{and}
        \qquad
        \psi^{(n)}(z) = \frac{\diff^n}{\diff z^n} \psi(z).
    \end{equation}
    
    \begin{theorem} \label{thm:log_gamma_GUE}        
    Let $Z(N,T)$ be the partition function of the homogeneous Log Gamma polymer model in full space with parameters $A_1=A_2=\cdots=B_1=B_2=\cdots=A \in (0,\infty)$. Then, we have
        \begin{equation} \label{eq:GUE_log_gamma}
            \lim_{N \to \infty} \mathbb{P}\left[ \frac{\log Z(N, N) + f N }{ \sigma N^{1/3} } \le s \right] =F_{\mathrm{GUE}} (s), 
        \end{equation}
        where $f = 2 \psi (A) $, $\sigma = -\left[ \psi^{(2)}(A) \right]^{1/3}$.
    \end{theorem}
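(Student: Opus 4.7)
The plan is to perform a steepest-descent analysis on the Fredholm determinant formula of \cref{thm:fredholm_det_log_gamma}. Specialized to the homogeneous case $A_i = B_i = A$, the kernel factor reads $g_{A,B}(Z) = (\Gamma(A+Z)/\Gamma(A-Z))^N$. The first step is to choose the scaling $\varsigma = -f N + \sigma N^{1/3} s$ and perform the matching change of variables $X = -fN + \sigma N^{1/3} x$, $Y = -fN + \sigma N^{1/3} y$ in the Fredholm expansion, converting the left-hand side of \eqref{eq:fredholm_det_log_gamma} into $\det(1 - \tilde{\mathbf{K}})_{\mathbb{L}^2(s, +\infty)}$, where
\begin{equation*}
\tilde{\mathbf{K}}(x,y) = \sigma N^{1/3}\, \mathbf{K}\!\left(-fN + \sigma N^{1/3} x, \; -fN + \sigma N^{1/3} y\right).
\end{equation*}
The goal is to show that $\tilde{\mathbf{K}} \to \mathcal{K}_{\mathrm{Airy}}$ in a sense strong enough to transfer to Fredholm determinants.

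To identify the constants $f$ and $\sigma$, set $h(Z) = \log\Gamma(A+Z) - \log\Gamma(A-Z)$, so that $h(0) = 0$, $h'(0) = 2\psi(A)$, $h''(0) = 0$ by parity, and $h'''(0) = 2\psi''(A)$. With the choice $f = 2\psi(A)$ and $\sigma = -[\psi''(A)]^{1/3}$, rescaling the contour variable as $Z = \tilde Z/(\sigma N^{1/3})$ cancels the divergent linear-in-$Z$ contribution of $N h(Z) + ZX$ against the drift $-fNZ$ coming from $X$, eliminates the quadratic term via the parity of $h$, and reduces the cubic term to exactly $-\tilde Z^3/3$ (since $\psi''(A)/\sigma^3 = -1$). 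The analogous computation for $W$ yields $\tilde W^3/3 - y \tilde W$. The factor $\pi / \sin[\pi(W - Z)]$ contributes $\sigma N^{1/3}/(\tilde W - \tilde Z)$ to leading order, and after accounting for the Jacobian $(\sigma N^{1/3})^{-2}$ of the $(Z,W)$-rescaling together with the prefactor $\sigma N^{1/3}$ in $\tilde{\mathbf{K}}$, the pointwise limit is
\begin{equation*}
\tilde{\mathbf{K}}(x,y) \;\longrightarrow\; \int \frac{\diff \tilde Z}{2\pi \mathrm{i}} \int \frac{\diff \tilde W}{2\pi \mathrm{i}}\, \frac{ e^{-\tilde Z^3/3 + x \tilde Z + \tilde W^3/3 - y \tilde W } }{\tilde W - \tilde Z},
\end{equation*}
which is precisely $\mathcal{K}_{\mathrm{Airy}}(x,y)$, provided we first deform the original contours $\mathrm{i}\mathbb{R}\pm d$ to steepest-descent paths $C^{2\pi/3}_{-\delta}$ and $C^{\pi/3}_{+\delta}$ (for a small fixed $\delta > 0$) prior to rescaling.

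The main technical obstacle is upgrading this pointwise convergence to convergence of the Fredholm determinants. For this we require uniform exponential estimates of the form $|\tilde{\mathbf{K}}(x,y)| \le C\, e^{-\rho(x+y)}$ with $\rho > 0$ independent of $N$, which then supply (via the Hadamard bound, as in the proof of \cref{lem:K_tau_hilbert_schmidt}) a summable majorant for the Fredholm expansion, licensing dominated convergence. The exponential factor $e^{-\rho(x+y)}$ originates from the displacement of the contours away from the critical point. The decay of the integrand along the tails of the steepest-descent paths must be controlled via Stirling's formula \eqref{eq:decay_Gamma} applied to $[\Gamma(A+Z)/\Gamma(A-Z)]^N$: on the rescaled paths, $\Re\, h$ attains its unique maximum at the critical point and falls off superpolynomially, uniformly in $N$. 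This is precisely the kind of analysis codified in \cref{sec:LG}.

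Finally, the Laplace-transform convergence is transferred to weak convergence of distributions via the Gumbel identity of \cref{rem:Laplace_Gumbel}: the left-hand side of \eqref{eq:fredholm_det_log_gamma} equals $\mathbb{P}[\log Z(N,N) + \mathcal{G} \le \varsigma]$ for an independent standard Gumbel $\mathcal{G}$. Hence our Fredholm-determinant convergence amounts to
\begin{equation*}
\mathbb{P}\!\left[ \frac{\log Z(N,N) + f N + \mathcal{G}}{\sigma N^{1/3}} \le s \right] \longrightarrow F_{\mathrm{GUE}}(s),
\end{equation*}
and since $\mathcal{G}/(\sigma N^{1/3}) \to 0$ in probability, Slutsky's theorem yields \eqref{eq:GUE_log_gamma}.
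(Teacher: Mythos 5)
Your overall strategy is the same as the paper's: saddle-point analysis of the Fredholm determinant of \cref{thm:fredholm_det_log_gamma}, followed by the Gumbel/Slutsky step to pass from the Laplace transform to the distribution function. Your identification of $f$ and $\sigma$ and the local cubic expansion at the double critical point $Z=0$ are correct. There is, however, a genuine error in the execution of the steepest-descent step: the contours you propose do not pass through the critical point.

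You deform the $Z$- and $W$-contours to $C^{2\pi/3}_{-\delta}$ and $C^{\pi/3}_{+\delta}$ with $\delta>0$ \emph{fixed}. Writing $h(Z)=\log\Gamma(A+Z)-\log\Gamma(A-Z)-fZ$, one has $h(Z)=\tfrac{\psi''(A)}{3}Z^3+O(Z^5)$ near $0$, hence $h(-\delta)=\tfrac{\sigma^3\delta^3}{3}+O(\delta^5)>0$ and likewise $-h(+\delta)>0$. So already at the base points of your wedges (and even more so slightly along them, since $\tfrac{d}{dt}\Re\,h(-\delta+te^{2\pi\mathrm{i}/3})\big|_{t=0}=-\tfrac12 h'(-\delta)>0$) the factors $e^{Nh(Z)}$ and $e^{-Nh(W)}$ are of size $e^{cN}$ with $c>0$. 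The critical point $Z=0$ does not lie on $C^{2\pi/3}_{-\delta}$ at all, so the assertion that ``$\Re\,h$ attains its unique maximum at the critical point'' on these paths is false, the pointwise limit is not justified, and no bound $|\tilde{\mathbf{K}}(x,y)|\le Ce^{-\rho(x+y)}$ uniform in $N$ can be obtained by absolute-value estimates on such contours. (Relatedly, after the rescaling $Z=\tilde Z/(\sigma N^{1/3})$ your base point moves to $-\sigma N^{1/3}\delta\to-\infty$ and does not converge to the contour $C^{2\pi/3}_{-1}$ of \eqref{eq:Airy_kernel}.) The fix is to anchor the wedge within $O(N^{-1/3})$ of the critical point, as the paper does with $\mathsf{C}^{2\pi/3}\bigl(-\tfrac{1}{\sigma N^{1/3}};d\bigr)\cup\mathsf{D}\bigl(-d,\sqrt3(d-\tfrac{1}{\sigma N^{1/3}})\bigr)$ for $Z$ and its mirror image for $W$, verifying steep descent separately on the wedge (via the cubic expansion) and on the vertical tails (via the monotonicity of $y\mapsto\Re\,h(-d+\mathrm{i}y)$, which the paper checks from the product formula for $\Gamma$). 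With that contour the maximum of $\Re\,h$ on the path is $O(1/N)$, the rescaled kernel converges to $\mathcal{K}_{\mathrm{Airy}}$ with the required exponential tail bounds, and your dominated-convergence and Gumbel arguments then go through. A minor further point: the estimates of \cref{sec:LG} concern $q$-Pochhammer symbols for the $q\to1$ limit and are not the relevant input here; what is needed is Stirling's formula \eqref{eq:decay_Gamma} together with the steep-descent property of the (corrected) contour.
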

    
    The asymptotic behavior of the free energy of the point-to-point Log Gamma polymer partition function was derived already in \cite{BorodinCorwinRemenik,krishnan_quastel_log_gamma,Barraquand_Corwin_Dimitrov_log_gamma}, although in these works a different representation of the Fredholm determinant was used. Notably, the integral kernel utilized in the cited works, which dates back to \cite{BorodinCorwin2011Macdonald}, has a complicated pole structure which makes the saddle point analysis rather involved. In fact in \cite{BorodinCorwinRemenik} the limit \eqref{eq:GUE_log_gamma} was established with a certain technical assumption on the parameter $A$, which was then removed in \cite{krishnan_quastel_log_gamma} through a perturbation argument. In \cite{Barraquand_Corwin_Dimitrov_log_gamma} authors could prove, without restrictions on $A$ the asymptotic limit directly from the explicit Fredholm determinant formula, though again through rather complicated contour deformations. We will see below that such issues do not arise using the formula derived in \cref{thm:fredholm_det_log_gamma} making the evaluation of the asymptotic limit rather straightforward. Notice moreover that in \cite{Barraquand_Corwin_Dimitrov_log_gamma} authors treat a more general asymptotic limit than \eqref{eq:GUE_log_gamma}, as they consider the partition function $Z(T,N)$ with $N \neq T$ potentially. As discussed in \cref{rem:Z(T_N)} a Fredholm determinant representation for the Laplace transform of the pdf of $Z(T,N)$ with integral kernel of ``free fermionic" type should be available and using that one should be able to simplify the asymptotic analysis of \cite{Barraquand_Corwin_Dimitrov_log_gamma}. We will not discuss such more general setting in this paper. 
    
    \begin{proof}
        We will compute the asymptotic limit of the exact formula given by \cref{thm:fredholm_det_log_gamma}. Considering the scaling
        \begin{equation}
            \varsigma = - f N + \sigma N^{1/3} s,
        \end{equation}
        we can compute the limit of the Laplace transform in the left hand side of \eqref{eq:fredholm_det_log_gamma}, using equality \eqref{eq:Laplace_Gumbel}, as
        \begin{equation} \label{eq:coovergence_laplace_to_probability}
            \lim_{N\to \infty} \mathbb{E} \left[ e^{ - e^{-\varsigma+ \log Z(N,N)}} \right] = \lim_{N\to \infty} \mathbb{P} \left[ \frac{\log Z(N,N) + f N}{\sigma N^{1/3}} \le s \right].
        \end{equation}
        We will now consider the asymptotic limit of the Fredholm determinant in the right hand side of \eqref{eq:fredholm_det_log_gamma}. After a change of variables $X= \varsigma + u \sigma N^{1/3}, Y= \varsigma + v \sigma N^{1/3}$ we can rewrite the integral kernel $\mathbf{K}$ as 
        \begin{equation} \label{eq:log_gamma_kernel_rescaled}
            \int_{ \mathrm{i} \mathbb{R} - d } \frac{\diff Z}{2 \pi \mathrm{i}} \int_{ \mathrm{i} \mathbb{R} + d} \frac{\diff W}{2 \pi \mathrm{i}} \frac{\pi}{\sin (\pi (W-Z))} \frac{e^{N h(Z) + Z \sigma N^{1/3} (u + s) }}{ e^{N h(W) + W \sigma N^{1/3} (v + s) } },
        \end{equation}
        where
        \begin{equation}
            h(Z) = \log \Gamma(A + Z) - \log \Gamma(A - Z) - fZ.
        \end{equation}
        Under our assignment of parameters $f,\sigma$ we see that $h(Z)$ has a doubly critical point at $Z=0$ and we have
        \begin{equation}\label{eq:derivatives_h_log_gamma}
            h(0)=h'(0)=h''(0)=0,
            \qquad
            h'''(0) = 2 \sigma^3 >0.
        \end{equation}
        Moreover, for any $d >0$, the function $\Re\{h(Z)\}$ attains a maximum at $Z=-d$ along the vertical line $\mathrm{i} \mathbb{R} -d$. This is a result of the evaluation of the derivative
        \begin{equation}
        \begin{split}
            \frac{\diff}{\diff y} \Re\left\{h(-d+\mathrm{i}y) \right\} &= \frac{\diff}{\diff y} \sum_{n=0}^{\infty} \left( - \log \left| n+ A -d +\mathrm{i} y \right| + \log \left| n+ A +d -\mathrm{i} y \right| \right)
            \\
            & = \sum_{n=0}^{\infty} \frac{-4 d (n+A) y }{[(n+A-d)^2 + y^2] [(n+A+d)^2 + y^2]},
        \end{split}
        \end{equation}
        which is clearly negative for $y>0$. Notice that in the first equality we made use of the product representation of the Gamma function $\Gamma(z) = z^{-1} \prod_{n \ge 1} (1+z/n)^{-1} (1+1/n)^z$. From these observations we easily see that, for $d$ sufficiently close to $0$, the integration contour
        \begin{equation} \label{eq:contour_log_gamma}
            \mathsf{C}^{2\pi/3}\left( - \frac{1}{\sigma N^{1/3}} ; d \right) \cup \mathsf{D} (-d, \sqrt{3}(d-\frac{1}{\sigma N^{1/3}}))
        \end{equation}
        where
        \begin{equation} \label{eq:notation_contours}
            \mathsf{C}^{\varphi}(a;d) = C_a^{\varphi} \cap \{ z: |\Re{z}| \le d \}
            \qquad
            \text{and}
            \qquad
            \mathsf{D} (a,b) = \{a + \mathrm{i} y : |y| >b\}  
        \end{equation}
        is steep descent for the function $h(Z)$. For a representation of \eqref{eq:contour_log_gamma} see \cref{fig:integration_contour}. This allows us to evaluate the $Z$ integral in the expression \eqref{eq:log_gamma_kernel_rescaled} via saddle point method. Clearly the same argument works for the evaluation of the $W$ integral, in which case the integration contour is taken to be
        \begin{equation} \label{eq:contour_log_gammma}
            \mathsf{C}^{\pi/3}\left( \frac{1}{\sigma N^{1/3}} ; d \right) \cup \mathsf{D} (d, \sqrt{3}(d-\frac{1}{\sigma N^{1/3}})),
        \end{equation}
        as shown in \cref{fig:integration_contour}. The saddle point method shows that $Z$ and $W$ integrals in \eqref{eq:log_gamma_kernel_rescaled} are dominated by the contribution along the $\mathsf{C}^{2\pi/3}\left(- \frac{1}{\sigma N^{1/3}} ; d \right)$ and $\mathsf{C}^{\pi/3}\left( \frac{1}{\sigma N^{1/3}} ; d \right)$ sections of the contours. We can therefore rescale integration variables as
        \begin{equation}
            Z=-\frac{\alpha}{\sigma N^{1/3}},
            \qquad
            W=\frac{\beta}{\sigma N^{1/3}},
        \end{equation}
        to compute the limiting form of the kernel $\mathbf{K}$ as
        \begin{equation}
            \mathbf{K}( \varsigma + u \sigma N^{1/3}, \varsigma + v \sigma N^{1/3})
            =\frac{1}{\sigma N^{1/3}} \mathcal{K}_{\mathrm{Airy}}(u+s,v+s)(1+O(N^{-1/3})).
        \end{equation}
        The steep descent analysis allows also to recover an exponential bound
        \begin{equation}
            \left|\sigma N^{1/3} \mathbf{K}( \varsigma + u \sigma N^{1/3}, \varsigma + v \sigma N^{1/3})\right|
            \le C e^{-c(u+v)},
        \end{equation}
        where $C,c$ are positive constants independent of $N,u,v$. The proof of the exponential inequality above is relatively straightforward and similar bounds were established, though in a slighlty different context, in \cite{BFP_2007}.
        We then conclude, by the bounded convergence theorem that
        \begin{equation}
            \lim_{N\to \infty} \det(1-\mathbf{K})_{\mathbb{L}^2(\varsigma,+\infty)} = \det(1-\mathcal{K}_\mathrm{Airy})_{\mathbb{L}^2(s,+\infty)}
        \end{equation}
        and this proves \eqref{eq:GUE_log_gamma}.
    \end{proof}
    
    \subsection{Baik-Rains phase transition for half space models: zero temperature case} \label{subs:baik_rains_pfaffian_schur}
    
    In this and the next subsections we undertake the evaluation of asymptotic limits of models in half space. In this subsection we focus on the simplest case of the Pfaffian Schur measure, i.e., the $q=0$ case of the free boundary Schur measure, and we consider asymptotics of $\lambda_1$ when $\lambda \sim \mathbb{FBS}^{(q=0)}_{(\boldsymbol{a},\gamma)}$. Through the Robinson-Schensted-Knuth algorithm it is known that $\lambda_1$ describes the last passage percolation through a symmetric environment of geometric random variables; see \cite{baik_barraquand_corwin_suidan_pfaffian,Betea_et_al_free_boundary}. Taking scaling limits or specializations of variables $\boldsymbol{a}$ it is also known that $\lambda_1$ describes the longest increasing subsequence of certain random involution \cite{baik_rains2001asymptotics} or the height of a polynuclear growth model in half space at the origin \cite{imamura_sasamoto_half_space_png}. The asymptotic limits of such models were all considered in the cited sources, although the precise form of the Fredholm pfaffian we consider below was not addressed. This represents a first motivation for us to address asymptotics of this classical model first. The new form of the matrix kernel $L$ of \cref{prop:fredholm_pfaffian_fbs}, combined with the use of the symmetry of the half space $q$-Whittaker measure of \cref{prop:symmmetry_half_space_qW} leads us to discover new formulas for the limiting fluctuations of $\lambda_1$. Another motivation for studying asymptotics in the $q=0$ case is that, since matrix kernels appearing in \cref{cor:pfaffian_q_PushTASEP}, \cref{thm:fredholm_pfaff_log_gamma} and \cref{thm:fredholm_pfaff_log_gamma_gaussian} all resemble each other, key ideas to compute their asymptotic limit can be given in the the simplest case. 
    
    \begin{theorem} \label{thm:asymptotic_pfaffian_schur}
        Let $\lambda$ be distributed according to the free boundary Schur measure $\mathbb{FBS}^{(q=0)}_{(\boldsymbol{a},\gamma)}$ with $q=0$. Here specialization $(\boldsymbol{a},\gamma)$, is $\boldsymbol{a}=(a_1,\dots,a_N)$ with $a_i=a \in (0,1)$ for $1\leq i\leq N$ and $a \gamma \in (0,1)$. Then, the following limits hold
        \begin{itemize}
            \item if $\gamma \in (0,1]$, rescaling $\gamma = 1-\frac{\xi}{\sigma N^{1/3}}$, we have
        \begin{equation} \label{eq:crossover_q=0}
            \lim_{N\to \infty} \mathbb{P} \left[ \frac{ \lambda_1 - N L}{ \sigma N^{1/3} } \le s \right] = F_{\mathrm{cross}}(s;\xi),
        \end{equation}
        where $L=\frac{2a}{1-a}$ and $\sigma = \frac{a^{1/3} (1+a)^{1/3}}{1-a}$;
        \item if $\gamma \in (1,1/a)$ then
        \begin{equation} \label{eq:gaussian_fluctuations_q=0}
            \lim_{N\to \infty} \mathbb{P}\left[ \frac{\lambda_1 - NL_\gamma}{\sigma_{\gamma} N^{1/2}} \le s \right] = \int_{-\infty}^s \frac{e^{-u^2/2}}{\sqrt{2 \pi}} \diff u,
        \end{equation}
        where $L_\gamma = \frac{a(1-2a \gamma + \gamma^2)}{(\gamma - a) (1-\gamma a)}$ and $\sigma_\gamma = \frac{\sqrt{(1-a^2)(\gamma^2-1) a \gamma^3 }}{(\gamma - a)(1-\gamma a)}$.
        \end{itemize}
    \end{theorem}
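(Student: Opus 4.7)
The plan is to perform a saddle point analysis of the Fredholm pfaffian representation given by \cref{prop:fredholm_pfaffian_fbs}. At $q=0$ the shift $S \sim \mathrm{Theta}(\zeta^2;q^2)$ is deterministic at $0$, so $\mathbb{P}[\lambda_1\le s] = \Pf(J-L)_{\ell^2(\mathbb{Z}'_{>s})}$, with $L$ built from the scalar kernel $k(x,y)$ of \eqref{eq:k_fbs}. Under the homogeneous specialization $(\boldsymbol{a},\gamma)$, $F(z) = \bigl((1-a/z)/(1-az)\bigr)^N \cdot (1-\gamma/z)/(1-\gamma z)$ and $\kappa^{\mathrm{hs}}(z,w) = (1-w/z)/[(1-1/z^2)(1-1/w^2)(1-1/zw)]$. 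After the substitution $x = NL + u\sigma N^{1/3}$, the $z$-part of the integrand takes the exponential form $\exp\{N G(z) - u\sigma N^{1/3}\log z\}$ (up to lower-order factors), where $G(z) = \log\tfrac{1-a/z}{1-az} - L\log z$.

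For case (i), I would first verify that the stated constants $L = 2a/(1-a)$ and $\sigma = a^{1/3}(1+a)^{1/3}/(1-a)$ make $z=1$ a cubic critical point of $G$: direct differentiation gives $G'(1) = G''(1) = 0$ and $G'''(1) = 2a(1+a)/(1-a)^3 = 2\sigma^3$. I would then deform both contours to pass through $1$ along the steep-descent wedge $C_{1+\delta/(\sigma N^{1/3})}^{\pi/3}$, placed slightly to the right of $1$ so as to avoid the pinching pole at $zw=1$ (for $\gamma\le 1$ the pole $z=1/\gamma$ lies outside the contour). Under the rescaling $z = 1 + \alpha/(\sigma N^{1/3})$, $w = 1 + \beta/(\sigma N^{1/3})$, the exponential reduces to $e^{\alpha^3/3-u\alpha}e^{\beta^3/3-v\beta}$; the rational prefactor $\kappa^{\mathrm{hs}}(z,w)$ contributes a factor $(\sigma N^{1/3})^2 (\alpha-\beta)/[4\alpha\beta(\alpha+\beta)]$; the measure $dz\,dw$ contributes $(\sigma N^{1/3})^{-2}$; and, crucially, with $\gamma = 1 - \xi/(\sigma N^{1/3})$ the boundary factor $\tfrac{(1-\gamma/z)(1-\gamma/w)}{(1-\gamma z)(1-\gamma w)}$ becomes $(\xi+\alpha)(\xi+\beta)/[(\xi-\alpha)(\xi-\beta)]$. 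These combine so that $k(x,y) \to \mathpzc{k}^{(\xi)}(u,v)$ pointwise, while the discrete gradients $\nabla_x,\nabla_y$ in \eqref{eq:L_fbs} become $(\sigma N^{1/3})^{-1}\partial_u$, $(\sigma N^{1/3})^{-1}\partial_v$ and the summation measure $\sum_{x\in \mathbb{Z}'_{>s}}$ produces an offsetting factor $\sigma N^{1/3}$ per site in the pfaffian expansion. Term-by-term convergence then yields $\Pf(J-\mathcal{K}^{(\xi)}_{\mathrm{cross}})_{\mathbb{L}^2(s,+\infty)} = F_{\mathrm{cross}}(s;\xi)$.

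For case (ii) with $\gamma \in (1,1/a)$ the pole $z=1/\gamma$ of $F$ lies strictly inside the unit circle, so the contour of \cref{prop:fredholm_pfaffian_fbs} is no longer admissible and I would first analytically continue in the spirit of \cref{thm:fredholm_pfaff_log_gamma_gaussian}. Doing so picks up a residue contribution at $z=1/\gamma$ whose exponential growth rate $\log\gamma > 0$ in $x$ displaces the typical value of $\lambda_1$ to a new saddle characterized by $\tfrac{d}{dz}[h(z) - L_\gamma\log z]\bigr|_{z=1/\gamma} = 0$. Solving this equation algebraically recovers exactly $L_\gamma = a(1-2a\gamma+\gamma^2)/[(\gamma-a)(1-\gamma a)]$. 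The saddle is now simple (the second derivative is nonzero), which yields fluctuations of scale $N^{1/2}$, and the variance $\sigma_\gamma^2$ is read off from the second derivative, matching the stated formula; the pfaffian degenerates at leading order to a rank-one scalar object and collapses into the standard normal cdf.

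The main obstacle in both cases is justifying dominated convergence in the Fredholm pfaffian expansion, which requires uniform-in-$N$ exponential bounds on each entry of the rescaled $2\times 2$ matrix kernel. These follow from steep-descent estimates on the deformed contours, analogous to those in the proof of \cref{thm:log_gamma_GUE}, with two specific subtleties. In case (i), the pole $z=1/\gamma = 1+O(N^{-1/3})$ sits inside the saddle-point scaling window and must be tracked carefully, so that it is the sole source of the $\xi$-dependent crossover factor and does not spoil the descent bounds. In case (ii), the contour is pushed across this pole, and one must verify that the remaining line integral is of smaller order than the extracted residue; this is the analog of the analytic continuation argument carried out in \cref{thm:fredholm_pfaff_log_gamma_gaussian}.
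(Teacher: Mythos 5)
Your strategy is the same as the paper's: at $q=0$ the shift $S$ is deterministic, so one analyzes $\Pf(J-L)$ directly, with a cubic saddle at $z=1$ and the pole $z=1/\gamma$ inside the $N^{-1/3}$ scaling window producing the $\xi$-dependent crossover factor, and for $\gamma>1$ an analytic continuation whose rank-two residue term dominates and collapses to the Gaussian cdf. Your computations of $L$, $\sigma$, $L_\gamma$ (via the saddle condition, which by the symmetry $h_\gamma(1/z)=-h_\gamma(z)$ can equivalently be imposed at $z=1/\gamma$ or $z=\gamma$) and $\sigma_\gamma$ all match, and your identification of the two delicate points (the pole in the scaling window; residue versus line-integral dominance) is exactly where the work lies.

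There is one genuine gap: the boundary case $\gamma=1$, i.e.\ $\xi=0$, which the statement includes ($\gamma\in(0,1]$) and which the paper treats as a separate third case. Your case-(i) argument places the contour at $1+\delta/(\sigma N^{1/3})$ with $\delta\in(0,\xi)$, which does not exist when $\xi=0$: the pole $z=1/\gamma=1$ then sits exactly at the saddle and cannot be kept outside the contour. The fix (as in the paper) is to analytically continue in $\gamma$ past $1$, extracting the residue as a rank-two perturbation $\bar R$ built from $A(x)=e^{Ng(1/\gamma)-x\log(1/\gamma)}$ and a single-integral function $B$; but at $\gamma=1$ one has $A\equiv 1$, so $\bar R$ is \emph{not} a bounded operator on $\ell^2(\mathbb{Z}'_{>t})$, and one must separately prove that the Fredholm pfaffian series of $\bar L+\bar R$ is still absolutely convergent. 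The paper does this via the minor expansion \eqref{eq:pfaffian_sum}, using that every minor of $\bar R$ of size larger than $2$ has vanishing pfaffian, so only one factor of the non-decaying function $A$ ever appears in each term; the limit is then $\mathcal{K}_{\mathrm{GOE}}$ with the extra term $\mathpzc{B}(u)-\mathpzc{B}(v)$ in the scalar kernel, consistent with $F_{\mathrm{cross}}(\cdot\,;0)=F_{\mathrm{GOE}}$. Your proposal has all the ingredients for this (you invoke the analogous continuation in case (ii)), but as written the $\xi=0$ endpoint is not covered and needs this separate argument.
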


    \begin{proof}
        From \cref{prop:fredholm_pfaffian_fbs}, for any $t\in \mathbb{R}$, we write, 
        \begin{equation} \label{eq:prob_pfaffian_schur}
            \mathbb{P}(\lambda_1 \le t) = \Pf[J-\bar{L}]_{\ell^2 (\mathbb{Z}'_{ > t})}.
        \end{equation}
        The $2\times 2$ matrix kernel is given by
        \begin{equation}
            \bar{L}(x,y) = \left( \begin{matrix} \bar{k}(x,y) & -\nabla_y \bar{k}(x,y)
            \\
            - \nabla_x \bar{k}(y,x) & \nabla_x \nabla_y \bar{k}(x,y) \end{matrix} \right),
        \end{equation}
        where
        \begin{equation} \label{eq:K0_11}
        \begin{split}
            \bar{k}(x,y) = \oint_{|z|=r} \frac{\diff z}{2\pi \mathrm{i} \sqrt{z} } \oint_{|w|=r}
            &
            \frac{\diff w}{2\pi \mathrm{i}  \sqrt{w} }
            e^{N g(z) - x \log z } e^{N g(w) - y \log w }
            \\
            &
            \times
            \frac{(z-\gamma)(w-\gamma)}{(1-\gamma z)(1-\gamma w) } \frac{z-w}{(z^2-1)(w^2-1)(zw-1)},
        \end{split}
        \end{equation}
        and
        \begin{equation} \label{eq:g_scaling_function}
            g(z) = \log (1-a/z) - \log(1-az).
        \end{equation}
        We will prove large $N$ asymptotics of the probability distribution of $\lambda_1$ separately, depending on the value of $\gamma$.
        
        \medskip
        \emph{ $0 \le \gamma < 1$ : convergence to crossover distribution.}
        In order to prove \eqref{eq:crossover_q=0}, we consider in \eqref{eq:prob_pfaffian_schur} the scaling 
        \begin{equation} \label{eq:scaling_t}
            t=NL + \sigma N^{1/3} s,
        \end{equation} 
        so that variable of kernel $\bar{L}$ are rescaled as
        \begin{equation} \label{eq:scaling_x_y}
            x = NL + \sigma N^{1/3} u,
            \qquad
            y = NL + \sigma N^{1/3} v,
        \end{equation}
        for $u,v \in (s,\infty)$.
        Isolating the term of order $N$ in the exponent of integral expressions for $\bar{k}$, we define the function
        \begin{equation}
            h(z)=g(z) - L \log z.
        \end{equation}
        Simple computations show that $h$ has a double critical point at $z=1$ and we have
        \begin{equation}\label{eq:derivatives_h}
            h(1)=h'(1)=h''(1)=0,
            \qquad
            h'''(1) = 2 \sigma^3 >0.
        \end{equation}
        Moreover, for any $r>1$, the complex curve $|z|=r$ is steep descent for $\Re \{ h(z) \}$ since
        \begin{equation} \label{eq:derivative_re_h}
            \frac{\diff}{\diff \theta} \Re \{ h( r e^{\mathrm{i} \theta} ) \} =  \frac{a (1-a^2) \sin \theta}{ |1-\frac{a}{r} e^{-\mathrm{i}\theta} |^2 |1- a r e^{\mathrm{i}\theta} |^2 } \left( r^{-1} - r \right) < 0,
            \qquad
            \text{for } \theta \in (0,\pi).
        \end{equation}
        Define the contour
        \begin{equation} \label{eq:contour_C_N}
            C^{\pi/3} \left(1+ \frac{\delta}{\sigma N^{1/3}} ;r \right) \cup D(\bar{\theta},r),
        \end{equation}
        where
        \begin{equation}
            C^{\varphi}(a;r) = \{z=a+\rho e^{\mathrm{sign}(\rho) \mathrm{i} \varphi }:|z|<r\},
            \qquad
            D(\varphi,r) = \{r e^{i\theta}:\theta\in(\varphi,2\pi -\varphi)\},
        \end{equation}
        and $\bar{\theta}>0$ solves the relation $1+\frac{\delta}{\sigma N^{1/3}} + \rho e^{\mathrm{i} \frac{\pi}{3}} = r e^{\mathrm{i}\bar{\theta}}$ for some $\rho>0$; see \cref{fig:integration_contour}.
        Properties \eqref{eq:derivatives_h}, \eqref{eq:derivative_re_h} of function $h$, imply that for $r$ sufficiently close to 1, but still $r-1=O(1)$, the contour \eqref{eq:contour_C_N} is steep descent for the function $h$. We now deform integration contours in \eqref{eq:K0_11} to be that of \eqref{eq:contour_C_N}, both for the $z$ and $w$ variables. Here we assume that $\delta \in (0,\xi)$, so that the pole $\gamma^{-1} \approx 1+ \frac{\xi}{\sigma N^{1/3}}$ lies outside of the integration contour as in \cref{fig:integration_contour}. When $N$ grows such integrals are dominated by the contribution over the curve $C^{\pi/3} \left(1+ \frac{\delta}{\sigma N^{1/3}} ;r \right)$, in the vicinity of the critical points $z=1, w=1$. We can therefore rescale integration variables as
        \begin{equation} \label{eq:scaling_z_w}
            z=1+\frac{\alpha}{\sigma N^{1/3}},
            \qquad
            w=1+\frac{\beta}{\sigma N^{1/3}},
        \end{equation}
        so that, thanks to \eqref{eq:derivatives_h}, the exponents in integrands of $\bar{k}$ become
        \begin{equation}
            N g(z) - x \log z = \frac{\alpha^3}{3} - \alpha u + O(N^{-1/3}),
            \qquad
            N g(w) - y \log w =  \frac{\beta^3}{3} - \beta v + O(N^{-1/3}).
        \end{equation}
        Expanding in the new variables $\alpha,\beta$ the remaining factors in the integrands of \eqref{eq:K0_11} we find
        \begin{equation}
        \begin{split}
            &
            \frac{1}{\sqrt{z w}} \frac{(z-\gamma)(w-\gamma)}{(1-\gamma z)(1-\gamma w) } \frac{z-w}{(z^2-1)(w^2-1)(zw-1)} 
            \\
            &
            \hspace{6cm} =  \frac{(\xi + \alpha) (\xi + \beta)}{(\xi - \alpha) (\xi - \beta)} \frac{\alpha - \beta}{4 \alpha \beta (\alpha + \beta)} + O(\frac{1}{\sigma N^{1/3}}),
        \end{split}
        \end{equation}
        proving the pointwise convergence
        \begin{equation}
            \bar{k}(x(u),y(v)) \xrightarrow[N\to \infty]{} \mathpzc{k}^{(\xi)}(u,v).
        \end{equation}
        Such convergence result can be strengthened and it holds uniformly for $u,v>s$, thanks to the exponential decay
        \begin{equation}
            \left| \bar{k}(x(u),y(v)) \right| \le C e^{-\delta (u+v)},
        \end{equation}
        which follows again from the steep descent property of the integration contour.
        
        Analogous estimates, for the remaining entries of the kernel $\bar{L}$ can be readily produced to show the convergence, 
        \begin{equation} \label{eq:nabla_y_k_bar}
            -\sigma N^{1/3} \nabla_y \bar{k}(x(u),y(v)) \xrightarrow[N\to \infty]{} - \partial_v \mathpzc{k}^{(\xi)}(u,v),
        \end{equation}
        \begin{equation} \label{eq:nabla_x_k_bar}
            -\sigma N^{1/3} \nabla_x \bar{k}(x(u),y(v)) \xrightarrow[N\to \infty]{} - \partial_u \mathpzc{k}^{(\xi)}(u,v),
        \end{equation}
        \begin{equation} \label{eq:nabla_x_nabla_y_k_bar}
            \sigma^2 N^{2/3} \nabla_x \nabla_y \bar{k}(x(u),y(v)) \xrightarrow[N\to \infty]{} \partial_u \partial_v \mathpzc{k}^{(\xi)}(u,v),
        \end{equation}
        which again hold uniformly for $u,v>s$. Exponential decay for $u,v\to \infty$ of terms in the left hand side of \eqref{eq:nabla_y_k_bar}, \eqref{eq:nabla_x_k_bar}, \eqref{eq:nabla_x_nabla_y_k_bar} is also easily derived from the steep descent of integration contour. Such convergence results for the kernel $\bar{L}$ are sufficient to establish convergence of Fredholm pfaffians
        \begin{equation}
            \Pf \left[J - \bar{L}\right]_{\ell^2(\mathbb{Z}'_{\ge t})} \xrightarrow[N\to \infty]{} \Pf \left[J-\mathcal{K}^{(\xi)}_{\mathrm{cross}} \right]_{\mathbb{L}^2{(s,+\infty)}},
        \end{equation}
        proving \eqref{eq:crossover_q=0} for $\gamma \in [0,1)$.
        
        \begin{figure}
            \centering
            \subfloat[]{ 
                \includegraphics[width=.45\linewidth]{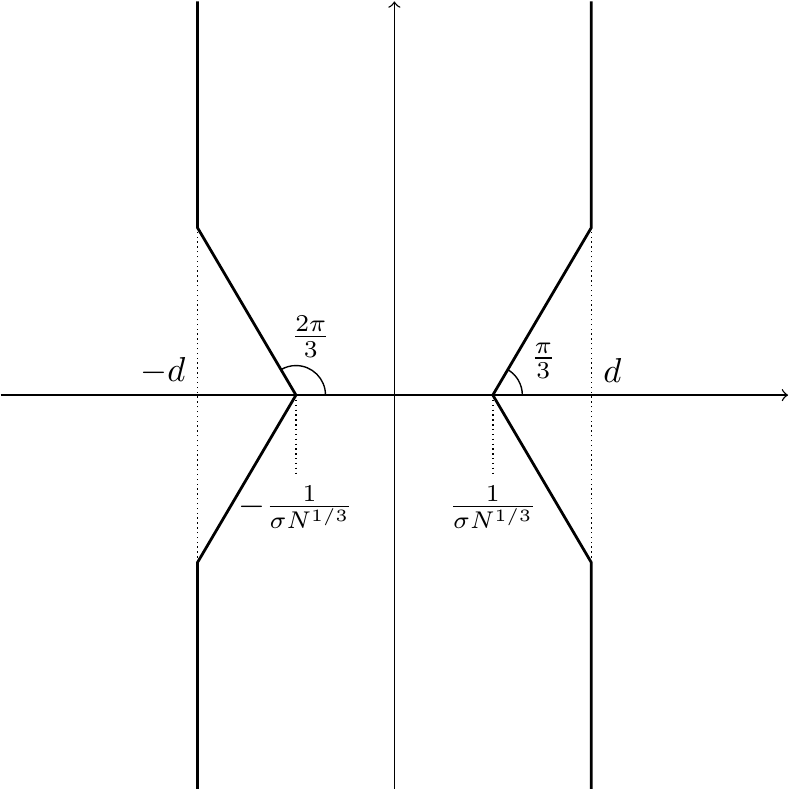} }%
                \hspace{.05cm}
                \subfloat[]{\includegraphics[width=.45\linewidth]{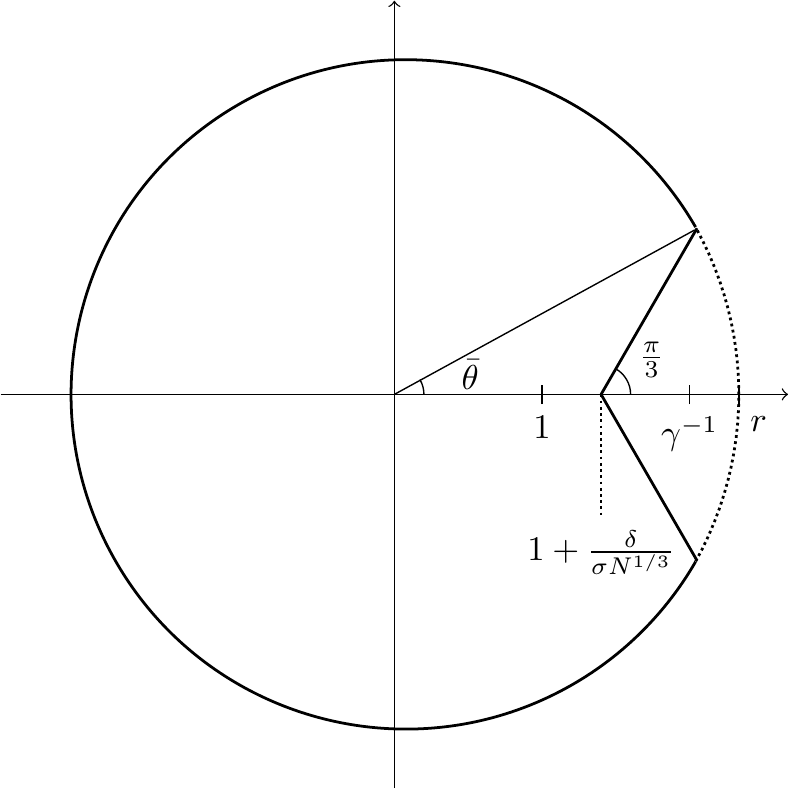} }%
                \caption{In the left panel the contours described in \eqref{eq:contour_log_gammma} are drawn. The right panel represents the conoturs of \eqref{eq:contour_C_N}.}
        \label{fig:integration_contour}
\end{figure}
        
        \medskip
        \emph{$\gamma=1$ : convergence to GOE Tracy-Widom distribution.} In order to set $\gamma=1$ in the Fredholm pfaffian formula \eqref{eq:prob_pfaffian_schur} for the probability distribution of $\lambda_1$ we need need to compute the  analytic extension of the function $\bar{k}$ in the the parameter $\gamma$, which we denote by $\bar{k}_{\mathrm{ext}}(x,y)$. From the integral expression \eqref{eq:K0_11} we see that $1/\gamma$ is a singularity for the integrand for both variables $z$ and $w$. Therefore when $1/\gamma<r$, the extended function $\bar{k}_\mathrm{ext}(x,y)$ equals $k(x,y)$ plus the residues of the integrand at poles $z=1/\gamma$ and $w=1/\gamma$, i.e.
        \begin{equation} \label{eq:k_bar_analytic_cont}
            \bar{k}_{\mathrm{ext}}(x,y) = \bar{k}(x,y) - A(x)B(y) + A(y)B(x),
        \end{equation}
        where
        \begin{equation}
            A(x) = e^{Ng(1/\gamma) - x \log 1/\gamma}
            \qquad
            \text{and} 
            \qquad
            B(y) = \sqrt{\gamma}  \oint_{|w|=r} \frac{\diff w}{2\pi \mathrm{i} \sqrt{w}} \frac{e^{Ng(w) - y \log w}}{w^2-1}.
        \end{equation}
        Denoting by $\bar{L}_{\mathrm{ext}}$ the analytic extension of the matrix kernel $\bar{L}$ in the parameter $\gamma$ we have
        \begin{equation} \label{eq:L_ext}
            \bar{L}_{\mathrm{ext}}(x,y) = \bar{L}(x,y) + \bar{R}(x,y),
        \end{equation}
        where
        \begin{equation}
             \bar{R}(x,y) = - \left( \begin{matrix} 1 & -\nabla_y \\ -\nabla_x & \nabla_x \nabla_y \end{matrix} \right) [A(x)B(y) - A(y) B(x)].
        \end{equation}
        When $\gamma<1$, functions $A,B$ are exponentially decaying and they belong to $\ell^2\left( \mathbb{Z}_{> t}' \right)$. Setting $\gamma=1$, we see that the function $A$ becomes constant $A\equiv 1$ and hence the kernel $\bar{R}(x,y)$ no longer defines an operator $\ell^2\left( \mathbb{Z}_{> t}' \right) \to \ell^2\left( \mathbb{Z}_{> t}' \right)$. Nevertheless we now show that the Fredholm pfaffian $\Pf[J-\bar{L}_{\mathrm{ext}}\big|_{\gamma=1}]$ still consists in an absolutely convergent series and it is therefore well defined. First let's compute, though saddle point method the asymptotic form of $\bar{k}$ and $B$. Using again the scaling \eqref{eq:scaling_t},  \eqref{eq:scaling_x_y} and following the same procedure described for the case $\gamma<1$, we find
        \begin{equation}
            \bar{k}(x(u),y(v)) \xrightarrow[N\to \infty]{} \mathpzc{k}^{(\infty)}(u,v)
        \end{equation}
        and
        \begin{equation}
            B(x(u)) \xrightarrow[N\to \infty]{} \mathpzc{B}(u),
        \end{equation}
        where the function $\mathpzc{B}$ was defined around \eqref{eq:k_0_crossover}. Such convergence results hold uniformly for $u>s$ as a result of exponential bounds
        \begin{equation} \label{eq:exponential_bound_k_gamma=1}
            |\bar{k}(x(u),y(v))| \le C e^{-\delta (u+v)}
            \qquad
            \text{and}
            \qquad
            |B(x(u))| \le C e^{-\delta u},
        \end{equation}
        which again follow by steep descent analysis. Analogous convergence statements hold also for discrete derivatives of $\bar{k}, q$, and in general we find
        \begin{equation}
            \left( \begin{matrix} \bar{k}_{\mathrm{ext}}(x(u),y(v)) & - \sigma N^{1/3} \nabla_y \bar{k}_{\mathrm{ext}}(x(u),y(v)) \\ - \sigma N^{1/3} \nabla_x \bar{k}_{\mathrm{ext}}(x(u),y(v)) & \sigma^2 N^{2/3} \nabla_x \nabla_y \bar{k}_{\mathrm{ext}}(x(u),y(v)) \end{matrix} \right) \xrightarrow[N\to \infty]{} \mathcal{K}_{\mathrm{GOE}} (u,v)
        \end{equation}
        Let us now prove that $\Pf[J-\bar{L}_{\mathrm{ext}}\big|_{\gamma=1}]$ is absolutely convergent for any $N$, so that the claim $\lim_{N\to \infty}\Pf[J-\bar{L}_{\mathrm{ext}}\big|_{\gamma=1}]  =  \Pf[J-\mathcal{K}_{\mathrm{GOE}}]$ will follow from the bounded convergence theorem. Observe that, for any $\ell \ge 0$, using \eqref{eq:pfaffian_sum}, we have
        \begin{equation} \label{eq:pfaffian_L_ext_gamma=1}
        \Pf[\bar{L}_{\mathrm{ext}}(x_i,x_j)\big|_{\gamma=1}]_{i,j=1}^\ell = \Pf[\bar{L}(x_i,x_j)]_{i,j=1}^\ell + \sum_{1\le a < b \le 2\ell} (-1)^{a+b-1} \Pf[\bar{R}^{a,b}] \Pf[\bar{L}^{\widehat{a},\widehat{b}}],
        \end{equation}
        where $R^{a,b}$ is the $2 \times 2$ minor of $[\bar{R}(x_i,x_j)]_{i,j=1}^\ell$ of elements at rows and columns $a,b$, while $\bar{L}^{\widehat{a},\widehat{b}}$ is the minor of $[\bar{L}(x_i,x_j)]_{i,j=1}^\ell$ obtained erasing rows and columns $a,b$. This follows from the fact that every minor of $[\bar{R}(x_i,x_j)]_{i,j=1}^\ell$ of size larger than 2 has vanishing pfaffian, for rank reasons. Setting $a=2(i-1) + \varepsilon_1$ and $b=2(j-1) + \varepsilon_2$ we see that $\Pf[\bar{R}^{a,b}]$ can only be equal to
        \begin{equation}
            -B(x_i) + B(x_j),
            \qquad
            \nabla B(x_j),
            \qquad
            - \nabla B(x_i),
            \qquad
            0
        \end{equation}
        respectively for $(\varepsilon_1,\varepsilon_2)=(1,1), (1,2),(2,1),(2,2)$. Using bounds \eqref{eq:exponential_bound_k_gamma=1} and Hadamard's inequality \eqref{eq:Hadamard_bound} we can estimate \eqref{eq:pfaffian_L_ext_gamma=1} as
        \begin{equation}
            \left| \Pf[\bar{L}_{\mathrm{ext}}(x(u_i),x(u_j))\big|_{\gamma=1} ]_{i,j=1}^\ell \right| \le \left[ (2\ell)^{\ell/2} + \binom{2 \ell}{2} 2 (2 \ell - 2)^{\frac{\ell -1}{2}} \right] C^\ell e^{-\delta (u_1+\cdots +u_\ell)},
        \end{equation}
        which proves that the series
        \begin{equation}
            \sum_{\ell \ge 0} \frac{1}{\ell!} \int_{[s,+\infty)^\ell} \diff u_1 \cdots \diff u_\ell  \left| \Pf[\bar{L}_{\mathrm{ext}}(x(u_i),x(u_j))\big|_{\gamma=1} ]_{i,j=1}^\ell \right|,
        \end{equation}
        converges. Since $\bar{L}_{\mathrm{ext}}\big|_{\gamma=1}$ converges uniformly to $\mathcal{K}_{\mathrm{GOE}}$, this proves that its Fredholm pfaffian, converges to $F_{\mathrm{GOE}}(s)$, proving \eqref{eq:crossover_q=0} for $\gamma=1$.
        
        \medskip
        
        \emph{ $\gamma>1$ : convergence to gaussian distribution.} We have seen in \eqref{eq:L_ext} that, in case $\gamma \ge 1$, the analytic extension of the matrix kernel $\bar{L}$ results in the addition of a rank 2 perturbation. Here we show that when $\gamma > 1$ such perturbation provides the dominant contribution to the Fredholm pfaffian of $\bar{L}_{\mathrm{ext}}$. Following ideas already elaborated in the proof of \cref{thm:fredholm_pfaff_log_gamma_gaussian}, we need first to make sense of the quantity $\Pf[J-\bar{L}_{\mathrm{ext}}]_{\ell^2(\mathbb{Z}_{\ge t})}$, since when $\gamma>1$, the functions $A(x)$ appearing in the rank 2 kernel $\bar{R}(x,y)$ are diverging. Consider the scaling
        \begin{equation} \label{eq:gaussian_scaling}
            t= L_\gamma N + \sigma_\gamma N^{1/2} s,
            \qquad
            x= L_\gamma N + \sigma_\gamma N^{1/2} u,
            \qquad
            y= L_\gamma N + \sigma_\gamma N^{1/2} v.
        \end{equation}
        Introducing the function
        \begin{equation}
            h_\gamma(z) = g(z) - L_\gamma \log z,
        \end{equation}
        we rewrite functions $\bar{k},A,B$ as
        \begin{equation} \label{eq:k_bar_gaussian_rescaled}
        \begin{split}
            \bar{k}(x(u),y(v)) = \oint_{|z|=r} \frac{\diff z}{2\pi \mathrm{i} \sqrt{z} } \oint_{|w|=r}
            &
            \frac{\diff w}{2\pi \mathrm{i}  \sqrt{w} }
            e^{N \left( h_\gamma(z) + h_\gamma(w) \right) - \sigma_{\gamma} N^{1/2} (u \log z + v \log w) }
            \\
            &
            \times
            \frac{(z-\gamma)(w-\gamma)}{(1-\gamma z)(1-\gamma w) } \frac{z-w}{(z^2-1)(w^2-1)(zw-1)},
        \end{split}
        \end{equation}
        \begin{equation} \label{eq:p_q_gaussian_rescaled}
            A(x(u)) = e^{Nh_{\gamma}(1/\gamma) - \sigma_\gamma N^{1/2} u \log 1/\gamma},
            \qquad
            B(y(v)) = \sqrt{\gamma}  \oint_{|w|=r} \frac{\diff w}{2\pi \mathrm{i} \sqrt{w}} \frac{e^{Nh_{\gamma}(w) - \sigma_\gamma N^{1/2} y \log w}}{w^2-1}.
        \end{equation}
        It is clear that, when $\gamma<1$ the kernel $\bar{L}_{\mathrm{ext}}$ with such scaling, defines a bounded operator as all functions are exponentially decaying in the variables $u,v>s$.  Using \cref{prop:pfaffian_L+R}, we write
        \begin{equation} \label{eq:pfaffian_L_ext}
            \Pf[J-\bar{L}_{\mathrm{ext}}] = \Pf[J-\bar{L}] \left( 1- \langle (B,-\nabla B) |(1-J^T \bar{L})^{-1}| (\nabla A,A) \rangle \right),
        \end{equation}
        where all operators and the scalar product are defined on the Hilbert space of pairs of square summable functions of variable $u$ such that $x > t$ under scaling \eqref{eq:gaussian_scaling}. 
        On the right hand side the inverse operator is defined by the Neumann series 
        \begin{equation} 
            (1-J^T \bar{L})^{-1} = \sum_{\ell \ge 0} (J^T \bar{L})^\ell.
        \end{equation} 
        This series is well defined as, for $N$ large enough the operator norm of $J^T \bar{L}$ can be made arbitrarily small, as a result of presence of factors $e^{N ( h_\gamma (z) +h_\gamma (w) )}$ for $|z|,|w|>1$ in the expression of function $\bar{k}$. We can now extend expression in the right hand side of \eqref{eq:pfaffian_L_ext} allowing $\gamma >1$. In this case we can evaluate the asymptotic limit of functions $\bar{k},A,B$ using steep descent analysis. Simple computations show that $h_\gamma$ satisfies some basic properties as
        \begin{equation}
            h_\gamma(1/z) = -h_\gamma(z),
            \qquad
            h_\gamma( \gamma ) < 0,
            \qquad
            h'_\gamma( \gamma) = 0,
            \qquad
            h''_\gamma( \gamma) = \frac{\sigma_\gamma^2}{\gamma^2} > 0.
        \end{equation}
        Unlike for the case $\gamma \le 1$, this time we choose as a integration contour the complex curve $|z|=\gamma - \frac{1}{\sigma_\gamma N^{1/2}}$ which is steep descent for the function $\Re\{ h_\gamma(z) \}$, following the same computation as in \eqref{eq:derivative_re_h}. This justifies the change of integration variables,
        \begin{equation}
            z= \gamma \left( 1 - \frac{\alpha}{\sigma_\gamma N^{1/2}} \right),
            \qquad
            w= \gamma \left( 1 - \frac{\beta}{\sigma_\gamma N^{1/2}}\right).
        \end{equation}
        With such rescaling in place, we see that the exponential terms in each integral in \eqref{eq:k_bar_gaussian_rescaled}, \eqref{eq:p_q_gaussian_rescaled} become
        \begin{equation}
            N h_\gamma (z) - \sigma N^{1/2} u \log z
            = N h_\gamma (\gamma) - \sigma_\gamma N^{1/2} u \log \gamma +\frac{1}{2} \alpha^2 + u \alpha
        \end{equation}
        and analogously
        \begin{equation}
            N h_\gamma (w) - \sigma N^{1/2} v \log w = N h_\gamma (\gamma) - \sigma_\gamma N^{1/2} v \log \gamma +\frac{1}{2} \beta^2 + v \beta.
        \end{equation}
        When $N$ is large we obtain
        \begin{equation} \label{eq:estimate_e}
            \bar{k}(x(u),y(v)) = O \left( N^{-5/2} e^{2N h_\gamma(\gamma) - \sigma_\gamma N^{1/2} (u+v) \log \gamma - c(u^2 + v^2)} \right),
        \end{equation}
        where $c \in (0,1/2)$ and
        \begin{equation} \label{eq:estimate_p}
            A(x(u)) = e^{-N h(\gamma) + \sigma N^{1/2} u \log \gamma},
        \end{equation}
        \begin{equation} \label{eq:estimate_q}
            B(y(v)) = \frac{1}{\sigma_\gamma N^{1/2}} \frac{1}{(\gamma - 1/\gamma) \sqrt{2\pi}} e^{ N h(\gamma) - \sigma N^{1/2} v \log \gamma - v^2/2}(1+O(N^{-1/2})).
        \end{equation}
        Notice that the effect of discrete derivatives in the limit reduces to a multiplication by a factor $\frac{1}{2}(\gamma-1/\gamma)$ and in particular we have
        \begin{equation} \label{eq:p_q_derivatives}
            \nabla_x A(x(u)) \approx \frac{1}{2}(\gamma-1/\gamma) A(x(u)),
            \qquad
            \nabla_y B(y(v)) \approx -\frac{1}{2}(\gamma-1/\gamma) B(y(v)),
        \end{equation}
        where $\approx$ denotes an approximation up to errors of order $N^{-1/2}$. We can now use estimates \eqref{eq:estimate_e}, \eqref{eq:estimate_p}, \eqref{eq:estimate_q} \eqref{eq:p_q_derivatives} to evaluate the right hand side of \eqref{eq:pfaffian_L_ext} obtaining
        \begin{equation}
            \lim_{N \to \infty}\Pf[J-\bar{L}] = 1
        \end{equation}
        and
        \begin{equation}
        \begin{split}
            \lim_{N \to \infty} \left( 1- \langle (B,-\nabla B) |(1-J^T \bar{L})^{-1}| (\nabla A,A) \rangle \right) & = \lim_{N \to \infty} \left( 1- \langle (B,-\nabla B) | (\nabla A,A) \rangle \right)
            \\
            & = 1 - \int_{s}^{\infty} \frac{e^{-v^2/2}}{\sqrt{2\pi}} \diff v.
        \end{split}
        \end{equation}
        This concludes the proof of \eqref{eq:gaussian_fluctuations_q=0} and hence of the theorem.
    \end{proof}
    
    \begin{corollary} \label{cor:equivalence_Baik_Rains_crossover}
        For any $\xi\in [0,+\infty]$, the definition of Baik-Rains crossover distribution $F_{\mathrm{cross}}(\,\cdot\,;\xi)$ given in \cref{def:crossover} is equivalent to that of $F^{\begin{tikzpicture} \draw[] (0,0) -- (.2,0) -- (.2,.2) -- (0,.2) -- (0,0);
        \draw[] (0,0) -- (.2,.2);
        \end{tikzpicture}}(\, \cdot \,;\xi)$ given in \cite[Definition 4]{baik_rains2001asymptotics}.
    \end{corollary}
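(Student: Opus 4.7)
The plan is to identify the limiting distribution obtained in \cref{thm:asymptotic_pfaffian_schur} with the one arising in the original Baik--Rains analysis, thereby exhibiting $F_\mathrm{cross}(\,\cdot\,;\xi)$ and $F^{\square}(\,\cdot\,;\xi)$ as two Fredholm pfaffian representations of the very same limit law. Uniqueness of limits in distribution then yields the desired equality.

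First, I would recall that via RSK (or equivalently, the Pfaffian Schur process machinery of \cite{borodin2005eynard,baik_barraquand_corwin_suidan_pfaffian}) the random partition $\lambda$ distributed according to $\mathbb{FBS}^{(q=0)}_{(\boldsymbol{a},\gamma)}$ with $\boldsymbol{a}=(a,\dots,a)$ ($N$ copies) describes the shape of a symmetrized array of i.i.d.~geometric random variables of parameter $a^2$, with the diagonal entries being geometric of parameter $a\gamma$. After a standard $a\to 1$ Poissonization, this is precisely the model studied by Baik and Rains: the longest increasing subsequence of a uniformly random involution whose number of fixed points is controlled by $\gamma$.

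Next, by \cite[Theorem~4]{baik_rains2001asymptotics}, under the critical scaling $\gamma=1-\xi/(\sigma N^{1/3})$ the rescaled variable $(\lambda_1-NL)/(\sigma N^{1/3})$ converges in distribution to $F^{\square}(\,\cdot\,;\xi)$, where the constants $L,\sigma$ match those of \cref{thm:asymptotic_pfaffian_schur} up to a routine change of variables. On the other hand, \cref{thm:asymptotic_pfaffian_schur} asserts convergence of the same random variable to $F_\mathrm{cross}(\,\cdot\,;\xi)$. The two limits must therefore coincide for every $\xi\in(0,+\infty)$. The endpoints $\xi=0$ and $\xi=+\infty$ are handled by continuity: in both representations one may pass to the limit, recovering $F_\mathrm{GOE}$ and $F_\mathrm{GSE}$ respectively (cf.~\eqref{eq:k_0_crossover}, \eqref{eq:k_infty} and \cref{def:GOE}, \cref{def:GSE}), consistently with the corresponding degeneration of Baik--Rains formulas.

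The main obstacle I anticipate is bookkeeping the dictionary between parameters, not a conceptual difficulty: one must verify that the scaling $\gamma=1-\xi/(\sigma N^{1/3})$ we use coincides, after the appropriate Poisson/alpha limit, with the critical scaling of the fixed-point density in the Baik--Rains involution model, and that the constants $L=2a/(1-a)$ and $\sigma=a^{1/3}(1+a)^{1/3}/(1-a)$ reduce to the standard KPZ constants entering \cite[Theorem~4]{baik_rains2001asymptotics}. Once this bookkeeping is carried out, the equality $F_\mathrm{cross}(\,\cdot\,;\xi)=F^{\square}(\,\cdot\,;\xi)$ is immediate from uniqueness of weak limits.
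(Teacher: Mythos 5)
Your overall strategy is the same as the paper's: exhibit $F_{\mathrm{cross}}(\,\cdot\,;\xi)$ and the Baik--Rains distribution $F^{\square}(\,\cdot\,;\xi)$ as two limit laws of one and the same sequence of random variables, and conclude by uniqueness of weak limits. The difference, and the one place where your argument has a genuine gap, is in how you arrange for the two limit theorems to apply to \emph{literally} the same model. As written, \cref{thm:asymptotic_pfaffian_schur} concerns $\lambda_1$ under $\mathbb{FBS}^{(q=0)}_{(\boldsymbol{a},\gamma)}$ with an alpha specialization $\boldsymbol{a}=(a,\dots,a)$ (geometric symmetrized LPP), while the Baik--Rains theorem you invoke concerns the Poissonized involution model, i.e.\ the Plancherel specialization. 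These are different sequences of random variables, so uniqueness of limits does not apply directly. You interpose ``a standard $a\to1$ Poissonization,'' but (i) the Poisson limit of the geometric environment is the $a\to 0$, $N\to\infty$ regime with $aN$ held fixed, not $a\to 1$ (which is the exponential limit); and (ii) even with the correct Poissonization you would still need to justify an interchange of limits (that Poissonization commutes with the critical $N^{1/3}$ scaling), or else cite the geometric version of the Baik--Rains result from their companion papers.

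The paper closes this gap from the other side: it recalls that the Baik--Rains variable $L^{\square}$ has the law of $\lambda_1$ under $\mathbb{HW}^{(q=0)}_{a;\gamma}$ with $a$ a single \emph{Plancherel} specialization, converts this to $\mathbb{FBS}^{(q=0)}_{(a,\gamma)}$ via the symmetry of \cref{prop:symmmetry_half_space_qW}, and then observes that the steepest-descent analysis in the proof of \cref{thm:asymptotic_pfaffian_schur} goes through verbatim when the alpha specialization is replaced by a Plancherel one. With that extension both limit theorems apply to the identical sequence of random variables and the conclusion is immediate. Your argument becomes correct if you replace the Poissonization step by this observation (or by an explicit, justified de-Poissonization lemma); the parameter bookkeeping you flag is then the only remaining task.
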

    
    \begin{proof}
        In \cite{baik_rains2001asymptotics} authors proved that the crossover distribution arises as limiting distribution of the longest increasing subsequence of a random involution, there denoted by $L^{\begin{tikzpicture} \draw[] (0,0) -- (.2,0) -- (.2,.2) -- (0,.2) -- (0,0);
        \draw[] (0,0) -- (.2,.2);
        \end{tikzpicture}}$. The random variable $L^{\begin{tikzpicture} \draw[] (0,0) -- (.2,0) -- (.2,.2) -- (0,.2) -- (0,0);
        \draw[] (0,0) -- (.2,.2);
        \end{tikzpicture}}$ has the same law of $\lambda_1$, where $\lambda$ is distributed according to measure $\mathbb{HW}^{(q=0)}_{a;\gamma}$ and $a$ is a single Plancherel specialization; here the parameter $\gamma$ counts the number of fixed points of the involution. Notice that $\mathbb{HW}^{(q=0)}_{a;\gamma}$ is a Schur measure since $q=0$. From \cref{prop:symmmetry_half_space_qW} we see that it is equivalent to take $\lambda$ with law  $\mathbb{HW}^{(q=0)}_{(a,\gamma);0}$, i.e. the Pfaffian Schur measure $\mathbb{FBS}^{(q=0)}_{(a,\gamma)}$. This shows that the Baik-Rains crossover arises also as limiting law of $\lambda_1$, when $\lambda \sim \mathbb{FBS}^{(q=0)}_{(a,\gamma)}$ and $a$ is a single Plancherel specialization. 
        In \cref{thm:asymptotic_pfaffian_schur} we characterized the asymptotics of the first row in $\lambda \sim \mathbb{FBS}^{(q=0)}_{(a,\gamma)}$, when $a$ is a list of single variable specializations, but it is straightforward, following the proof to show that the same limit holds also when $a$ is a Plancherel specialization. This completes the proof.
    \end{proof}
    
    \subsection{Baik-Rains phase transition for half space models: positive temperature case} \label{subs:baik_rains_positive}
    
    Guided by the analysis of Pfaffian Schur measure described in the previous subsection, we can now establish asymptotic limits for the partition function of the half space log Gamma polymer or for the rightmost particle in a $q$-PushTASEP with particle creation. Similarities between matrix kernels in the Fredholm pfaffian expansions of both models allow us to present rather concise proofs, which mirror techniques explained in detail in \cref{thm:asymptotic_pfaffian_schur}. 

    \begin{theorem} \label{thm:asymptotic_hs_log_gamma}
        Let $Z^{\mathrm{hs}}(N,N)$ be the partition function of the half space log Gamma polymer with parameters $A_1=A_2=\cdots=A\in(0,+\infty)$ and boundary strength $\varUpsilon>-A$. Then, the following limits hold:
        \begin{itemize}
            \item if $\varUpsilon \ge 0$, rescaling $\varUpsilon = \frac{\xi}{ \sigma^{\mathrm{hs}} N^{1/3} }$ we have
                \begin{equation} \label{eq:convergence_crossover_log_gamma}
                    \lim_{N \to \infty} \mathbb{P} \left[ \frac{ \log Z^{\mathrm{hs}}(N,N) + f^{\mathrm{hs}} N }{ \sigma^{\mathrm{hs}} N^{1/3} } \le s \right] = F_{\mathrm{cross}} (s;\xi),
                \end{equation}
                where $f^{\mathrm{hs}} =  2 \psi(A)$ and $\sigma^{\mathrm{hs}} = \sqrt[3]{\psi^{(2)}(A)}$;
            \item if $-A<\varUpsilon<0$,
                \begin{equation} \label{eq:log_gamma_hs_gaussian_fluctuations}
                    \lim_{N \to \infty} \mathbb{P} \left[ \frac{ \log Z^{\mathrm{hs}}(N,N) - f^{\mathrm{hs}}_\varUpsilon N }{ \sigma^{\mathrm{hs}}_\varUpsilon N^{1/2} } \le s \right] = \int_{-\infty}^{s} \frac{e^{-u^2/2}}{\sqrt{2\pi}} \diff u,
                \end{equation}
                where $f^{\mathrm{hs}}_\varUpsilon = - \psi(A+\varUpsilon) - \psi(A - \varUpsilon)$ and $\sigma^{\mathrm{hs}}_\varUpsilon = \sqrt{\psi^{(1)}(A+\varUpsilon) - \psi^{(1)}(A-\varUpsilon)}$.
        \end{itemize}
    \end{theorem}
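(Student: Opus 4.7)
The plan is to apply the Fredholm pfaffian formula of \cref{thm:fredholm_pfaff_log_gamma} (for $\varUpsilon > 0$) or its analytic continuation \cref{thm:fredholm_pfaff_log_gamma_gaussian} (for $\varUpsilon \le 0$), convert the Laplace transform on the left-hand side into a CDF limit via the Gumbel identity \eqref{eq:Laplace_Gumbel}, and perform a saddle-point asymptotic analysis of the right-hand side. The whole procedure will parallel the treatment of the Pfaffian Schur measure carried out in \cref{thm:asymptotic_pfaffian_schur}, and the three regimes ($\varUpsilon > 0$, $\varUpsilon = 0$, $\varUpsilon < 0$) will correspond exactly to the three cases ($0 \le \gamma < 1$, $\gamma = 1$, $\gamma > 1$) treated there.

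\textbf{Crossover regime ($\varUpsilon \ge 0$).} I would first take $\varsigma = -f^{\mathrm{hs}} N + \sigma^{\mathrm{hs}} N^{1/3} s$, rescale kernel variables by $X = \varsigma + \sigma^{\mathrm{hs}} N^{1/3}(u-s)$, $Y = \varsigma + \sigma^{\mathrm{hs}} N^{1/3}(v-s)$, and set $\varUpsilon = \xi/(\sigma^{\mathrm{hs}} N^{1/3})$. The scaling function $h(Z) = \log\Gamma(A+Z) - \log\Gamma(A-Z) - f^{\mathrm{hs}} Z$ has a triple critical point at $Z=0$: the calibration $f^{\mathrm{hs}} = 2\psi(A)$ kills $h'(0)$, while $h''(0) = 0$ and $h'''(0) = 2\psi^{(2)}(A)$ match $\sigma^{\mathrm{hs}}$. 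I would deform the $\mathrm{i}\mathbb{R} - d$ contour of $\mathbf{k}$ in \eqref{eq:k_hs_log_gamma} through $Z = -\delta/(\sigma^{\mathrm{hs}} N^{1/3})$ with $\delta \in (0,\xi)$, along rays at angles $\pm 2\pi/3$; steepness of $\Re\{h\}$ along these rays is verified from the infinite product representation of $\Gamma$, along the lines of the computation in \eqref{eq:derivative_re_h}. Under the rescaling $\alpha = -\sigma^{\mathrm{hs}} N^{1/3} Z$, $\beta = -\sigma^{\mathrm{hs}} N^{1/3} W$, the contour becomes $C_\delta^{\pi/3}$ of \cref{def:contour_C_p_theta}; the exponentials converge to $e^{\alpha^3/3 - \alpha u}$ and $e^{\beta^3/3 - \beta v}$; the ratios $\Gamma(\varUpsilon \pm Z)/\Gamma(\varUpsilon \mp Z)$ degenerate via $\Gamma(x) \sim 1/x$ to $(\xi \mp \alpha)/(\xi \pm \alpha)$; and the factor $\Gamma(-2Z)\Gamma(-2W)\sin[\pi(Z-W)]/\sin[\pi(Z+W)]$ yields $-\tfrac{1}{4\alpha\beta}\cdot\tfrac{\alpha-\beta}{\alpha+\beta}$ multiplied by $(\sigma^{\mathrm{hs}} N^{1/3})^2$. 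After conjugating each $2\times 2$ block of $\mathbf{L}$ by $\mathrm{diag}(1, (\sigma^{\mathrm{hs}} N^{1/3})^{-1})$ to absorb the partial-derivative Jacobians, one obtains entrywise convergence to $\mathcal{K}_{\mathrm{cross}}^{(\xi)}$ of \eqref{eq:kernel_K_cross}. Uniform exponential bounds along the lines of \cref{prop:fredholm_pfaffian_absolutely_convergent} will justify the passage to the limit in the Fredholm pfaffian expansion. The boundary case $\varUpsilon = 0$ demands \cref{thm:fredholm_pfaff_log_gamma_gaussian} with $m = 1$, $\varepsilon = 0$: as $\varUpsilon \downarrow 0$ the residue at $Z = -\varUpsilon$ that crosses the contour during analytic continuation produces the shift $\mathpzc{B}(u) - \mathpzc{B}(v)$ in the limiting kernel, matching \eqref{eq:k_0_crossover} and yielding $F_{\mathrm{cross}}(s;0) = F_{\mathrm{GOE}}(s)$. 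Absolute convergence of the rank-$2$ correction is established exactly as in the $\gamma = 1$ subcase of \cref{thm:asymptotic_pfaffian_schur}, via the minor expansion \eqref{eq:pfaffian_sum} and Hadamard's bound.

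\textbf{Gaussian regime and principal obstacle.} For $-A < \varUpsilon < 0$, I would take $\varsigma = -f^{\mathrm{hs}}_\varUpsilon N + \sigma^{\mathrm{hs}}_\varUpsilon N^{1/2} s$ and apply \cref{thm:fredholm_pfaff_log_gamma_gaussian}. Following the blueprint of the Gaussian case of \cref{thm:asymptotic_pfaffian_schur}, the Fredholm pfaffian $\Pf(J - \widehat{\mathbf{L}})$ should converge to $1$, because steep-descent of the scaling function $H(W) = \log\Gamma(A+W) - \log\Gamma(A-W) - f^{\mathrm{hs}}_\varUpsilon W$ makes the operator norm $\|\widehat{\mathbf{L}}\|$ arbitrarily small for large $N$ and collapses the Neumann expansion of $(1 - J^T\widehat{\mathbf{L}})^{-1}$ to the identity. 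The dominant contribution to $\mathbf{A}$ is $\mathbf{A}_0(X) = e^{-\varUpsilon X}\prod_i \Gamma(A-\varUpsilon)/\Gamma(A+\varUpsilon)$, and a saddle-point analysis of $\mathbf{B}$ localizes the scalar product near a point where the $N$-order exponentials of $\mathbf{A}_0$ and of $\mathbf{B}$ cancel, producing a Gaussian profile in $v$ whose variance matches $(\sigma^{\mathrm{hs}}_\varUpsilon)^2 = \psi^{(1)}(A+\varUpsilon) - \psi^{(1)}(A-\varUpsilon)$ via the second-order expansion of the exponent. After integration against the Riemann-sum Jacobian $\sigma^{\mathrm{hs}}_\varUpsilon N^{1/2}$ in $u$, the product gives exactly $\int_{-\infty}^s e^{-u^2/2}/\sqrt{2\pi}\,du$. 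The principal technical obstacle is precisely the control of this scalar-product correction: since $\mathbf{A}_0$ grows like $e^{-\varUpsilon X}$ while $\mathbf{B}$ only decays like $e^{-dX}$ with $d<1/2<-\varUpsilon$ generically, the pairing $\langle \mathbf{B}|\mathbf{A}_0\rangle$ is not absolutely convergent in the naive sense, and the iterated contour-shifting argument used in the proof of \cref{thm:fredholm_pfaff_log_gamma_gaussian} around \eqref{eq:J^TL_r_times} must be invoked so that each application of $J^T\widehat{\mathbf{L}}$ lowers the effective growth rate, making the Neumann expansion absolutely convergent and allowing the identification of its large-$N$ limit. A secondary difficulty is that the saddle of $H$ need not lie on the real axis for all admissible $(A,\varUpsilon)$, so the contour deformation for $\mathbf{B}$ may need to be implemented at a complex saddle; once these are handled, the remainder of the argument follows the template established by the $\gamma > 1$ case of \cref{thm:asymptotic_pfaffian_schur}.
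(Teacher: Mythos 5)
Your proposal is correct and follows essentially the same route as the paper: the same scalings of $\varsigma$, the same scaling functions $h$ and $h_\varUpsilon$ with steep-descent contours, the same treatment of the $\varUpsilon=0$ residue producing the $\mathpzc{B}$-shift, and the same Neumann-series/iterated-contour-shift control of the scalar product in the Gaussian regime. The only inaccuracy is your worry about a complex saddle for $\mathbf{B}$: the relevant critical point of $h_\varUpsilon$ is the real point $W=\varUpsilon$ (where $h_\varUpsilon'(\varUpsilon)=0$ and $h_\varUpsilon''(\varUpsilon)=(\sigma^{\mathrm{hs}}_\varUpsilon)^2>0$), so no such complication arises.
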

    
    \begin{proof}
        The proof is based on asymptotic analysis of Fredholm pfaffian formulas \eqref{eq:fredholm_pfaff_log_gamma} and \eqref{eq:pfaff_L_hat_log_gamma}. As observed in \eqref{eq:coovergence_laplace_to_probability}, following a scaling of the parameter $\varsigma$, the Laplace transform $\mathbb{E}\left[ e^{-e^{-\varsigma + \log Z^{\mathrm{hs}}(N,N)}} \right]$ identifies the probability distribution of asymptotic fluctuations of $\log Z^{\mathrm{hs}}(N,N)$. We distinguish three cases depending on the value of $\varUpsilon$.
        
        \medskip
        
        \emph{ $\varUpsilon > 0$ : convergence to crossover distribution.} The starting point is the exact formula \eqref{eq:fredholm_pfaff_log_gamma} and we consider the scaling
        \begin{equation} \label{eq:scaling_varsigma}
            -\varsigma = f_{\mathrm{hs}} N - \sigma_{\mathrm{hs}} N^{1/3} s,
        \end{equation}
        which induces in the kernel $\mathbf{L}(x,y)$ the change of variables
        \begin{equation} \label{eq:scaling_X_Y}
            X = \varsigma + \sigma_{\mathrm{hs}} N^{1/3} u,
            \qquad
            Y = \varsigma + \sigma_{\mathrm{hs}} N^{1/3} v,
        \end{equation}
        with $u,v>s$. Defining the function
        \begin{equation} \label{eq:h_hs_log_gamma}
            h(Z) = \log \Gamma (A+Z) - \log \Gamma (A-Z) - f_{\mathrm{hs}} Z
        \end{equation}
        we rewrite $\mathbf{k}$ of \eqref{eq:k_hs_log_gamma} as
        \begin{equation} \label{eq:kernel_log_gamma_hs_rescaled}
        \begin{split}
            \mathbf{k}(X(u),Y(v)) = \int_{\mathrm{i} \mathbb{R}-d} \frac{\diff Z}{2 \pi \mathrm{i}} \int_{\mathrm{i} \mathbb{R}-d} \frac{\diff W}{2 \pi \mathrm{i}} & e^{N (h(Z) +h(W)) + \sigma_{\mathrm{hs}} N^{1/3}(uZ + vW)}
            \\
            & \times \frac{\Gamma(\varUpsilon + Z)}{\Gamma(\varUpsilon - Z)} \frac{\Gamma(\varUpsilon + W)}{\Gamma(\varUpsilon - W)} \mathsf{k}^{\mathrm{hs}}(Z,W).
        \end{split}
        \end{equation}
        As already observed in the proof of \cref{thm:log_gamma_GUE}, $h(Z)$ possesses a doubly critical point at $Z=0$ and for $d$ sufficiently close to 0, the function $\Re\{h(Z)\}$, restricted on $Z \in \mathrm{i} \mathbb{R} - d$  attains a global maximum at $Z=-d$. We deform both $Z$ and $W$ integration contours to the curve
        \begin{equation} \label{eq:contour_hs_log_gamma}
            \mathsf{C}^{2 \pi/3}\left( -\frac{\delta}{\sigma N^{1/3}} ; d \right) \cup \mathsf{D} (-d, \sqrt{3}(d-\frac{\delta}{\sigma N^{1/3}})),
        \end{equation}
        where we recall the notation of \eqref{eq:notation_contours} and $\delta \in (0,\xi)$. We see that, by the properties of function $h$, the contour \eqref{eq:contour_hs_log_gamma} is steep descent and for large $N$, the dominant part of both $Z$ and $W$ integrals comes from the integration over the curve $\mathsf{C}^{2 \pi/3}\left( -\frac{\delta}{\sigma N^{1/3}} ; d \right)$. After a change of variables
        \begin{equation}
            Z=- \frac{\alpha}{ \sigma_{\mathrm{hs}} N^{1/3} },
            \qquad
            W = - \frac{\beta}{ \sigma_{\mathrm{hs}} N^{1/3} }
        \end{equation}
        from expression \eqref{eq:kernel_log_gamma_hs_rescaled}, we see that
        \begin{equation}
            \mathbf{k}(X(u),Y(v)) = \mathpzc{k}^{(\xi)}(u,v) (1+O(N^{-1/3}))
        \end{equation}
        and similarly for other elements of the matrix kernel, $\mathbf{L}$
        \begin{gather*}
            \partial_X \mathbf{k}(X(u),Y(v)) = \frac{1}{\sigma_{\mathrm{hs}}N^{1/3}} \partial_u\mathpzc{k}^{(\xi)}(u,v) (1+O(N^{-1/3})),
            \\
            \partial_Y \mathbf{k}(X(u),Y(v)) = \frac{1}{\sigma_{\mathrm{hs}}N^{1/3}} \partial_v\mathpzc{k}^{(\xi)}(u,v) (1+O(N^{-1/3})),
            \\
            \partial_X \partial_Y \mathbf{k}(X(u),Y(v)) =  \frac{1}{\sigma_{\mathrm{hs}}^2N^{2/3}} \partial_u \partial_v \mathpzc{k}^{(\xi)}(u,v) (1+O(N^{-1/3})).
        \end{gather*}
        Exponential bounds such as 
        \begin{multline}
            |\mathbf{k}(X(u),Y(v))|,|\sigma_{\mathrm{hs}}N^{1/3} \partial_Y \mathbf{k}(X(u),Y(v))|,
            \\
            |\sigma_{\mathrm{hs}}N^{1/3} \partial_X \mathbf{k}(X(u),Y(v))|,|\sigma_{\mathrm{hs}}^2N^{2/3} \partial_X\partial_Y \mathbf{k}(X(u),Y(v))| < \mathrm{const.} e^{-\delta(u+v)}
        \end{multline}
        are also straightforward from steep descent analysis and this proves, through bounded convergence that
        \begin{equation}
            \lim_{N \to \infty} \Pf \left[ J - \mathbf{L} \right]_{\mathbb{L}^2(\varsigma,+\infty)} = \Pf\left[ J - \mathcal{K}^{(\xi)}_{\mathrm{cross}} \right]_{\mathbb{L}^2(s,+\infty)}
        \end{equation}
        and hence \eqref{eq:convergence_crossover_log_gamma} for $\varUpsilon>0$.
        
        \medskip
        
        \emph{ $\varUpsilon = 0$ : convergence to GOE Tracy-Widom distribution.} In \cref{subs:pfaffian_log_gamma} right before \cref{thm:fredholm_pfaff_log_gamma_gaussian}, we have already observed that, when $\varUpsilon$ approaches $0$, for the representation \eqref{eq:k_hs_log_gamma} of the function $\mathbf{k}$, it becomes no longer possible to choose integration contours $\mathrm{i} \mathbb{R} -d$ such that $0
        <d<\varUpsilon$. Denoting by $\mathbf{k}_{\mathrm{ext}}(X,Y)$ the analytic extension of $\mathbf{k}$ in the parameter $\varUpsilon$ we have
        \begin{equation}
            \begin{split}
                \mathbf{k}_{\mathrm{ext}}(X,Y) \big|_{\varUpsilon=0} &= \left(\mathbf{k}(X,Y) - \mathbf{A}_0(X) \mathbf{B}(Y) +\mathbf{A}_0(Y) \mathbf{B}(X) \right)\big|_{\varUpsilon=0}
                \\ & =
                \left(\mathbf{k}(X,Y) - \mathbf{B}(Y) + \mathbf{B}(X) \right)\big|_{\varUpsilon=0}
            \end{split}
        \end{equation}
        where functions $\mathbf{A}_0,\mathbf{B}$ were defined in \eqref{eq:p_j} \eqref{eq:q} and we have used the fact that $\mathbf{A}_0(X)\big|_{\varUpsilon=0}=1$. We also define $\mathbf{L}_{\mathrm{ext}}$ to be the analytic extension of $\mathbf{L}$ in the parameter $\varUpsilon$ and we have
        \begin{equation}
            \mathbf{L}_{\mathrm{ext}}(X,Y)\big|_{\varUpsilon=0} = \mathbf{L}(X,Y)\big|_{\varUpsilon=0} - \left( \begin{matrix} 1 & -\partial_Y \\ -\partial_X & \partial_X \partial_Y \end{matrix} \right) (\mathbf{B}(Y) - \mathbf{B}(X))\big|_{\varUpsilon=0}.
        \end{equation}
        Considering the scaling \eqref{eq:scaling_varsigma}, \eqref{eq:scaling_X_Y} we can now compute through the steep descent method, the asymptotic form of the kernel $\mathbf{L}_{\mathrm{ext}}$, as done for the case $\varUpsilon>0$. We obtain 
        \begin{gather}
            \mathbf{k}(X(u),Y(v))\big|_{\varUpsilon=0} = \mathpzc{k}^{(\infty)}(u,v)(1+O(N^{-1/3})),
            \\
            \mathbf{B}(X(u)) = \mathpzc{B}(u) (1+O(N^{-1/3})),
        \end{gather}
        where the terms $O(N^{-1/3})$ can be shown to be absolutely bounded in $u,v>s$. This shows that 
        \begin{equation} \label{eq:L_log_gamma_converges_GOE}
            \left(\begin{matrix} 1 & 0 \\ 0 & \sigma_{\mathrm{hs}} N^{1/3} \end{matrix}\right) \cdot
            \mathbf{L}_{\mathrm{ext}}(X(u),Y(v)) \big|_{\varUpsilon=0} \cdot \left(\begin{matrix} 1 & 0 \\ 0 & \sigma_{\mathrm{hs}} N^{1/3} \end{matrix}\right) =  \mathcal{K}_{\mathrm{GOE}}(u,v) (1+O(N^{-1/3})).
        \end{equation}
        By arguments developed in the proof of the $\gamma=1$ case of \cref{thm:asymptotic_pfaffian_schur}, the Fredholm pfaffian of the matrix kernel $\mathbf{L}_{\mathrm{ext}}$ always defines an absolutely convergent expansion, even though it does not define a bounded operator on $\mathbb{L}^2(\varsigma,+\infty) \times \mathbb{L}^2(\varsigma,+\infty)$. This along with the convergence \eqref{eq:L_log_gamma_converges_GOE} imples that
        \begin{equation}
            \lim_{N \to \infty} \Pf \left[ J - \mathbf{L}_{\mathrm{ext}}\big|_{\varUpsilon=0} \right]_{\mathbb{L}^2(\varsigma,+\infty)} = \Pf\left[ J - \mathcal{K}_{\mathrm{GOE}} \right]_{\mathbb{L}^2(s,+\infty)},
        \end{equation}
        proving \eqref{eq:convergence_crossover_log_gamma} for $\varUpsilon=0$.
    
        \medskip
        
        \emph{ $\varUpsilon < 0$ : convergence to gaussian distribution.} We will follow the same ideas as in the case $\gamma>1$ of \cref{thm:asymptotic_pfaffian_schur}. Nevertheless, computations here become technically more involved due to the nontrivial pole structure of the kernel $\mathbf{L}$, compared to that of $\bar{L}$. The main issue we will overcome below is to control that, during the asymptotic limit, our pfaffian expression remains absolutely convergent. We consider the equality \eqref{eq:pfaff_L_hat_log_gamma} and the scaling
        \begin{equation} \label{eq:scaling_varsigma_log_gamma}
            -\varsigma = f_\varUpsilon^{\mathrm{hs}} N - \varsigma^{\mathrm{hs}}_\varUpsilon N^{1/2} s.
        \end{equation}
        With this choice it is clear that, in the left hand side of \eqref{eq:pfaff_L_hat_log_gamma}, 
        \begin{equation}
            \lim_{N\to \infty}\mathbb{E}\left[ e^{-e^{-\varsigma + \log Z^{\mathrm{hs}}(N,N)}} \right] = \lim_{N \to \infty} \mathbb{P} \left[ \frac{ \log Z^{\mathrm{hs}}(N,N) - f^{\mathrm{hs}}_\varUpsilon N }{ \sigma^{\mathrm{hs}}_\varUpsilon N^{1/2} } \le s \right]
        \end{equation}
        and therefore it remains to consider the scaling limit of the right hand side of \eqref{eq:pfaff_L_hat_log_gamma},
        \begin{equation} \label{eq:product_pfaff_L_hat_times_scalar_product}
            \Pf \left[ J - \widehat{\mathbf{L}} \right]_{\mathbb{L}^2(\varsigma,+\infty)} \times  \bigg( 1-  \langle (\mathbf{B},-\mathbf{B}') | \left(1-J^T \widehat{\mathbf{L}}\right)^{-1} | (\mathbf{A}',\mathbf{A} )\rangle \bigg).
        \end{equation}
        The scaling \eqref{eq:scaling_varsigma_log_gamma} of parameter $\varsigma$ induces on the functions $\widehat{\mathbf{L}}, \mathbf{B},\mathbf{A}$ the change of variable
        \begin{equation}
            X= \varsigma +  \sigma^{\mathrm{hs}}_\varUpsilon N^{1/2} u,
            \qquad
            Y= \varsigma +  \sigma^{\mathrm{hs}}_\varUpsilon N^{1/2} v.
        \end{equation}
        Defining the function
        \begin{equation}
            h_\varUpsilon(Z) = \log \Gamma(A+Z) - \log \Gamma (A-Z) - f_\varUpsilon^{\mathrm{hs}} Z,
        \end{equation}
        we rewrite $\mathbf{A}_j$ (which compose $\mathbf{A}$) and $\mathbf{B}$ appearing in terms in the right hand side of \eqref{eq:product_pfaff_L_hat_times_scalar_product} as
        \begin{equation}
            \mathbf{A}_j(X(u)) = \frac{(-1)^j}{j!} e^{N h_\varUpsilon(-\varUpsilon - j)   -(j+\varUpsilon) \sigma_\varUpsilon^{\mathrm{hs}} N^{1/2} u } \frac{\Gamma(2\varUpsilon+2j)}{\Gamma(2\varUpsilon+j) },
        \end{equation}
        \begin{equation}
            \mathbf{B}(Y(v)) = \int_{\mathrm{i}\mathbb{R} - d} \frac{\diff W}{2\pi \mathrm{i}} e^{N h_\varUpsilon(W) + \sigma_\varUpsilon^{\mathrm{hs}} N^{1/2} v W} \Gamma(-2W)
            \frac{\Gamma(1-\varUpsilon+W)}{\Gamma(1-\varUpsilon -W)}.
        \end{equation}
        For the matrix kernel $\widehat{\mathbf{L}}$ we are going to use the expression \begin{equation}
            \left( \begin{matrix} 1 & - \partial_Y \\ - \partial_X & \partial_X \partial_Y \end{matrix} \right) \widehat{\mathbf{L}}_{1,1}(X,Y),
        \end{equation}
        where $\widehat{\mathbf{L}}_{1,1}(X,Y)$ is given by \eqref{eq:L_11_2}. We write
        \begin{equation}
        \begin{split}
            S_j(X(u),Y(v))=-\int_{\mathrm{i} \mathbb{R} + R } \frac{\diff W}{2\pi \mathrm{i}} 
            &
            e^{N\left( h_\varUpsilon(W) + h_\varUpsilon(-W-j) \right) + \sigma_\varUpsilon^{\mathrm{hs}} N^{1/2} (vW - u(W+j)) } \\&\times
            \frac{\Gamma(\varUpsilon+W)}{\Gamma(\varUpsilon-W)} \frac{\Gamma(\varUpsilon-W-j)}{\Gamma(\varUpsilon+W+j)} \frac{\Gamma(2W+2j)}{\Gamma(2W+1)},
        \end{split}
        \end{equation}
        where $R\in (\varUpsilon-1,-\varUpsilon-j+1)$ and
        \begin{equation}
        \begin{split}
            \mathbf{k}_{\varUpsilon-\delta}(X(u),Y(v)) = \int_{\mathrm{i} \mathbb{R}+\varUpsilon -\delta} \frac{\diff Z}{2 \pi \mathrm{i}} \int_{\mathrm{i} \mathbb{R} +\varUpsilon -\delta} \frac{\diff W}{2 \pi \mathrm{i}} & e^{N (h_\varUpsilon(Z) +h_\varUpsilon(W)) + \sigma_\varUpsilon^{\mathrm{hs}} N^{1/2}(uZ + vW)}
            \\
            & \times \frac{\Gamma(\varUpsilon + Z)}{\Gamma(\varUpsilon - Z)} \frac{\Gamma(\varUpsilon + W)}{\Gamma(\varUpsilon - W)} \mathsf{k}^{\mathrm{hs}}(Z,W).
        \end{split}
        \end{equation}
        Simple computations show that $h_\varUpsilon$ has the following properties,
        \begin{gather*}
            h_\varUpsilon(Z) < 0 \quad \text{for }Z \in [\varUpsilon,0), \qquad h_\varUpsilon'(Z) > 0 \quad \text{for }Z \in (\varUpsilon,-\varUpsilon),
            \\
            h_\varUpsilon(-Z) = - h_\varUpsilon(Z), \qquad h_\varUpsilon'(\varUpsilon) = 0, \qquad h_\varUpsilon''(\varUpsilon) = (\sigma_\varUpsilon^{\mathrm{hs}})^2  > 0.
        \end{gather*}
        We evaluate the function $\mathbf{k}_{\varUpsilon-\delta}$ through saddle point method setting $\delta=0$, noticing that the integration contour passes through the saddle point $W=\varUpsilon$ of $h_\varUpsilon$. After a change of variables,
        \begin{equation}
            Z = \varUpsilon + \mathrm{i} \frac{\alpha}{\sigma_\varUpsilon^{\mathrm{hs}} N^{1/2} },
            \qquad
            W = \varUpsilon + \mathrm{i} \frac{\beta}{\sigma_\varUpsilon^{\mathrm{hs}} N^{1/2} },
        \end{equation}
        we have
        \begin{equation} \label{eq:estimate_K_varUpsilon}
        \begin{split}
            \mathbf{k}_{\varUpsilon}(X(u),Y(v)) = & e^{2N h_\varUpsilon(\varUpsilon) + \sigma_\varUpsilon^{\mathrm{hs}} N^{1/2} \varUpsilon (u+v) } \frac{\Gamma(2\varUpsilon)^2 \Gamma(-2\varUpsilon)^2}{\sin(2\pi \varUpsilon)} \left(\frac{1}{\sigma_\varUpsilon^\mathrm{hs} N^{1/2}} \right)^5
            \\
            & \times
            \int_{-\infty}^{\infty} \frac{\diff \alpha}{2\pi} \int_{-\infty}^{\infty} \frac{\diff \beta}{2\pi} e^{-(\alpha^2+\beta^2)/2 + \mathrm{i} (u\alpha +v\beta )} \alpha \beta (\alpha-\beta)
            \\
            & = O\left( N^{-5/2} e^{2N h_\varUpsilon(\varUpsilon) + \sigma_\varUpsilon^{\mathrm{hs}} N^{1/2} \varUpsilon (u+v) - c (u^2+v^2) } \right),
        \end{split}
        \end{equation}
        where $c \in (0,1/2)$. The previous expression is apparently divergent when $\varUpsilon=-1,-2,\dots$, but one can easily see that this is not the case as such singularities can be avoided shifting integration contours by $O(N^{-1/2})$ to the left of $\varUpsilon$ at the cost of adding a factor $N^{3/2}$. A similar estimate can be obtained for the functions $S_j$, although in that case we opt for a less sharp bound keeping the integration contour to be $\mathrm{i} \mathbb{R} +R$, which is not necessarily steep descent for $h_\varUpsilon(W) +h_\varUpsilon(-W-j)$. We have
        \begin{equation} \label{eq:bound_S_j}
            S_j(X(u),Y(v)) = O\left(N^{-1} e^{N(h_\varUpsilon(R) +h_\varUpsilon(-R-j)) + \sigma_\varUpsilon^{\mathrm{hs}} N^{1/2}(vR -u(R+j)) } \right).
        \end{equation}
        Notice that whenever $R\in(\varUpsilon-\tilde{\delta}, \tilde{\delta})$, for $\tilde{\delta}>0$ small enough and $j<-R$, then $h_\varUpsilon(R) +h_\varUpsilon(-R-j)<0$ and hence the right hand side of \eqref{eq:bound_S_j} is exponentially decaying in $N$. Combining \eqref{eq:estimate_K_varUpsilon} and \eqref{eq:bound_S_j} we obtain, recalling \eqref{eq:L_11_2},
        \begin{equation} \label{eq:bound_L_hat}
            \widehat{\mathbf{L}}(X(u),Y(v)) = O \left( N^{-1} e^{-CN} e^{\sigma_{\varUpsilon}^\mathrm{hs} N^{1/2} [(-R-1)u+Rv] } \right),
        \end{equation}
        for some $C>0$ fixed. Choosing a suitable $R \in (-1,0)$, such bound clearly implies the estimate
        \begin{equation}
            \Pf[J - \widehat{\mathbf{L}}]_{\mathbb{L}(\varsigma,+\infty)} = 1 + O(e^{-C' N}),
        \end{equation}
        where $0 < C'<C$, settling the convergence of the first factor in \eqref{eq:product_pfaff_L_hat_times_scalar_product}. 
        
        We now come to estimate the scalar product appearing in the second factor of \eqref{eq:product_pfaff_L_hat_times_scalar_product}. In order to estimate the function $\mathbf{B}$ we move the integration contour to the line $\mathrm{i} \mathbb{R}+\varUpsilon$ and by saddle point method we obtain
        \begin{equation} \label{eq:q_asymptotics}
        \begin{split}
            \mathbf{B}(Y(v)) & = e^{N h_\varUpsilon(\varUpsilon) + \sigma_{\varUpsilon}^{\mathrm{hs}} N^{1/2} v \varUpsilon } \frac{\Gamma(-2\varUpsilon)}{\Gamma(1-2\varUpsilon)} \int_{-\infty}^{+\infty} \frac{\diff \beta}{ 2 \pi } e^{-\beta^2/2 + \mathrm{i} v \beta} (1+O(N^{-1/2})) \\
            & = e^{N h_\varUpsilon(\varUpsilon) + \sigma_{\varUpsilon}^{\mathrm{hs}} N^{1/2} v \varUpsilon } \frac{1}{-2 \varUpsilon} \frac{1}{\sqrt{2 \pi}} e^{-v^2/2} (1+O(N^{-1/2})).
        \end{split}
        \end{equation}
        On the other hand for the function $\mathbf{A}$, we observe that
        \begin{equation}\label{eq:p_asymptotics}
            \mathbf{A}(X(u)) = \mathbf{A}_0(X(u))(1+o(1)) = e^{-Nh_\varUpsilon(\varUpsilon)-\varUpsilon\sigma_\varUpsilon^{\mathrm{hs}}N^{1/2} u}(1+o(1)),
        \end{equation}
        where the term $o(1)$ is bounded in $u$ and uniformly exponentially decaying in $N$. By virtue of estimate \eqref{eq:bound_L_hat} the operator $J^T \widehat{\mathbf{L}}$ has norm arbitrarily small, for $N$ large and hence the resolvent $(1-J^T \widehat{\mathbf{L}})^{-1}$ can be expanded in Neumann series and we have
        \begin{equation}
            \langle (\mathbf{B},-\mathbf{B}') | \left(1-J^T \widehat{\mathbf{L}}\right)^{-1} | (\mathbf{A}',\mathbf{A} )\rangle = \sum_{\ell \ge 0} \langle (\mathbf{B},-\mathbf{B}') | \left(J^T \widehat{\mathbf{L}}\right)^\ell | (\mathbf{A}',\mathbf{A} )\rangle.
        \end{equation}
        In the proof of \cref{thm:fredholm_pfaff_log_gamma_gaussian}, using estimates \eqref{eq:bound_p,q_2}, \eqref{eq:L_hat_exponential_decay_1} and producing the iterated bound \eqref{eq:J^TL_r_times}, we established the exponential decay of terms $\langle (\mathbf{B},-\mathbf{B}') | \left(J^T \widehat{\mathbf{L}}\right)^\ell | (\mathbf{A}',\mathbf{A} )\rangle$.  Here we can refine such arguments using the improved bounds \eqref{eq:bound_L_hat}, \eqref{eq:q_asymptotics}, \eqref{eq:p_asymptotics} and we obtain, for any $\ell \ge 1$,
        \begin{equation}
            \left|\langle (\mathbf{B},-\mathbf{B}') | \left(J^T \widehat{\mathbf{L}}\right)^\ell | (\mathbf{A}',\mathbf{A} )\rangle \right| = O \left( e^{-C'N \ell} \right).
        \end{equation}
        For $\ell=0$ we have 
        \begin{equation}
            \begin{split}
                \langle (\mathbf{B},-\mathbf{B}') | (\mathbf{A}',\mathbf{A} )\rangle = \int_{s}^{\infty} \frac{\diff u}{\sqrt{2\pi}} e^{-u^2/2}  (1+O(N^{-1/2})),
            \end{split}
        \end{equation}
    which proves that
    \begin{equation*} 
        \lim_{N\to \infty} \Pf \left[ J - \widehat{\mathbf{L}} \right]_{\mathbb{L}^2(\varsigma,+\infty)} \times  \bigg( 1-  \langle (\mathbf{B},-\mathbf{B}') | \left(1-J^T \widehat{\mathbf{L}}\right)^{-1} | (\mathbf{A}',\mathbf{A} )\rangle \bigg) = \int^{s}_{-\infty} \frac{\diff u}{\sqrt{2\pi}} e^{-u^2/2}.
    \end{equation*}
    This shows \eqref{eq:log_gamma_hs_gaussian_fluctuations} and completes the proof of the theorem.
    
    \end{proof}
    
    \begin{remark}
        The one presented above is the first instance of a proof of the Baik-Rains phase transition for a ``positive temperature" KPZ model. Recently, \cite{barraquand_wang_2021} proved a similar ``depinning transition" for the point-to-half-line partition function of the half space Log Gamma polymer using a matching in distribution with the point-to-point polymer partition function in full space.
    \end{remark}
    
    We also present asymptotic results for the $q$-PushTASEP in half space. The proof follows the same ideas as those presented in \cref{thm:asymptotic_hs_log_gamma} and therefore we are only going to sketch it.
    
    For the next theorem we recall the $q$-digamma and $q$-polygamma functions
    \begin{equation}
        \psi_q(z) = -\log(1-q) + \log q \sum_{\ell \ge 0} \frac{q^{z+\ell}}{1-q^{z+\ell}}
        \qquad
        \text{and}
        \qquad
        \psi^{(n)}_q(z) = \frac{\diff^n}{\diff z^n} \psi_q(z)
    \end{equation}
    which are well defined for $q\in(0,1)$ and $z \in \mathbb{C}\setminus\mathbb{Z}_{\le 0}$. Instead of the series representation given above, the $q$-digamma function can be defined in terms of the $q$-Gamma function $\psi_q(z) = \frac{\diff}{\diff z}\log \Gamma_q(z)$, where $\Gamma_q(z)$ is defined in \eqref{eq:q_gamma}.
    
    \begin{theorem}
        Let $\mathsf{X}^{\mathrm{hs}}(T)$ be the geometric $q$-PushTASEP with particle creation and assume that parameters are $a_1=a_2=\cdots=a\in(0,1)$ and $\gamma\in(0,1/a)$. Then, the following limits hold:
        \begin{itemize}
            \item if $\gamma \in (0,1]$, rescaling $\gamma = 1- \frac{\xi}{ \sigma^{\mathrm{hs}} N^{1/3} }$ we have
                \begin{equation} \label{eq:crossover_q_pushTASEP}
                    \lim_{N \to \infty} \mathbb{P} \left[ \frac{ \mathsf{x}^{\mathrm{hs}}_N(N) - (p^{\mathrm{hs}}+1) N }{ \sigma^{\mathrm{hs}} N^{1/3} } \le s' \right] = F_{\mathrm{cross}} (s';\xi),
                \end{equation}
                where $p^{\mathrm{hs}} =  \frac{2}{\log q} \left[ \log(1-q) + \psi_q\left( \log_q a \right) \right]$ and $\sigma^{\mathrm{hs}} = \frac{1}{\log q} \sqrt[3]{\psi_q^{(2)}\left( \log_q a \right) }$;
            \item if $\gamma\in(1,1/a)$,
                \begin{equation} \label{eq:q_pushTASEP_gaussian}
                    \lim_{N \to \infty} \mathbb{P} \left[ \frac{ \mathsf{x}^{\mathrm{hs}}_N(N) - (p^{\mathrm{hs}}_\gamma +1) N }{ \sigma^{\mathrm{hs}}_\varUpsilon N^{1/2} } \le s' \right] = \int_{-\infty}^{s'} \frac{e^{-u^2/2}}{\sqrt{2\pi}} \diff u,
                \end{equation}
                where 
                $$
                p^{\mathrm{hs}}_\gamma = \frac{1}{\log q} \left[ 2\log(1-q) + \psi_q\left( \log_q(a/\gamma) \right) + \psi_q\left( \log_q(a \gamma) \right) \right]
                $$ 
                and 
                $$
                \sigma^{\mathrm{hs}}_\gamma = \frac{\gamma}{\log q} \sqrt{\psi_q^{(1)}\left(\log_q(a \gamma)\right) - \psi_q^{(1)}\left(\log_q(a /\gamma)\right) }.
                $$
        \end{itemize}
    \end{theorem}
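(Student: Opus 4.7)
The starting point is the Fredholm pfaffian representation \eqref{eq:qpush_TASEP_pfaffian} of \cref{cor:pfaffian_q_PushTASEP} for the probability distribution of $\mathsf{x}^{\mathrm{hs}}_N(N)$. Because $\chi\sim q\text{-}\mathrm{Geo}(q)$ and $S\sim\mathrm{Theta}(\zeta^2;q^2)$ remain of order one while $\mathsf{x}^{\mathrm{hs}}_N(N) - N$ grows linearly in $N$, the shifted probability $\mathbb{P}(\mathsf{x}^{\mathrm{hs}}_N(N)-N+\chi+2S\le s)$ has the same asymptotic behavior (after choosing $\zeta = \zeta(N)$ in a suitable way so that $S$ does not alter the leading scale) as $\mathbb{P}(\mathsf{x}^{\mathrm{hs}}_N(N) \le N + s)$. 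Writing the integrand of the kernel $L_{\mathrm{push}}$ of \cref{cor:pfaffian_q_PushTASEP} in exponential form, the scaling function which governs the $N\to\infty$ behavior is
\begin{equation*}
    h(z) = g(z) - L\log z, \qquad \text{where} \qquad g(z) = \log\frac{(a/z;q)_\infty}{(az;q)_\infty},
\end{equation*}
and the constants $p^{\mathrm{hs}},\sigma^{\mathrm{hs}}$ (respectively $p^{\mathrm{hs}}_\gamma,\sigma^{\mathrm{hs}}_\gamma$) are precisely those making $h$ doubly critical at $z=1$ (resp. having a simple saddle at $z=\gamma^{-1}$); this is the same choice of parameters as in \cref{thm:asymptotic_pfaffian_schur} but with the $q$-deformed Pochhammer $g$ replacing the log-rational function \eqref{eq:g_scaling_function}. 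The $q$-digamma derivatives $\psi_q^{(k)}$ appear because $\partial_z^k g$ at $z=1$ is expressed through $\psi_q$.

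\textbf{Case $\gamma\in(0,1]$.} I would mirror the proof of \cref{thm:asymptotic_pfaffian_schur} and of the $\varUpsilon\ge 0$ case of \cref{thm:asymptotic_hs_log_gamma}. As in \eqref{eq:derivative_re_h}, any circle $|z|=r>1$ is steep descent for $\Re\{h(z)\}$, now with the $q$-deformed integrand, and $h$ is doubly critical at $z=1$ with $h'''(1)=2(\sigma^{\mathrm{hs}})^3>0$. I deform the $z$ and $w$ contours to the composite curve of \eqref{eq:contour_C_N} with $\delta\in(0,\xi)$ so that the poles $z=\gamma^{-1},w=\gamma^{-1}$ sit just outside; the change of variables $z=1+\alpha/(\sigma^{\mathrm{hs}}N^{1/3})$, $w=1+\beta/(\sigma^{\mathrm{hs}}N^{1/3})$ and the substitution $\mathsf{x}^{\mathrm{hs}}_N(N)-N=(p^{\mathrm{hs}}+1)N + \sigma^{\mathrm{hs}}N^{1/3}u$ turn the kernel $L_{\mathrm{push}}$ into the crossover kernel $\mathcal{K}^{(\xi)}_{\mathrm{cross}}$ of \cref{def:crossover} up to error $O(N^{-1/3})$. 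The discrete derivatives $\nabla_x,\nabla_y$ insert the factors $\frac12(z^{-1}-z)\approx -\alpha/(\sigma^{\mathrm{hs}}N^{1/3})$ and similarly in $\beta$, producing the partial derivatives in \eqref{eq:kernel_K_cross}. Uniform exponential bounds of the form $|L_{\mathrm{push}}|\le Ce^{-\delta(u+v)}$, derived by the same steep descent estimates used in \cref{thm:asymptotic_pfaffian_schur}, allow one to pass to the limit inside the Fredholm pfaffian by bounded convergence. Setting $\gamma=1$ requires the analytic continuation \eqref{eq:k_bar_analytic_cont} across the pole at $z=\gamma^{-1}$, which contributes the residue functions $A(x),B(y)$ and asymptotically yields $\mathpzc{k}^{(0)}$ and $F_{\mathrm{GOE}}$, exactly as in the $\gamma=1$ analysis of \cref{thm:asymptotic_pfaffian_schur}.

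\textbf{Case $\gamma\in(1,1/a)$.} The analytically continued kernel $\bar{L}_{\mathrm{ext}}$ is a sum $\bar{L} + \bar{R}$ with $\bar{R}$ a rank-two perturbation built from the residues at $z=\gamma^{-1}$ and $w=\gamma^{-1}$ (the $q$-PushTASEP analog of \eqref{eq:L_ext}), so \cref{prop:pfaffian_L+R} expresses the Fredholm pfaffian as the product in \eqref{eq:pfaffian_L_ext}. Under the Gaussian scaling $\mathsf{x}^{\mathrm{hs}}_N(N)-N=(p^{\mathrm{hs}}_\gamma+1)N + \sigma^{\mathrm{hs}}_\gamma N^{1/2}u$, the circle $|z|=\gamma-\varepsilon_N$ (with $\varepsilon_N=O(N^{-1/2})$) becomes steep descent for $\Re\{h(z)\}$ passing through the simple saddle at $z=\gamma^{-1}$, with $h''(\gamma^{-1})=(\sigma^{\mathrm{hs}}_\gamma/\gamma)^2$ giving the Gaussian width. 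Repeating the estimates \eqref{eq:estimate_e}--\eqref{eq:p_q_derivatives}, one shows that $\Pf[J-\bar{L}]\to 1$, that the two residue functions $A,B$ produce conjugate exponentials that cancel in their product, and that the scalar product $\langle(B,-\nabla B)|(1-J^T\bar L)^{-1}|(\nabla A,A)\rangle$ is dominated by its $\ell=0$ term, which converges to $\int_s^\infty e^{-u^2/2}\diff u/\sqrt{2\pi}$. This yields \eqref{eq:q_pushTASEP_gaussian}.

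\textbf{Main obstacle.} The only genuinely delicate point, beyond what has already been worked out in \cref{thm:asymptotic_pfaffian_schur,thm:asymptotic_hs_log_gamma}, is verifying the uniform steep descent of the composite contour and the uniform exponential bounds on the $q$-deformed integrand: unlike the pure Gamma-function case, the relevant factors involve infinite products of $q$-Pochhammer symbols, so one must control $(a/z;q)_\infty/(az;q)_\infty$ and the ratio $F(z)/F(w)$ along the deformed contours uniformly in $N$. These estimates are completely analogous to the ones collected in \cref{sec:LG} and used in the proof of \cref{thm:fredholm_pfaff_log_gamma}, so no new technology is required; the argument is a direct transcription of \cref{thm:asymptotic_hs_log_gamma} with $\log\Gamma_q$ in place of $\log\Gamma$ and with the critical point relocated from $Z=0$ or $Z=\varUpsilon$ to $z=1$ or $z=\gamma^{-1}$.
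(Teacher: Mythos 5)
Your proposal follows essentially the same route as the paper's own (sketched) proof: it starts from the Fredholm pfaffian of \cref{cor:pfaffian_q_PushTASEP}, identifies the scaling function $h_q(z)=\log\frac{(a/z;q)_\infty}{(az;q)_\infty}-p^{\mathrm{hs}}\log z$ with a double critical point at $z=1$ for the crossover regime, handles $\gamma=1$ via the residue/analytic-continuation mechanism yielding $F_{\mathrm{GOE}}$, and treats $\gamma>1$ via the rank-two perturbation of \cref{prop:pfaffian_L+R} with a Gaussian saddle point, deferring the uniform $q$-Pochhammer bounds to \cref{sec:LG} exactly as the paper does. The only blemish is in the Gaussian case, where the relevant saddle on the contour $|z|=\gamma-\varepsilon_N$ is at $z=\gamma$ (with $h''(\gamma)=(\sigma^{\mathrm{hs}}_\gamma/\gamma)^2>0$), not at $z=\gamma^{-1}$ (where, by the antisymmetry $h(1/z)=-h(z)$, the second derivative is negative); the pole producing the divergent residue function $A$ sits at $z=\gamma^{-1}$, but the steep-descent analysis of the double integral is carried out at $z=\gamma$.
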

    
    \begin{proof}[Sketch of proof] 
    
    The starting point is the Fredholm pfaffian formula \eqref{eq:qpush_TASEP_pfaffian}. We consider different scalings of the parameter $s$ depending on the value of $\gamma$.
    
    \medskip
    
    \emph{$\gamma<1$: convergence to crossover distribution.} Under the scaling
    \begin{equation}
        s = (p^\mathrm{hs}-1)N + \sigma^\mathrm{hs}N^{1/3} s', \qquad x = s + \sigma^\mathrm{hs} N^{1/3} u, \qquad y = s + \sigma^\mathrm{hs} N^{1/3} v, 
    \end{equation}
    the function $k(x,y)$, which defines the kernel $L_\mathrm{push}$ can be written as
    \begin{equation} \label{eq:k_q_rescaled}
    \begin{split}
        k(x(u),y(v)) = \oint_{|z|=r}& \frac{\diff z}{2\pi \mathrm{i}z^{3/2}} \oint_{|w|=r'} \frac{\diff w}{2\pi \mathrm{i} w^{5/2}} \frac{(\gamma/z,\gamma/w;q)_\infty}{(\gamma z,\gamma w;q)_\infty} \kappa^\mathrm{hs}(z,w) \\
        & \times 
        e^{N h_q(z) - \sigma^\mathrm{hs} N^{1/3} (u+s') \log z} e^{N h_q(w) - \sigma^\mathrm{hs} N^{1/3} (u+s') \log w}.
    \end{split}
    \end{equation}
    Here we assume that $u,v>s'$ and the function $h_q$ in the exponents is defined as
    \begin{equation}
        h_q(z) = \log (a/z;q)_\infty - \log (a z;q)_\infty - p^\mathrm{hs} \log z.
    \end{equation}
    A calculation shows that $h_q(1)=h_q'(1)=h_q''(1)=0$ and $h_q'''(1)= 2 (\sigma^\mathrm{hs})^3>0$, so that, employing a saddle point analysis, around critical points $z=1,w=1$ we can prove that
    \begin{equation}
        \lim_{N\to \infty} k(x(u),y(v)) = \mathpzc{k}^{(\xi)} (u,v)
        \qquad
        \text{and}
        \qquad
        \lim_{N\to \infty} L_\mathrm{push}(x(u),y(v)) = \mathcal{K}_\mathrm{cross}^{(\xi)} (u,v).
    \end{equation}
    Considering the scaling of $s$ adopted in the left hand side of \eqref{eq:qpush_TASEP_pfaffian}, this shows \eqref{eq:crossover_q_pushTASEP} for $\gamma<1$.
    
    \medskip
    
    \emph{$\gamma=0$ : convergence to GOE distribution.} 
    For the rest of the proof we will express the residues of the integrand of \eqref{eq:k_q_rescaled} at poles $z,w=\gamma^{-1} q^{-j}$, for $j=0,1,2,\dots$, as
    \begin{equation*}
        \mathrm{Res}_{z=\frac{1}{\gamma q^j}} \left\{ \parbox{2.1cm}{\centering $z$-integrand in \eqref{eq:k_q_rescaled}} \right\} = A^{(q)}_j(x) B^{(q)}(y),
        \qquad
        \mathrm{Res}_{w=\frac{1}{\gamma q^j}} \left\{ \parbox{2.1cm}{\centering $w$-integrand in \eqref{eq:k_q_rescaled}} \right\} = - A^{(q)}_j(y) B^{(q)}(x),
    \end{equation*}
    where
    \begin{equation}
        A^{(q)}_j(x) = (\gamma q^j)^{x+1/2} (-1)^j \frac{q^{\binom{j+1}{2}}}{(q;q)_j} \frac{( \gamma^2 q^j;q)_\infty}{(\gamma^2 q^{2j};q)_\infty} \left( \frac{ (a\gamma q^j ;q)_\infty }{ (a /\gamma q^j ;q)_\infty } \right)^N,
    \end{equation}
    \begin{equation}
        B^{(q)}(y) = \frac{1}{2 \pi \mathrm{i}} \oint_{|w|=r'} \frac{\diff w}{w^{y+5/2}} F(w) \frac{(q,w \gamma q^j,1/\gamma q^{j-1}w;q)_\infty}{(1/w^2,q^j \gamma/w,z/\gamma q^{j-1};q)_\infty} \frac{\vartheta_3(\zeta^2w^2/\gamma^2 q^{2j};q^2)}{ \vartheta_3(\zeta^2;q^2) }.
    \end{equation}
    These should be seen as $q$-deformations of functions \eqref{eq:p_j}, \eqref{eq:q}. Then, we can proceed with the analytic continuation of formula \eqref{eq:qpush_TASEP_pfaffian} in the variable $\gamma$. In particular as $\gamma$ approaches $1$ from the left we see that it is no longer possible to select radii $r,r'$ of the integration contours of the function $k(x,y)$ such that $1<r,r'<1/\gamma$ and as the singularities at $z,w=1/\gamma$ cross the contours the analytic extension of the operator $L_\mathrm{push}$ reads
    \begin{equation}
        L_\mathrm{push,ext}\big|_{\gamma = 1} (x,y) = L_\mathrm{push}\big|_{\gamma = 1}(x,y) - \left( \begin{matrix} 1 & - \nabla_y \\ - \nabla_x & \nabla_x \nabla_y \end{matrix} \right) \left[ B^{(q)}(y) - B^{(q)}(x) \right]\bigg|_{\gamma = 1},
    \end{equation}
    since $A^{(q)}_0(x)\big|_{\gamma = 1} = 1$. Using ideas elaborated in the case $\varUpsilon=0$ of \cref{thm:asymptotic_hs_log_gamma} we can now prove that the Fredholm pfaffian of the operator $L_\mathrm{push,ext}\big|_{\gamma = 1}$ defines an absolutely convergent series and its limit for $N\to \infty$ can be computed using saddle point method. The result is 
    \begin{equation}
        \lim_{N \to \infty} \left(\begin{matrix} 1 & 0 \\ 0 & \sigma^{\mathrm{hs}} N^{1/3} \end{matrix}\right) \cdot L_\mathrm{push,ext}\big|_{\gamma = 1} \cdot \left(\begin{matrix} 1 & 0 \\ 0 & \sigma^{\mathrm{hs}} N^{1/3} \end{matrix}\right) = \mathcal{K}_\mathrm{GOE}.
    \end{equation}
    One can now show, as in \cref{thm:asymptotic_hs_log_gamma}, that such limit implies the convergence of Fredholm pfaffians, proving the case $\gamma =1$ of \eqref{eq:crossover_q_pushTASEP}.
    
    \medskip
    
    \emph{$\gamma > 1$ : convergence to gaussian distribution} We will argue in the same way as in the case $\varUpsilon<0$ of \cref{thm:asymptotic_hs_log_gamma}. First we need to provide an analytic extension of the Fredholm pfaffian in the right hand side of \eqref{eq:qpush_TASEP_pfaffian} valid for $\gamma >1$. Reasoning as in \cref{thm:fredholm_pfaff_log_gamma_gaussian} one can show that, assuming that $\gamma \in [q^{-m+\frac{2-\varepsilon}{2}},q^{-m+\frac{1-\varepsilon}{2}})$ for some $m\in \mathbb{Z}_{\ge 0}$ and $\varepsilon\in\{0,1\}$, then
    \begin{equation}
        \Pf\left[ J - \widehat{L}_\mathrm{push} \right]_{\ell^2(\mathbb{Z}'_{> s})} \left( 1- \langle (B^{(q)},-\nabla B^{(q)}) | \left( 1-\widehat{L}_\mathrm{push} \right)^{-1} | (\nabla A^{(q)},A^{(q)}) \right),
    \end{equation}
    is the analytic extension of $\Pf\left[ J - L_\mathrm{push} \right]_{\ell^2(\mathbb{Z}'_{\ge s})}$, where
    \begin{equation} \label{eq:L_hat_push}
        \widehat{L}_\mathrm{push}(x,y) = L_\mathrm{push}(x,y) - \left( \begin{matrix} 1 & - \nabla_y \\ - \nabla_x & \nabla_x \nabla_y \end{matrix} \right) \sum_{j=m}^{2m+\varepsilon-2} \left[A_j^{(q)}(x) B^{(q)}(y) - A_j^{(q)}(y) B^{(q)}(x)\right],
    \end{equation}
    and $A^{(q)} = \sum_{j=0}^{2m+\varepsilon-2} A_j^{(q)}$. Here, we are assuming that $\langle (\, \cdot \, , \, \cdot \,) | (\, \cdot \, , \, \cdot \,) \rangle $ is the natural scalar product on $\ell^2(\mathbb{Z}'_{>s}) \times \ell^2(\mathbb{Z}'_{>s})$. Then, considering the scaling
    \begin{equation}
        s = (p^\mathrm{hs}_\gamma -1) N + \sigma^\mathrm{hs}_\gamma N^{1/2} s'
    \end{equation}
    one can show, following ideas developed in \cref{thm:asymptotic_hs_log_gamma}, that
    \begin{equation}
        \lim_{N\to \infty} \Pf\left[ J - \widehat{L}_\mathrm{push} \right]_{\ell^2(\mathbb{Z}'_{\ge s})} = 1
    \end{equation}
    and
    \begin{equation}
        \lim_{N\to \infty} \left( 1- \langle (B^{(q)},-\nabla B^{(q)}) | \left( 1-\widehat{L}_\mathrm{push} \right)^{-1} | (\nabla A^{(q)},A^{(q)}) \right) = \int_{-\infty}^{s'} \frac{e^{-u^2/2}}{\sqrt{2\pi}} \diff u.
    \end{equation}
    Taking the same scaling limit on the left hand side of \eqref{eq:qpush_TASEP_pfaffian} completes the proof of \eqref{eq:q_pushTASEP_gaussian}.
\end{proof}

\section{KPZ equation} 
\label{sec:KPZ}

Throughout this section we will treat the Cole-Hopf solution $\mathcal{H}$ of the KPZ equation, defined as $\mathcal{H}=\log \mathscr{Z}$ where $\mathscr{Z}$ is the mild solution of the stochastic heat equation with multiplicative noise \eqref{eq:SHE}. For a more details on mild solutions we consider below the reader can consult \cite{quastel_introduction_to_KPZ,Parekh2019}, where proofs of their existence uniqueness and positivity are provided.

\subsection{Convergence of Log Gamma polymers to stochastic heat equation with multiplicative noise} \label{subs:convergence_log_gamma_SHE}

Certain classes of models of directed polymer in random media are known to converge, under a weak noise scaling to interesting limits. In particular, in \cite{AlbertsKhaninQuastel2012}, authors introduced the so-called \emph{intermediate disorder regime}, under which the noise of a space-time uncorrelated random environment tends to zero as the length of a polymer diverge. It was there proved that, under certain assumptions, the point-to-point partition function of the polymer model converges, after proper scaling, to the solution of the stochastic heat equation with multiplicative noise. We recall this result below, adapting their general result to the particular case of the Log Gamma polymer model.

\begin{proposition}[\cite{AlbertsKhaninQuastel2012},Theorem 2.2] \label{prop:Alberts_khanin_quastel}
    Let $Z(T,N)$ be the partition function of the Log Gamma polymer with parameters $A_1=A_2=\cdots=B_1=B_2=\cdots=A$. Consider the scaling $A=\sqrt{N}$, and
    \begin{equation}
        \log Z( \frac{tN}{2} + xN^{1/2}, \frac{tN}{2}) = \log \mathscr{Z}_N(x,t) + \frac{t}{8} + \frac{x}{2} - \frac{t}{2}N \log N + \frac{1}{2}(t - x \log N) N^{1/2}.
    \end{equation}
    Then, as $N\to +\infty$, the process $\mathscr{Z}_N(x,t)$ converges in distribution to $\mathscr{Z}(x,t)$, the mild solution of the stochastic heat equation with multiplicative noise \eqref{eq:SHE} on $x \in \mathbb{R}$ and initial conditions $\mathscr{Z}(x,0)=\delta_0(x)$.
\end{proposition}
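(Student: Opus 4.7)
The plan is to deduce the statement from the general intermediate disorder invariance principle of Alberts--Khanin--Quastel \cite{AlbertsKhaninQuastel2012}. Their theorem applies to partition functions of directed polymers in an i.i.d.\ environment of the form $1+\beta_N \xi_{i,j}$, where $\xi_{i,j}$ are i.i.d.\ centered with variance $1$ and mild moment conditions, provided that $\beta_N$ is scaled as $\beta_N \sim \beta N^{-1/4}$ along a diagonal of length $N$. Under this scaling, the point-to-point partition function, appropriately rescaled, converges in distribution to the mild solution of the mSHE with narrow wedge initial data. The task is therefore to rewrite our homogeneous Log Gamma partition function in the AKQ form and to identify the deterministic constants entering the statement.

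First, I would analyse the single-weight distribution. With $A=\sqrt{N}$, each $w_{i,j}\sim \mathrm{Gamma}^{-1}(2\sqrt N)$. Setting $w_{i,j} = m_N(1+\beta_N \xi_{i,j})$ with $\xi_{i,j}$ centered of unit variance, one computes
\begin{equation*}
    m_N = \mathbb{E}[w_{i,j}] = \frac{1}{2\sqrt N -1},
    \qquad
    \beta_N^2 = \frac{\mathrm{Var}(w_{i,j})}{m_N^2} = \frac{1}{2\sqrt N - 2}.
\end{equation*}
Hence $\beta_N \sim (2\sqrt N)^{-1/2} = 2^{-1/2} N^{-1/4}$, which is exactly the AKQ scaling. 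The requisite moment and tail bounds for $\xi_{i,j}$ follow from standard inverse Gamma estimates (for $N$ large the distribution is close to log-normal).

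Next I would factor the partition function along a path $\varpi$ of length $T+N-1$ as
\begin{equation*}
    w(\varpi) = m_N^{T+N-1}\prod_{\ell=1}^{T+N-1}(1+\beta_N\xi_{\varpi_\ell}),
\end{equation*}
so that $Z(T,N) = m_N^{T+N-1} \widetilde Z(T,N)$ where $\widetilde Z$ is the AKQ-normalized partition function with weights $1+\beta_N\xi$. Along the diagonal $(\tfrac{tN}2 + xN^{1/2},\tfrac{tN}2)$ the AKQ invariance principle yields
\begin{equation*}
    \left(\tfrac{N+T-1}{\pi N^2/2}\right)^{1/2} \widetilde Z \;\xrightarrow[N\to\infty]{d}\; \mathscr{Z}(x,t),
\end{equation*}
in the sense of finite-dimensional distributions, with the narrow-wedge normalization absorbed into the prefactor (this is the Gaussian transition density evaluated at the endpoint, and the factor $\beta_N^2$ produces the multiplicative noise strength $1$ appearing in \eqref{eq:SHE}).

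Finally, taking logarithms and substituting the asymptotics $\log m_N = -\log(2\sqrt N -1) = -\tfrac12\log N - \tfrac12\log 4 + O(N^{-1/2})$ and the Gaussian prefactor's asymptotics, produces the deterministic shift $\frac{t}{8} + \frac{x}{2} - \frac{t}{2}N\log N + \frac12(t - x\log N)N^{1/2}$ stated in \cref{prop:Alberts_khanin_quastel} (the $\tfrac{t}{8}$ comes from the $\mathrm{It\bar o}$ correction in matching the chaos series of $Z$ with that of $\mathscr Z$, cf.\ \cite[Theorem 2.2]{AlbertsKhaninQuastel2012}). The main obstacle will be the bookkeeping of these lower-order corrections: the leading $N\log N$ and $N^{1/2}\log N$ pieces are easy to read off from $\log m_N$, but the subleading constant $\tfrac t8$ and the $\tfrac x2$ shift require tracking the Gaussian local limit correction for the underlying simple random walk along the diagonal endpoint, which must be done to order $N^{-1/2}$ in order to match against the narrow-wedge heat kernel $p(x,t)$ at the initial time. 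Once these constants are matched, the statement follows directly from the AKQ theorem.
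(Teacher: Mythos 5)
The paper does not prove this proposition at all: it is imported wholesale as a citation to \cite{AlbertsKhaninQuastel2012}, Theorem 2.2, so there is no internal argument to compare yours against. Your plan is the natural (and essentially the only) route — recast the inverse Gamma weights as $w_{i,j}=m_N(1+\beta_N\xi_{i,j})$, verify $\beta_N\sim 2^{-1/2}N^{-1/4}$, invoke the intermediate disorder invariance principle, and match constants — and your computations of $m_N$ and $\beta_N^2=1/(2\sqrt N-2)$ are correct.

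There is, however, one genuine gap that you should not wave through. The AKQ theorem, as stated, concerns a \emph{fixed} i.i.d.\ environment $\{\xi_{i,j}\}$ (with finite moments of sufficiently high order) in which only the inverse temperature $\beta_N$ is sent to zero. Here the environment law itself varies with $N$: the centered, normalized variables $\xi_{i,j}=(w_{i,j}-m_N)/(m_N\beta_N)$ form a triangular array whose distribution changes at every stage of the limit (inverse Gamma of parameter $2\sqrt N$). You therefore cannot apply Theorem 2.2 off the shelf; you must either check that the polynomial-chaos/$U$-statistics proof of AKQ goes through for triangular arrays under uniform moment bounds (which do hold here — all moments of $\xi_{i,j}$ of any fixed order are bounded uniformly in $N$ once $2\sqrt N$ exceeds that order — but this needs to be said and the Lindeberg-type condition verified), or cite a version of the invariance principle that explicitly allows $N$-dependent weight laws, as is done for the half-space analogue in \cite{Xuan_Wu_Intermediate_Disorder}. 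A second, smaller point: in your constant bookkeeping, $(T+N-1)\log m_N$ produces a term $-tN\log 2$ at order $N$, which is absent from the statement; it is cancelled only by the path-entropy factor $\log\binom{T+N-2}{N-1}\approx(T+N)\log 2+\ldots$ hidden inside your ``Gaussian prefactor.'' You should make this cancellation explicit, since otherwise the claim that the leading deterministic terms ``are easy to read off from $\log m_N$'' is misleading — the order-$N$ contribution from $\log m_N$ alone does not match the proposition.
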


The intermediate disorder regime was considered also for polymer models in restricted geometries. Below we recall a result of \cite{Xuan_Wu_Intermediate_Disorder}, which provides the half space variant of \cref{prop:Alberts_khanin_quastel}.

\begin{proposition}[\cite{Xuan_Wu_Intermediate_Disorder}, Theorem 5.1] \label{prop:xuan_wu_SHE_hs}
    Let $Z^{\mathrm{hs}}(T,N)$ be the partition function of the homogeneous Log Gamma polymer in half space with parameters $A_1=A_2=\cdots=A$ and boundary strength $\varUpsilon$. Consider the scaling $A=\sqrt{N}$, $\varUpsilon = \omega + \frac{1}{2} \in \mathbb{R}$ and
    \begin{equation}
        \log Z^{\mathrm{hs}} ( \frac{t N}{2} + x \sqrt{N} , \frac{t N}{2}) = \log \mathscr{Z}_N^{\mathrm{hs}} (x,t) +\frac{t}{8} +\frac{x}{2} -\frac{t}{2} N \log N + \frac{1}{2}(t - x \log N) \sqrt{N}.
    \end{equation}
    Then, as $N\to + \infty$ the process $\mathscr{Z}_N(x,t)$ converges in distribution to $\mathscr{Z}(x,t)$ the mild solution of the stochastic heat equation with multiplicative noise \eqref{eq:SHE_hs} on $x \in \mathbb{R}_+$ and Robin boundary conditions $\left(\partial_x - \omega\right) \mathscr{Z}(x,t)\big|_{x=0} = 0 $ and initial conditions $\mathscr{Z}(x,0)=\delta_0(x)$.
\end{proposition}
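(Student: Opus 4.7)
The plan is to adapt the chaos expansion approach of Alberts--Khanin--Quastel, used in the proof of \cref{prop:Alberts_khanin_quastel}, to the half space setting. Recall that for the full space Log Gamma polymer, the strategy is: (i) write $Z(T,N) = \sum_{\varpi \in \Pi_{(1,1)\to (T,N)}} \prod_{\ell} w_{\varpi_\ell}$; (ii) decompose $\log w_{i,j}$ into its mean and a centered fluctuation that, when $A_i,B_j = \sqrt{N}$, has variance of order $1/N$; (iii) exponentiating and expanding the product across a path, one obtains a discrete polymer chaos series indexed by subsets of lattice sites traversed; (iv) upon passing to the diffusive scaling $(T,N) \mapsto (tN/2 + x\sqrt{N}, tN/2)$, the rescaled random walk measure converges to the Brownian bridge/heat kernel, the fluctuations $(\log w_{i,j} - \mathbb{E} \log w_{i,j})\sqrt{N}$ converge to white noise, and each chaos coefficient converges to the corresponding Wiener chaos of the mild solution $\mathscr{Z}$.

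For the half space problem, the same architecture applies, but on the restricted lattice $\Lambda^{\mathrm{hs}} = \{i \ge j\}$. First I would rewrite $Z^{\mathrm{hs}}(T,N)$ as a sum over up-right paths that are forced to stay in $\Lambda^{\mathrm{hs}}$, i.e.\ the lattice analogue of reflected Brownian motion in $\mathbb{R}_+$. Under the diffusive scaling used in \cref{prop:xuan_wu_SHE_hs}, these reflected walks converge (as in the Donsker invariance principle with reflection) to the reflected Brownian bridge/reflected heat kernel $p^{\mathrm{hs}}$ with Neumann boundary at $x=0$. One then performs the same centering as in the full-space case on the bulk weights $w_{i,j}$ with $i>j$, using the digamma expansions $\psi(\sqrt{N}), \psi'(\sqrt N)$, to produce the deterministic centering (this accounts for the $\frac{t}{8}+\frac{x}{2} - \frac{t}{2}N\log N + \frac{1}{2}(t-x\log N)\sqrt{N}$ normalization) together with a chaos series in the bulk white noise.

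The genuinely new feature is the diagonal. The weights $w_{i,i} \sim \mathrm{Gamma}^{-1}(A_i+\varUpsilon)$ with $A_i=\sqrt{N}$ and $\varUpsilon = \omega+\tfrac{1}{2}$ fixed differ from the ``symmetric'' bulk weight $\mathrm{Gamma}^{-1}(2A_i)$ by an amount of order $1/\sqrt{N}$. Expanding $\log w_{i,i}$ around the bulk mean produces an extra deterministic drift of order $1/\sqrt{N}$ per diagonal visit on top of the usual bulk fluctuation term. Under the scaling, the number of diagonal visits of a reflected walk up to time $tN/2$ is of order $\sqrt{N}$, so these drifts accumulate to an $O(1)$ exponential of the local time at $0$, i.e.\ a Feynman--Kac factor $\exp\{-\omega L^0_t(\mathscr{B}^{\mathrm{hs}})\}$. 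By the Feynman--Kac/Girsanov representation for the reflected heat equation, this factor is exactly what converts the Neumann heat kernel into the Robin heat kernel with parameter $\omega$. Equivalently, at the level of chaos, one shows that the diagonal chaos series converges to the Wiener chaos expansion of the mild solution to \eqref{eq:SHE_hs} with Robin boundary data.

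The main technical obstacles are threefold. First, one needs quantitative local central limit estimates for reflected random walks and sharp tail control on the number of diagonal visits, uniform in $(x,t)$ on compacta, to pass the fluctuations to white noise; this is where bounds for ratios of Gamma functions of the type developed in \cref{sec:LG} are instrumental. Second, one must prove tightness of $\mathscr{Z}_N^{\mathrm{hs}}(x,t)$ as a random process on $\mathbb{R}_+ \times \mathbb{R}_+$, typically via moment bounds on the chaos expansion and Kolmogorov's criterion, with the subtle point that near $x=0$ moments blow up like $\delta_0$ because of the narrow-wedge initial condition and must be controlled in a weighted norm. Third, and most delicate, one has to match the limiting chaos expansion termwise with the Wiener chaos of the mild solution $\mathscr{Z}^{\mathrm{hs}}$; this requires identifying that the diagonal weights produce precisely the Robin kernel and not, say, a Dirichlet or absorbed kernel. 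The cleanest way is to verify that the limit process solves the mild formulation with $p^{\mathrm{hs}}_\omega$ as the heat kernel and then invoke the uniqueness and positivity theorem of Parekh \cite{Parekh2019} to conclude.
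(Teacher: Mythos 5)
This proposition is not proved in the paper at all: it is imported verbatim as Theorem 5.1 of the cited reference (Wu's work on the intermediate disorder regime in half space), exactly as \cref{prop:Alberts_khanin_quastel} is imported from Alberts--Khanin--Quastel. So there is no in-paper argument to compare yours against; the authors' "proof" is the citation. Your sketch does follow the strategy of the cited work — discrete polymer chaos expansion, convergence of the reflected walk to the reflected bridge, identification of the limiting chaos with the mild solution, and uniqueness via Parekh — so as a reconstruction of the external proof it is aimed in the right direction.

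One substantive imprecision is worth flagging, because it sits exactly where the boundary parameter is born. You assert that the diagonal weights $w_{i,i}\sim\mathrm{Gamma}^{-1}(\sqrt{N}+\varUpsilon)$ "differ from the symmetric bulk weight $\mathrm{Gamma}^{-1}(2A_i)$ by an amount of order $1/\sqrt{N}$." They do not: using $\psi(z)\approx\log z-\tfrac{1}{2z}$ one finds
\begin{equation*}
\mathbb{E}\log w_{i,i}-\mathbb{E}\log w_{\mathrm{bulk}}
= \psi(2\sqrt{N})-\psi(\sqrt{N}+\varUpsilon)
= \log 2+\frac{\tfrac12-\varUpsilon}{\sqrt{N}}+O(N^{-1})
= \log 2-\frac{\omega}{\sqrt{N}}+O(N^{-1}),
\end{equation*}
so each diagonal visit carries an $O(1)$ excess of $\log 2$ on top of the $O(N^{-1/2})$ drift. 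Since a reflected walk makes order $\sqrt{N}$ diagonal visits, the $\log 2$ terms accumulate to a divergent $O(\sqrt{N})$ contribution if treated as a drift; they must instead be absorbed as a factor $2^{\#\{\text{diagonal visits}\}}$ into the combinatorics of the restricted walk, where (via the reflection principle) they are precisely what upgrades the restricted-path count to the Neumann heat kernel $p^{\mathrm{hs}}$. Only the residual $-\omega/\sqrt{N}$ per visit survives as a genuine drift, and it is this term that exponentiates to the local-time factor converting Neumann into Robin with parameter $\omega$. Your sketch conflates these two contributions; separating them is the crux of the half-space argument and the reason the answer depends on $\omega=\varUpsilon-\tfrac12$ rather than on $\varUpsilon$ itself.
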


\subsection{Proofs of \cref{thm:solution_mSHE} and of \cref{thm:solution_mSHE_hs}}

Taking appropriate limits of Fredholm determinant and pfaffian formulas for partition functions of Log Gamma polymer models given in \cref{thm:fredholm_det_log_gamma},  \cref{thm:fredholm_pfaff_log_gamma} and \cref{thm:fredholm_pfaff_log_gamma_gaussian} we prove here exact formulas for the Hopf-Cole solution of the KPZ equation.

\begin{proof}[Proof of \cref{thm:solution_mSHE}]
    We are going to consider the intermediate disorder scaling, prescribed by \cref{prop:Alberts_khanin_quastel}, in both sides of relation \eqref{eq:fredholm_det_log_gamma}. Setting
    \begin{equation} \label{eq:scaling_varsigma_SHE}
        \varsigma = -\frac{t}{2} N \log N + \frac{t}{2} \sqrt{N} + \frac{t}{12} +s,
    \end{equation}
    by \cref{prop:Alberts_khanin_quastel} the left hand side of \eqref{eq:fredholm_det_log_gamma} becomes the left hand side of \eqref{eq:fredholm_det_SHE}, that is
    \begin{equation} \label{eq:laplace_log_gamma_converges_laplace_SHE_hs}
        \lim_{N \to \infty} \mathbb{E} \left[ e^{ - e^{ -\varsigma + \log Z(N,N)}} \right] = \mathbb{E} \left[ e^{ - e^{ -s + \log \mathscr{Z}(0,t) +t/24}} \right].
    \end{equation}
    We now show that the Fredholm determinant in the right hand side of the kernel $\mathbf{K}$ converges to that of $\mathscr{K}_{\mathrm{mSHE}}$. First let us deform the integration contours in the integral expression \eqref{eq:K_LG} of $\mathbf{K}$ to be $C_{-1}^{3\pi/4}$ and $C_{1}^{\pi/4}$ respectively for the $Z$ and $W$ variable. By the Stirling's approximation
    \begin{equation} \label{eq:stirling_approx}
        \log \Gamma(z) = \left(z- \frac{1}{2} \right) \log z - z + \frac{1}{2} \log (2 \pi) + \frac{1}{12} \frac{1}{z} + O \left(\frac{1}{z^3} \right),
    \end{equation}
    which holds as long as $\arg\{z\}$ stays bounded away from $\pm \pi$, we can estimate the function $g_{A,A}$ appearing in the integral representation \eqref{eq:K_LG} as
    \begin{equation} \label{eq:log_gAA}
        \log g_{A,A} (Z) +Z \varsigma =  -\frac{t}{2} \frac{Z^3}{3} + Z s + O(ZN^{-1/2}) .
    \end{equation} 
    Always using the Stirling's approximation, this time applied to the polygamma function $\psi^{(1)}$, one can show that the function
    \begin{equation} \label{eq:Re_g_AA}
        \Re\left\{ \log g_{A,A} (Z) + Z \varsigma \right\}
    \end{equation}
    is concave along the half line $y \mapsto Z= -1 + y e^{\mathrm{i}3\pi/4} $ for $y>0$ and $N$ sufficiently large. For this compute
    \begin{equation}
    \begin{split}
        &\frac{\diff^2}{\diff y^2}\Re\left\{ \log g_{A,A} (Z) + Z \varsigma \right\}\big|_{Z=Z= -1 + y e^{\mathrm{i}3\pi/4}} 
        \\
        &=  N \Im \left\{ \psi^{(1)} (\sqrt{N} + Z  ) - \psi^{(1)}(\sqrt{N} - Z) \right\} \big|_{Z=Z= -1 + y e^{\mathrm{i}3\pi/4}}
        \\
        &\approx N\Im \left\{ \frac{1}{\sqrt{N} + Z} + \frac{1}{2(\sqrt{N}+Z)^2} - \frac{1}{\sqrt{N} - Z} - \frac{1}{2(\sqrt{N}-Z)^2}  \right\}\big|_{Z=Z= -1 + y e^{\mathrm{i}3\pi/4}},
    \end{split}
    \end{equation}
    then one can explictly evaluate the last line and confirm that for $N$ large enough it is a strictly negative function for all $y>0$.
    The explicit evaluation is not too much appealing and therefore we do not report it. This, along with estimate \eqref{eq:log_gAA} implies that there exists an exponentially decaying function $\tilde{g}$, independent of $N$, such that
    \begin{equation} \label{eq:bound_g_AA}
        \left| g_{A,A}(Z) e^{Z \varsigma } \right|\bigg|_{Z=Z= -1 + |y| e^{\mathrm{sign}(y)\mathrm{i}3\pi/4}} \le \tilde{g}(y),
    \end{equation}
    for all $y\in \mathbb{R}$. Similar arguments applied to terms $g_{A,A} (W)$ show that
    \begin{equation} \label{eq:K_log_gamma_converges_to_K_SHE}
        \lim_{N \to \infty}\mathbf{K}(X+ \varsigma ,Y+ \varsigma ) = \mathscr{K}_{\mathrm{mSHE}}(X +s,Y +s),
    \end{equation}
    by bounded convergence. Moreover the inequality
    \begin{equation}
        | \mathbf{K}(X+\varsigma,Y+\varsigma) | \le \mathrm{const.} e^{-(X+Y)},
    \end{equation}
    in which the constant is independent of $N$, combined with the limit \eqref{eq:K_log_gamma_converges_to_K_SHE} implies that
    \begin{equation}
        \lim_{N \to \infty} \det [1-\mathbf{K}]_{\mathbb{L}^2(\varsigma,+\infty)} = \det \left[1-\mathscr{K}_{\mathrm{mSHE}} \right]_{\mathbb{L}^2(s,+\infty)},
    \end{equation}
    again by bounded convergence. Notice that, so far we carried out the limit assuming that integration contours in the integral expression of $\mathscr{K}_{\mathrm{mSHE}}$ were $C_{-1}^{3\pi/4}$ and $C_{1}^{\pi/4}$ respectively for the $Z$ and $W$ variable. Once the convergence result is established we can deform such contours to be $\mathrm{i} \mathbb{R}-d$ and $\mathrm{i} \mathbb{R}+d$ as prescribed by \eqref{eq:K_SHE}. This concludes the proof.
\end{proof}

Next we prove the Fredholm pfaffian formula \eqref{eq:fredholm_pfaffian_SHE}. 

\begin{proof}[Proof of \cref{thm:solution_mSHE_hs}]
    We follow the same arguments presented in the proof of \cref{thm:solution_mSHE} above. Consider the Fredholm pfaffian formula for the half space polymer partition function \eqref{eq:fredholm_pfaff_log_gamma}. Under the scaling prescribed by \cref{prop:xuan_wu_SHE_hs} we have
    \begin{equation}
        \lim_{N\to \infty} \mathbb{E}\left[ e^{-e^{-\varsigma + \log Z^{\mathrm{hs}}(N,N)} } \right] = \mathbb{E}\left[ e^{-e^{-s+ \log \mathscr{Z}^{\mathrm{hs}}(N,N)+t/24} } \right],
    \end{equation}
    where $\varsigma$ is taken as in \eqref{eq:scaling_varsigma_SHE}. Moving to the right hand side of \eqref{eq:fredholm_pfaff_log_gamma}, using the Stirling's approximation we evaluate the function $g_{\varUpsilon;A}$ appearing in the integrand of function $\mathbf{k}$ as
    \begin{equation}
        g_{\varUpsilon;A} (Z) e^{Z \varsigma} = \frac{\Gamma (\frac{1}{2} + v + Z)}{\Gamma (\frac{1}{2} + v - Z)}  e^{ -\frac{t}{2} \frac{Z^3}{3} + Z s + O(Z N^{-1/2}) }.
    \end{equation} 
    Uniform bounds in $N$ on the absolute value of $g_{\varUpsilon;A} (Z) e^{Z \varsigma }$, for $Z\in C_{d}^{\pi/4}$ can be also derived as described around \eqref{eq:Re_g_AA}. This shows that
    \begin{equation}
        \lim_{N\to\infty} \mathbf{k}(X+ \varsigma ,Y+\varsigma ) = \mathscr{K}^\mathrm{hs}(X+s,Y+s)  
    \end{equation}
    and hence 
    \begin{equation} \label{eq:convergence_L_log_gamma_to_L_SHE}
        \lim_{N\to\infty} \mathbf{L}(X+\varsigma,Y+\varsigma) = \mathscr{L}_\mathrm{mSHE}(X+s,Y+s).
    \end{equation}
    Exponential bounds $\mathbf{k}(X+\varsigma,Y+\varsigma) = O\left( e^{-d(X+Y)} \right)$ now allow to prove the convergence of the Fredholm pfaffian
    \begin{equation}
        \lim_{N\to \infty}\Pf[J - \mathbf{L}]_{\mathbb{L}^2(\varsigma,+\infty)} = \Pf[J - \mathscr{L}_{\mathrm{mSHE}}]_{\mathbb{L}^2(s,+\infty)}.
    \end{equation}
\end{proof}

Next we provide an explicit expression for the probability distribution of the solution of the half space stochastic heat equation $\mathscr{Z}^\mathrm{hs}(0,t)$ for the boundary parameter $\omega \le-1/2$. To derive such formula we are simply going to take the scaling limit of results of \eqref{thm:fredholm_pfaff_log_gamma_gaussian}. Alternatively we could produce an analytic continuation, in the parameter $\omega$, of relation \eqref{eq:fredholm_pfaffian_SHE}. The two procedure, as one can observe from the statement of \cref{thm:SHE_gaussian}, yield the same result. We define the functions
\begin{equation} 
    \mathdutchcal{A}_j(X) = \frac{(-1)^j}{j!} e^{\frac{t}{2} \frac{(1/2 + \omega +j)^3}{3}-(1/2+\omega+j)X} \frac{\Gamma(1+2\omega+2j)}{\Gamma(1+2\omega+j) },
\end{equation}
\begin{equation} 
        \mathdutchcal{B}(Y) = \int_{\mathrm{i}\mathbb{R}+d} \frac{\diff W}{2\pi \mathrm{i}} e^{ \frac{t}{2} \frac{W^3}{3}-YW} \Gamma(2W)
        \frac{\Gamma(\frac{1}{2}-\omega-W)}{\Gamma( \frac{1}{2}-\omega +W)}.
\end{equation}

\begin{theorem} \label{thm:SHE_gaussian}
    Consider $\mathscr{Z}^{\mathrm{hs}}(x,t)$ the solution of the stochastic heat equation \eqref{eq:SHE_hs} in half space with initial conditions $\mathscr{Z}^{\mathrm{hs}}(x,0)=\delta_x$ and boundary parameter $\omega \le -1/2$. Let $m\in\mathbb{Z}_{\ge 0}$ and $\varepsilon\in \{0,1\}$ be such that $\frac{1}{2}+\omega \in (-m+\frac{1-\varepsilon}{2},-m+\frac{2-\varepsilon}{2}]$ and take a number $d$ satisfying
    \begin{equation}
        0 < d < \min\left\{ \frac{1}{2}, \frac{1}{2} + \omega + m \right\}.
    \end{equation}
    Then, there exists $s^*$ such that, for all $s>s^*$ we have
    \begin{equation} \label{eq:fredholm_pfaffian_SHE_gaussian}
    \begin{split}
        \mathbb{E}\left[ e^{-e^{-s + \log \mathscr{Z}^{\mathrm{hs}}(0,t) +t/24 }} \right] = & \Pf \left[ J - \widehat{\mathscr{L}}_{\mathrm{mSHE}} \right]_{\mathbb{L}^2(s,+\infty)}
        \\
        & \qquad
        \times \bigg( 1-  \langle (\mathdutchcal{B},-\mathdutchcal{B}') | \left(1-J^T \widehat{\mathscr{L}}_{\mathrm{mSHE}} \right)^{-1} | (\mathdutchcal{A}',\mathdutchcal{A} )\rangle \bigg),
    \end{split}
    \end{equation}
    where 
    \begin{equation} 
        \widehat{\mathscr{L}}_{\mathrm{mSHE}}(X,Y) = \mathscr{L}_\mathrm{mSHE}(X,Y) - \left( \begin{matrix} 1 & -\partial_Y \\ -\partial_X & \partial_X \partial_Y \end{matrix} \right) \sum_{j=m}^{2m+\varepsilon-2} \left[ \mathdutchcal{A}_j(X) \mathdutchcal{B}(Y) - \mathdutchcal{A}_j(Y) \mathdutchcal{B}(X) \right],
    \end{equation}
    \begin{equation} 
        \mathdutchcal{A}(X) = \sum_{j=0}^{2m+\varepsilon-2} \mathdutchcal{A}_j(X).
    \end{equation}
\end{theorem}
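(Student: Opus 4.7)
The plan is to import \cref{thm:fredholm_pfaff_log_gamma_gaussian} to the continuum level by taking the intermediate disorder scaling limit of \cref{prop:xuan_wu_SHE_hs}. Setting $A_1=A_2=\cdots=\sqrt{N}$, $\varUpsilon = \omega+\tfrac{1}{2}$, so that the interval condition on $\tfrac{1}{2}+\omega$ translates exactly into the interval condition on $\varUpsilon$ used in \cref{thm:fredholm_pfaff_log_gamma_gaussian}. Taking
\begin{equation*}
    \varsigma = -\tfrac{t}{2}N\log N + \tfrac{t}{2}\sqrt{N} + \tfrac{t}{12} + s,
\end{equation*}
\cref{prop:xuan_wu_SHE_hs} yields
\begin{equation*}
    \lim_{N \to \infty} \mathbb{E}\left[ e^{-e^{-\varsigma + \log Z^{\mathrm{hs}}(N,N)}}\right] = \mathbb{E}\left[ e^{-e^{-s+\log \mathscr{Z}^{\mathrm{hs}}(0,t)+t/24}}\right],
\end{equation*}
so the left hand side of \eqref{eq:pfaff_L_hat_log_gamma} reproduces the left hand side of \eqref{eq:fredholm_pfaffian_SHE_gaussian}. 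The task is to show that every piece on the right hand side of \eqref{eq:pfaff_L_hat_log_gamma} converges to its counterpart in \eqref{eq:fredholm_pfaffian_SHE_gaussian}.

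At the level of integrands this is an application of Stirling's approximation \eqref{eq:stirling_approx}, exactly as in the proof of \cref{thm:solution_mSHE_hs}. Using the identification $\varUpsilon = \omega+\tfrac{1}{2}$ one obtains, uniformly on compact sets of the integration variable,
\begin{equation*}
    g_{\varUpsilon;A}(Z)\, e^{Z\varsigma} = \frac{\Gamma(\tfrac{1}{2}+\omega+Z)}{\Gamma(\tfrac{1}{2}+\omega-Z)} e^{-\frac{t}{2}\frac{Z^3}{3}+Zs}\bigl(1+O(Z N^{-1/2})\bigr),
\end{equation*}
which gives pointwise convergence $\mathbf{A}_j(X+\varsigma) \to \mathdutchcal{A}_j(X+s)$, $\mathbf{B}(Y+\varsigma) \to \mathdutchcal{B}(Y+s)$, and $\widehat{\mathbf{L}}(X+\varsigma,Y+\varsigma) \to \widehat{\mathscr{L}}_{\mathrm{mSHE}}(X+s,Y+s)$, together with their derivatives. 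Quadratic concavity estimates on the steepest descent contours (of the type already used around \eqref{eq:Re_g_AA} and \eqref{eq:bound_g_AA}) provide $N$-independent exponentially decaying dominating functions for the integrands, so that all pointwise limits upgrade to bounded convergence. In particular, an estimate of the form $|\widehat{\mathbf{L}}(X+\varsigma,Y+\varsigma)| \le C e^{(-R-1)X+RY}$, with $R$ chosen in $(\varUpsilon-1,-\varUpsilon)$, survives the $N\to\infty$ limit because the only $N$-dependence in the relevant factors is the Stirling remainder.

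The delicate step, and the main obstacle, is the convergence of the scalar product factor. When $\varUpsilon<0$ the functions $\mathbf{A}_j$, hence $\mathdutchcal{A}_j$, grow exponentially in their argument, so the individual terms $\langle(\mathbf{B},-\mathbf{B}')|(J^T\widehat{\mathbf{L}})^\ell|(\mathbf{A}',\mathbf{A})\rangle$ are only conditionally controlled through the contour shifting argument used inside the proof of \cref{thm:fredholm_pfaff_log_gamma_gaussian}: applying $J^T\widehat{\mathbf{L}}$ lowers the exponential growth by $e^{-(1-\delta)X}$ at each step, and only after enough iterations does the resulting function become integrable against $\mathbf{B}$. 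To transfer this argument to the continuum one has to verify that the number $M$ of iterations needed, the size of the shifts $R$, and the resulting constants are all uniform in $N$; this is where the condition $s>s^*$ enters, analogous to $N\ge N^*$ in the log gamma case, as it forces the operator norm $\|\widehat{\mathscr{L}}_{\mathrm{mSHE}}\|_{\mathbb{L}^2(s,+\infty)}<1$ and guarantees absolute convergence of the Neumann series $(1-J^T\widehat{\mathscr{L}}_{\mathrm{mSHE}})^{-1}=\sum_{\ell\ge 0}(J^T\widehat{\mathscr{L}}_{\mathrm{mSHE}})^\ell$. The exponential bounds \eqref{eq:L_hat_exponential_decay_1} and \eqref{eq:bound_p,q_2} then have direct analogues at the mSHE level, proved by the same contour deformation applied to the integral representations of $\mathdutchcal{A}_j,\mathdutchcal{B},\widehat{\mathscr{L}}_{\mathrm{mSHE}}$, with the Gamma ratios $\Gamma(\tfrac{1}{2}+\omega\pm Z)$ replacing the $q$-free $\Gamma(\varUpsilon\pm Z)$ via Stirling.

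Once each term of the Neumann series is shown to converge and to be dominated by a summable $N$-independent sequence, dominated convergence in $\ell$ delivers convergence of the scalar product, and the analogous Hadamard-type estimate \eqref{eq:fredholm_pfaffian_absolute_conv} (adapted to $\widehat{\mathscr{L}}_{\mathrm{mSHE}}$) delivers convergence of the Fredholm pfaffian $\Pf[J-\widehat{\mathbf{L}}]\to\Pf[J-\widehat{\mathscr{L}}_{\mathrm{mSHE}}]$. Combining the two factors and passing to the limit on both sides of \eqref{eq:pfaff_L_hat_log_gamma} yields \eqref{eq:fredholm_pfaffian_SHE_gaussian}. As an alternative (and consistency check) one could instead start from \cref{thm:solution_mSHE_hs} and perform directly the analytic continuation in $\omega$: the poles at $Z,W=-\tfrac{1}{2}-\omega-j$ of $\Gamma(\tfrac{1}{2}+\omega+Z),\Gamma(\tfrac{1}{2}+\omega+W)$ cross the contours as $\omega$ decreases through $-\tfrac{1}{2},-\tfrac{3}{2},\dots$, producing exactly the rank perturbations encoded by $\mathdutchcal{A}_j,\mathdutchcal{B}$, and the regularisation through $\widehat{\mathscr{L}}_{\mathrm{mSHE}}$ together with the scalar product correction proceeds verbatim as in \cref{thm:fredholm_pfaff_log_gamma_gaussian}.
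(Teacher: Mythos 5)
Your proposal is correct and follows essentially the same route as the paper: take the intermediate disorder limit of \eqref{eq:pfaff_L_hat_log_gamma} under the scaling of \cref{prop:xuan_wu_SHE_hs}, use Stirling's approximation to obtain pointwise convergence of $\mathbf{A}_j,\mathbf{B},\widehat{\mathbf{L}}$ to $\mathdutchcal{A}_j,\mathdutchcal{B},\widehat{\mathscr{L}}_{\mathrm{mSHE}}$ together with $N$-uniform exponential dominating bounds, and use the condition $s>s^*$ to keep $\|\widehat{\mathbf{L}}\|<1$ uniformly in $N$ (replacing the $N\ge N^*$ device of \cref{thm:fredholm_pfaff_log_gamma_gaussian}, since $\varsigma$ is no longer fixed), so that the Neumann series and the Fredholm pfaffian both pass to the limit by bounded convergence. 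You have correctly identified all the key points, including the alternative route by direct analytic continuation in $\omega$, which the paper also mentions.
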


\begin{proof}
    As in the proof of \cref{thm:solution_mSHE_hs} we consider the scaling indicated in \cref{prop:xuan_wu_SHE_hs} and compute the limit of relation \eqref{eq:pfaff_L_hat_log_gamma}. Setting the parameter $\varsigma$ as in \eqref{eq:scaling_varsigma_SHE}, relation \eqref{eq:laplace_log_gamma_converges_laplace_SHE_hs}, stating the convergence of the Laplace transform of $Z^{\mathrm{hs}}(N,N)$ to that of $\mathscr{Z}^\mathrm{hs}(0,t)$, still holds, as this is independent of the value of the boundary parameter $v$. To evaluate the limit of the Fredholm pfaffian in the right hand side of \eqref{eq:pfaff_L_hat_log_gamma}, we recall that such expression is well defined only when $\| \widehat{\mathbf{L}} \| < 1$, condition that in \cref{thm:fredholm_pfaff_log_gamma_gaussian} can be achieved taking $N$ sufficiently large, while leaving $\varsigma$ fixed. Given the scaling of the parameter $\varsigma$ we have adopted, we can no longer control the norm of $\widehat{\mathbf{L}}$ in such way and for this reason we are now forced to take $s$ to be sufficiently large, but still fixed.  As already explained in the proof of \cref{thm:solution_mSHE} and \cref{thm:solution_mSHE_hs} above in the subsection the function
    \begin{equation}
        g_{\varUpsilon;A}(Z) e^{Z \varsigma }
    \end{equation}
    is absolutely bounded for $Z\in C_{-d}^{3\pi/4}$ by a summable function on the same curve and it converges, for $N$ large, to $e^{-\frac{t}{2} \frac{Z^3}{3}+ s Z}$. We can refine the estimate \eqref{eq:bound_g_AA} as
    \begin{equation}
        \left| g_{\varUpsilon,A}(Z) e^{Z \varsigma } \right|\bigg|_{Z=Z= -d+ |y| e^{\mathrm{sign}(y)\mathrm{i}3\pi/4}} \le e^{-s d} \tilde{g}(y),
    \end{equation}
    where $\tilde{g} \in \mathbb{L}^1(\mathbb{R})$ and produce the bound
    \begin{equation}
        \widehat{\mathbf{L}}(X,Y) = O( e^{-s d} e^{-d(X+Y)}),
    \end{equation}
    which holds independently of $N$. It is now clear that we can fix an $s^*$ large enough so that, for any $s>s^*$ the norm of $\widehat{\mathbf{L}}$ can be made arbitrarily small uniformly in $N$.
    Similar arguments allow to produce bounds
    \begin{equation}
        |\mathbf{A}(X+\varsigma)|,|\mathbf{A}'(X+\varsigma)|\le \mathrm{const.}e^{ -(1/2+\omega) X},
        \qquad
        |\mathbf{B}(X+\varsigma)|,|\mathbf{B}'(X+\varsigma)|< \mathrm{const.} e^{-d X},
    \end{equation}
    which also hold uniformly in $N$. Thanks to this observation and to arguments detailed in the proof of \cref{thm:fredholm_pfaff_log_gamma_gaussian}, the expression in the right hand side of \eqref{eq:pfaff_L_hat_log_gamma} is well defined for any $N$. We can now proceed to evaluate the limit of functions $\mathbf{A}_j, \mathbf{B}$ and we obtain
    \begin{equation}
        \lim_{N\to \infty} \mathbf{A}_j(X+\varsigma) = \mathdutchcal{A}_j(X+s),
        \qquad
        \lim_{N\to \infty} \mathbf{B}(X+\varsigma) = \mathdutchcal{B}(X+s),
    \end{equation}
    which combined with \eqref{eq:convergence_L_log_gamma_to_L_SHE} imply, by bounded convergence, that
    \begin{multline*}
        \lim_{N \to \infty}
        \Pf \left[ J - \widehat{\mathbf{L}} \right]_{\mathbb{L}^2(\varsigma,+\infty)}
        \bigg( 1-  \langle (\mathbf{B},-\mathbf{B}') | \left(1-J^T \widehat{\mathbf{L}}\right)^{-1} | (\mathbf{A}',\mathbf{A} )\rangle \bigg),
        \\
        =
        \Pf \left[ J - \widehat{\mathscr{L}}_{\mathrm{mSHE}} \right]_{\mathbb{L}^2(s,+\infty)} \bigg( 1-  \langle (\mathdutchcal{B},-\mathdutchcal{B}') | \left(1-J^T \widehat{\mathscr{L}}_{\mathrm{mSHE}} \right)^{-1} | (\mathdutchcal{A}',\mathdutchcal{A} )\rangle \bigg),
    \end{multline*}
    completing the proof.
\end{proof}

\subsection{Alternative representation of kernel and matching with Krejenbrink-Le Doussal kernel} \label{subs:krejenbrink_le_doussal}

Here we recover, through simple manipulations of the matrix kernel $\mathscr{L}_{\mathrm{mSHE}}$, a Fredholm pfaffian formula for the solution of the mSHE in half space $\mathscr{Z}^\mathrm{hs}(0,t)$, previously found by Krejenbrink and Le Doussal in \cite{krejenbrink_le_doussal_KPZ_half_space}. In this last work, authors derived their expression for the Laplace transform $\mathbb{E} \left[ e^{ - e^{ -s  +\log \mathscr{Z}^{\mathrm{hs}}(0,t) + t/24  }} \right]$  via Bethe ansatz, although the procedure involved the summation over moments of $\mathscr{Z}^\mathrm{hs}(0,t)$, only a finite number of which are finite. Nevertheless, the solution proposed in \cite{krejenbrink_le_doussal_KPZ_half_space}, although not mathematically justified, turns out to be correct as we confirm in \cref{cor:krejenbrink_le_doussal} below.

\medskip

We introduce the function
\begin{equation}\label{eq:fstheta}
    \mathscr{F}_s(\theta) = \frac{1}{1+e^{-\theta +s}}
\end{equation}
and the $2 \times 2$ matrix kernel
\begin{equation}
    \mathscr{M} (\theta,\varphi) =\left( \begin{matrix} \mathscr{M}_{1,1}(\theta,\varphi) & \mathscr{M}_{1,2}(\theta,\varphi)
    \\
    - \mathscr{M}_{1,2}(\varphi, \theta) & \mathscr{M}_{2,2}(\theta,\varphi)
    \end{matrix} \right).
\end{equation}
Its coefficients are given by
    \begin{equation}
    \begin{split}
        \mathscr{M}_{1,1}(\theta,\varphi) = \int_{\mathrm{i}\mathbb{R}+d} \frac{\diff Z}{2\pi \mathrm{i}} \int_{\mathrm{i}\mathbb{R}+d} \frac{\diff W}{2\pi \mathrm{i}} & e^{\frac{t}{2}\left( \frac{Z^3}{3} + \frac{W^3}{3} \right) -\theta Z - \varphi W } \frac{Z-W}{Z+W} \Gamma(2Z) \Gamma(2W)
        \\
        & \times \frac{\Gamma(\frac{1}{2}+\omega-Z)}{\Gamma(\frac{1}{2}+\omega+Z) }\frac{ \Gamma(\frac{1}{2}+\omega-W)}{ \Gamma(\frac{1}{2}+\omega+W)} \frac{\sin(\pi Z) }{\pi} \frac{\sin(\pi W) }{\pi},
    \end{split}
    \end{equation}
    \begin{equation}
    \begin{split}
        \mathscr{M}_{1,2}(\theta,\varphi) = \int_{\mathrm{i}\mathbb{R}+d} \frac{\diff Z}{2\pi \mathrm{i}} \int_{\mathrm{i}\mathbb{R}+d} \frac{\diff W}{2\pi \mathrm{i}} & e^{\frac{t}{2}\left( \frac{Z^3}{3} + \frac{W^3}{3} \right) - \theta Z -\varphi W } \frac{Z-W}{Z+W} \Gamma(2Z) \Gamma(2W)
        \\
        & \times \frac{\Gamma(\frac{1}{2}+\omega-Z)}{\Gamma(\frac{1}{2}+\omega+Z) }\frac{ \Gamma(\frac{1}{2}+\omega-W)}{ \Gamma(\frac{1}{2}+\omega+W)} \frac{\sin(\pi Z) }{\pi} \cos(\pi W),
    \end{split}
    \end{equation}
    \begin{equation}
    \begin{split}
        \mathscr{M}_{2,2}(\theta,\varphi) = \int_{\mathrm{i}\mathbb{R}+d} \frac{\diff Z}{2\pi \mathrm{i}} \int_{\mathrm{i}\mathbb{R}+d} \frac{\diff W}{2\pi \mathrm{i}} & e^{\frac{t}{2}\left( \frac{Z^3}{3} + \frac{W^3}{3} \right) - \theta Z -\varphi W } \frac{Z-W}{Z+W} \Gamma(2Z) \Gamma(2W)
        \\
        & \times \frac{\Gamma(\frac{1}{2}+\omega-Z)}{\Gamma(\frac{1}{2}+\omega+Z) }\frac{ \Gamma(\frac{1}{2}+\omega-W)}{ \Gamma(\frac{1}{2}+\omega+W)} \cos(\pi Z) \cos(\pi W).
    \end{split}
    \end{equation}

We are now ready to match our result of \cref{thm:solution_mSHE_hs} with Krejenbrink and Le Doussal's formulas \cite[eq. (6), (7)]{krejenbrink_le_doussal_KPZ_half_space}.
    
\begin{corollary} \label{cor:krejenbrink_le_doussal}
    Under the hypothesis of \cref{thm:solution_mSHE_hs}, we have
    \begin{equation} \label{eq:krejenbrink_le_doussal}
        \mathbb{E} \left[ e^{ - e^{ -s  +\log \mathscr{Z}^{\mathrm{hs}}(0,t) + t/24  }} \right] =
        \Pf \left[J-\mathscr{F}_s \cdot \mathscr{M} \right]_{\mathbb{L}^2(\mathbb{R})}.
    \end{equation}
\end{corollary}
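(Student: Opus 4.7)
The proof will mirror that of \cref{prop:q_krejenbrink_le_doussal}, replacing the Gosper $q$-sine addition formula \eqref{eq:addition_formula_sinq} and the Ramanujan bilateral sum \eqref{eq:ramanujan_psi} with their continuum counterparts: the ordinary addition formula $\sin[\pi(Z-W)] = \sin(\pi Z)\cos(\pi W) - \cos(\pi Z)\sin(\pi W)$, together with the Mellin-type identity
\begin{equation*}
    \frac{\pi}{\sin[\pi(Z+W)]} = e^{s(Z+W)}\int_{\mathbb{R}} e^{-\theta(Z+W)}\,\mathscr{F}_s(\theta)\,\diff\theta,
\end{equation*}
valid for $0<\Re(Z+W)<1$ and proven by the substitution $\theta=s-\log u$ followed by the beta integral $\int_0^\infty u^{a-1}/(1+u)\diff u = \pi/\sin(\pi a)$. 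The hypothesis $0<d<\min(1/2,1/2+\omega)$ from \cref{thm:solution_mSHE_hs} guarantees $\Re(Z+W)=2d\in(0,1)$, so the representation applies on the contours appearing in \eqref{eq:k_she_hs}.

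\textbf{Steps.} Substituting the addition formula into the numerator and the Mellin identity into the denominator of $\mathscr{K}^\mathrm{hs}$, and absorbing the factor $e^{s(Z+W)}$ into a shift $X\mapsto X+s$, $Y\mapsto Y+s$, the first step is to obtain the antisymmetric outer product decomposition
\begin{equation*}
    \mathscr{K}^\mathrm{hs}(X+s,Y+s) = \int_\mathbb{R} \mathscr{F}_s(\theta)\big[\tilde{a}(X,\theta)\tilde{b}(Y,\theta) - \tilde{a}(Y,\theta)\tilde{b}(X,\theta)\big]\diff\theta,
\end{equation*}
where $\tilde{a}(X,\theta)$ and $\tilde{b}(X,\theta)$ are single contour integrals over $\mathrm{i}\mathbb{R}+d$ built respectively from $\sin(\pi Z)/\pi$ and $\cos(\pi W)$ times the common factor $e^{\frac{t}{2}\frac{Z^3}{3}-Z(X+\theta)}\,\Gamma(2Z)\Gamma(\tfrac12+\omega-Z)/\Gamma(\tfrac12+\omega+Z)$. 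Next I would package $\tilde{a}$, $\tilde{b}$ and their $X$-derivatives into $2\times 2$ matrices $A(X,\theta)$ and $B(\theta,Y)$, chosen so that the partial-derivative structure of $\mathscr{L}_{\mathrm{mSHE}}$ in \eqref{eq:fredholm_pfaffian_SHE} reads $J\,\mathscr{L}_{\mathrm{mSHE}}(X+s,Y+s) = \int_\mathbb{R} \mathscr{F}_s(\theta)\,A(X,\theta)B(\theta,Y)\,\diff\theta$. Identifying $\mathbb{L}^2(s,+\infty)$ with $\mathbb{L}^2(0,+\infty)$ via this shift, I would then apply the pfaffian exchange identity $\Pf[J-J^TAfB]_{H_1} = \Pf[J-fBAJ^T]_{H_2}$ used in the proof of \cref{prop:q_krejenbrink_le_doussal} to rewrite the pfaffian as $\Pf[J-\mathscr{F}_s\cdot \widetilde{\mathscr{M}}]_{\mathbb{L}^2(\mathbb{R})}$, where $\widetilde{\mathscr{M}}(\theta,\varphi) = \int_0^\infty B(\theta,X)A(X,\varphi)\diff X\cdot J^T$. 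The $X$-integration collapses via $\int_0^\infty e^{-(Z+W)X}\diff X = 1/(Z+W)$ (justified again by $\Re(Z+W)=2d>0$), and produces in the numerator the factor $Z-W$ coming from the derivative terms in $A$, $B$: this is exactly the rational kernel $(Z-W)/(Z+W)$ appearing in the definition of $\mathscr{M}$. Matching the four entries of $\widetilde{\mathscr{M}}$ with $\mathscr{M}_{1,1}, \mathscr{M}_{1,2}, -\mathscr{M}_{1,2}(\varphi,\theta), \mathscr{M}_{2,2}$ then yields \eqref{eq:krejenbrink_le_doussal}.

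\textbf{Main obstacle.} The chief bookkeeping difficulty is verifying that the four entries of $\widetilde{\mathscr{M}}=BAJ^T$ reproduce $\mathscr{M}(\theta,\varphi)$ with the correct signs and argument-swaps, since the antisymmetric outer-product structure combined with the skew matrix $J^T$ must be unwound carefully, and the pairing between $\sin(\pi Z),\cos(\pi Z)$ factors and the four sectors of the $2\times 2$ matrix is delicate. On the analytic side, absolute convergence of the intermediate double-contour and $X$-integrals (allowing use of Fubini throughout), as well as the legitimacy of the pfaffian exchange identity on operators that are not trace-class, are controlled by the exponential decay estimates $|\mathbf{k}(X,Y)|, |\partial_X\mathbf{k}(X,Y)|, |\partial_Y\mathbf{k}(X,Y)|, |\partial_X\partial_Y\mathbf{k}(X,Y)| \le C e^{-d(X+Y)}$ already established in the proof of \cref{prop:fredholm_pfaffian_absolutely_convergent} and the analogous pointwise bound $|\tilde{a}(X,\theta)|,|\tilde{b}(X,\theta)| \le C' e^{-d(X+\theta)}$, which follow from the same contour-shift and Stirling estimates used in the proof of \cref{thm:solution_mSHE_hs}.
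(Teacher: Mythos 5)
Your proposal is correct and follows essentially the same route as the paper's own proof: the paper likewise replaces the $q$-identities of \cref{prop:q_krejenbrink_le_doussal} by the classical sine addition formula and the integral representation $\frac{e^{-s(Z+W)}}{\sin[\pi(Z+W)]}=\frac{1}{\pi}\int_{\mathbb{R}}\frac{e^{-(Z+W)\theta}}{1+e^{-\theta+s}}\diff\theta$ (valid for $\Re\{Z+W\}\in(0,1)$), factors $J\mathscr{L}_{\mathrm{mSHE}}(X+s,Y+s)=\int\mathscr{F}_s(\theta)A(X,\theta)B(\theta,Y)\diff\theta$, and concludes via $\Pf[J-J^TA\mathscr{F}_sB]=\Pf[J-\mathscr{F}_sBAJ^T]$ with $\mathscr{M}(\theta,\varphi)=\int_0^\infty B(\theta,x)A(x,\varphi)J^T\diff x$. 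Your additional remarks on convergence and on how the $(Z-W)/(Z+W)$ factor arises are consistent with, and slightly more detailed than, what the paper records.
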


\begin{proof}
    One can adapt the proof of \cref{prop:q_krejenbrink_le_doussal} and write 
    \begin{equation}
        J \mathscr{L}_{\mathrm{mSHE}}(X+s,Y+s) = \int_{-\infty}^{\infty} \mathscr{F}_s(\theta) A(X,\theta) B(\theta,Y) \diff \theta,
    \end{equation}
    for some explicit operators $A(X,\theta), B(\theta,Y)$, which for convenience we do not report. To do so, rather than \eqref{eq:ramanujan_psi}, \eqref{eq:addition_formula_sinq} one needs to use the more canonical notable formulas
    \begin{equation}
        \sin[\pi(Z-W)] = \sin(\pi Z) \cos(\pi W) - \cos(\pi Z) \sin(\pi W)
    \end{equation}
    and
    \begin{equation}
        \frac{e^{-s(Z+W)}}{\sin[\pi (Z+W)]} = \frac{1}{\pi} \int_{-\infty}^{\infty} \frac{e^{-(Z+W) \theta}}{1+e^{-\theta+s}} \diff \theta,
    \end{equation}
    where the second one is well posed only for $\Re\{Z+W\} \in (0,1)$. Then \eqref{eq:krejenbrink_le_doussal} follows from the symmetry $\Pf[J-J^TA \mathscr{F}_s B] = \Pf[J- \mathscr{F}_sBAJ^T]$ of Fredholm pfaffians. We do not report the details of the argument any further.

\end{proof}

An immediate corollary of relation \eqref{eq:krejenbrink_le_doussal} is that the Laplace transform of the density function of $\mathscr{Z}^\mathrm{hs}(0,t)$ is equal to the expectation of a multiplicative function of a pfaffian point process with correlation kernel $\mathscr{M}$. This had been observed in \cite[eq. (8)]{krejenbrink_le_doussal_KPZ_half_space} and we report the same result below for completeness.

\begin{corollary}
    Under the hypothesis of \cref{thm:solution_mSHE_hs} we have
    \begin{equation}
        \mathbb{E}\left[ e^{-e^{-s+\log \mathscr{Z}^{\mathrm{hs}}(0,t) +t/24}} \right] = \mathbb{E}_{\mathscr{M}} \left[ \prod_{i=1}^\infty \frac{1}{1+e^{-s+\mathfrak{a}_i}} \right],
    \end{equation}
    where in the right hand side $\{\mathfrak{a}_i:i\in \mathbb{Z}_{\ge 1} \}$ is a pfaffian point process with correlation kernel $\mathscr{M}$.
\end{corollary}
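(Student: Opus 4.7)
The plan is to deduce the identity as an immediate consequence of \cref{cor:krejenbrink_le_doussal} combined with the general multiplicative expectation formula \eqref{eq:multiplicative_expectation_pfaff_general} for pfaffian point processes. This mirrors exactly the way the analogous statement \eqref{eq:multiplicative_expectation_pfaffian} was derived from \cref{prop:q_krejenbrink_le_doussal}, so only minimal bookkeeping is required.

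First I would invoke \cref{cor:krejenbrink_le_doussal} to rewrite the left hand side as the Fredholm pfaffian $\Pf[J-\mathscr{F}_s\cdot\mathscr{M}]_{\mathbb{L}^2(\mathbb{R})}$, where $\mathscr{F}_s$ is the Fermi factor \eqref{eq:fstheta}. Next I would apply the general identity \eqref{eq:multiplicative_expectation_pfaff_general} with the test function $g(\theta)=-\mathscr{F}_s(\theta)$, which yields
\begin{equation*}
    \Pf\left[J-\mathscr{F}_s\cdot\mathscr{M}\right]_{\mathbb{L}^2(\mathbb{R})} = \mathbb{E}_{\mathscr{M}}\left[\prod_{i=1}^{\infty}\bigl(1-\mathscr{F}_s(\mathfrak{a}_i)\bigr)\right],
\end{equation*}
provided $\{\mathfrak{a}_i\}$ is the pfaffian point process on $\mathbb{R}$ with correlation kernel $\mathscr{M}$. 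A direct computation then gives the elementary identity
\begin{equation*}
    1-\mathscr{F}_s(\theta) = 1-\frac{1}{1+e^{-\theta+s}} = \frac{1}{1+e^{-s+\theta}},
\end{equation*}
which, applied at each $\theta=\mathfrak{a}_i$, transforms the product into $\prod_{i\ge 1}(1+e^{-s+\mathfrak{a}_i})^{-1}$, completing the chain of equalities.

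The only real verification needed is that the hypotheses underlying \eqref{eq:multiplicative_expectation_pfaff_general} are satisfied, i.e.\ that $\mathscr{M}$ genuinely defines a pfaffian point process and that $\mathscr{F}_s$ decays sufficiently fast to justify the Fredholm expansion. The first point follows from the construction of $\mathscr{M}$ out of the matrix kernel $\mathscr{L}_{\mathrm{mSHE}}$ in the proof of \cref{cor:krejenbrink_le_doussal}, which exhibits $\mathscr{M}$ as a conjugation of $\mathscr{L}_{\mathrm{mSHE}}$ by the operators $A,B$ and therefore inherits the pfaffian structure of $\mathscr{L}_{\mathrm{mSHE}}$. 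The second point is unproblematic because $\mathscr{F}_s(\theta)$ decays exponentially as $\theta\to-\infty$ and the matrix kernel $\mathscr{M}(\theta,\varphi)$ itself decays exponentially as $\theta,\varphi\to+\infty$, as can be seen by a standard steep-descent shift of the contours in $Z$ and $W$ (already implicit in the proof of \cref{thm:solution_mSHE_hs}). Together these decays make the Fredholm series absolutely convergent and justify the exchange of sum and expectation needed to pass between \eqref{eq:multiplicative_expectation_pfaff_general} and its probabilistic interpretation. No genuine obstacle arises; the statement is essentially a reformulation of \cref{cor:krejenbrink_le_doussal}.
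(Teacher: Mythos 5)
Your proof is correct and follows exactly the paper's argument: the paper also deduces the corollary directly from \eqref{eq:krejenbrink_le_doussal} together with the multiplicative-expectation identity \eqref{eq:multiplicative_expectation_pfaff_general} applied with $g=-\mathscr{F}_s$, using $1-\mathscr{F}_s(\theta)=\tfrac{1}{1+e^{-s+\theta}}$. The extra verifications you supply about decay and the pfaffian structure of $\mathscr{M}$ are consistent with (and slightly more detailed than) what the paper records.
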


\begin{proof}
    This is a straightforward consequence of \eqref{eq:krejenbrink_le_doussal} from the basic property \eqref{eq:multiplicative_expectation_pfaff_general} of pfaffian point processes.
\end{proof}
In \cite{krejenbrink_le_doussal_KPZ_half_space} authors discovered how to rewrite the Fredholm pfaffian of the $2\times 2$ matrix kernel $\mathscr{M}$ as a Fredholm determinant of a scalar kernel. We define 
\begin{equation}
        \mathscr{D}_s(x,y) = 2 \partial_x \int_{-\infty}^\infty \diff \theta \, \mathscr{F}_s(\theta) \left[ \mathsf{f}_{\mathrm{even}}(x+\theta) \mathsf{f}_{\mathrm{odd}}(y+\theta) - \mathsf{f}_{\mathrm{odd}}(x+\theta) \mathsf{f}_{\mathrm{even}}(y+\theta) \right],
    \end{equation}
    where
    \begin{equation}
        \mathsf{f}_{\mathrm{odd}}(\theta) = \int_{ \mathrm{i} \mathbb{R} + d } \frac{\diff Z}{2 \pi \mathrm{i}} e^{\frac{t}{2}\frac{Z^3}{3} - \theta Z} \frac{\Gamma(\frac{1}{2} + \omega -Z)}{\Gamma(\frac{1}{2} + \omega +Z)} \Gamma(2Z) \frac{\sin(\pi Z)}{\pi},
    \end{equation}
    \begin{equation}
        \mathsf{f}_{\mathrm{even}}(\theta) = \int_{\mathrm{i} \mathbb{R} + d} \frac{\diff Z}{2 \pi \mathrm{i}} e^{\frac{t}{2}\frac{Z^3}{3} - \theta Z} \frac{\Gamma(\frac{1}{2} + \omega -Z)}{\Gamma(\frac{1}{2} + \omega +Z)} \Gamma(2Z) \cos(\pi Z).
    \end{equation}
    \begin{corollary}
        Under the assumptions of \cref{thm:solution_mSHE_hs} we have
        \begin{equation}
            \mathbb{E}\left[ e^{-e^{ -s +\log \mathscr{Z}^{\mathrm{hs}}(0,t) +t/24}} \right] = \sqrt{\det[1-\mathscr{D}_s]_{\mathbb{L}^2(\mathbb{R})}}.
        \end{equation}
    \end{corollary}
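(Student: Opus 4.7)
The identity is a purely algebraic consequence of \cref{cor:krejenbrink_le_doussal}, so I would reduce it to an operator-theoretic statement independent of the stochastic heat equation: namely, that $\Pf[J-\mathscr{F}_{s}\cdot\mathscr{M}]_{\mathbb{L}^{2}(\mathbb{R})}^{2}=\det[1-\mathscr{D}_{s}]_{\mathbb{L}^{2}(\mathbb{R})}$. The starting point is the general Pfaffian-to-determinant identity $\Pf[J-K]^{2}=\det[I-J^{-1}K]_{\mathbb{L}^{2}(\mathbb{R})\otimes\mathbb{C}^{2}}$, applicable here because $J$ is block-antisymmetric with $J^{2}=-I$ and because the symmetrised kernel $\sqrt{\mathscr{F}_{s}(\theta)}\mathscr{M}(\theta,\varphi)\sqrt{\mathscr{F}_{s}(\varphi)}$ is block-antisymmetric (the factor $(Z-W)/(Z+W)$ in the integrand of $\mathscr{M}_{ij}$ is antisymmetric under $Z\leftrightarrow W$, while the remaining $\Gamma$ and $\sin/\cos$ factors are symmetric). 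It thus suffices to show
\[
\det[I+J\mathscr{F}_{s}\mathscr{M}]_{\mathbb{L}^{2}(\mathbb{R})\otimes\mathbb{C}^{2}}=\det[1-\mathscr{D}_{s}]_{\mathbb{L}^{2}(\mathbb{R})}.
\]

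The key structural input is the rank-reducing factorisation
\[
\mathscr{M}(\theta,\varphi)_{ij}=(-\partial_{\theta}+\partial_{\varphi})\int_{0}^{\infty}\mathsf{f}_{i}(\theta+\tau)\,\mathsf{f}_{j}(\varphi+\tau)\,\diff\tau, \qquad (\mathsf{f}_{1},\mathsf{f}_{2})=(\mathsf{f}_{\mathrm{odd}},\mathsf{f}_{\mathrm{even}}),
\]
obtained by substituting $Z-W=-\partial_{\theta}+\partial_{\varphi}$ (as operators acting on $e^{-\theta Z-\varphi W}$) and $1/(Z+W)=\int_{0}^{\infty}e^{-(Z+W)\tau}\diff\tau$ (convergent since $\Re(Z+W)=2d>0$ on the chosen contours) into the double-contour integrals defining $\mathscr{M}_{ij}$. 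Writing $H(\theta,\varphi)=\int_{0}^{\infty}\mathbf{f}(\theta+\tau)\mathbf{f}(\varphi+\tau)^{T}\diff\tau$ with $\mathbf{f}=(\mathsf{f}_{\mathrm{odd}},\mathsf{f}_{\mathrm{even}})^{T}$, integration by parts in $\tau$ produces the alternative form $\mathscr{M}=\mathbf{f}(\theta)\mathbf{f}(\varphi)^{T}+2\partial_{\varphi}H$ via the identity $\partial_{\theta}H+\partial_{\varphi}H=-\mathbf{f}(\theta)\mathbf{f}(\varphi)^{T}$ (the $\tau=\infty$ boundary term vanishes by the decay $\mathsf{f}_{\mathrm{odd/even}}(u)=O(e^{-du})$, which follows from the $\Re Z=d>0$ contour in their integral representations).

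With this form in hand, $H=\mathcal{F}\mathcal{F}^{*}$ with $\mathcal{F}\colon\mathbb{L}^{2}(\mathbb{R}_{\geq 0})\to\mathbb{L}^{2}(\mathbb{R})\otimes\mathbb{C}^{2}$ given by $(\mathcal{F}g)(\theta)=\int_{0}^{\infty}\mathbf{f}(\theta+\tau)g(\tau)\diff\tau$, and the cyclic identity $\det[I-AB]=\det[I-BA]$ collapses the $\mathbb{C}^{2}$-valued internal factor of $\mathcal{F}\mathcal{F}^{*}$. Tracing over the spinor index using
\[
\operatorname{tr}\bigl[J\,\mathbf{f}(x+\theta)\mathbf{f}(y+\theta)^{T}\bigr]=\mathsf{f}_{\mathrm{odd}}(x+\theta)\mathsf{f}_{\mathrm{even}}(y+\theta)-\mathsf{f}_{\mathrm{even}}(x+\theta)\mathsf{f}_{\mathrm{odd}}(y+\theta),
\]
and converting the $\theta$-derivative into an $x$-derivative by $\partial_{\theta}\mathbf{f}(x+\theta)=\partial_{x}\mathbf{f}(x+\theta)$, one recovers precisely the scalar kernel $\mathscr{D}_{s}$ of the statement; the prefactor $2$ originates from the decomposition $\mathscr{M}=\mathbf{f}\mathbf{f}^{T}+2\partial_{\varphi}H$ of the previous paragraph, which effectively symmetrises the $(-\partial_{\theta}+\partial_{\varphi})$-piece.

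The hardest part will be the careful bookkeeping in the last step: one must commute the differential operators $\partial_{\theta},\partial_{\varphi}$ past the multiplication operator $\mathscr{F}_{s}$, validate every integration by parts by verifying that all boundary contributions vanish, and maintain the trace-class condition at each cyclic permutation. Decay estimates of the form $|\mathsf{f}_{\mathrm{odd/even}}(u)|,|\partial_{u}\mathsf{f}_{\mathrm{odd/even}}(u)|\leq C e^{-du}$, analogous to those already used in the proof of \cref{thm:fredholm_pfaff_log_gamma_gaussian}, should be sufficient to legitimise all rearrangements and provide the uniform control needed to interpret the relevant Fredholm determinants and pfaffians rigorously.
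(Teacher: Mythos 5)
Your reduction is the right one: by \cref{cor:krejenbrink_le_doussal} it suffices to prove $\Pf[J-\mathscr{F}_s\cdot\mathscr{M}]_{\mathbb{L}^2(\mathbb{R})}=\sqrt{\det[1-\mathscr{D}_s]_{\mathbb{L}^2(\mathbb{R})}}$, and your opening moves (conjugation by $\sqrt{\mathscr{F}_s}$, the identity \eqref{eq:pfaff_det}, the antisymmetry of $\mathscr{M}$, and the factorization $\mathscr{M}_{ij}(\theta,\varphi)=(-\partial_\theta+\partial_\varphi)\int_0^\infty\mathsf{f}_i(\theta+\tau)\mathsf{f}_j(\varphi+\tau)\,\diff\tau$) are all correct. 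But be aware that the identity you are reducing to is precisely the content of \cite[Proposition B.2]{krejenbrink_le_doussal_KPZ_half_space}, which the paper simply cites; you are therefore attempting to reprove that proposition, and the decisive step of your argument---the collapse of the $2\times2$ matrix Fredholm determinant to the scalar determinant $\det[1-\mathscr{D}_s]$---is asserted rather than proved, and the mechanism you invoke does not deliver it.

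Concretely, let $\tilde{\mathcal{G}}:\mathbb{L}^2(\mathbb{R}_{\ge0})\to\mathbb{L}^2(\mathbb{R})\otimes\mathbb{C}^2$ have kernel $\mathbf{f}(\theta+\tau)$ and $\tilde{\mathcal{G}}_1$ its derivative. Your factorization reads $\mathscr{M}=\tilde{\mathcal{G}}\tilde{\mathcal{G}}_1^T-\tilde{\mathcal{G}}_1\tilde{\mathcal{G}}^T=U\left(\begin{smallmatrix}0&1\\-1&0\end{smallmatrix}\right)U^T$ with $U=(\tilde{\mathcal{G}},\tilde{\mathcal{G}}_1)$, so the cyclic identity turns $\det[1-J^T\mathscr{F}_s\mathscr{M}]$ into a determinant on $\mathbb{L}^2(\mathbb{R}_{\ge0})\oplus\mathbb{L}^2(\mathbb{R}_{\ge0})$ of the form $\det\!\left[\begin{smallmatrix}I+\beta^T&-\gamma\\ \alpha&I+\beta\end{smallmatrix}\right]$ with $\alpha,\gamma$ antisymmetric: the two-component structure is not eliminated, it is merely transported from the spinor index to a doubled half-line, and reducing this block determinant to a single scalar Fredholm determinant (moreover one posed on $\mathbb{L}^2(\mathbb{R})$ rather than on the half-line) is the nontrivial algebraic heart of the cited proposition, which your "tracing over the spinor index" does not supply. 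Two further points are left open: $\Pf^2=\det$ fixes the Pfaffian only up to sign, so you need a continuity argument (e.g.\ letting $s\to-\infty$, where both sides tend to $1$) to select the positive branch; and your bound $|\mathsf{f}_{\mathrm{odd/even}}(u)|\le Ce^{-du}$ controls only $u\to+\infty$, whereas the $\theta$-integral defining $\mathscr{D}_s$ runs over all of $\mathbb{R}$, so the trace-class and integration-by-parts claims need a matching estimate as $u\to-\infty$ balanced against the decay of $\mathscr{F}_s(\theta)$. The shortest correct route is the paper's: invoke \cite[Proposition B.2]{krejenbrink_le_doussal_KPZ_half_space} directly.
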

    \begin{proof}
        It was shown in \cite[Proposition B.2]{krejenbrink_le_doussal_KPZ_half_space}, that
        \begin{equation}
            \Pf[J-\mathscr{F}_s \cdot \mathscr{M}]_{\mathbb{L}^2(\mathbb{R})} = \sqrt{\det[1-\mathscr{D}_s]_{\mathbb{L}^2(\mathbb{R})}},
        \end{equation}    
        so that \cref{cor:krejenbrink_le_doussal} yields the proof.
    \end{proof}

\subsection{Asymptotic fluctuations of $\mathscr{Z}^\mathrm{hs}(0,t)$ and Baik-Rains phase transition: proof of \cref{thm:asymptotics_SHE}} \label{subs:hsSHE_Baik_Rains}

In this section we consider large $t$ asymptotics of our Fredholm pfaffian expressions \eqref{eq:fredholm_pfaffian_SHE}, \eqref{eq:fredholm_pfaffian_SHE_gaussian} for the distribution of $\mathscr{Z}^\mathrm{hs}(0,t)$. Such asymptotic analysis is fundamentally equivalent to those considered in \cref{subs:baik_rains_positive} and therefore in the discussion below we will at times omit the details of our arguments.

\begin{proof}[Proof of \cref{thm:asymptotics_SHE}] As usual we will proceed case by case, depending on the value of $\omega$.

\medskip
\emph{$\omega>-\frac{1}{2}$ : convergence to crossover distribution.} Consider the exact formula \eqref{eq:fredholm_pfaffian_SHE} and set
\begin{equation}
    s=  r \left( \frac{t}{2} \right)^{1/3} .
\end{equation}
Then, using relation \eqref{eq:Laplace_Gumbel} we have
\begin{equation}
    \lim_{t \to \infty} \mathbb{E} \left[ e^{ - e^{ -s  +\log \mathscr{Z}^{\mathrm{hs}}(0,t) + t/24  }} \right] = \lim_{t \to +\infty} \mathbb{P} \left[ \frac{\log \mathscr{Z}^{\mathrm{hs}}(0,t) +  t/24}{ 2^{-1/3} t^{1/3}} \le r \right].
\end{equation}
We now move to the right hand side of \eqref{eq:fredholm_pfaffian_SHE}. After a change of variables
\begin{equation} \label{eq:change_of_variables_SHE_asymptotics}
    X = u \left( \frac{t}{2} \right)^{1/3} , \qquad Y = v  \left( \frac{t}{2} \right)^{1/3} , \qquad Z =  \alpha \left( \frac{2}{t} \right)^{1/3} , \qquad W = \beta  \left( \frac{2}{t} \right)^{1/3} 
\end{equation}
and a repeated use of approximations $\Gamma(z) \approx \frac{1}{z}$, $\sin z \approx z$
we compute the scaling limit of function $\mathscr{K}^\mathrm{hs}$ as
\begin{equation} 
    \begin{split}
     \lim_{t\to \infty} \mathscr{K}^\mathrm{hs}(X(u),Y(v)) = \mathpzc{k}^{(\xi)} (u,v),
    \end{split}
\end{equation}
where $\mathpzc{k}^{(\xi)}$ is defined in \eqref{eq:k_xi}. Notice that the integration contours were rescaled too after the change of variables and they have been set to $C^{\pi/3}_\delta$. Analogous limits for derivatives of $\mathscr{K}^\mathrm{hs}$ can be established and we have
\begin{equation}
    \lim_{t \to \infty} \left( \begin{matrix} 1 & 0 \\ 0 & \left( \frac{2}{t} \right)^{1/3} \end{matrix} \right) \mathscr{L}_{\mathrm{mSHE}}(X(u),Y(v)) \left( \begin{matrix} 1 & 0 \\ 0 & \left( \frac{2}{t} \right)^{1/3} \end{matrix} \right) =  \mathcal{K}^{(\xi)}_{\mathrm{cross}}(u,v).
\end{equation}
Exponential bounds, in the variables $u,v$, for the rescaled kernel $\mathscr{L}_{\mathrm{mSHE}}(X(u),Y(v))$ now allow to conclude that
\begin{equation}
    \lim_{t \to \infty} \Pf\left[ J - \mathscr{L}_\mathrm{mSHE} \right]_{\mathbb{L}^2(s,+\infty)} = \Pf\left[ J - \mathcal{K}_\mathrm{cross}^{(\xi)} \right]_{\mathbb{L}^2(r,+\infty)}.
\end{equation}

\medskip
\emph{$\omega=-\frac{1}{2}$ : convergence to GOE Tracy-Widom distribution.} When $\omega$ tends to $-1/2$ we can no longer choose, in the integral expression of $\mathscr{K}^\mathrm{hs}(X,Y)$, contours $C^{\pi/3}_d$ with $0<d<\omega+1/2$. Let $\mathscr{K}^\mathrm{hs}_\mathrm{ext}(X,Y)$ be the analytic extension of $\mathscr{K}^\mathrm{hs}(X,Y)$ in the parameter $\omega$. Then it is clear that
\begin{equation}
    \mathscr{K}^\mathrm{hs}_\mathrm{ext}(X,Y)\big|_{\omega=-\frac{1}{2}} = \mathscr{K}^\mathrm{hs}(X,Y)\big|_{\omega=-\frac{1}{2}} - ( \mathdutchcal{B}(Y) - \mathdutchcal{B}(X) )\big|_{\omega=-\frac{1}{2}}
\end{equation}
and hence
\begin{equation}
    \left(\mathscr{L}_{\mathrm{mSHE}}\right)_\mathrm{ext}(X,Y)\big|_{\omega=-\frac{1}{2}} = \mathscr{L}_{\mathrm{mSHE}}(X,Y)\big|_{\omega=-\frac{1}{2}} - \left( \begin{matrix} 1 & - \partial_Y \\ -\partial_X & \partial_X \partial_Y \end{matrix} \right) (\mathdutchcal{B}(Y) -\mathdutchcal{B}(X) )\big|_{\omega=-\frac{1}{2}},
\end{equation}
where, naturally $\left(\mathscr{L}_{\mathrm{mSHE}}\right)_\mathrm{ext}$ denotes the analytic extension of $\mathscr{L}_{\mathrm{mSHE}}$. After the change of variables \eqref{eq:change_of_variables_SHE_asymptotics} it is simple to compute the limit
\begin{equation}
    \lim_{t\to \infty} \mathscr{K}^\mathrm{hs}_\mathrm{ext}(X(u),Y(v))\big|_{\omega=-\frac{1}{2}} = \mathpzc{k}^{(\infty)}(u,v) - \mathpzc{B}(v) + \mathpzc{B}(u)
\end{equation}
and similarly
\begin{equation}
    \lim_{t\to \infty} \left( \begin{matrix} 1 & 0 \\ 0 & \left( \frac{2}{t} \right)^{1/3} \end{matrix} \right) \left(\mathscr{L}_{\mathrm{mSHE}}\right)_\mathrm{ext}(X(u),Y(v))\big|_{\omega=-\frac{1}{2}} \left( \begin{matrix} 1 & 0 \\ 0 & \left( \frac{2}{t} \right)^{1/3} \end{matrix} \right) = \mathcal{K}_{\mathrm{GOE}}(u,v).
\end{equation}
Recall that $\mathcal{K}_\mathrm{GOE}, \mathpzc{k}^{(\infty)}$ and $\mathpzc{B}$ were given in \cref{def:GOE}, \eqref{eq:k_infty} and \eqref{eq:q_Airy}. Although the kernel $\left(\mathscr{L}_{\mathrm{mSHE}}\right)_\mathrm{ext}(X(u),Y(v))\big|_{\omega=-\frac{1}{2}}$ does not define a bounded operator on $\mathbb{L}^2(s,+\infty)\times \mathbb{L}^2(s,+\infty)$, its Fredholm pfaffian defines an absolutely convergent series, uniformly in $u,v>r$. This can be verified as in the proof of the case $\gamma=1$ of \cref{thm:asymptotic_pfaffian_schur}. Due to the exponential decay of terms of functions $\mathscr{K}^\mathrm{hs}(X(u),Y(v))\big|_{\omega=-\frac{1}{2}}, \mathdutchcal{B}(X(u)) \big|_{\omega=-\frac{1}{2}}$ we are able to pass the limit symbol inside the Fredholm pfaffian and establish the limit
\begin{equation}
    \lim_{t \to \infty} \Pf\left[ J - \left(\mathscr{L}_\mathrm{mSHE}\right)_\mathrm{ext} \big|_{\omega=-\frac{1}{2}} \right]_{\mathbb{L}^2(s,+\infty)} = \Pf\left[ J - \mathcal{K}_\mathrm{GOE} \right]_{\mathbb{L}^2(r,+\infty)},
\end{equation}
which proves \eqref{eq:SHE_crossover} for $\omega=-\frac{1}{2}$.

\medskip
\emph{$\omega<-\frac{1}{2}$ : convergence to gaussian distribution.} In this case we consider the exact formula \eqref{eq:fredholm_pfaffian_SHE_gaussian} and we set
\begin{equation}
    s =  \frac{t}{2} \left( \frac{1}{2} + \omega \right)^2 + \sigma_\omega t^{1/2} r.
\end{equation}
Notice that $ t f_\omega = \frac{t}{24} + \frac{t}{2} \left( \frac{1}{2} + \omega \right)^2$. Taking the limit of the Laplace transform we get
\begin{equation}
    \lim_{t \to + \infty} \mathbb{E} \left[ e^{ - e^{ -s  +\log \mathscr{Z}^{\mathrm{hs}}(0,t) + t/24  }} \right] = \lim_{t \to +\infty} \mathbb{P} \left[ \frac{\log \mathscr{Z}^{\mathrm{hs}}(0,t) +  tf_\omega }{ \sigma_\omega t^{1/2}} \le r \right].
\end{equation}
We move to the asymptotic limit of the right hand side of \eqref{eq:fredholm_pfaffian_SHE_gaussian}. For the analysis that follows we define the function
\begin{equation}
    h_\omega(Z) = \frac{Z^3}{3} - \left( \frac{1}{2} + \omega \right)^2 Z.
\end{equation}
Let us first evaluate the limit of the matrix kernel $\widehat{\mathscr{L}}_\mathrm{mSHE}$ which we write as
\begin{equation} \label{eq:L_hat_SHE}
    \widehat{\mathscr{L}}_\mathrm{mSHE}(X,Y)= \left( \begin{matrix} 1 & - \partial_Y \\ - \partial_X & \partial_X \partial_Y \end{matrix} \right) \left(  \widehat{\mathscr{L}}_\mathrm{mSHE}\right)_{1,1}(X,Y),
\end{equation}
where
\begin{equation} \label{eq:L_hat_SHE_11}
    \left(  \widehat{\mathscr{L}}_\mathrm{mSHE}\right)_{1,1}(X,Y) = \mathscr{K}^\mathrm{hs}_{\delta-\frac{1}{2}-\omega} (X,Y) + \sum_{j=1}^{2m+\varepsilon-2} \mathscr{S}_j(X,Y).
\end{equation}
In the previous expression $\mathscr{K}^\mathrm{hs}_{\delta-\frac{1}{2}-\omega}$ denotes the function $\mathscr{K}^\mathrm{hs}$ of \eqref{eq:k_she_hs} where the integration contours for the $Z$ and $W$ variables are taken to be $\mathrm{i}\mathbb{R}+\delta-\frac{1}{2} -\omega $, while the $\mathscr{S}_j$ are the residues at the $Z$ pole at $-W+j$ of the $Z$-integrand of the $\mathscr{K}^\mathrm{hs}$. Here we are assuming that $\delta>0$ is sufficiently small, but fixed. For more details on this identity see the passage around \eqref{eq:L_11_2}, where an analogous formula was derived in the study of the half space Log Gamma polymer partition function. After a change of variables
\begin{equation}
    X=\frac{t}{2} \left( \frac{1}{2} + \omega \right)^2 + \sigma_\omega t^{1/2} u, \qquad Y=\frac{t}{2} \left( \frac{1}{2} + \omega \right)^2 + \sigma_\omega t^{1/2} v,
\end{equation}
where we take $u,v>r$, we can write the expression for functions $\mathscr{S}_j$
\begin{equation}
\begin{split}
    \mathscr{S}_j(X(u),Y(v)) = (-1)^{j+1} \int_{\mathrm{i} \mathbb{R} + R} & \frac{\diff W}{2 \pi \mathrm{i}} e^{\frac{t}{2} \left( h_\omega(W) + h_\omega(j-W) \right) -\sigma_\omega t^{1/2} (Wv + (j-W)u) } \\ & \times \frac{\Gamma(\frac{1}{2}+\omega+W-j)}{\Gamma(\frac{1}{2}+\omega-W+j)}\frac{\Gamma(\frac{1}{2}+\omega-W)}{\Gamma(\frac{1}{2}+\omega+W)}\frac{\Gamma(2j-2W)}{\Gamma(1-2W)},
\end{split}
\end{equation}
where $R$ in the integration contours can be taken in the interval $(\frac{1}{2}+\omega-1,-\frac{1}{2}-\omega-j+1)$. We obtain the estitmate
\begin{equation} \label{eq:bound_scr_S_j}
    \left| \mathscr{S}_j(X(u),Y(v)) \right| \le \mathrm{const.} e^{\frac{t}{2}\left[ h_\omega(R) + h_\omega(j-R) \right] -\sigma_\omega t^{1/2} \left[ Rv + (j-R)u \right]},
\end{equation}
where the constant can be made independent of $t,R$. Similarly we have
\begin{equation}
\begin{split}
    \mathscr{K}^\mathrm{hs}_{-\frac{1}{2}-\omega+\delta} (X(u),Y(v)) = &
             \int_{\mathrm{i}\mathbb{R} +\delta-\frac{1}{2}-\omega} \frac{\diff Z}{2 \pi \mathrm{i}} \int_{\mathrm{i}\mathbb{R}+\delta-\frac{1}{2}-\omega}
            \frac{\diff W}{2 \pi \mathrm{i}}
            e^{\frac{t}{2} \left[h_\omega(Z) + h_\omega(W) \right] - \sigma_\omega t^{1/2} (Zu - Wv)}
            \\
            &
            \times
            \frac{\Gamma(\frac{1}{2} + \omega - Z)}{\Gamma (\frac{1}{2} + \omega+Z)} \frac{\Gamma(\frac{1}{2} + \omega -W)}{\Gamma (\frac{1}{2} + \omega+W)} \Gamma(2Z) \Gamma(2W) \frac{\sin [\pi (Z-W)]}{\sin [\pi (Z+W)]},
    \end{split}
\end{equation}
from where we get
\begin{equation} \label{eq:bound_scr_K}
    \left| \mathscr{K}^\mathrm{hs}_{-\frac{1}{2}-\omega+\delta} (X(u),Y(v)) \right| \le \mathrm{const.} e^{t \, h_\omega\left( \delta - \frac{1}{2} - \omega \right) - \sigma_\omega t^{1/2} \left(\delta-\frac{1}{2}-\omega \right) (u + v)}.
\end{equation}
Combining estimates \eqref{eq:bound_scr_S_j}, where we might set $R=\frac{j}{2}$, \eqref{eq:bound_scr_K} with \eqref{eq:L_hat_SHE}, \eqref{eq:L_hat_SHE_11} we see that
\begin{equation}
    \lim_{t\to \infty} \Pf\left[J-\widehat{\mathscr{L}}_{\mathrm{mSHE}} \right]_{\mathbb{L}( s ,+\infty)} = 1,
\end{equation}
since $h_\omega(\delta - \frac{1}{2}-\omega),h_\omega(\frac{j}{2})<0$, respectively for $\delta>0$ small and $1\le j \le 2\left( \frac{1}{2} +\omega \right)^2$ and all terms of kernel $\widehat{\mathscr{L}}_{\mathrm{mSHE}}$ are exponentially decaying in $t$.

We also write the scaling form of functions $\mathdutchcal{A}_j$ as
\begin{equation}
    \mathdutchcal{A}_j(X(u)) = \frac{(-1)^j}{j!} e^{\frac{t}{2} h(\frac{1}{2}+\omega+j)-\sigma_v t^{1/2}(1/2+\omega+j)u} \frac{\Gamma(1+2\omega+2j)}{\Gamma(1+2\omega+j) }
\end{equation}
and using saddle point method we compute the large $t$ limit of function $\mathdutchcal{B}$, to find
\begin{equation}
\begin{split}
    \mathdutchcal{B}(X(u)) &= \int_{\mathrm{i} \mathbb{R}-\frac{1}{2}-\omega} \frac{\diff W}{2 \pi \mathrm{i}} e^{\frac{t}{2}h_\omega(W)-\sigma_\omega t^{1/2} uW} \Gamma(2W) \frac{\Gamma(\frac{1}{2}-\omega-W)}{\Gamma(\frac{1}{2}-\omega+W)}
    \\
    &
    = \frac{1}{\sigma_\omega t^{1/2}} e^{\frac{t}{2}h_\omega(-\frac{1}{2}-\omega)-\sigma_\omega t^{1/2} (-\frac{1}{2}-\omega)u} \frac{1}{-1-2\omega} \frac{e^{-u^2/2}}{\sqrt{2 \pi}} (1+o(1)).
\end{split}
\end{equation}
Since the operator $\widehat{\mathscr{L}}_\mathrm{mSHE}$ is arbitrarily small in norm we can expand the resolvent $(1-J^T \widehat{\mathscr{L}}_\mathrm{mSHE})$ in Neumann series and the scalar product in the right hand side of \eqref{eq:fredholm_pfaffian_SHE_gaussian} becomes
\begin{equation}
\begin{split}
    \langle (\mathdutchcal{B},-\mathdutchcal{B}') | \left(1-J^T \widehat{\mathscr{L}}_{\mathrm{mSHE}} \right)^{-1} | (\mathdutchcal{A}',\mathdutchcal{A} )\rangle &= \sum_{\ell \ge 0} \langle (\mathdutchcal{B},-\mathdutchcal{B}') | \left(J^T \widehat{\mathscr{L}}_{\mathrm{mSHE}} \right)^\ell| (\mathdutchcal{A}',\mathdutchcal{A} )\rangle.
\end{split}
\end{equation}
As observed several times in different places in the text, functions $\mathdutchcal{A}_j$ are divergent, yet by means of estimates \eqref{eq:bound_scr_S_j}, \eqref{eq:bound_scr_K} we can show that all scalar products $\langle (\mathdutchcal{B},-\mathdutchcal{B}') | \left(J^T \widehat{\mathscr{L}}_{\mathrm{mSHE}} \right)^\ell| (\mathdutchcal{A}',\mathdutchcal{A} )\rangle$ are numerically convergent and they are of order $O(e^{-ct \ell})$, for some constant $c>0$. More details on this argument can be found in the proof of the case $\varUpsilon<0$ of \cref{thm:asymptotic_hs_log_gamma} above. We obtain
\begin{equation}
    \sum_{\ell \ge 0} \langle (\mathdutchcal{B},-\mathdutchcal{B}') | \left(J^T \widehat{\mathscr{L}}_{\mathrm{mSHE}} \right)^\ell| (\mathdutchcal{A}',\mathdutchcal{A} )\rangle
    = \langle (\mathdutchcal{B},-\mathdutchcal{B}') | (\mathdutchcal{A}',\mathdutchcal{A} )\rangle + o(1)
    = \int_r^\infty \frac{e^{-u^2/2}}{\sqrt{2 \pi}} \diff u + o(1).
\end{equation}
Finally we evaluate the right hand side of \eqref{eq:fredholm_pfaffian_SHE_gaussian} to find
\begin{multline*}
    \lim_{t\to \infty}
    \Pf \left[ J - \widehat{\mathscr{L}}_{\mathrm{mSHE}} \right]_{\mathbb{L}^2(s,+\infty)} \bigg( 1-  \langle (\mathdutchcal{B},-\mathdutchcal{B}') | \left(1-J^T \widehat{\mathscr{L}}_{\mathrm{mSHE}} \right)^{-1} | (\mathdutchcal{A}',\mathdutchcal{A} )\rangle \bigg) \\ = \int^r_{-\infty} \frac{e^{-u^2/2}}{\sqrt{2 \pi}} \diff u,
\end{multline*}
which concludes the proof.
\end{proof}

\appendix

\section{Fredholm determinant and Fredholm pfaffian} \label{app:Fredholm}

Given a kernel $K(x,y) \in \mathbb{L}^2(\Omega \times \Omega, \nu \otimes \nu) $ we define its \emph{Fredholm determinant} as
    \begin{equation}
        \det(1-K)_{\mathbb{L}^2(\Omega)} = 1 + \sum_{\ell \ge 1} \frac{(-1)^\ell}{\ell!} \int_{\Omega^\ell} \det [K(x_i,x_j)]_{i,j=1}^\ell \nu(\diff x_1) \cdots \nu(\diff x_\ell),
    \end{equation}
    whenever the right hand side is convergent.
    
    Consider a $2 \times 2$ matrix kernel 
    \begin{equation}
        k(x,y) = \left( \begin{matrix} k_{1,1}(x,y) & k_{1,2}(x,y) \\ -k_{1,2}(y,x) & k_{2,2} (x,y) \end{matrix} \right),
    \end{equation}
    where $k_{i,j} \in \mathbb{L}^2(\Omega \times \Omega, \nu \otimes \nu)$ and moreover $k_{i,i}(y,x) = - k_{i,i}(x,y)$ for all $x,y\in \Omega$ and $i=1,2$. The \emph{Fredholm pfaffian} of $k$ is defined as
    \begin{equation}
        \Pf(J-k)_{\mathbb{L}^2(\Omega)} = 1 + \sum_{\ell \ge 1} \frac{(-1)^\ell}{\ell!} \int_{\Omega^\ell} \Pf [k(x_i,x_j)]_{i,j=1}^\ell \nu(\diff x_1) \cdots \nu(\diff x_\ell).
    \end{equation}
    For instance the first few terms of the expansion above are
    \begin{multline*}
        1- \int_\Omega \Pf\left( \begin{matrix} 0 & k_{1,2}(x,x) \\ -k_{1,2}(x,x) & 0 \end{matrix} \right) \nu(\diff x) 
        \\
        + \frac{1}{2} \int_{\Omega^2} \Pf \left( \begin{matrix} 0 & k_{1,2}(x,x)  & k_{1,1}(x,y) & k_{1,2}(x,y) \\
        -k_{1,2}(x,x) & 0 & -k_{1,2}(y,x) & k_{2,2}(x,y)
        \\
        k_{1,1}(y,x) & k_{1,2}(y,x) & 0 & k_{1,2}(y,y)
        \\
        -k_{1,2}(x,y) & k_{2,2}(y,x) & -k_{1,2}(y,y) & 0
        \end{matrix} \right) \nu(\diff x)\nu(\diff y) + \cdots.
    \end{multline*}
    The Fredholm pfaffian of a skew-symmetric matrix kernel $k$ is related to its Fredholm determinants via the relation \cite[Lemma 8.1]{Rains2000}
    \begin{equation} \label{eq:pfaff_det}
        \Pf[J-k]_{\mathbb{L}^2(\Omega)}^2 = \det[1-J^Tk]_{\mathbb{L}^2(\Omega)}
    \end{equation}
    
    For any non vanishing function $f:\Omega \to \mathbb{R}$ we can define a rescaled kernel
    \begin{equation}
        \tilde{k}(x,y) = \left( \begin{matrix} f(x) f(y) k_{1,1}(x,y) & \frac{f(x)}{f(y)}k_{1,2}(x,y) \\ - \frac{f(y)}{f(x)} k_{1,2}(y,x) & \frac{1}{f(x) f(y)} k_{2,2} (x,y) \end{matrix} \right),
    \end{equation}
    so that by notable properties of determinants we have
    \begin{equation}
        \Pf(J-k)_{\mathbb{L}^2(\Omega)} = \Pf(J-\tilde{k})_{\mathbb{L}^2(\Omega)}
    \end{equation}

\medskip 

The pfaffian of a skew-symmetric matrix $M$ of size $2n \times 2n$ can be expressed through the expansion
    \begin{equation}\label{eq:pfaffian_Leibnitz}
        \Pf[M] = \sum_{ \alpha = \{(i_1,j_1),\dots,(i_n,j_n) \} }
        \epsilon (\alpha) M_{i_1,j_1} \cdots M_{i_n,j_n},
    \end{equation}
    where the summation runs over choices of $\alpha$ such that $i_1,j_1,\dots,i_n,j_n$ is a permutation of $\{1,\dots,2n\}$ with $i_1 < \cdots < i_n$ and $i_r < j_r$ for $r=1,\dots,n$. Finally $\epsilon(\alpha)$ is the sign of the permutation $\left(\begin{smallmatrix} 1 & 2 & \cdots & 2n-1 & 2n \\ i_1 & j_1 & \cdots & i_n & j_n \end{smallmatrix}\right)$.

A useful tool to establish convergence of Fredholm determinants and pfaffians is the Hadamard's bound. Assume that a matrix $M = (M_{i,j})_{i,j=1}^\ell$ has coeffients bounded as $|M_{i,j}|\le a_i b_j$ for some numbers $a_i,b_i>0, i=1,\dots,\ell$. Then we have
    \begin{equation} \label{eq:Hadamard_bound}
        \left| \det(M) \right| \le \ell^{\ell/2} \prod_{i=1}^\ell a_i b_i.
    \end{equation}
    Such bound is easily adapded to pfaffians of anti-symmetric matrices $M$ using the relation $\Pf[M] = \sqrt{\det(M)}$.

\begin{proposition} \label{prop:rank_2}
    Let $a,b \in \mathbb{L}^2(\Omega,\nu)$ and $T:\mathbb{L}^2(\Omega,\nu) \to \mathbb{L}^2(\Omega,\nu)$ be a linear operator.
    Define the $2\times 2$ matrix kernel
    \begin{equation} \label{eq:R_rank_2}
        R(x,y) = \left( \begin{matrix} a(x)b(y) - a(y)b(x) & -a(x)b'(y) + a'(y)b(x) \\ - a'(x)b(y) + a(y)b'(x) & a'(x)b'(y) - a'(y)b'(x) \end{matrix} \right),
    \end{equation}
    where $a'=T(a), b'=T(b)$. Then the matrix $(R(x_i,x_j))_{i,j=1}^n$ has rank at most 2 for any $n$. 
\end{proposition}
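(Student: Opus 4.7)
The plan is to exhibit the $2n \times 2n$ block matrix $M := (R(x_i,x_j))_{i,j=1}^n$ as a difference of two rank-one matrices. Denoting the rows and columns of $M$ by pairs $(i,\alpha)$ with $i\in\{1,\dots,n\}$ and $\alpha\in\{1,2\}$, so that the $(i,j)$-th $2\times 2$ block of $M$ equals $R(x_i,x_j)$, the goal is to produce column vectors $\widetilde{A},\widetilde{B}\in\mathbb{R}^{2n}$ with
\begin{equation*}
M=\widetilde{A}\widetilde{B}^T-\widetilde{B}\widetilde{A}^T.
\end{equation*}
Since each of $\widetilde{A}\widetilde{B}^T$ and $\widetilde{B}\widetilde{A}^T$ has rank at most one, this immediately yields $\mathrm{rank}(M)\le 2$.

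The first step is to recognize the ``Wronskian'' structure of \eqref{eq:R_rank_2}. Introducing the notation $a^{[1]}:=a$, $a^{[2]}:=a'$, $b^{[1]}:=b$, $b^{[2]}:=b'$, a direct inspection of the four entries of $R(x,y)$ shows
\begin{equation*}
R(x,y)_{\alpha,\beta}=(-1)^{\alpha+\beta}\left[a^{[\alpha]}(x)b^{[\beta]}(y)-b^{[\alpha]}(x)a^{[\beta]}(y)\right],\qquad \alpha,\beta\in\{1,2\}.
\end{equation*}
The sign prefactor can be absorbed into rescaled quantities $\widetilde{a}^{[\alpha]}(x):=(-1)^{\alpha-1}a^{[\alpha]}(x)$ and $\widetilde{b}^{[\alpha]}(x):=(-1)^{\alpha-1}b^{[\alpha]}(x)$, giving the clean bilinear identity
\begin{equation*}
R(x,y)_{\alpha,\beta}=\widetilde{a}^{[\alpha]}(x)\widetilde{b}^{[\beta]}(y)-\widetilde{b}^{[\alpha]}(x)\widetilde{a}^{[\beta]}(y).
\end{equation*}

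The second and final step is to set $\widetilde{A}_{(i,\alpha)}:=\widetilde{a}^{[\alpha]}(x_i)$ and $\widetilde{B}_{(i,\alpha)}:=\widetilde{b}^{[\alpha]}(x_i)$. Reading off the $((i,\alpha),(j,\beta))$-entry of $\widetilde{A}\widetilde{B}^T-\widetilde{B}\widetilde{A}^T$ gives exactly the expression on the right-hand side of the previous display evaluated at $(x,y)=(x_i,x_j)$, which by construction is $M_{(i,\alpha),(j,\beta)}$. This establishes the factorization and completes the argument.

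There is no genuine obstacle in this proof; the only point that requires care is the sign bookkeeping when matching the four entries of $R(x,y)$ against the symmetric template $a^{[\alpha]}(x)b^{[\beta]}(y)-b^{[\alpha]}(x)a^{[\beta]}(y)$, but as shown above a simple $(-1)^{\alpha-1}$ rescaling of the two components is enough to align everything. Note also that the fact that $a'$ and $b'$ are the images of $a$ and $b$ under the operator $T$ plays no role in the argument: only the four scalar functions $a,b,a',b'$ enter, and the rank bound is structural.
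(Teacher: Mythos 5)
Your proof is correct: the sign bookkeeping checks out in all four entries, and the factorization $M=\widetilde{A}\widetilde{B}^T-\widetilde{B}\widetilde{A}^T$ immediately gives rank at most $2$. This is essentially the same argument as the paper's, which verifies the identical rank-one-plus-rank-one structure by showing that the image of the operator with kernel $R$ is contained in the span of $\bigl(a,-a'\bigr)$ and $\bigl(b,b'\bigr)$; your version is just the explicit finite-dimensional factorization of the same observation.
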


\begin{proof}
        Let $u,v \in \mathbb{L}^2(\Omega,\nu)$. Then we have
        \begin{equation}
            \int_\Omega R(x,y) \cdot \binom{u(y)}{v(y)} \nu(dy) = (\langle b | u \rangle - \langle b' | v \rangle) \binom{a(x)}{-a'(x)} + (\langle a | u \rangle - \langle a' | v \rangle) \binom{b(x)}{b'(x)},
        \end{equation}
        which shows that the image of the operator defined by $R(x,y)$ is a subspace of $\mathbb{L}^2(\Omega,\nu)\times \mathbb{L}^2(\Omega,\nu)$ of dimension at most 2. 
\end{proof}

\begin{proposition} \label{prop:pfaffian_L+R}
    Let $L$ be a $2 \times 2$ matrix kernel of trace class, with norm $\| L \|<1$ and let $R$ be a rank 2 perturbation as in \eqref{eq:R_rank_2}. Then we have
    \begin{equation}
        \Pf[J - (L + R)] = \Pf[J - L] (1-\langle B | \widetilde{A} \rangle),
    \end{equation}
    where $B = (b,-b')$ and $\widetilde{A} = (1-J^TL)^{-1} \binom{a'}{a}$.
\end{proposition}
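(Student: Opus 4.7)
The plan is to apply a pfaffian analogue of the matrix determinant lemma suited to rank-two skew perturbations. The essential preliminary observation is structural: setting the $\mathbb{C}^2$-valued functions $\alpha(x) := \binom{a(x)}{-a'(x)}$ and $\beta(x) := \binom{b(x)}{-b'(x)}$, a direct multiplication verifies that
\[
R(x,y) = \alpha(x)\beta(y)^T - \beta(x)\alpha(y)^T,
\]
so that $R$ is itself an antisymmetric rank-two kernel. The base operator $A := J - L$ is skew-symmetric in the matrix-kernel sense $A(x,y)^T = -A(y,x)$, and it is bounded-invertible because $\|L\| < 1$ gives the Neumann series for $(I - J^T L)^{-1} = (I + JL)^{-1}$; combining with the factorization $J - L = J(I - J^T L)$ (from $J^{-1} = -J = J^T$) one obtains $A^{-1} = -(I - J^T L)^{-1} J$, and in particular $A^{-1}$ inherits skew-symmetry.

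I would then establish the following rank-two pfaffian update identity, for any invertible skew-symmetric $A$ and any column-valued functions $u_1,u_2$:
\[
\Pf[A + u_1 u_2^T - u_2 u_1^T] = \Pf[A]\,(1 + u_2^T A^{-1} u_1).
\]
Squaring both sides and invoking relation \eqref{eq:pfaff_det} together with Sylvester's determinant identity $\det(I + XY) = \det(I + YX)$ reduces the problem to computing the $2\times 2$ determinant $\det[I_2 + V^T U]$ with $U = (A^{-1}u_1, -A^{-1}u_2)$ and $V = (u_2, u_1)$. Skew-symmetry of $A^{-1}$ forces $u_i^T A^{-1} u_i = 0$ and $u_1^T A^{-1} u_2 = -u_2^T A^{-1} u_1$, so $V^T U = (u_2^T A^{-1} u_1)\, I_2$ and its determinant is the perfect square $(1 + u_2^T A^{-1} u_1)^2$. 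The sign of the square root is fixed by continuity at $u_1 = u_2 = 0$.

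Applying the identity with $u_1 = \beta$, $u_2 = \alpha$ (so that $u_1 u_2^T - u_2 u_1^T = -R$) yields
\[
\Pf[J - (L+R)] = \Pf[J-L]\bigl(1 + \alpha^T (J-L)^{-1}\beta\bigr).
\]
It remains to identify $\alpha^T(J-L)^{-1}\beta$ with $-\langle B \mid \widetilde{A}\rangle$. The skew-symmetry of $(J-L)^{-1}$ gives $\alpha^T(J-L)^{-1}\beta = -\beta^T(J-L)^{-1}\alpha$; substituting $(J-L)^{-1} = -(I - J^T L)^{-1}J$ and using $J\alpha = -J^T\alpha = -\binom{a'}{a}$ converts this into $-\beta^T(I - J^T L)^{-1}\binom{a'}{a} = -\beta^T \widetilde{A} = -\langle B \mid \widetilde{A}\rangle$, where $B = \beta^T$ and $\widetilde{A} = (I - J^T L)^{-1}\binom{a'}{a}$ by definition. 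Substituting back produces the claimed identity.

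The main technical obstacle is ensuring every step carries over rigorously to the infinite-dimensional Fredholm setting. Specifically one must check that the trace-class hypothesis on $L$ (hence also on the automatically trace-class rank-two $R$) guarantees well-definedness of the Fredholm determinants and pfaffians involved, that $\|L\| < 1$ together with the bound $\|J\| = 1$ guarantees both invertibility of $I - J^T L$ and uniform convergence of its Neumann series, and that the Sylvester identity $\det(I + XY) = \det(I + YX)$ remains valid for trace-class compositions. Once these standard functional-analytic points are settled, the finite-dimensional algebra above transfers verbatim.
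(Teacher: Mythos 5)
Your proposal is correct and follows essentially the same route as the paper: square the pfaffian via \eqref{eq:pfaff_det}, factor $\det[1-J^T(L+R)]=\det[1-J^TL]\det[1-(1-J^TL)^{-1}J^TR]$, reduce the second factor to a $2\times 2$ determinant using the rank-two structure of $R$, and show it is the perfect square $(1-\langle B\mid\widetilde A\rangle)^2$ because the diagonal pairings vanish by skew-symmetry. The only cosmetic differences are that you package this as a general rank-two pfaffian update lemma and that you make explicit the continuity argument fixing the sign of the square root, which the paper leaves implicit.
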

\begin{proof}
    Define $G=(1-J^T L)^{-1}$. Using relation \eqref{eq:pfaff_det} and the multiplicative property of determinants we have
    \begin{equation} \label{eq:pfaffian_L+R}
        \Pf[J - (L + R)] = \sqrt{\det[1-J^T(L+R)]} = \sqrt{\det[1-J^TL]} \sqrt{\det[1-GJ^TR]}.
    \end{equation}
    We rewrite the matrix $J^TR$ in the more convenient notation
    \begin{equation}
        J^TR =  \left( \begin{matrix} a' \\ a \end{matrix} \right) \cdot (b,-b') - \left( \begin{matrix} b' \\ b \end{matrix} \right) \cdot (a,-a').
    \end{equation}
    Denoting $\widetilde{A}$ as in the statement of the proposition and $\widetilde{B}=G\binom{q'}{q}$ we have
    \begin{equation}
        GJ^TR= \widetilde{A} \cdot (b,-b') - \widetilde{B} \cdot (a,-a')
    \end{equation}
    so that
    \begin{equation}
        \det[1-GJ^TR] = \det \left[ 1 - \left( \begin{matrix} \langle B | \widetilde{A} \rangle &  - \langle B | \widetilde{B} \rangle \\ \langle A | \widetilde{A} \rangle &  - \langle A | \widetilde{B} \rangle \end{matrix}  \right) \right],
    \end{equation}
    where $A=(a,-a')$. We will deduce, by the skew-symmetric property of the kernel $L$, that $\langle B | \widetilde{B} \rangle=\langle A | \widetilde{A} \rangle = 0$ and $\langle B | \widetilde{A} \rangle= - \langle A | \widetilde{B} \rangle$. We only show that $\langle A | \widetilde{A} \rangle = 0$ as the identity $\langle B | \widetilde{B} \rangle = 0$ can be proven in the same way. 
    We consider the representation of $G$ as a Neumann series of the operator $ J^T L $ and we write
    \begin{equation}
    \begin{split}
        \langle A | \widetilde{A} \rangle &= (a,-a') \cdot  \sum_{\ell \ge 0} \left( J^T L \right)^\ell \cdot \left( \begin{matrix} a' \\ a \end{matrix} \right)
        \\
        & = - \sum_{\ell \ge 0} (a',a) \cdot J \cdot \left( J^T L \right)^\ell \cdot \left( \begin{matrix} a' \\ a \end{matrix} \right).
    \end{split}
    \end{equation}
    In the right hand side we recognize that for each $\ell$ the operator $J \cdot (J^TL)^\ell$ is skew symmetric and hence each term in the summation is identically zero. Let us now apply the same idea to show that $\langle B | \widetilde{A} \rangle= - \langle A | \widetilde{B} \rangle$. We have
    \begin{equation}
    \begin{split}
        \langle B | \widetilde{A} \rangle &= - \sum_{\ell \ge 0} (b',b) \cdot J \cdot (J^TL)^\ell \cdot \left( \begin{matrix} a' \\ a \end{matrix} \right)
        \\
        &= - \sum_{\ell \ge 0} (a',a) \cdot  (L^TJ)^\ell \cdot J^T \cdot \left( \begin{matrix} b' \\ b \end{matrix} \right)
        \\
        &= \sum_{\ell \ge 0} (a',a) \cdot J \cdot (J^T L)^\ell \cdot \left( \begin{matrix} b' \\ b \end{matrix} \right)
        = -\langle A | \widetilde{B} \rangle.
    \end{split}
    \end{equation}
    These computations prove that
    \begin{equation}
        \det[1-GJ^TR] = (1-\langle B| \widetilde{A } \rangle)^2
    \end{equation}
    and along with \eqref{eq:pfaffian_L+R} complete the proof.
\end{proof}

\section{Estimates on $q$-Pochhammer symbols}
\label{sec:LG}

In this section we will prove a number of bounds on $q$-Pochhammer symbols necessary to establish Fredholm determinant and pfaffian formulas for the distribution of the Log Gamma polymer reported in \cref{thm:fredholm_det_log_gamma} and \cref{thm:fredholm_pfaff_log_gamma}. Together with the classical and well known results of \cref{lem:convergence_gumbel} and \cref{lem:convergence_qGamma}, we will present additional refined estimates as those of \cref{lem:decay_qGamma}, \cref{lem:bound_ratio_qgamma}, which appear to be new. In deriving these new bounds we will only use elementary techniques.

\subsection{Useful bounds for the proof of \cref{thm:fredholm_det_log_gamma}}

We will start stating a number of preliminary lemmas concerning the convergence of $q$-Pochhammer symbols and Theta functions. 

\begin{lemma} \label{lem:convergence_gumbel}
    Let $\chi \sim q$-$\mathrm{Geo}(q)$ and consider the scaling $q=e^{-\varepsilon}$. Then, we have the following asymptotic equivalence in distribution
    \begin{equation}
        \chi = \varepsilon^{-1} \log \varepsilon^{-1} + \varepsilon^{-1} \mathcal{G} + O(1),
    \end{equation}
    where $\mathcal{G}$ is a Gumbel random variable.
\end{lemma}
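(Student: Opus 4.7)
The plan is to establish convergence in distribution $\varepsilon\chi - \log\varepsilon^{-1} \to \mathcal{G}$ via the method of Laplace transforms. First I would compute the moment generating function of $\chi$ in closed form. Combining the explicit $q$-Geometric probability mass function $\mathbb{P}(\chi=s) = q^s (q;q)_\infty/(q;q)_s$ with Euler's identity $\sum_{s\ge 0} z^s/(q;q)_s = 1/(z;q)_\infty$ gives
\begin{equation*}
    \mathbb{E}\bigl[e^{-t\varepsilon\chi}\bigr] = \frac{(q;q)_\infty}{(q^{1+t};q)_\infty} = \Gamma_q(1+t)\,(1-q)^t,
\end{equation*}
the second equality being the definition \eqref{eq:q_gamma} of the $q$-Gamma function. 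The identity holds whenever $t>-1$ so the underlying series converges.

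Next I would take the scaling limit $\varepsilon \downarrow 0$. Since $(1-q)/\varepsilon \to 1$ and $\Gamma_q(x)\to\Gamma(x)$ uniformly on compact subsets of $(0,\infty)$, as asserted in \cref{lem:convergence_qGamma} below, multiplying through by $\varepsilon^{-t} = e^{t\log\varepsilon^{-1}}$ produces
\begin{equation*}
    \mathbb{E}\bigl[e^{-t(\varepsilon\chi - \log\varepsilon^{-1})}\bigr]
    = \Gamma_q(1+t)\bigl((1-q)/\varepsilon\bigr)^t
    \xrightarrow[\varepsilon\to 0]{} \Gamma(1+t)
\end{equation*}
for every $t>-1$.

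Finally I would identify the limit as the Laplace transform of the standard Gumbel distribution: the substitution $u=e^{-r}$ in $\int_\mathbb{R} e^{-tr}\,e^{-r}\,e^{-e^{-r}}\,\diff r$ reduces it to $\int_0^\infty u^t e^{-u}\,\diff u = \Gamma(1+t)$. Convergence of Laplace transforms on the open interval $(-1,+\infty)$, which contains a neighborhood of the origin where the limit is continuous and equal to $1$ at $t=0$, implies convergence in distribution by the standard continuity theorem for Laplace transforms. The argument is essentially self-contained modulo the classical pointwise limit $\Gamma_q\to\Gamma$, and no serious obstacle arises; the only technical point is checking that the Laplace transforms exist on a common interval containing $0$, which is immediate from the closed-form expression above.
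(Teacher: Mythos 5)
Your proof is correct, but it takes a genuinely different route from the paper. The paper works directly with the rescaled probability mass function: after the change of variables $k=\varepsilon^{-1}\log\varepsilon^{-1}+\varepsilon^{-1}y$ it shows, using the uniform asymptotic expansion of $\log(q;q)_{\varepsilon^{-1}\log\varepsilon^{-1}+\varepsilon^{-1}y}$ from Borodin--Corwin, that the unnormalized density becomes $e^{-y-e^{-y}+o(1)}$, i.e.\ it proves a local (density) limit and reads off the Gumbel law from the limiting density. You instead compute the Laplace transform in closed form via Euler's identity, recognize it as $\Gamma_q(1+t)(1-q)^t$, and pass to the limit using the classical convergence $\Gamma_q\to\Gamma$ together with Curtiss' continuity theorem. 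Both arguments are sound; yours is arguably more self-contained, since it replaces the nontrivial uniform $q$-Pochhammer asymptotics at the scale $\varepsilon^{-1}\log\varepsilon^{-1}$ with the elementary identity $\sum_{s\ge 0}z^s/(q;q)_s=1/(z;q)_\infty$ and the standard limit $\Gamma_q\to\Gamma$ (which the paper already records as \cref{lem:convergence_qGamma}), while the paper's version yields the slightly stronger local statement of pointwise density convergence — though only convergence in distribution is ever used downstream, so nothing is lost by your approach.
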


\begin{proof}
    The unnormalized density function of the random variable $\chi$ is $k\mapsto q^k/(q;q)_k$ for $k\in \mathbb{N}_0$. After scaling and change of variables $k=\varepsilon^{-1} \log \varepsilon^{-1} + \varepsilon^{-1} y$, it becomes, up to constant terms in $y$
    \begin{equation}
        e^{-y - e^{-y} +o(1)},
    \end{equation}
    where the error $o(1)$ is uniformly bounded in $y$ and tends to zero, for $\varepsilon$ small, uniformly in $y$ ranging in any half line $[M,\infty)$. This follows from the asymptotic expansion (see \cite[Proposition 4.1.9]{BorodinCorwin2011Macdonald})
    \begin{equation}
        \log (q;q)_{\varepsilon^{-1} \log\varepsilon^{-1} + \varepsilon^{-1} y} = -\varepsilon^{-1} \frac{\pi^2}{6} - \frac{1}{2} \log \frac{\varepsilon}{ 2 \pi} + e^{-y} + o(1).
    \end{equation}
    From the expression of the unnormalized density function after rescaling we conclude that, in distribution $\varepsilon \chi - \log\varepsilon^{-1}$ converges to a Gumbel random variable. 
\end{proof}

\begin{lemma} \label{lem:convergence_theta}
    Let $\theta \sim \mathrm{Theta}(\zeta;q)$ and consider the scaling $\zeta=(1-q)^n e^{-\varsigma}$ $q=e^{-\varepsilon}$. Then, we have the following asymptotic equivalence in distribution
    \begin{equation}
        \theta =  -\varepsilon^{-1} n \log \varepsilon^{-1} - \varepsilon^{-1} \varsigma + \varepsilon^{-1/2} \mathcal{N}(0,1) + O(1),
    \end{equation}
    where $\mathcal{N}(0,1)$ is a centered gaussian random variable of variance 1. 
\end{lemma}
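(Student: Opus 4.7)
The starting point is the explicit pmf of the Theta distribution,
\[
    \mathbb{P}(\theta = k) = \frac{q^{k^2/2}\zeta^k}{\vartheta_3(\zeta;q)}, \qquad k\in\mathbb{Z},
\]
so that $\theta$ is a \emph{discrete Gaussian} random variable. My plan is therefore to complete the square in $k$ under the scaling $q=e^{-\varepsilon}$, $\zeta=(1-q)^n e^{-\varsigma}$, identify the resulting centering and variance, and then upgrade the pointwise convergence of the rescaled density to a convergence in distribution.

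To execute the first step I would write the logarithm of the unnormalized pmf as
\[
    -\frac{\varepsilon k^2}{2} + k\bigl(n\log(1-q)-\varsigma\bigr)
    = -\frac{\varepsilon}{2}\bigl(k-\mu_\varepsilon\bigr)^2 + \frac{\varepsilon \mu_\varepsilon^2}{2},
\]
with $\mu_\varepsilon := \varepsilon^{-1}\bigl(n\log(1-q)-\varsigma\bigr)$. Using the Taylor expansion $\log(1-q)=\log\varepsilon - \varepsilon/2 + O(\varepsilon^2)$, this gives
\[
    \mu_\varepsilon = -\varepsilon^{-1} n \log\varepsilon^{-1} - \varepsilon^{-1}\varsigma - \tfrac{n}{2} + O(\varepsilon),
\]
which matches the announced centering up to an $O(1)$ term. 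The quadratic exponent shows that the discrete density is concentrated on scale $\varepsilon^{-1/2}$ around $\mu_\varepsilon$, consistent with the claimed Gaussian fluctuations.

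Next I would introduce the rescaled variable $x_\varepsilon(k) := \sqrt{\varepsilon}(k-\lfloor\mu_\varepsilon\rfloor)$. Under this change of variables the unnormalized density reads $e^{-x^2/2 + o(1)}$, uniformly on compact sets of $x \in \mathbb{R}$. To turn this pointwise statement into the convergence $\sqrt{\varepsilon}\bigl(\theta + \varepsilon^{-1}n\log\varepsilon^{-1} + \varepsilon^{-1}\varsigma\bigr) \to \mathcal{N}(0,1)$ it remains to control the normalization: this is done by a Riemann sum comparison between $\sum_{k\in\mathbb{Z}} e^{-\varepsilon(k-\mu_\varepsilon)^2/2}$ and $\int_{\mathbb{R}} e^{-\varepsilon(y-\mu_\varepsilon)^2/2}\,dy = \sqrt{2\pi/\varepsilon}$, which together with the identity $\vartheta_3(\zeta;q) = \sqrt{2\pi/\varepsilon}\, e^{\varepsilon\mu_\varepsilon^2/2}(1+o(1))$ (alternatively deduced from the Jacobi triple product factorization of $\vartheta_3$ and the $q\to 1^-$ asymptotics of the $q$-Pochhammer symbols, analogous to those used in the proof of Lemma B.1) gives the desired probability density.

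The only mild obstacle is making the discrete-to-continuous approximation uniform enough to pass to convergence in distribution rather than merely pointwise convergence of densities. This is handled by a uniform Gaussian tail bound: for $|k-\mu_\varepsilon|\ge R\varepsilon^{-1/2}$ one has $e^{-\varepsilon(k-\mu_\varepsilon)^2/2}\le e^{-R|k-\mu_\varepsilon|\sqrt{\varepsilon}/2}$, and the resulting geometric tails are summable with constants independent of $\varepsilon$. Summing the contributions in the bulk (via the Riemann sum) and in the tails (via the geometric estimate) then yields the claimed equality in distribution, with the $O(1)$ error term absorbing the constant $-n/2$ from the expansion of $\log(1-q)$ and the rounding from $\lfloor\mu_\varepsilon\rfloor$.
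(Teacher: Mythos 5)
Your proposal is correct and follows essentially the same route as the paper's proof: write down the unnormalized discrete Gaussian density, complete the square (equivalently, change variables $k=\mu_\varepsilon+\varepsilon^{-1/2}x$), and identify the limit as a standard Gaussian, with the expansion $\log(1-e^{-\varepsilon})=-\log\varepsilon^{-1}-\varepsilon/2+O(\varepsilon^2)$ accounting for the $O(1)$ discrepancy in the centering. The paper's version is terser and omits the Riemann-sum normalization and tail estimates that you spell out, but the underlying argument is the same.
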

\begin{proof}
    The unnormalized density function of the random variable $\theta$ is, after the scaling,
    \begin{equation}
        e^{- \frac{\varepsilon k^2}{2} + k (-\varsigma + n \log(1-e^{-\varepsilon})) }
    \end{equation}
    with $k\in \mathbb{Z}$. After a change of variable $k= - \varepsilon^{-1} \varsigma + \varepsilon^{-1} n \log(1-e^{-\varepsilon}) + \varepsilon^{-1/2} x$ it becomes, modulo normalization, equal to $e^{-x^2/2}$, completing the proof.
\end{proof}

\begin{lemma} \label{lem:convergence_ratio_theta}
    Set $q=e^{-\varepsilon}$, with $\varepsilon>0$. Let $\varsigma\in \mathbb{R}, n \in \mathbb{R}_+$ and $\alpha = -d + \mathrm{i} y $ with $d \in (0,1)$ and $y\in [ -\varepsilon^{-1} \pi , \varepsilon^{-1} \pi]$. Then, we have
    \begin{equation} \label{eq:theta_3_convergence}
        \lim_{\varepsilon \downarrow 0}(1-q)^{\alpha n} \frac{\vartheta_3(q^\alpha e^{-\varsigma} (1-q)^n;q)}{\vartheta_3( e^{-\varsigma} (1-q)^n;q)} = e^{- \alpha \varsigma},
    \end{equation}
    where the convergence holds locally uniformly in $y$.
    Moreover, there exists a constant $C$ depending on $\varsigma,d,n$ such that
    \begin{equation} \label{eq:theta_3_bound}
        \left| (1-q)^{\alpha n} \frac{\vartheta_3(q^\alpha e^{-\varsigma} (1-q)^n;q)}{\vartheta_3( e^{-\varsigma} (1-q)^n;q)}  \right| \le C,
    \end{equation}
    for all $\varepsilon > 0, y \in [ -\varepsilon^{-1} \pi , \varepsilon^{-1} \pi]$.
\end{lemma}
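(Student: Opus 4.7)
The plan is to use the Jacobi triple product series $\vartheta_3(\omega;q)=\sum_{k\in\mathbb{Z}} q^{k^2/2}\omega^k$ and analyze both numerator and denominator via a Gaussian saddle-point estimate, mirroring the proof of \cref{lem:convergence_theta}. With $q=e^{-\varepsilon}$ and $\zeta=(1-q)^ne^{-\varsigma}$, the summand of $\vartheta_3(\zeta;q)$ equals $\exp(-\tfrac{\varepsilon}{2}k^2+k\log\zeta)$, a Gaussian weight on $\mathbb{Z}$ centered at the saddle $k^\star=\log\zeta/\varepsilon$ of width $\varepsilon^{-1/2}$. For the numerator the algebraic identity $q^{k^2/2+\alpha k}=q^{-\alpha^2/2}q^{(k+\alpha)^2/2}$ shifts the Gaussian saddle to $k^\star-\alpha$. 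After cancellation of the common $\sqrt{2\pi/\varepsilon}$ prefactor and of the bulk exponential, the ratio reduces to
\[
\frac{\vartheta_3(q^\alpha\zeta;q)}{\vartheta_3(\zeta;q)}=q^{-\alpha^2/2}\zeta^{-\alpha}\bigl(1+o(1)\bigr),
\]
and multiplication by $(1-q)^{\alpha n}$ together with $q^{-\alpha^2/2}\to 1$ yields the stated pointwise limit.

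To make the saddle step rigorous for complex $\alpha$---where the saddle $k^\star-\alpha$ sits off the real axis---the cleanest tool is Poisson summation. Applied to $g_\alpha(k)=q^{(k+\alpha)^2/2}\zeta^k$ one gets $\sum_{k\in\mathbb{Z}} g_\alpha(k)=\sqrt{2\pi/\varepsilon}\sum_{m\in\mathbb{Z}}\hat{g}_\alpha(m)$, and a direct Gaussian computation gives $|\hat{g}_\alpha(m)|=\mathcal{O}(e^{-2\pi^2m^2/\varepsilon}e^{2\pi my})$. On any compact $y$-interval the $m\neq 0$ terms decay like $e^{-2\pi^2/\varepsilon}$ and the $m=0$ mode reproduces exactly the Gaussian asymptotic above. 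The same identity applied to $\vartheta_3(\zeta;q)$ produces an identical prefactor that cancels in the ratio, yielding locally uniform convergence in $y$.

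For the uniform bound, the key observation is that $\alpha=-d+iy$ satisfies $|q^{\alpha k}|=q^{-dk}$ independently of $y$. The triangle inequality applied termwise in the series therefore gives
\[
\bigl|\vartheta_3(q^\alpha\zeta;q)\bigr|\le\sum_{k\in\mathbb{Z}}q^{k^2/2}(q^{-d}\zeta)^k=\vartheta_3(q^{-d}\zeta;q),
\]
while $|(1-q)^{\alpha n}|=(1-q)^{-dn}$. The quantity to be bounded is thus dominated by its real-$\alpha$ version $(1-q)^{-dn}\vartheta_3(q^{-d}\zeta;q)/\vartheta_3(\zeta;q)$, a strictly positive continuous function of $\varepsilon\in(0,\infty)$ that admits a finite limit as $\varepsilon\downarrow 0$ by the pointwise result and tends to $1$ as $\varepsilon\uparrow\infty$ (only $k=0$ survives in each theta); it is therefore globally bounded by some constant $C=C(\varsigma,d,n)$.

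The main technical difficulty is uniform control of the Poisson tails as $y$ varies, since the bound $e^{-2\pi^2m^2/\varepsilon}e^{2\pi my}$ degrades for large $|y|$---this is precisely why the statement restricts to the fundamental domain $y\in[-\pi/\varepsilon,\pi/\varepsilon]$ and only asks for locally uniform convergence. Once $y$ is confined to a compact interval the remainder of the argument is a standard Gaussian-sum estimate.
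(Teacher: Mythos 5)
Your proof follows essentially the same route as the paper's: both parts rest on the Gaussian form of the theta series $\vartheta_3(e^\eta;e^{-\varepsilon})=\sum_k e^{-\varepsilon k^2/2+\eta k}\sim\sqrt{2\pi/\varepsilon}\,e^{\eta^2/2\varepsilon}$, applied to the two values of $\eta$ in numerator and denominator, and the uniform bound is obtained by reducing to $y=0$. The paper justifies the asymptotic by a Riemann-sum approximation (under the condition $\Im\eta=o(\sqrt\varepsilon)$, which is where local uniformity in $y$ enters), while you use Poisson summation, and for the bound the paper uses monotonicity of $|(-qe^{a+\mathrm{i}\theta};q)_\infty|$ in the phase via the triple-product formula, while you use the termwise triangle inequality $|q^{\alpha k}|=q^{-dk}$; these are interchangeable and both correct.

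One step in your final paragraph is concretely false: the claim that $(1-q)^{-dn}\vartheta_3(q^{-d}\zeta;q)/\vartheta_3(\zeta;q)\to 1$ as $\varepsilon\uparrow\infty$ because ``only $k=0$ survives.'' In the numerator the $k=1$ term is $q^{1/2-d}\zeta$, which does not vanish for $d=\tfrac12$ and diverges for $d\in(\tfrac12,1)$ as $q\to0$; since at $y=0$ the triangle inequality is an equality, the quantity itself diverges as $\varepsilon\to\infty$ in that range, so a bound valid for literally all $\varepsilon>0$ cannot be obtained this way (and, as stated, \eqref{eq:theta_3_bound} is only genuinely available on bounded ranges of $\varepsilon$, which is all the applications use). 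The paper's own proof has the same lacuna — it only invokes the $\varepsilon\downarrow0$ limit — so your argument is no weaker than the original, but you should either restrict to $d\le\tfrac12$, or restrict $\varepsilon$ to a bounded interval, rather than asserting a finite limit at $\varepsilon=\infty$.
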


\begin{proof}
    We will first prove the convergence result \eqref{eq:theta_3_convergence} and later we will establish the bound \eqref{eq:theta_3_bound}.
    From the Jacobi triple product identity we have
    \begin{equation}
        \vartheta_3(e^{\eta} , e^{-\varepsilon}) = \sum_{k \in \mathbb{Z}} e^{-\frac{\varepsilon k^2}{2} + \eta k},
    \end{equation}
    which, after completing the square in the exponent and operating the change of variable $x=\sqrt{\varepsilon} k - \frac{\eta}{\sqrt{\varepsilon}}$, becomes
    \begin{equation}
        \frac{e^{\frac{\eta^2}{2 \varepsilon}}}{\sqrt{\varepsilon}} \sum_{x \in \sqrt{\varepsilon} \mathbb{Z} + \frac{\eta}{\sqrt{\varepsilon}}} e^{-\frac{x^2}{2}} \sqrt{\varepsilon}.
    \end{equation}
    This proves the asymptotic expansions
    \begin{equation} \label{eq:theta_3_expansion}
        \vartheta_3(e^\eta ;e^{-\varepsilon} ) = \sqrt{\frac{2 \pi}{\varepsilon}} e^{\frac{\eta^2}{2 \varepsilon}}  \left(1+ O(\sqrt{\varepsilon}) \right),
    \end{equation}
    which holds when $\Im (\eta) = o(\sqrt{\varepsilon})$. Applying \eqref{eq:theta_3_expansion} to the cases $\eta=-\varsigma + n \log(1-e^{-\varepsilon})$ and $\eta= -\varsigma + n \log(1-e^{-\varepsilon}) - \varepsilon \alpha$ we can easily verify \eqref{eq:theta_3_convergence}.
    
    In order to show the bound \eqref{eq:theta_3_bound}, we will prove that the absolute value of the theta function $\vartheta_3( e^{a + \mathrm{i}y}, e^{-\varepsilon})$, with $a \in \mathbb{R}$ and $y\in[-\varepsilon^{-1} \pi , \varepsilon^{-1} \pi]$ has a maximum at $y=0$. To prove this we show that $|(-qe^{a+\mathrm{i}y};q)_\infty|$ is a decreasing function for $y\in [0,\varepsilon^{-1} \pi)$. For this we compute the derivative
    \begin{equation}
    \begin{split}
        \frac{\diff}{\diff y} \Re \left\{ \log (-qe^{a+\mathrm{i}y};q)_\infty \right\}
        &
        =
        \frac{\diff}{\diff y} \sum_{ k \ge 0} \Re \left\{ \log (1+ q^k e^{a+\mathrm{i}y} ) \right\}
        \\
        & =
        \frac{\diff}{\diff y} \sum_{ k \ge 0} \frac{1}{2} \log (1+ q^{2k} e^{2a} +2 q^k e^{a} \cos y) 
        \\
        & =
        \frac{1}{2}
        \sum_{k \ge 0} \frac{- q^k e^a \sin y }{1+ q^{2k} e^{2a} + 2 q^k e^{a} \cos y},
    \end{split}
    \end{equation}
    which clearly is negative if $y\in [0,\varepsilon^{-1} \pi)$. Using the product expansion of the Jacobi theta function we can prove that
    \begin{equation}
        \left| (1-q)^{-n(d-\mathrm{i}y)} \frac{\vartheta_3 (q^{-d+\mathrm{i}y} e^{-\varsigma} (1-q)^n ;q )}{\vartheta_3 ( e^{-\varsigma} (1-q)^n ;q )} \right| \le (1-q)^{-dn} \frac{\vartheta_3 (q^{-d} e^{-\varsigma} (1-q)^n ;q )}{\vartheta_3 ( e^{-\varsigma} (1-q)^n ;q )}
    \end{equation}
    and since the right hand side converges to $e^{- d \varsigma}$ as $\varepsilon \to 0$, it can certainly be bounded by a constant independent of $\varepsilon$.
\end{proof}

The following convergence result is classical.

\begin{lemma}[\cite{andrews1986q}, Appendix A] \label{lem:convergence_qGamma}
    The $q$-Gamma function $\Gamma_q(X)$ converges, as $q\to 1$, to $\Gamma(X)$ locally uniformly in $X\in \mathbb{C} \setminus \mathbb{Z}_{\le 0}$.
\end{lemma}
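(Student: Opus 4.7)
The statement is classical (see e.g.\ the cited Appendix A of \cite{andrews1986q}); my plan follows the standard three-step route of reducing to a right half-plane via the functional equation, then matching the series expansion of $\log \Gamma_q$ with an integral representation of $\log \Gamma$.

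First I would reduce the claim to the right half-plane $\{\Re X > 0\}$. Both functions satisfy the recurrences $\Gamma(X+1)=X\Gamma(X)$ and $\Gamma_q(X+1)=[X]_q\Gamma_q(X)$, where $[X]_q := (1-q^X)/(1-q)$ tends to $X$ locally uniformly on $\mathbb{C}$ by direct Taylor expansion. Iterating, for any compact $K \subset \mathbb{C}\setminus\mathbb{Z}_{\le 0}$ one may choose $N \in \mathbb{Z}_{\ge 0}$ so that $K+N \subset \{\Re X > 0\}$, thus reducing the problem to locally uniform convergence on the open right half-plane.

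Second, on $\{\Re X > 0\}$, expanding $\log(1-q^{k+X}) = -\sum_{m\ge 1} q^{m(k+X)}/m$ in the logarithm of the product defining $\Gamma_q(X)$ and summing the geometric series in $k$ yields
\begin{equation*}
\log \Gamma_q(X) = (1-X)\log(1-q) + \sum_{m=1}^{\infty} \frac{q^{mX} - q^m}{m(1-q^m)}.
\end{equation*}
Setting $q=e^{-\varepsilon}$, this sum is a Riemann sum of step $\varepsilon$: each summand is $(1-X)/m + O(\varepsilon)$ when $m\varepsilon \ll 1$ and decays exponentially when $m\varepsilon \gg 1$, so the small-$m\varepsilon$ window contributes $-(1-X)\log\varepsilon + O(1)$. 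This logarithmic divergence is cancelled precisely by the prefactor $(1-X)\log(1-q) = (1-X)\log\varepsilon + O(\varepsilon)$, leaving a finite limit. An Euler--Maclaurin comparison then identifies this limit with $\log \Gamma(X)$ through the classical Binet--Malmsten integral representation of $\log \Gamma$ on $\{\Re X > 0\}$.

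Third, to upgrade pointwise convergence to locally uniform convergence on compacts $K \subset \{\Re X > 0\}$, I would split the sum at some threshold $M = M(K)$: for $m \le M$ each summand depends continuously on $(q,X)$ and converges uniformly, while the tail $m > M$ is dominated by a geometric-type bound $C_K e^{-c M\varepsilon}$ uniformly in $\varepsilon$. The main technical obstacle is the uniform control of the delicate logarithmic cancellation between $(1-X)\log(1-q)$ and the small-$m\varepsilon$ portion of the Riemann sum; this is handled by exploiting that $\Re X$ stays bounded away from $0$ on compact subsets of the right half-plane, making the Euler--Maclaurin error bounds uniform in $X \in K$. Combining this with the reduction step completes the proof.
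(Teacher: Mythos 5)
The paper does not actually prove this lemma: it is stated with a citation to Appendix A of Andrews' book and used as a black box, so there is no "paper proof" to compare against. Your argument is a legitimate, self-contained route to the classical result. The reduction via $\Gamma_q(X+1)=[X]_q\,\Gamma_q(X)$ with $[X]_q\to X$ locally uniformly is correct (the limiting denominator $X(X+1)\cdots(X+N-1)$ is non-vanishing off $\mathbb{Z}_{\le 0}$, so uniform convergence passes through the quotient), and the series identity
$\log\Gamma_q(X)=(1-X)\log(1-q)+\sum_{m\ge1}\frac{q^{mX}-q^m}{m(1-q^m)}$
for $\Re X>0$ is right, as is the cancellation of the $(1-X)\log\varepsilon$ divergence against the prefactor and the identification of the regularized limit with Malmsten's integral. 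For the record, the proof in the cited source goes instead through the infinite-product form of $\Gamma_q$ compared factor-by-factor with Euler's product for $\Gamma$, which avoids the Binet--Malmsten representation and the Euler--Mascheroni bookkeeping your route entails; both are standard.

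One point in your Step 3 would fail as literally written: you cannot split the sum at a threshold $M=M(K)$ that is independent of $\varepsilon$. With $M$ fixed, the tail bound $C_Ke^{-cM\varepsilon}$ tends to $C_K$ rather than to $0$, and the head $\sum_{m\le M}$ converges only to a harmonic partial sum, not to the regularized limit. The correct split is in the variable $t=m\varepsilon$: take $m\le \delta/\varepsilon$ (where the logarithmic cancellation against $(1-X)\log(1-q)$ lives, with a $\gamma$-discrepancy between the Riemann sum and the integral that must be tracked), $\delta/\varepsilon< m< T/\varepsilon$ (a genuine Riemann sum on $[\delta,T]$, uniform in $X\in K$ because the integrand is smooth there), and $m\ge T/\varepsilon$ (tail $O(e^{-cT})$ uniformly in $\varepsilon$, using $\Re X\ge c_K>0$). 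With that correction the argument closes.
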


We recall that, similarly to the classical Gamma function, the $q$-Gamma function $\Gamma_q(X)$, introduced in \eqref{eq:q_gamma} satisfies the functional equation
\begin{equation} \label{eq:q_gamma_functional_eq}
    [X]_q \Gamma_q(X) = \Gamma_q(X+1),
\end{equation}
where $[X]_q=\frac{1-q^X}{1-q}$ are the $q$-numbers.

\medskip 

Below we will establish some bounds for the absolute value of $q$-Gamma function $\Gamma_q(z)$, which are uniform in $q\in(1/2,1)$ and $z$ with bounded real part. Techniques we will employ are elementary. The following lemma will be useful.

\begin{lemma} \label{lem:inequalities_sum_integrals}
    Consider real numbers $a<b$ and continuous functions $f,g:[a-1,b+1] \to \mathbb{R}$ such that $f$ is increasing, while $g$ is decreasing. Then, we have
    \begin{equation}
        \int_{a-1}^b f(k) \diff k \le \sum_{k\in \mathbb{Z} \cap [a,b]} f(k) \le \int_a^{b+1} f(k) \diff k
    \end{equation}
    and
    \begin{equation}
        \int_{a}^{b-1} g(k) \diff k \le \sum_{k\in \mathbb{Z} \cap [a,b]} g(k) \le \int_{a-1}^{b} g(k) \diff k.
    \end{equation}
\end{lemma}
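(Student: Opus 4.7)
The lemma is the standard monotone comparison between a Riemann sum and an integral, and the plan is to reduce it to a pointwise bound per integer and then telescope. I would first observe that for any integer $k$, monotonicity of $f$ gives $f(t) \le f(k)$ on $[k-1,k]$ and $f(t) \ge f(k)$ on $[k,k+1]$. Integrating these pointwise inequalities on unit intervals (using continuity of $f$ on $[a-1,b+1]$, which ensures integrability) yields
\begin{equation*}
\int_{k-1}^{k} f(t)\,\diff t \;\le\; f(k) \;\le\; \int_{k}^{k+1} f(t)\,\diff t
\end{equation*}
for every integer $k$ whose unit neighborhood lies in $[a-1,b+1]$.

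Next, setting $n_0 = \lceil a \rceil$ and $n_1 = \lfloor b \rfloor$, so that $\mathbb Z \cap [a,b] = \{n_0,n_0+1,\dots,n_1\}$, I would sum the displayed double inequality over $k = n_0, \dots, n_1$. Since the unit intervals $[k-1,k]$ for $k$ in this range tile $[n_0-1, n_1]$, and the intervals $[k,k+1]$ tile $[n_0, n_1+1]$, the sums of the integrals telescope and produce
\begin{equation*}
\int_{n_0-1}^{n_1} f(t)\,\diff t \;\le\; \sum_{k \in \mathbb Z \cap [a,b]} f(k) \;\le\; \int_{n_0}^{n_1+1} f(t)\,\diff t.
\end{equation*}
In the concrete applications made of this lemma, $a$ and $b$ are integers, so $n_0 = a$ and $n_1 = b$ and this collapses exactly to the claimed inequality. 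The statement for the decreasing $g$ is proved in identical fashion, the only change being that the pointwise inequalities flip, so the roles of the lower and upper integrals are exchanged and one ends up with $\int_{a}^{b-1} g \le \sum g(k) \le \int_{a-1}^{b} g$.

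No step in this argument is expected to be the main obstacle; the lemma is simply the $L^1$ incarnation of monotonicity, essentially the integral-test mechanism for series. The one place where a reader might wish for more care is the interplay between the true endpoints $a,b$ and their integer parts $n_0,n_1$, but this is a cosmetic issue since the only uses of the lemma in the paper (for bounding tails of $q$-Pochhammer sums and ratios in Section \ref{sec:LG}) feature integer summation ranges.
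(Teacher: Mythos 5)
Your argument is the standard integral-test telescoping, and it is correct; the paper in fact states \cref{lem:inequalities_sum_integrals} without any proof at all, so there is nothing to compare against beyond noting that this is exactly the argument one is expected to supply. The pointwise bounds $\int_{k-1}^{k} f \le f(k) \le \int_{k}^{k+1} f$ for increasing $f$ (and their reversals for decreasing $g$), summed over $k\in\{\lceil a\rceil,\dots,\lfloor b\rfloor\}$, give the claim whenever $a,b$ are integers, which covers every use of the lemma in \cref{sec:LG}.

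One caveat: the endpoint issue you call cosmetic is slightly more than that. As literally stated, the first inequality fails for non-integer endpoints: take $a=1/2$, $b=3/2$ and $f\equiv 1$ (or any nearly constant increasing $f$); then $\int_{a-1}^{b}f=2$ while the sum is $f(1)=1$. So the lemma is only true in the form your telescoping actually produces, with $\lceil a\rceil$ and $\lfloor b\rfloor$ in place of $a$ and $b$, and collapses to the stated form precisely when $a,b\in\mathbb{Z}$. Relatedly, for decreasing $g$ the telescoping yields $\int_{a}^{b+1}g$ as the lower bound, not the stated $\int_{a}^{b-1}g$; since $g$ need not be nonnegative these are not comparable in general, so the displayed lower bound for $g$ appears to be a typo rather than something your argument should reproduce. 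None of this matters for the paper: every invocation has $a\in\mathbb{Z}$ and $b=\infty$, where all these distinctions disappear, and your proof is adequate for those cases.
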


\begin{lemma} \label{lem:decay_qGamma}
    Fix a real number $d \notin \mathbb{Z}_{\le 0}$ and let $q=e^{-\varepsilon}$. Then, there exists constants $C_1,C_2$ depending only on $d$, such that
    \begin{equation}
        \left| \Gamma_q (d+\mathrm{i}y) \right| \le C_1 e^{-C_2 |y|},
    \end{equation}
    for all $\varepsilon >0$ and $y \in [-\varepsilon^{-1} \pi, \varepsilon^{-1} \pi]$.
\end{lemma}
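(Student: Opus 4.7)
The plan is to reduce to the case $d>0$ and then exploit the product representation
\begin{equation*}
\frac{|(q^{d+iy};q)_\infty|^2}{(q^d;q)_\infty^2}=\prod_{k=0}^\infty\left[1+\frac{\sin^2(\varepsilon y/2)}{\sinh^2(\varepsilon(d+k)/2)}\right],
\end{equation*}
which comes from $q=e^{-\varepsilon}$ via the elementary identity $(1-q^{d+k})^2=4q^{d+k}\sinh^2(\varepsilon(d+k)/2)$. The reduction to $d>0$ uses the functional equation \eqref{eq:q_gamma_functional_eq} iteratively to write $|\Gamma_q(d+iy)|=|\Gamma_q(d+n+iy)|\prod_{j=0}^{n-1}|[d+j+iy]_q|^{-1}$, where $n\ge 0$ is the smallest integer with $d+n>0$; a direct computation gives $|1-q^{d+j+iy}|^2=(1-q^{d+j})^2+4q^{d+j}\sin^2(\varepsilon y/2)\ge(1-q^{d+j})^2$, so $|[d+j+iy]_q|\ge|[d+j]_q|$, and the latter is uniformly bounded below in $\varepsilon>0$ whenever $d+j\notin\mathbb{Z}_{\le 0}$. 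It will therefore suffice to prove a lower bound
\begin{equation*}
S(y):=\sum_{k\ge 0}\log\!\left[1+\frac{\sin^2(\varepsilon y/2)}{\sinh^2(\varepsilon(d+k)/2)}\right]\ge\frac{\pi}{4}|y|-C_d,
\end{equation*}
since then $|\Gamma_q(d+iy)|=\Gamma_q(d)\,e^{-S(y)/2}$ and $\Gamma_q(d)$ is uniformly bounded in $q\in(0,1)$ for $d>0$ fixed.

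To control $S(y)$ I plan a sum-to-integral comparison. The summand $f_\varepsilon(u):=\log[1+\sin^2(\varepsilon y/2)/\sinh^2(u/2)]$ is positive and decreasing in $u$, so \cref{lem:inequalities_sum_integrals} gives $S(y)\ge\varepsilon^{-1}\int_{\varepsilon d}^\infty f_\varepsilon(u)\,du$. The key step is the closed-form evaluation
\begin{equation*}
\int_0^\infty\log\!\left[1+\frac{s^2}{4\sinh^2(u/2)}\right]\,du=2\alpha(\pi-\alpha),\qquad s=2\sin\alpha,\ \alpha\in[0,\pi/2],
\end{equation*}
which I would derive by differentiating in $s$, substituting $x=e^{-u}$ to reduce the $s$-derivative to the rational integral $\int_0^1 dx/[(x-\cos 2\alpha)^2+\sin^2 2\alpha]$, and integrating back in $\alpha$. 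Specializing $\alpha=\varepsilon|y|/2$ turns the main term into $\varepsilon^{-1}\int_0^\infty f_\varepsilon=|y|(\pi-\varepsilon|y|/2)\ge\pi|y|/2$ on the whole range $|y|\le\pi/\varepsilon$. The cutoff error is handled using $\sinh(u/2)\ge u/2$ together with the antiderivative $\int\log(1+v^{-2})\,dv=v\log(1+v^{-2})+2\arctan v$, which yield $\varepsilon^{-1}\int_0^{\varepsilon d}f_\varepsilon\le 2d\log(1+|y|/d)+\pi d$; since $2d\log(1+|y|/d)\le(\pi/4)|y|+C'_d$, combining produces the required lower bound on $S(y)$.

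The main obstacle I expect is the exact identity $I(s)=2\alpha(\pi-\alpha)$: it is precisely what yields a linear-in-$|y|$ lower bound uniform in $\varepsilon$, whereas any cruder bound on the integrand (e.g.\ $\log(1+x)\ge cx/(1+x)$) falls short of absorbing the $O(\log|y|)$ Riemann-sum cutoff error. A minor technical point is that the argument above implicitly treats $\varepsilon\in(0,\varepsilon_0]$; for $\varepsilon\ge\varepsilon_0$ the range $|y|\le\pi/\varepsilon_0$ is bounded and $(q^{d+iy};q)_\infty$ is continuous and zero-free on the resulting compact set in $(q,y)$, so $|\Gamma_q(d+iy)|$ is uniformly bounded there and the estimate is trivial after enlarging $C_1$.
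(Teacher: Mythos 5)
Your proof is correct; I checked the product identity, the reduction to $d>0$ via the functional equation, the closed-form evaluation $I(s)=2\alpha(\pi-\alpha)$ (which indeed follows from $I'(s)\,ds=(2\pi-4\alpha)\,d\alpha$ after the substitution $x=e^{-u}$), and the cutoff estimate; each step goes through uniformly in $\varepsilon>0$, so the trailing remark about restricting to $\varepsilon\le\varepsilon_0$ is not even needed.

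The paper's proof shares your overall strategy (reduce the sum over $k$ to an integral via the monotone comparison lemma, then evaluate that integral in closed form) but executes it on the $y$-derivative rather than on $\log|\Gamma_q(d+\mathrm{i}y)|$ itself: it sets $\phi_\varepsilon(d,y)=-\tfrac{\diff}{\diff y}\log|1-e^{-\varepsilon(d+\mathrm{i}y)}|$, evaluates $G_\varepsilon(d,y)=\int_0^\infty\phi_\varepsilon(d+k,y)\,\diff k$ as a difference of arctangents, and then must analyze the shape of $G_\varepsilon$ (convexity on $[0,\varepsilon^{-1}\pi]$, the plateau near $-\pi/2$, the Taylor expansion at $y=0$) before integrating back in $y$ to extract the linear decay; because the comparison lemma shifts the argument by one, this forces the preliminary reduction to $d>1$ rather than $d>0$. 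Your route trades that qualitative analysis of $G_\varepsilon$ for the single exact identity $\int_0^\infty\log[1+s^2/(4\sinh^2(u/2))]\,\diff u=2\alpha(\pi-\alpha)$, which hands you the linear lower bound $\pi|y|/2$ directly and leaves only the elementary cutoff term $\varepsilon^{-1}\int_0^{\varepsilon d}$ to absorb; the price is that you must produce this exact evaluation (as you note, cruder bounds on the integrand would not beat the $O(\log|y|)$ boundary error), whereas the paper's arctangent formula is a more routine computation whose difficulty is displaced into the subsequent convexity argument. The two are essentially Fubini-related, but yours is the more self-contained and arguably cleaner write-up, and the intermediate identity $|1-q^{a+\mathrm{i}y}|^2/(1-q^a)^2=1+\sin^2(\varepsilon y/2)/\sinh^2(\varepsilon a/2)$ is a nice simplification the paper does not use.
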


\begin{proof}
    We prove the result for $d > 1$ as this also implies the general case by means of the functional relation for the $q$-Gamma function \eqref{eq:q_gamma_functional_eq}. We also assume that $y\ge 0$ with no loss of generality, since $\overline{\Gamma_q(z)} = \Gamma_q(\overline{z})$.
    
    Define the function
    \begin{equation} \label{eq:phi}
    \begin{split}
        \phi_\varepsilon(d,y) &\coloneqq - \frac{\diff}{\diff y} \log \left| 1-e^{-\varepsilon (d+ \mathrm{i}y)} \right|
        \\
        & = - \frac{ \varepsilon e^{-\varepsilon d} \sin (\varepsilon y)  }{1+e^{-2 \varepsilon d} - 2e^{-\varepsilon d} \cos(\varepsilon y)},
    \end{split}
    \end{equation}
    so that
    \begin{equation} \label{eq:d_dy_log_qGamma}
        \frac{\diff}{\diff y} \log \left| \Gamma_q(d+\mathrm{i}y) \right| = \sum_{k\ge 0} \phi_\varepsilon(d+k,y). 
    \end{equation}
    A simple inspection of the derivative of $\phi_\varepsilon(d,y)$ shows that it is a strictly decreasing function in the $d$ variable for $d>0$, whenever $y\in(0,\varepsilon^{-1} \pi)$. We can therefore compare the summation in the right hand side of \eqref{eq:d_dy_log_qGamma} with the integral of $\phi_\varepsilon(d+k,y)$ over $k$ using \cref{lem:inequalities_sum_integrals}, obtaining inequalities
    \begin{equation} \label{eq:inequalities_G}
        G_\varepsilon(d-1,y)
        \le
        \sum_{k\ge 0} \phi_\varepsilon(d+k,y)  \le G_\varepsilon(d,y),
    \end{equation}
    where
    \begin{equation} \label{eq:G}
    \begin{split}
        G_\varepsilon(d,y) &\coloneqq \int_0^\infty \phi_\varepsilon(d+k,y) \diff k
        \\
        & =
        \arctan \left( \cot (\varepsilon y) - \frac{e^{-\varepsilon d}}{\sin (\varepsilon y)} \right) - \arctan (\cot (\varepsilon y) ).
    \end{split}
    \end{equation}
    Here the equality in the second line follows from an exact evaluation of the integral. A display of inequalities \eqref{eq:inequalities_G} is given in \cref{fig:plot_G} (\rm{b}), where we set $d=1/2$. When $d > 0$, the function $y\mapsto G_\varepsilon(d,y)$ is convex in the region $y \in [0,\varepsilon^{-1} \pi]$, it reaches the value 0 only at $y=0$ and $y=\varepsilon^{-1} \pi$ and it has a unique minimum at $y_*=\varepsilon^{-1} \arccos(e^{-\varepsilon d})$, where it attains the value $-\arctan\left( e^{-\varepsilon d} (1-e^{-2 \varepsilon d} )^{1/2} \right) \sim -\pi/2$. Moreover, always from the explicit expression, we see that, at $y=0$, $G_\varepsilon$ possesses the Taylor expansion $G_\varepsilon(d,y) = -y \frac{\varepsilon}{1-e^{-\varepsilon d}} + o(y)$. All these properties imply that, when $d>0$, there exist constants $c_1,c_2$, possibly dependent on $d$ but independent of $\varepsilon$, such that
    \begin{equation}
        \int_0^y G_\varepsilon (d,s) \diff s \le  c_1 - c_2 y,
    \end{equation}
    for all $y \in [0,\varepsilon^{-1} \pi]$. We finally come to estimate the absolute value of the $q$-Gamma function as
    \begin{equation}
    \begin{split}
        \left| \Gamma_q(d+\mathrm{i}y) \right| &= \Gamma_q(d) e^{ \int_0^y \frac{\diff}{\diff s} \log  \left|  \Gamma_q(d+\mathrm{i}s) \right| \diff s  }
        \\
        &
        \le 
        \Gamma_q(d) e^{\int_0^y G_\varepsilon(d,s) \diff s} 
        \\
        &
        \le \Gamma_q(d) e^{c_1-c_2y} 
    \end{split}
    \end{equation}
    and this completes the proof.
\end{proof}

\begin{lemma} \label{lem:bound_ratio_qgamma}
    Let $\varepsilon > 0$, $q=e^{-\varepsilon}$ and fix $a,b \in \mathbb{R}$, with $a\notin \mathbb{Z}_{\ge 0}$. Then, there exists a constant $C$ depending only on $a,b$, such that
    \begin{equation} \label{eq:bound_ratio_qgamma}
        \left|\frac{\Gamma_q(a + \mathrm{i}y)}{\Gamma_q(b + \mathrm{i} y)}\right| \le C (1+|y|)^{a-b},
    \end{equation}
     for all $\varepsilon>0$ and $y \in [-\varepsilon^{-1} \pi, \varepsilon^{-1} \pi]$.
\end{lemma}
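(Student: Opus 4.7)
The plan is to reduce the problem to an explicit Fourier-type expansion of the log-ratio. Applying the series $-\log|1-z|=\mathrm{Re}\sum_{n\ge 1}z^n/n$ to $z=q^{c+k+\mathrm{i}y}$ for $c\in\{a,b\}$ and $k\ge 0$, summing over $k$, and using $1-\cos\theta=2\sin^2(\theta/2)$, gives
\begin{equation*}
\log\left|\frac{\Gamma_q(a+\mathrm{i}y)}{\Gamma_q(b+\mathrm{i}y)}\right| \;=\; \log\frac{\Gamma_q(a)}{\Gamma_q(b)} \;-\; 2S(y),\qquad S(y)\;=\;\sum_{n\ge 1}\frac{q^{na}-q^{nb}}{n(1-q^n)}\sin^2\!\left(\frac{n\varepsilon y}{2}\right).
\end{equation*}
By \cref{lem:convergence_qGamma} the first term on the right is bounded uniformly in $\varepsilon$, so the proof will reduce to showing $S(y)\ge \tfrac{b-a}{2}\log(1+|y|)-C$ (assuming $a\le b$, which is WLOG by passing to the reciprocal). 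Before that, I would reduce to $a,b\in(0,1]$ with $b-a\in(0,1]$ by iterating the functional equation $\Gamma_q(X+1)=\tfrac{1-q^X}{1-q}\Gamma_q(X)$: each step multiplies the ratio by a factor of the form $(1-q^{\alpha+\mathrm{i}y})/(1-q^{\beta+\mathrm{i}y})$ with $\alpha,\beta$ bounded away from non-positive integers, and the elementary sandwich $(1-q^\alpha)^2\le|1-q^{\alpha+\mathrm{i}y}|^2\le(1+q^\alpha)^2$ shows such factors have modulus bounded above and below uniformly in $\varepsilon$, hence can be absorbed in $C$ without affecting the exponent $a-b$. The case $|y|\le 1$ is then immediate from \cref{lem:convergence_qGamma} together with the classical Gamma bound, so I may assume $|y|\ge 1$ in what follows.

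For the lower bound on $S(y)$ I would exploit the concavity of $x\mapsto 1-q^{nx}$ on $[0,1]$, which gives the elementary inequality $1-q^{n(b-a)}\ge (b-a)(1-q^n)$ whenever $b-a\in[0,1]$. Factoring $q^{na}-q^{nb}=q^{na}(1-q^{n(b-a)})$ and applying this inequality term by term yields
\begin{equation*}
S(y)\;\ge\;(b-a)\sum_{n\ge 1}\frac{q^{na}}{n}\sin^2\!\left(\frac{n\varepsilon y}{2}\right)\;=\;\frac{b-a}{2}\log\frac{|1-q^{a+\mathrm{i}y}|}{1-q^a},
\end{equation*}
where the equality is the classical Fourier identity $\sum_{n\ge 1}\tfrac{r^n}{n}\sin^2(n\theta/2)=\tfrac{1}{2}\log\tfrac{|1-re^{\mathrm{i}\theta}|}{1-r}$, specialized at $r=q^a\in(0,1)$ and $\theta=\varepsilon y$. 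From $|1-q^{a+\mathrm{i}y}|^2=(1-q^a)^2+4q^a\sin^2(\varepsilon y/2)$, the inequalities $|\sin(\varepsilon y/2)|\ge \varepsilon|y|/\pi$ (valid since $\varepsilon|y|\le\pi$) and $1-q^a\le\varepsilon a$, one concludes that $|1-q^{a+\mathrm{i}y}|/(1-q^a)\ge c(1+|y|)$ for some $c>0$ depending only on $a$. Substituting back and exponentiating then delivers \eqref{eq:bound_ratio_qgamma}.

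The main technical obstacle will be bookkeeping: one must verify that all constants produced in the reduction step and in the explicit lower bound are genuinely uniform in $\varepsilon\in(0,\varepsilon_0)$. This amounts to applying elementary asymptotics such as $(1-q^\alpha)/(1-q)\to\alpha$ and $(1-q^\alpha)\asymp\varepsilon\alpha$ as $\varepsilon\to 0$ consistently at every step. Once this is in place, the concavity inequality and the classical Fourier identity do the rest of the work.
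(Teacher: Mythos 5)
Your central estimate is correct and follows a genuinely different route from the paper's: you use the exact bilateral expansion $\log\bigl|\Gamma_q(a+\mathrm{i}y)/\Gamma_q(b+\mathrm{i}y)\bigr|=\log\bigl(\Gamma_q(a)/\Gamma_q(b)\bigr)-2S(y)$, the chord form of concavity $1-q^{n(b-a)}\ge (b-a)(1-q^n)$, and the closed form of $\sum_n r^n\sin^2(n\theta/2)/n$, whereas the paper applies the mean value theorem in the real part of the argument and compares the resulting series $\sum_{k}\varphi_\varepsilon(a_*+k,y)$ with an explicitly computable integral (\cref{lem:inequalities_sum_integrals}). I verified the identity for $S(y)$, the concavity step, the Fourier identity, and the final lower bound $|1-q^{a+\mathrm{i}y}|/(1-q^a)\ge c(1+|y|)$; all are sound for $0<a<b\le a+1$ and $\varepsilon$ bounded above. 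The problems are entirely in your reduction to that case.

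First, ``$a\le b$ WLOG by passing to the reciprocal'' is not a valid reduction. The lemma asserts only an upper bound, and the reciprocal of the upper bound for the swapped pair $(b,a)$ is the \emph{lower} bound $|\Gamma_q(a+\mathrm{i}y)/\Gamma_q(b+\mathrm{i}y)|\ge C^{-1}(1+|y|)^{a-b}$, not the upper bound required when $a>b$. Your argument produces a lower bound on $S(y)$ and hence controls the ratio only when the smaller real part sits in the numerator; the case $a>b$ needs the complementary \emph{upper} bound on $S$ for the swapped pair, which does not follow from the chord inequality (one would instead use the tangent-line bound $1-q^{nx}\le n\varepsilon x$ and then control $\varepsilon/(1-q^n)-1/n$), and none of this appears in your write-up.

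Second, the claim that each functional-equation factor ``has modulus bounded above and below uniformly in $\varepsilon$, hence can be absorbed in $C$ without affecting the exponent'' is false. From $|1-q^{\alpha+\mathrm{i}y}|^2=(1-q^\alpha)^2+4q^\alpha\sin^2(\varepsilon y/2)$ one finds $|1-q^{\alpha+\mathrm{i}y}|/(1-q)\asymp 1+|y|$, which is $O(1)$ at $y=0$ but of order $\varepsilon^{-1}$ at $|y|=\pi/\varepsilon$; your sandwich $(1-q^\alpha)\le|1-q^{\alpha+\mathrm{i}y}|\le 1+q^\alpha$ has endpoints that are not comparable uniformly in $\varepsilon$. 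Since pushing $a$ and $b$ into $(0,1]$ generically requires shifting them by \emph{different} integers, unmatched factors of this type survive, and they are exactly what carries the change in the exponent $a-b$ under the shift (only a ratio of two such factors at the same $y$, i.e.\ the case of equal shifts, is uniformly bounded). The silver lining is that doing this bookkeeping honestly — each unmatched factor contributes $(1+|y|)^{\pm1}$ matching the unit change in $a-b$ — simultaneously repairs the first gap: one raises $b$ above $a$ via the functional equation, collects a factor $\asymp(1+|y|)$ per step, and then invokes your $a<b$ estimate. This is in substance the reduction the paper performs.
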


\begin{proof}
    It suffices to prove the inequality \eqref{eq:bound_ratio_qgamma} for $1<a<b$, as the general case can be easily recovered using the functional equation \eqref{eq:q_gamma_functional_eq}.
    
    Assume $1<a<b$. By Lagrange's mean value theorem we have
    \begin{equation} \label{eq:difference_log_qGamma}
    \begin{split}
        \log\left| \Gamma_q (a+\mathrm{i} y) \right| - \log\left| \Gamma_q (b+\mathrm{i} y) \right| &
        = (a-b) \frac{\diff}{\diff a} \log\left| \Gamma_q (a+\mathrm{i} y) \right| \bigg|_{a=a_*}
        \\
        &
        =
        (a - b) \left( - \log \left( 1-e^{-\varepsilon} \right) + \sum_{k\ge 0} \varphi_\varepsilon (a_*+k,y) \right),
    \end{split}
    \end{equation}
    for some $a_* \in [a,b]$, where we have defined the functions 
    \begin{equation} \label{eq:varphi}
        \varphi_\varepsilon(x,y) \coloneqq -\frac{\diff}{\diff x} \log \left| 1-e^{-\varepsilon (x+\mathrm{i} y)} \right|
        =
        \frac{\varepsilon (1-e^{\varepsilon x} \cos(\varepsilon y))}{1+ e^{2\varepsilon x} - 2 e^{\varepsilon x} \cos(\varepsilon y)}.
    \end{equation}
    Computing the derivative
    \begin{equation}
        \frac{\diff}{\diff x} \varphi_\varepsilon(x,y) = \frac{\varepsilon^2 e^{2 \varepsilon x} ( ( e^{\varepsilon x} + e^{-\varepsilon x}) \cos (\varepsilon y) - 2 )  }{(1+ e^{2\varepsilon x} - 2 e^{\varepsilon x} \cos(\varepsilon y))^2},
    \end{equation}
    we find that the function $k\mapsto \varphi_\varepsilon (x+k ,y)$ has critical point at
    \begin{equation}
        k_* = -x+ \varepsilon^{-1} \arccosh\left( \frac{1}{\cos(\varepsilon y)} \right),
    \end{equation}
    where by convention we fix $k_*=-\infty$ if $\cos(\varepsilon y) <0$. Restricting to values $k>0$ and $x>1$, we see that $\varphi_\varepsilon(x+k,y)$ is increasing in the interval $k\in [0,k_*\vee 0)$ and decreasing for $k \ge k_*\vee 0$, so that we can estimate the summation in the second line of \eqref{eq:difference_log_qGamma}, using \cref{lem:inequalities_sum_integrals} as
    \begin{equation}
    \begin{split}
        \sum_k \varphi_\varepsilon (x+k,y) &\ge \left( \int_{-1}^{[k_*]} + \int_{[k_*] +1}^{\infty} \right) \varphi_\varepsilon (x+k,y) \diff k + \varphi_\varepsilon (x+[k_*],y)
        \\
        &
        =
        \left(\int_{-1}^{\infty} - \int_{[k_*]}^{[k_*]+1} \right) \varphi_\varepsilon (x+k,y) \diff k + \varphi_\varepsilon (x+[k_*],y)
        \\
        &
        =
        \log \left| 1-e^{-\varepsilon (x-1 + \mathrm{i}y) } \right| -\int_{k_*}^{k_*+1} \varphi_\varepsilon (x+k,y) \diff k + \varphi_\varepsilon (x+[k_*],y),
    \end{split}
    \end{equation}
    if $k_*>0$ and
    \begin{equation}
        \sum_{k=0}^\infty \varphi_\varepsilon (x+k,y) \ge \int_{0}^\infty \varphi_\varepsilon (x+k,y) \diff k
        = \log \left| 1-e^{-\varepsilon (x + \mathrm{i}y) } \right|,
    \end{equation}
    if $k_*\le0$. Since the function $\varphi_\varepsilon(x,y)$ is bounded in absolute value for $x>1$ and any $y$, as it can be seen from the explicit expression \eqref{eq:varphi}, we can conclude that
    \begin{equation}
        \sum_{k=0}^\infty \varphi_\varepsilon (x+k,y) \ge \log \left| 1-e^{-\varepsilon (x-1 + \mathrm{i}y) } \right| + C,
    \end{equation}
    for some constant $C$ independent of $\varepsilon,y$. Substituting this bound in \eqref{eq:difference_log_qGamma} we obtain
    \begin{equation}
    \begin{split}
        \log\left| \frac{\Gamma_q (a+\mathrm{i} y)}{\Gamma_q (b+\mathrm{i} y)} \right| &
        \le
        (a - b) \left( \log \left| \frac{ 1-e^{-\varepsilon (a_*-1 + \mathrm{i}y) }}{1-e^{-\varepsilon}} \right| + C \right) \le C' + (a-b) \log( 1 + |y|),
    \end{split}
    \end{equation}
    for some other constant $C'$ independent of $\varepsilon,y$, which proves \eqref{eq:bound_ratio_qgamma}.
\end{proof}

Making use of the estimates developed throughout this subsection we are finally able to prove \cref{thm:fredholm_det_log_gamma}.

\subsection{Useful bounds for the proof of \cref{thm:fredholm_pfaff_log_gamma}}

In order to prove \cref{thm:fredholm_pfaff_log_gamma} we need a number of bounds on ratio of $q$-Gamma function that add to results already established in the previous subsection. The main technical result is given by the following proposition.

\begin{proposition} \label{prop:bound_qgamma_k11}
    Let $q=e^{-\varepsilon}$, for $\varepsilon > 0$ and consider complex numbers $Z=-d+\mathrm{i}u,W=-d+\mathrm{i}v$ with $d \in (0,1/2)$. Then, there exists a $C$ independent of $u,v,\varepsilon$, such that we have
    \begin{multline} \label{eq:bound_ratio_many_qGamma}
        \left| \frac{\Gamma_q(-2Z) \Gamma_q(-2W) \Gamma_q(1+Z+W) \Gamma_q(-W-Z)}{\Gamma_q(W-Z) \Gamma_q(1+Z-W)} \right|
        \\
        \le 
        C
        (1+2|u|)^{-2d -1/2} (1+2|v|)^{-2d -1/2},
    \end{multline}
    for all $u,v \in [-\varepsilon^{-1} \pi, \varepsilon^{-1} \pi]$.
\end{proposition}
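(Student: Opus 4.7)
The plan is to first collapse the six--factor ratio on the left--hand side into a single product of two $q$-Gamma functions and a ratio of four $q$-Pochhammer symbols, via the $q$-reflection formula, and then to bound each piece uniformly in $\varepsilon$ using Lemma~\ref{lem:decay_qGamma}, Lemma~\ref{lem:bound_ratio_qgamma}, and elementary inequalities.

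Applying the identity $\Gamma_q(z)\Gamma_q(1-z) = (1-q)(q;q)_\infty^2 / \bigl[(q^z;q)_\infty (q^{1-z};q)_\infty\bigr]$ once with $z = -W-Z$ and once with $z = W-Z$ yields
\[
\frac{\Gamma_q(1+Z+W)\,\Gamma_q(-W-Z)}{\Gamma_q(W-Z)\,\Gamma_q(1+Z-W)} \;=\; \frac{(q^{W-Z};q)_\infty\,(q^{1+Z-W};q)_\infty}{(q^{-W-Z};q)_\infty\,(q^{1+Z+W};q)_\infty}.
\]
Under the parameterization $Z = -d + \mathrm{i}u$, $W = -d+\mathrm{i}v$, the moduli of the four bases equal $1$, $q$, $q^{2d}$, $q^{1-2d}$, respectively, all of which lie in $(0,1]$. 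Thus the task reduces to bounding the product $\Gamma_q(-2Z)\Gamma_q(-2W)$ times this Pochhammer ratio.

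The next step is to control each factor. For the $q$-Pochhammer ratio I would rely on the elementary identity $|1 - re^{\mathrm{i}\theta}|^2 = (1-r)^2 + 4r\sin^2(\theta/2) \ge (1-r)^2$ for $r \in [0,1)$. This immediately gives the lower bounds $|(q^{-W-Z};q)_\infty| \ge (q^{2d};q)_\infty$ and $|(q^{1+Z+W};q)_\infty| \ge (q^{1-2d};q)_\infty$. For the numerator Pochhammers, one can use $|1 - q^k e^{\mathrm{i}\theta}| \le 1 + q^k$ on $k\ge 1$ and treat the $k = 0$ factors (which carry the $\sin(\varepsilon(u-v)/2)$ dependence) by hand. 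Matched against the Gamma reflection relating the numerator Pochhammers to those in the denominator, these bounds yield a ratio whose $q \to 1$ limit matches the classical factor $\sin[\pi(Z-W)]/\sin[\pi(Z+W)]$.

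For $\Gamma_q(-2Z)$ and $\Gamma_q(-2W)$, both of which have real part $2d \in (0,1)$, I would strengthen Lemma~\ref{lem:decay_qGamma} so that in addition to the exponential bound $e^{-c|y|}$ one extracts the Stirling--type polynomial prefactor. The derivative $\phi_\varepsilon(d,y)$ of~\eqref{eq:phi} and its integral $G_\varepsilon(d,y)$ of~\eqref{eq:G} already encode both the linear--in--$|y|$ and logarithmic--in--$|y|$ pieces of the Stirling expansion; a more careful bookkeeping at the second integration stage—estimating $G_\varepsilon(d,y)$ against $-\frac{\pi}{2} + O(|y|^{-1})$ rather than simply a constant—produces the improved uniform bound $|\Gamma_q(2d \pm 2\mathrm{i} u)| \le C(1+2|u|)^{2d-1/2} e^{-c|u|}$ for all $\varepsilon>0$ and $u \in [-\varepsilon^{-1}\pi,\varepsilon^{-1}\pi]$.

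The main obstacle I anticipate is the last, combining step. A naive multiplication of the individual bounds produced above does not directly yield the polynomial rate $(1+2|u|)^{-2d-1/2}(1+2|v|)^{-2d-1/2}$ claimed; one must carefully harvest the additional $|u|$- and $|v|$-decay hidden in the Pochhammer numerator factors $|1 - e^{\mathrm{i}\varepsilon(u-v)}|$ and $|1 - q e^{-\mathrm{i}\varepsilon(u-v)}|$, especially in the delicate region where $u$ and $v$ have opposite signs and the exponential decay of $|\Gamma_q(-2Z)\Gamma_q(-2W)|$ is compensated by growth in the $q$-analog of $1/\sin[\pi(Z+W)]$. Resolving this cancellation uniformly in $\varepsilon$ — essentially a $q$-deformed version of the classical identity $\Gamma(-2Z)\Gamma(-2W)\sin[\pi(Z-W)]/\sin[\pi(Z+W)]$ — is the technical heart of the argument.
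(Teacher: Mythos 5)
Your route is genuinely different from the paper's, and, as you yourself flag, it is not complete: the ``combining step'' you defer is where the whole difficulty lives, and two of the intermediate bounds you propose to multiply together are not actually available uniformly in $\varepsilon$. First, the strengthened form of \cref{lem:decay_qGamma} you want, $|\Gamma_q(2d\pm 2\mathrm{i}u)|\le C(1+2|u|)^{2d-1/2}e^{-c|u|}$ for all $u\in[-\varepsilon^{-1}\pi,\varepsilon^{-1}\pi]$, is false near the endpoints: $\Gamma_q(x+\mathrm{i}y)$ is periodic in $y$ with period $2\pi/\varepsilon$, so $\Gamma_q(2d-2\mathrm{i}u)$ at $u=\pm\pi/\varepsilon$ equals $\Gamma_q(2d)=O(1)$, while your proposed right-hand side there is $O\bigl((2\pi/\varepsilon)^{2d-1/2}e^{-c\pi/\varepsilon}\bigr)\to 0$. (Note also that the argument $2d-2\mathrm{i}u$ leaves the range covered by \cref{lem:decay_qGamma} as soon as $|u|>\pi/(2\varepsilon)$.) Second, the ``elementary'' lower bounds for the denominator Pochhammers, e.g. $|(q^{-Z-W};q)_\infty|\ge(q^{2d};q)_\infty$, degenerate as $\varepsilon\to0$, since $(q^{2d};q)_\infty=(1-q)^{1-2d}(q;q)_\infty/\Gamma_q(2d)\to0$; bounding numerator and denominator separately therefore cannot produce a constant independent of $\varepsilon$, and the $\varepsilon$-divergences must cancel inside the ratio — which is again exactly the step you leave open.

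The paper avoids both problems by never estimating a single $\Gamma_q$ (or Pochhammer symbol) in isolation. It multiplies and divides each of the six factors by $\Gamma_q(\tfrac12+\mathrm{i}\,\Im(\cdot))$, so that every resulting ratio has numerator and denominator with the \emph{same} imaginary part and \cref{lem:bound_ratio_qgamma} applies; that lemma is the uniform-in-$\varepsilon$, periodicity-respecting substitute for Stirling, and it already yields the polynomial factors $(1+2|u|)^{2d-1/2}(1+2|v|)^{2d-1/2}$, the two $(u+v)$-ratios and the two $(u-v)$-ratios each cancelling to constants. What remains is the product $\Gamma_q(\tfrac12-2\mathrm{i}u)\Gamma_q(\tfrac12-2\mathrm{i}v)\Gamma_q(\tfrac12+\mathrm{i}(u+v))\Gamma_q(\tfrac12-\mathrm{i}(u+v))\big/\bigl(\Gamma_q(\tfrac12-\mathrm{i}(u-v))\Gamma_q(\tfrac12+\mathrm{i}(u-v))\bigr)$, which after the substitution $a=u+v$, $b=u-v$ is shown to be uniformly bounded by the dedicated \cref{lem:bound_ratio_gamma_a+b,lem:bound_ratio_gamma_b-a}; their proofs are precisely the region-by-region analysis (using the reflection $\Gamma_q(\tfrac12+\mathrm{i}(a+b))=\Gamma_q(\tfrac12+\mathrm{i}(2\pi/\varepsilon-a-b))$ and the convexity of $G_\varepsilon$) that encodes the cancellation you correctly identify as the technical heart. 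Incidentally, your suspicion that naive multiplication cannot reach the stated rate $(1+2|u|)^{-2d-1/2}(1+2|v|)^{-2d-1/2}$ is well founded: the paper's own chain of estimates delivers $(1+2|u|)^{2d-1/2}(1+2|v|)^{2d-1/2}$ times a constant (which is what the application in \cref{thm:fredholm_pfaff_log_gamma} actually needs, the additional decay coming from the prefactors $f(Z)f(W)$); the sign of $2d$ in the displayed exponent appears to be a slip rather than extra decay you are expected to extract.
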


We prove \eqref{eq:bound_ratio_many_qGamma} by combining a number of bounds on ratio of $q$-Gamma functions, that we show separately in the following lemmas.

\begin{lemma} \label{lem:bound_ratio_gamma_a+b}
    Let $\varepsilon> 0$, $q=e^{-\varepsilon}$. Then, we have
    \begin{equation} \label{eq:bound_ratio_gamma_a+b}
        \left| \frac{\Gamma_q(\frac{1}{2} + \mathrm{i} (a+b)) \Gamma_q(\frac{1}{2} + \mathrm{i} a)}{\Gamma_q(\frac{1}{2} + \mathrm{i} b)} \right| \le C
    \end{equation}
    for all $a,b \in [0,\varepsilon^{-1}\pi]$, where $C$ is a constant independent of $a,b,\varepsilon$. 
\end{lemma}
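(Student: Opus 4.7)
I would split into two regimes based on the value of $\varepsilon$. For $\varepsilon \ge \varepsilon_0$ with any fixed $\varepsilon_0 > 0$, the base $q = e^{-\varepsilon}$ is uniformly bounded away from $1$, so each factor $|1 - q^{k+1/2+\mathrm{i}y}|$ lies in $[1 - q^{k+1/2}, 1 + q^{k+1/2}]$. Combined with the product formula $\Gamma_q(X) = (1-q)^{1-X}(q;q)_\infty/(q^X;q)_\infty$, this yields two-sided bounds $C_1 \le |\Gamma_q(\tfrac{1}{2} + \mathrm{i}y)| \le C_2$ uniform in $y$, with constants depending only on $\varepsilon_0$, and \eqref{eq:bound_ratio_gamma_a+b} follows immediately.

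For the delicate regime $\varepsilon \in (0, \varepsilon_0)$, I would use the Jacobi triple product identity to express
\begin{equation*}
|\Gamma_q(\tfrac{1}{2}+\mathrm{i}y)|^2 = \frac{(1-q)(q;q)_\infty^3}{S(y)}, \qquad S(y) = \vartheta_3(-e^{-\mathrm{i}\varepsilon y};q),
\end{equation*}
and then apply Poisson summation to the theta function to get the modular expansion
\begin{equation*}
S(y) = \sqrt{\tfrac{2\pi}{\varepsilon}}\sum_{k\in\mathbb{Z}} e^{-2\pi^2(k - u(y))^2/\varepsilon}, \qquad u(y) = \tfrac{1}{2} - \tfrac{\varepsilon y}{2\pi}.
\end{equation*}
Since $a, b \in [0, \pi/\varepsilon]$, the parameters $u_1 = u(a)$ and $u_2 = u(b)$ lie in $[0, 1/2]$, while $u_3 = u(a+b) = u_1 + u_2 - 1/2$ lies in $[-1/2, 1/2]$. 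With $\alpha = 2\pi^2/\varepsilon$ and $d(u) = \mathrm{dist}(u, \mathbb{Z})$, the Gaussian sum defining $S(y)$ is dominated by the term of index closest to $u(y)$ uniformly in $\varepsilon \in (0, \varepsilon_0]$, so that $S(y) \asymp \sqrt{2\pi/\varepsilon}\, e^{-\alpha d(u(y))^2}$. Combined with the standard asymptotic $(1-q)(q;q)_\infty^3 \sim 2\pi\sqrt{2\pi/\varepsilon}\, e^{-\pi^2/(2\varepsilon)}$ (see \cite[Proposition~4.1.9]{BorodinCorwin2011Macdonald}), this yields $|\Gamma_q(\tfrac{1}{2}+\mathrm{i}y)|^2 \asymp e^{-\alpha(1/4 - d(u(y))^2)}$ with $\varepsilon$-independent implicit constants.

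Plugging these bounds into the ratio-squared in \eqref{eq:bound_ratio_gamma_a+b} reduces the lemma to the algebraic inequality $d(u_1)^2 + d(u_3)^2 - d(u_2)^2 \le 1/4$. Since $d(u_1) = u_1$, $d(u_2) = u_2$, and $d(u_3) = |u_1 + u_2 - 1/2|$, a direct expansion gives
\begin{equation*}
u_1^2 + (u_1 + u_2 - \tfrac{1}{2})^2 - u_2^2 = (u_1 + u_2)(2u_1 - 1) + \tfrac{1}{4} \le \tfrac{1}{4},
\end{equation*}
using $u_1 \le 1/2$ (so $2u_1 - 1 \le 0$) together with $u_1 + u_2 \ge 0$. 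The main obstacle is making the asymptotic estimates uniform as $\varepsilon \to 0$: a naive monotonicity argument based on the fact that $|\Gamma_q(\tfrac{1}{2} + \mathrm{i}y)|$ is decreasing on $[0,\pi/\varepsilon]$ fails because $\log|\Gamma_q(\tfrac{1}{2} + \mathrm{i}\pi/\varepsilon)| \sim -\pi^2/(4\varepsilon) \to -\infty$. The cancellation between the singular factor $e^{-\pi^2/(2\varepsilon)}$ from $(q;q)_\infty^3$ and the growth $e^{+\alpha d(u)^2}$ from $1/S$ becomes visible only after passing to the Poisson-summed (modular) form of $\vartheta_3$, and is enforced by the linear constraint $u_3 = u_1 + u_2 - 1/2$ inherited from the additivity on the $\Gamma$-side.
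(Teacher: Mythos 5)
Your proof is correct, but it takes a genuinely different route from the paper's. The paper argues entirely in "physical space": it splits the square $[0,\varepsilon^{-1}\pi]^2$ into five regions, dispatches the easy ones using the monotonicity of $y\mapsto|\Gamma_q(\tfrac12+\mathrm{i}y)|$ on $[0,\varepsilon^{-1}\pi]$ together with the reflection $\Gamma_q(\tfrac12+\mathrm{i}(a+b))=\Gamma_q(\tfrac12+\mathrm{i}(\tfrac{2\pi}{\varepsilon}-a-b))$, and in the two hard regions reduces the claim to the integral inequality $\int_0^a\Phi_\varepsilon\le\int_{\pi/\varepsilon-a}^{\pi/\varepsilon}\Phi_\varepsilon$ for $\Phi_\varepsilon=\tfrac{\diff}{\diff y}\log|\Gamma_q(\tfrac12+\mathrm{i}y)|$, which it proves by sandwiching $\Phi_\varepsilon$ between the explicit functions $G_\varepsilon(\pm\tfrac12,\cdot)$ and checking a resulting polynomial inequality for small $\varepsilon$. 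You instead pass to the "modular side": the identity $|\Gamma_q(\tfrac12+\mathrm{i}y)|^2=(1-q)(q;q)_\infty^3/\vartheta_3(-e^{-\mathrm{i}\varepsilon y};q)$ is correct (it follows from \eqref{eq:q_gamma} and \eqref{eq:jacobi_theta}), Poisson summation and the standard $(q;q)_\infty$ asymptotic give $|\Gamma_q(\tfrac12+\mathrm{i}y)|^2\asymp e^{-\alpha(1/4-d(u(y))^2)}$ uniformly, and the constraint $u_3=u_1+u_2-\tfrac12$ reduces the lemma to the one-line inequality $(u_1+u_2)(2u_1-1)+\tfrac14\le\tfrac14$. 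Your approach is shorter, makes the exponent bookkeeping exact, and shows where the bound saturates ($a=0$ or $a=b=\varepsilon^{-1}\pi$); the paper's is longer but deliberately avoids modular/special-function input, in line with its stated aim of keeping the $q$-Pochhammer estimates elementary and self-contained (though it does invoke the same $(q;q)_\infty$ expansion elsewhere, e.g.\ in \cref{lem:convergence_gumbel}). If you write yours up, make explicit that $\vartheta_3(-e^{-\mathrm{i}\varepsilon y};q)=(q;q)_\infty|(\sqrt q e^{\mathrm{i}\varepsilon y};q)_\infty|^2>0$, and that the Gaussian sum is within an absolute factor of its largest term even when $d(u)=\tfrac12$ (two terms then tie, but the tail is still controlled uniformly for $\varepsilon\le\varepsilon_0$).
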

\begin{proof}
Based on the value of $a,b$ we distinguish five regions, visualized in \cref{fig:plot_G} (a) and we prove the desired inequality in each of these regions.

\medskip

(\colorbox{gray!50}{\textcolor{gray!50}{$\cdot$}}) When $0 \le a \le b \le \frac{\pi}{\varepsilon}$, we have $|\Gamma_{q}(\frac{1}{2}+ \mathrm{i}a)| \le |\Gamma_{q}(\frac{1}{2}+ \mathrm{i}b)|$ and \eqref{eq:bound_ratio_gamma_a+b} holds since is $\Gamma_{q}(\frac{1}{2}+ \mathrm{i}(a+b))$ bounded; this case correspond to the gray area in \cref{fig:plot_G} (a). 

\medskip

(\colorbox{yellow}{\textcolor{yellow}{$\cdot$}}) When $a<b$ and $ a+b \le \frac{\pi}{\varepsilon}$, corresponding to the yellow region, we have $|\Gamma_{q}(\frac{1}{2}+ \mathrm{i}(a+b))| \le |\Gamma_{q}(\frac{1}{2}+ \mathrm{i}b)|$ and again \eqref{eq:bound_ratio_gamma_a+b} holds.

\medskip

(\colorbox{violet!50}{\textcolor{violet!50}{$\cdot$}}) The case $a<b, a+b > \frac{\pi}{\varepsilon}$ and $ b \le \frac{2\pi}{\varepsilon} -a -b$, corresponds to the violet region. Since
    \begin{equation} \label{eq:q_gamma_reflection_pi}
        \Gamma_q(\frac{1}{2} + \mathrm{i}(a+b)) = \Gamma_q(\frac{1}{2} + \mathrm{i}(\frac{2\pi}{\varepsilon} -a-b))
    \end{equation}
    as in the previous case we have $|\Gamma_q(\frac{1}{2} + \mathrm{i}(a+b))| \le |\Gamma_q(\frac{1}{2} + \mathrm{i}b)|$ and \eqref{eq:bound_ratio_gamma_a+b} holds.

\medskip

(\colorbox{red!50}{\textcolor{red!50}{$\cdot$}}) The red region corresponds to choice $a+b > \frac{\pi}{\varepsilon}$  and $a<\frac{2\pi}{\varepsilon} -a-b<b $. Here we find that the denominator in \eqref{eq:bound_ratio_gamma_a+b} is smaller than either factors in the numerator and in this case the desired inequality will be proven by carefully taking into account each term in the expression. Using again \eqref{eq:q_gamma_reflection_pi}, we write
    \begin{equation} \label{eq:log_qgamma_a+b_integral}
        \log \left| \frac{\Gamma_q(\frac{1}{2} + \mathrm{i} (a+b)) \Gamma_q(\frac{1}{2} + \mathrm{i} a)}{\Gamma_q(\frac{1}{2} + \mathrm{i} b)} \right| = \log | \Gamma_q(\frac{1}{2}) | + \left( \int_0^a - \int_{\frac{2\pi}{\varepsilon} - a-b}^b \right) \Phi_\varepsilon(y)\diff y,
    \end{equation}
    where
    \begin{equation}
        \Phi_\varepsilon(y) = \frac{\diff}{\diff y} \log|\Gamma_q(\frac{1}{2} + \mathrm{i} y)|.
    \end{equation}
    Since $\Phi_\varepsilon(y)$ is negative for $y\in(0,\frac{\pi}{\varepsilon})$ we can estimate the second integral in the right hand side of \eqref{eq:log_qgamma_a+b_integral} by enlarging the integration domain as
    \begin{equation}
        \int_{\frac{2\pi}{\varepsilon} - a-b}^b \Phi_\varepsilon(y) \diff y \ge \int_{\frac{\pi}{\varepsilon} - a}^{\frac{\pi}{\varepsilon}} \Phi_\varepsilon(y)\diff y.
    \end{equation}
    Plugging the previous inequality in \eqref{eq:log_qgamma_a+b_integral}, we find that \eqref{eq:bound_ratio_gamma_a+b} holds, for instance, if
    \begin{equation} \label{eq:inequality_integral_Phi}
        \int_0^a \Phi_\varepsilon (y) \diff y \le \int_{\frac{\pi}{\varepsilon} - a}^\frac{\pi}{\varepsilon} \Phi_\varepsilon (y) \diff y,
        \qquad
        \text{for all } a\in[0,\frac{2\pi}{3\varepsilon}]
    \end{equation}
    and this is what we aim to show. We make use of the estimate \eqref{eq:inequalities_G} to bound integrands in both sides of \eqref{eq:inequality_integral_Phi}. In particular, as shown in \cref{fig:plot_G} (b), we see that
    \begin{equation} \label{eq:G_upper_bound}
        \Phi_\varepsilon(y) \le G_\varepsilon(1/2,y_*)  \left[ \mathbf{1}_{[0,y_*]}(y) \frac{y}{y_*} + \mathbf{1}_{(y_*,\frac{\pi}{\varepsilon}]}(y) \frac{\frac{\pi}{\varepsilon} -y}{\frac{\pi}{\varepsilon} - y_*}  \right],
    \end{equation}
    where $y_*=\varepsilon^{-1} \arccos(e^{-\varepsilon/2})$ is the critical point of $G_\varepsilon(1/2,y)$ and 
    \begin{equation}\label{eq:G_lower_bound}
        \Phi_\varepsilon(y) \ge G_\varepsilon(-1/2,y) \ge -\varepsilon \frac{\frac{\pi}{\varepsilon}-y}{2e^{-\varepsilon/2}},
        \qquad
        \text{for } y\in[\frac{\pi}{3\varepsilon},\frac{\pi}{\varepsilon}].
    \end{equation}
    The first inequality follows from convexity of $G_\varepsilon(1/2,y)$, while the second can be derived bounding the derivative $\frac{\diff}{\diff y}G_\varepsilon(1/2,y)$ in the indicated region of $y$.
    In order to prove \eqref{eq:inequality_integral_Phi}, we can improve such inequality by replacing the integrands with the right hand sides of \eqref{eq:G_upper_bound} and \eqref{eq:G_lower_bound}. Computing the integrals we find that \eqref{eq:inequality_integral_Phi} holds, if
    \begin{equation}
        G_\varepsilon (1/2,y_*) \left[\mathbf{1}_{[0,y_*]}(a) \frac{a^2}{2y_*} + \mathbf{1}_{(y_*,\frac{\pi}{\varepsilon}]}(a) \frac{\pi(2a-y_*)-\varepsilon a^2}{2(\pi-\varepsilon y_*)} \right] \le -\frac{a^2 \varepsilon e^{-\varepsilon/2}}{2},
    \end{equation}
    for all $a\in[0,\frac{2\pi}{3 \varepsilon}]$. The previous polynomial inequality can at this point easily verified for $\varepsilon$ small enough, noticing that $y_*=\varepsilon^{-1/2} + O(\varepsilon^{1/2})$ and $G_\varepsilon(1/2,y_*)=-\frac{\pi}{2} + O(\varepsilon^{1/2})$. This proves \eqref{eq:inequality_integral_Phi} and hence \eqref{eq:bound_ratio_gamma_a+b} for the region $a+b>\frac{\pi}{\varepsilon}$ and $a <\frac{2\pi}{\varepsilon} -a-b < b$.

    \medskip
    
    (\colorbox{blue!50}{\textcolor{blue!50}{$\cdot$}}) Lastly we consider the blue region in \cref{fig:plot_G} (a), corresponding to choices $a+b>\frac{\pi}{\varepsilon}$ and $\frac{2 \pi}{\varepsilon} -a-b < a<b$. We use once again the integral expression \eqref{eq:log_qgamma_a+b_integral} and given our conditions on $a,b$ we need to estimate the difference
    \begin{equation} \label{eq:difference_integral_Phi}
        \left( \int_0^{\frac{2\pi}{\varepsilon} - a- b} - \int_a^b \right) \Phi_\varepsilon(y) \diff y.
    \end{equation}
    As a function of $b$, \eqref{eq:difference_integral_Phi} is an increasing function in the region $b\in[\frac{2\pi}{3\varepsilon},\frac{\pi}{\varepsilon}]$ and therefore we have
    \begin{equation}
        \left( \int_0^{\frac{2\pi}{\varepsilon} - a- b} - \int_a^b \right) \Phi_\varepsilon(y) \diff y \le \left( \int_0^{\frac{\pi}{\varepsilon} - a} - \int_a^{\frac{\pi}{\varepsilon}} \right) \Phi_\varepsilon(y) \diff y.
    \end{equation}
    By a change of variable $a'=\frac{\pi}{\varepsilon} - a \in [0,\frac{\pi}{2 \varepsilon}]$, we recognize that the term in the right hand side is negative as a result of \eqref{eq:inequality_integral_Phi} and this shows \eqref{eq:bound_ratio_gamma_a+b} for the region of parameters $a,b$ under consideration, concluding the proof.
\end{proof}

\begin{figure}[ht]
        \centering
        \subfloat[]{{\includegraphics[scale=.7]{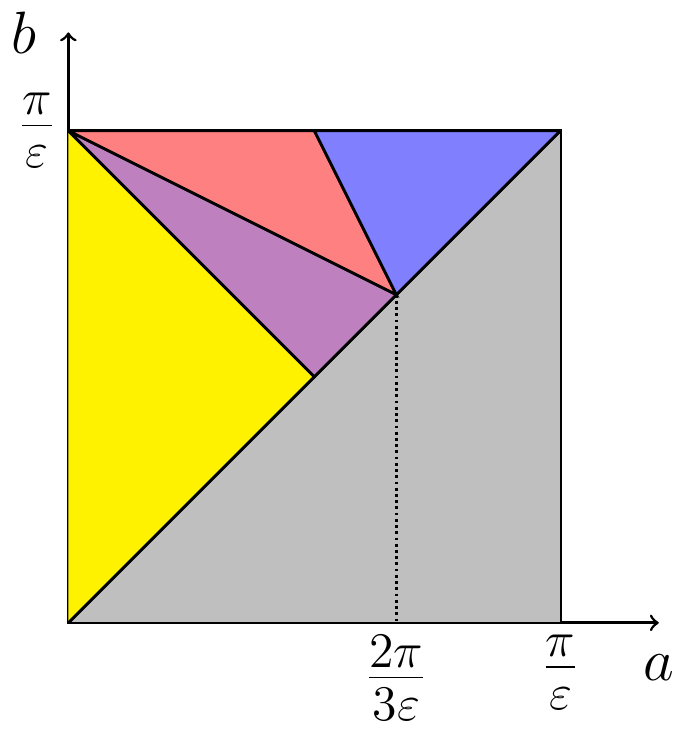}}}%
        \hspace{.7cm}
        \subfloat[]{{\includegraphics[scale=.6]{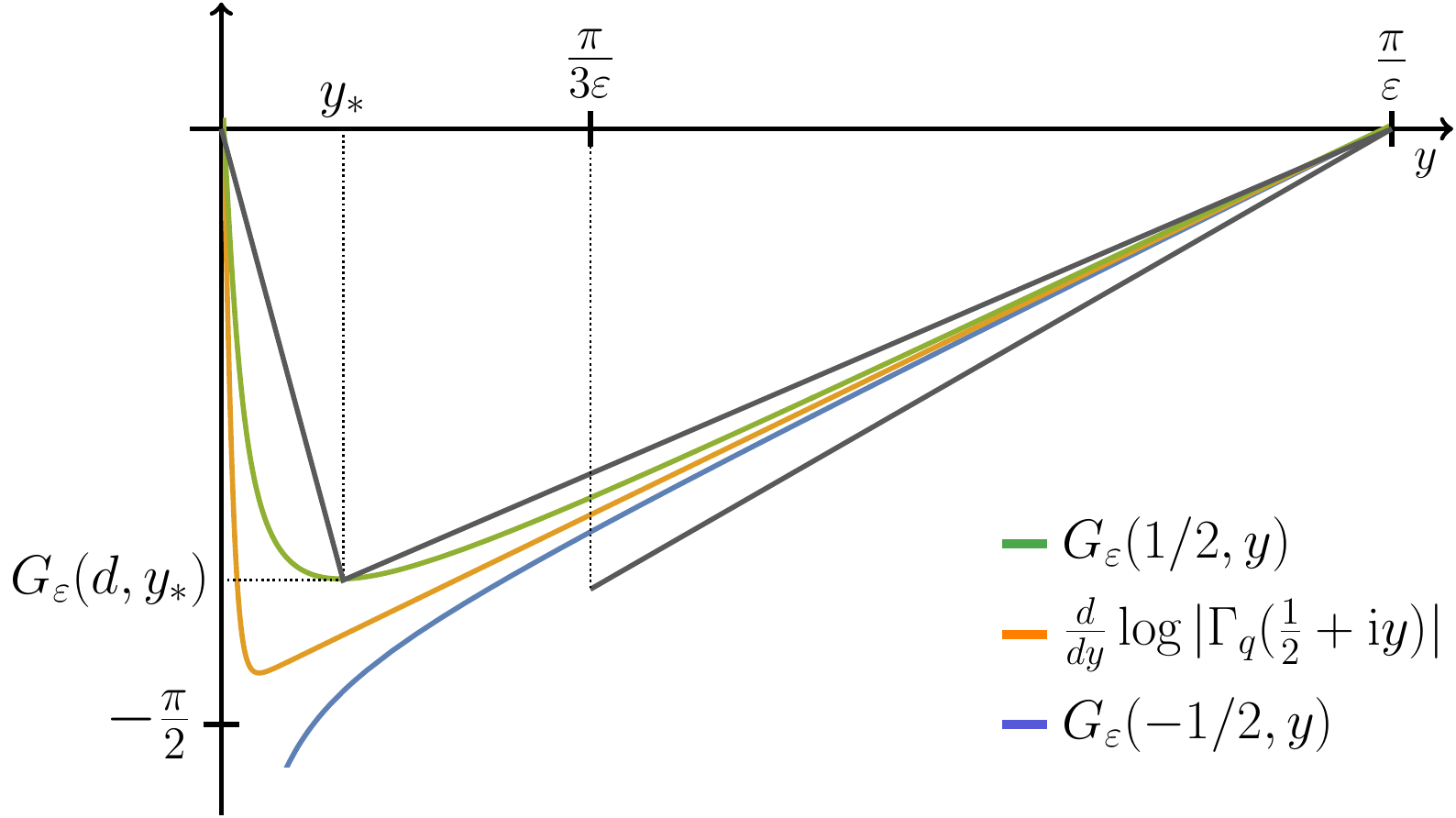} }}%
        \caption{On the left panel we see a breakdown of regions for parameters $a,b$ considered in the proof of \cref{lem:bound_ratio_gamma_a+b}. On the right panel we see a plot of the function $G_\varepsilon(d,y)$ defined in \eqref{eq:G} for $d=\pm1/2$ and a display of inequalities \eqref{eq:inequalities_G}.}
        \label{fig:plot_G}
    \end{figure}

\begin{lemma} \label{lem:ratio_qgamma_q2gamma}
    Let $\varepsilon>0, q=e^{-\varepsilon}$. Then, we have
    \begin{equation}
        \left| \frac{\Gamma_q(1+\mathrm{i} a)}{\Gamma_{q^2}(1+\mathrm{i} a)} \right| \le 1,
    \end{equation}
    for all $a\in[0,\varepsilon^{-1} \pi]$.
\end{lemma}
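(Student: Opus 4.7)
The plan is to reduce the claim to an elementary factor‑by‑factor comparison on an infinite product of the form $|1+q^{k+\mathrm{i}a}|^{2}\le(1+q^{k})^{2}$, which then follows immediately from $\cos(\varepsilon a)\le 1$. Using $\Gamma_{q}(z)=(1-q)^{1-z}(q;q)_\infty/(q^{z};q)_\infty$ and noting that the factor $(1-q)^{-\mathrm{i}a}/(1-q^{2})^{-\mathrm{i}a}$ has modulus $1$, I would first write
\begin{equation*}
\left|\frac{\Gamma_q(1+\mathrm{i}a)}{\Gamma_{q^{2}}(1+\mathrm{i}a)}\right|^{2}
=\frac{(q;q)_\infty^{2}\,|(q^{2+2\mathrm{i}a};q^{2})_\infty|^{2}}{(q^{2};q^{2})_\infty^{2}\,|(q^{1+\mathrm{i}a};q)_\infty|^{2}}.
\end{equation*}

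The key idea is to split everything on the base $q^{2}$. I would apply the factorisations $(q;q)_\infty=(q;q^{2})_\infty (q^{2};q^{2})_\infty$ and $(q^{1+\mathrm{i}a};q)_\infty=(q^{1+\mathrm{i}a};q^{2})_\infty(q^{2+\mathrm{i}a};q^{2})_\infty$, and then use the elementary identity
\begin{equation*}
1-q^{2j+2\mathrm{i}a}=(1-q^{j+\mathrm{i}a})(1+q^{j+\mathrm{i}a})
\qquad(j\ge 1),
\end{equation*}
which yields $|1-q^{2j+2\mathrm{i}a}|^{2}=|1-q^{j+\mathrm{i}a}|^{2}\,|1+q^{j+\mathrm{i}a}|^{2}$. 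After inserting this into the numerator and grouping the remaining $|1-q^{k+\mathrm{i}a}|^{2}$ factors by the parity of $k$, the contributions from $|1-q^{k+\mathrm{i}a}|^{2}$ telescope completely, leaving only the factors $|1+q^{j+\mathrm{i}a}|^{2}$ and the real Pochhammer $(q;q^{2})_\infty^{2}$. Combined with the classical identity $(q;q^{2})_\infty(-q;q)_\infty=1$ (a direct consequence of $(q^{2};q^{2})_\infty=(q;q)_\infty(-q;q)_\infty$), this will deliver the compact expression
\begin{equation*}
\left|\frac{\Gamma_q(1+\mathrm{i}a)}{\Gamma_{q^{2}}(1+\mathrm{i}a)}\right|^{2}=\frac{|(-q^{1+\mathrm{i}a};q)_\infty|^{2}}{(-q;q)_\infty^{2}}.
\end{equation*}

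From this identity the conclusion is immediate: since
\begin{equation*}
|1+q^{k+\mathrm{i}a}|^{2}=1+q^{2k}+2q^{k}\cos(\varepsilon a)\;\le\;1+q^{2k}+2q^{k}=(1+q^{k})^{2}
\end{equation*}
holds term‑by‑term for every $k\ge 1$, taking the product over $k\ge 1$ gives $|(-q^{1+\mathrm{i}a};q)_\infty|\le(-q;q)_\infty$, and hence the desired bound $|\Gamma_q(1+\mathrm{i}a)/\Gamma_{q^{2}}(1+\mathrm{i}a)|\le 1$ (with equality only at $a=0$).

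The only real work is the bookkeeping in the telescoping step: one must check that the factors of the form $|1-q^{k+\mathrm{i}a}|^{2}$ appearing in the numerator (from the identity for $j\ge 1$) and in the denominator (from $|(q^{1+\mathrm{i}a};q)_\infty|^{2}$, split into odd and even $k$) cancel in exactly the right way, leaving the clean product of $|1+q^{j+\mathrm{i}a}|^{2}$. Note also that since we restrict $a\in[0,\varepsilon^{-1}\pi]$, all $q^{\mathrm{i}a}$ range over the unit upper semicircle, so no convergence or branch issue arises in the above manipulations, which justifies the rearrangements of absolutely convergent products.
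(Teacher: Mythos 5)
Your proof is correct. The paper proves the same inequality differently: it writes $\log\bigl|\Gamma_q(1+\mathrm{i}a)/\Gamma_{q^2}(1+\mathrm{i}a)\bigr|$ as $\int_0^a\sum_{k\ge1}\bigl(\phi_\varepsilon(k,y)-\phi_{2\varepsilon}(k,y)\bigr)\,\diff y$ with the function $\phi_\varepsilon$ from \eqref{eq:phi}, evaluates the difference explicitly as $-\varepsilon e^{-\varepsilon k}\sin(\varepsilon y)/\bigl(1+e^{-2\varepsilon k}+2e^{-\varepsilon k}\cos(\varepsilon y)\bigr)$, and observes that this is negative for $0<y<\varepsilon^{-1}\pi$, so the logarithm is nonincreasing from its value $0$ at $a=0$. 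The two arguments are really two faces of the same observation: the paper's integrand is exactly $\frac{\diff}{\diff y}\log|1+q^{k+\mathrm{i}y}|$ summed over $k$, i.e.\ the derivative form of your identity $\bigl|\Gamma_q(1+\mathrm{i}a)/\Gamma_{q^2}(1+\mathrm{i}a)\bigr|=|(-q^{1+\mathrm{i}a};q)_\infty|/(-q;q)_\infty$. What your route buys is a clean closed-form expression for the ratio and a termwise bound $|1+q^{k+\mathrm{i}a}|^2=1+q^{2k}+2q^k\cos(\varepsilon a)\le(1+q^k)^2$ that requires no sign analysis and in fact proves the inequality for all real $a$, not just $a\in[0,\varepsilon^{-1}\pi]$ (the restriction is genuinely needed in the paper's derivative argument, where the sign of $\sin(\varepsilon y)$ matters). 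What the paper's route buys is uniformity of technique: the function $\phi_\varepsilon$ and the derivative-of-$\log|\Gamma_q|$ machinery are reused across the neighbouring Lemmas in \cref{sec:LG}, so the authors stay inside one toolkit. Your telescoping step can even be shortened: the single factorisation $(q^{2+2\mathrm{i}a};q^2)_\infty=(q^{1+\mathrm{i}a};q)_\infty(-q^{1+\mathrm{i}a};q)_\infty$ cancels the denominator $(q^{1+\mathrm{i}a};q)_\infty$ outright, with no need to split it by parity.
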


\begin{proof}
    We write
    \begin{equation}
        \begin{split}
            \log \left| \frac{\Gamma_q(1+\mathrm{i} a)}{\Gamma_{q^2}(1+\mathrm{i} a)} \right| &= \int_0^a  \frac{\diff}{\diff y} \log \left| \frac{\Gamma_q(1+\mathrm{i} y)}{\Gamma_{q^2}(1+\mathrm{i} y)} \right| \diff y
            \\
            &=
            \int_0^a \sum_{k\ge 1} \left( \phi_\varepsilon (k,y) - \phi_{2\varepsilon} (k,y) \right) \diff y,
        \end{split}
    \end{equation}
    where the function $\phi_\varepsilon$ was introduced in \eqref{eq:phi}. Evaluating explicitly the difference of the two $\phi_\varepsilon$ functions in the integrand we find
    \begin{equation}
        \phi_\varepsilon (k,y) - \phi_{2\varepsilon}  (k,y) = - \frac{\varepsilon e^{- \varepsilon k} \sin (\varepsilon y) }{1+ e^{-2 \varepsilon k} + 2 e^{- \varepsilon k} \cos(\varepsilon y)},
    \end{equation}
    which is negative for $0<y<\varepsilon^{-1} \pi$. This proves the desired inequality.
\end{proof}

\begin{lemma}\label{lem:ratio_qgamma_a}
    Let $\varepsilon>0, q=e^{-\varepsilon}$. Then, we have
    \begin{equation} \label{eq:ratio_qgamma_a}
        \left| \frac{\Gamma_q(\frac{1}{2}+\mathrm{i} a)^2}{\Gamma_{q}(\frac{1}{2} + 2\mathrm{i}  a)} \right| \le C,
    \end{equation}
    for all $a\in[0,\varepsilon^{-1} \pi /2]$, where $C$ is a constant independent of $a,\varepsilon$.
\end{lemma}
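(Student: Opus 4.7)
Following the template of Lemma B.5 and Lemma B.6, I would rewrite the logarithm of the ratio in the lemma as an integral of the derivative of $\log|\Gamma_q|$:
\begin{equation*}
\log\left|\frac{\Gamma_q(\tfrac12+ia)^2}{\Gamma_q(\tfrac12+2ia)}\right|=\log\Gamma_q(\tfrac12)+\int_0^a\bigl[2\,\Phi_\varepsilon(y)-2\,\Phi_\varepsilon(2y)\bigr]\,dy,
\end{equation*}
where, in the notation of \eqref{eq:phi},
\begin{equation*}
\Phi_\varepsilon(y):=\tfrac{d}{dy}\log|\Gamma_q(\tfrac12+iy)|=\sum_{k\ge0}\phi_\varepsilon(k+\tfrac12,y).
\end{equation*}
The plan is to prove that this integral is bounded uniformly in $\varepsilon>0$ and $a\in[0,\varepsilon^{-1}\pi/2]$.

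The input for this is the two-sided estimate $G_\varepsilon(-\tfrac12,y)\le\Phi_\varepsilon(y)\le G_\varepsilon(\tfrac12,y)$, which was established inside the proof of Lemma B.5 with $G_\varepsilon$ the explicit function of \eqref{eq:G}. Recall that $G_\varepsilon(d,\cdot)$ vanishes at $y=0$ and at $y=\varepsilon^{-1}\pi$, is decreasing then increasing with a single interior minimum at $y_\ast:=\varepsilon^{-1}\arccos(e^{-\varepsilon d})$, where its value tends to $-\pi/2$ as $\varepsilon\downarrow 0$. I would split the range of $a$ into three regimes: $(\mathrm{i})$ $a\le y_\ast$, where both $y$ and $2y$ lie on the initial decreasing portion of $\Phi_\varepsilon$ and the integrand is controlled by an explicit small negative quantity; $(\mathrm{ii})$ $y_\ast\le a\le\varepsilon^{-1}\pi/3$, where one of $y$ or $2y$ may pass the critical point but both remain in the principal half-period; and $(\mathrm{iii})$ $\varepsilon^{-1}\pi/3\le a\le\varepsilon^{-1}\pi/2$, where $2y$ approaches the minimum $\pi/\varepsilon$ of $\Phi_\varepsilon$.

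The main obstacle will be case $(\mathrm{iii})$: here $\Phi_\varepsilon(2y)$ attains its largest negative values, and direct pointwise comparison with $\Phi_\varepsilon(y)$ is insufficient. The strategy is to exploit the reflection symmetry $|\Gamma_q(\tfrac12+iy)|=|\Gamma_q(\tfrac12+i(2\pi/\varepsilon-y))|$ to rewrite the contribution of $\Phi_\varepsilon(2y)$ near $2y=\pi/\varepsilon$ as a contribution on the monotone portion, then use a \emph{difference-of-integrals} argument very close in spirit to the blue and red regions of the proof of Lemma B.6 (in particular the polynomial inequality leading to \eqref{eq:inequality_integral_Phi}). A convenient alternative would be to first apply the $q$-Legendre duplication identity $\Gamma_q(2x)\,\Gamma_{q^2}(\tfrac12)=(1+q)^{2x-1}\Gamma_{q^2}(x)\Gamma_{q^2}(x+\tfrac12)$ with $2x=\tfrac12+ia$ and $2x=\tfrac12+2ia$; this rewrites the quotient as the product of a bounded prefactor and $\bigl|\Gamma_{q^2}(\tfrac14+\tfrac{ia}{2})^2\Gamma_{q^2}(\tfrac34+\tfrac{ia}{2})^2/[\Gamma_{q^2}(\tfrac14+ia)\Gamma_{q^2}(\tfrac34+ia)]\bigr|$, to which the same integral-comparison machinery (now with base $q^2$ and shifted real parts $\beta\in\{\tfrac14,\tfrac34\}$) applies verbatim.
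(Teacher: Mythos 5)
There is a genuine gap, and it sits exactly where you flag it, plus one place you do not. Your fallback via the duplication formula is the right key identity---it is what the paper actually uses---but the way you deploy it defeats its purpose. Applying duplication to \emph{both} numerator and denominator yields $\prod_{\beta\in\{1/4,3/4\}}\Gamma_{q^2}(\beta+\mathrm{i}a/2)^2/\Gamma_{q^2}(\beta+\mathrm{i}a)$, i.e.\ two copies of precisely the quotient you started from (a squared $q$-Gamma at half the imaginary part over a $q$-Gamma at the full imaginary part), so ``the same machinery applies verbatim'' is circular: nothing has been reduced. The paper applies duplication \emph{only to the denominator}, writing $\Gamma_q(\tfrac12+2\mathrm{i}a)=\mathsf{c}\,\Gamma_{q^2}(\tfrac14+\mathrm{i}a)\,\Gamma_{q^2}(\tfrac34+\mathrm{i}a)$. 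After this every Gamma factor carries the \emph{same} imaginary part $\mathrm{i}a$, and the quotient factors as $\bigl(\Gamma_q(\tfrac12+\mathrm{i}a)/\Gamma_q(1+\mathrm{i}a)\bigr)^2\cdot\bigl(\Gamma_q(1+\mathrm{i}a)/\Gamma_{q^2}(1+\mathrm{i}a)\bigr)^2\cdot\Gamma_{q^2}(1+\mathrm{i}a)/\Gamma_{q^2}(\tfrac14+\mathrm{i}a)\cdot\Gamma_{q^2}(1+\mathrm{i}a)/\Gamma_{q^2}(\tfrac34+\mathrm{i}a)$, up to a bounded constant. \Cref{lem:bound_ratio_qgamma} handles the equal-imaginary-part ratios with exponents $-1+\tfrac34+\tfrac14=0$, and \cref{lem:ratio_qgamma_q2gamma} handles the base change; no integral comparison is needed at all. (The restriction $a\le\varepsilon^{-1}\pi/2$ is exactly what makes the $q^2$-versions of these lemmas applicable.)

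Your primary route is not salvageable by the tools you cite. The inequality you need is $\int_0^a\Phi_\varepsilon(y)\,\diff y\le\int_a^{2a}\Phi_\varepsilon(y)\,\diff y+O(1)$, a \emph{two-sided} $O(1)$ comparison of two quantities each of size of order $a$. Substituting the envelopes $G_\varepsilon(\pm\tfrac12,\cdot)$ loses too much: one has $G_\varepsilon(\tfrac12,y)-\Phi_\varepsilon(y)\asymp 1/y$ for $1\ll y\ll\varepsilon^{-1}$ (the envelope relaxes to its plateau near $-\pi/2$ only algebraically, while $\Phi_\varepsilon$ does so exponentially fast), so $\int_0^a\bigl(G_\varepsilon(\tfrac12,y)-\Phi_\varepsilon(y)\bigr)\diff y$ grows like $\tfrac12\log a$, which is unbounded over $a\le\varepsilon^{-1}\pi/2$. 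This is why the analogous comparison in the paper, \eqref{eq:inequality_integral_Phi}, is only ever used as a \emph{one-sided} inequality with room to spare; it cannot be recycled for the tight comparison your case (iii) requires. If you insist on the integral route you would need a genuinely sharper pointwise control of $\Phi_\varepsilon(y)+\tfrac{\pi}{2}-\tfrac{\varepsilon y}{2}$, which is far more work than the one-line asymmetric duplication argument above.
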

\begin{proof}
    We use the duplication formula for the $q$-Gamma function
    \begin{equation}
        \Gamma_q(2z) = \mathsf{c}_{q,z} \Gamma_{q^2} (z) \Gamma_{q^2} (\frac{1}{2} + z),
    \end{equation}
    where $\mathsf{c}_{q,z}=\frac{(1+q)^{2z -1}}{(1-q^2)^{1/2} (q^2,-q;q)_\infty}$ and we have $\mathsf{c}_{q,z} \xrightarrow[q\to1]{} \frac{2^{2z-1}}{\sqrt{\pi}} $.
    Through simple algebraic manipulations we rewrite the ratio in the left hand side of \eqref{eq:ratio_qgamma_a} as
    \begin{equation}
        \begin{split}
            \frac{\Gamma_q(\frac{1}{2}+\mathrm{i} a)^2}{\Gamma_{q}(\frac{1}{2} + 2\mathrm{i}  a)} &= C_{q,a} \frac{\Gamma_{q}(\frac{1}{2}+\mathrm{i}a)^2}{ \Gamma_{q^2}(\frac{1}{4}+\mathrm{i}a) \Gamma_{q^2}(\frac{3}{4}+\mathrm{i}a) } 
            \\
            &
            = C_{q,a}
            \left( \frac{\Gamma_q(\frac{1}{2}+\mathrm{i}a)}{\Gamma_q(1+\mathrm{i}a)} \right)^2  \left( \frac{\Gamma_{q}(1+\mathrm{i}a)}{\Gamma_{q^2}(1+\mathrm{i}a)} \right)^2 \frac{\Gamma_{q^2}(1+\mathrm{i}a)}{\Gamma_{q^2}(\frac{1}{4}+\mathrm{i}a)} \frac{\Gamma_{q^2}(1+\mathrm{i}a)}{\Gamma_{q^2}(\frac{3}{4}+\mathrm{i}a)},
        \end{split}
    \end{equation}
    for some constant $C_{q,a}$, which is absolutely bounded in $q$. Each term in the right hand side can be bounded in absolute value through \cref{lem:bound_ratio_qgamma} and \cref{lem:ratio_qgamma_q2gamma} and the resulting inequality yields \eqref{eq:ratio_qgamma_a}.
\end{proof}

\begin{lemma} \label{lem:bound_ratio_gamma_b-a}
    Let $\varepsilon> 0$, $q=e^{-\varepsilon}$. Then, we have
    \begin{equation} \label{eq:bound_ratio_gamma_b-a}
        \left| \frac{\Gamma_q(\frac{1}{2} + \mathrm{i} (b-a)) \Gamma_q(\frac{1}{2} + \mathrm{i} a)}{\Gamma_q(\frac{1}{2} + \mathrm{i} b)} \right| \le C
    \end{equation}
    for all $a,b \in [0,\varepsilon^{-1}\pi]$, where $C$ is a constant independent of $a,b,\varepsilon$. 
\end{lemma}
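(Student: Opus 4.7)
The plan is to mirror the strategy used in the proof of \cref{lem:bound_ratio_gamma_a+b}. First, using the symmetry $|\Gamma_q(\tfrac{1}{2}+\mathrm{i}y)| = |\Gamma_q(\tfrac{1}{2}-\mathrm{i}y)|$ coming from $\overline{\Gamma_q(z)} = \Gamma_q(\bar z)$, I would reduce without loss of generality to the case $b \ge a \ge 0$. Setting $c := b - a$, one has $a$, $c$, and $a+c = b$ all lying in $[0, \pi/\varepsilon]$, and the claim reduces to
\begin{equation*}
\left| \frac{\Gamma_q(\tfrac{1}{2}+\mathrm{i} c)\, \Gamma_q(\tfrac{1}{2}+\mathrm{i} a)}{\Gamma_q(\tfrac{1}{2}+\mathrm{i} (a+c))} \right| \le C, \qquad a, c \ge 0, \; a + c \le \pi/\varepsilon.
\end{equation*}

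Following the blueprint of \eqref{eq:log_qgamma_a+b_integral} I would then write, with $\Phi_\varepsilon(y) = \frac{d}{dy}\log|\Gamma_q(\tfrac{1}{2}+\mathrm{i} y)| = \sum_{k\ge 0} \phi_\varepsilon(\tfrac{1}{2}+k, y)$,
\begin{equation*}
\log \left| \frac{\Gamma_q(\tfrac{1}{2}+\mathrm{i} c)\, \Gamma_q(\tfrac{1}{2}+\mathrm{i} a)}{\Gamma_q(\tfrac{1}{2}+\mathrm{i} (a+c))} \right| = \log|\Gamma_q(\tfrac{1}{2})| + \int_0^c \Phi_\varepsilon(y)\, dy - \int_a^{a+c} \Phi_\varepsilon(y)\, dy.
\end{equation*}
As a sanity check, in the classical limit $\varepsilon \to 0$ the right-hand side becomes $\tfrac{1}{2}\log\pi + \tfrac{1}{2}\log(1 + \tanh(\pi a)\tanh(\pi c))$ by the identity $|\Gamma(\tfrac{1}{2}+\mathrm{i}y)|^2 = \pi/\cosh(\pi y)$, which is uniformly bounded by $\tfrac{1}{2}\log(2\pi)$; so the target estimate is consistent.

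For the $q$-setting I would apply the sandwich $G_\varepsilon(-\tfrac{1}{2}, y) \le \Phi_\varepsilon(y) \le G_\varepsilon(\tfrac{1}{2}, y)$ from \eqref{eq:inequalities_G} to the two integrals separately, and run a case analysis on the position of $(a,c)$ in $[0,\pi/\varepsilon]^2$, analogous to \cref{fig:plot_G}(a). The natural cases are: (i) when $a+c \le \pi/(2\varepsilon)$, the Taylor expansion of $G_\varepsilon(\pm\tfrac{1}{2}, y)$ near $y = 0$, combined with the uniform bound $|G_\varepsilon(d,y)| \le \pi/2$ from the explicit formula \eqref{eq:G}, gives a direct $O(1)$ estimate; (ii) when $a+c$ is close to $\pi/\varepsilon$, one exploits the anti-symmetry of $\Phi_\varepsilon$ around $y = \pi/\varepsilon$ (which follows from $|\Gamma_q(\tfrac{1}{2}+\mathrm{i}y)| = |\Gamma_q(\tfrac{1}{2}+\mathrm{i}(2\pi/\varepsilon - y))|$) together with the substitution $(a, c) \mapsto (\pi/\varepsilon - (a+c), c)$ to reduce to case (i); and (iii) intermediate ranges are handled by an elementary polynomial inequality analogous to \eqref{eq:inequality_integral_Phi}, verified using the leading-order behavior $G_\varepsilon(\tfrac{1}{2}, y_*) = -\pi/2 + O(\sqrt{\varepsilon})$ and $y_* = \varepsilon^{-1/2} + O(\varepsilon^{1/2})$.

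The main obstacle will be the regime in which both $a$ and $c$ are of order $\pi/(2\varepsilon)$: here the approximations $\int_0^c \Phi_\varepsilon \approx -\pi c/2$ and $\int_a^{a+c} \Phi_\varepsilon \approx -\pi c/2$ are comparable in magnitude, so the two integrals cancel to leading order, and one must use both sides of the sandwich for $\Phi_\varepsilon$ in a coordinated way to extract the remaining $O(1)$ difference. This is the same type of delicate cancellation that appears in the red-region case of \cref{lem:bound_ratio_gamma_a+b}, and the plan is to push through the same style of explicit polynomial comparison used there.
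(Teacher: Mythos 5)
Your setup is sound as far as it goes (the reduction to $b\ge a$, the integral representation, the continuum sanity check), but the heart of the argument --- cases (i)--(iii) --- does not close, and the gap is quantitative, not cosmetic. What you must prove is an \emph{additive} $O(1)$ bound on $\int_0^c\Phi_\varepsilon-\int_a^{a+c}\Phi_\varepsilon$, i.e.\ on $\tilde F(a+c)-\tilde F(a)-\tilde F(c)$ with $\tilde F(x)=\int_0^x|\Phi_\varepsilon|$, where each of the three terms can separately be of size $O(\varepsilon^{-1})$. The sandwich \eqref{eq:inequalities_G} is too lossy for this: the gap between the two envelopes is $G_\varepsilon(\tfrac12,y)-G_\varepsilon(-\tfrac12,y)=-\int_{-1/2}^{1/2}\phi_\varepsilon(s,y)\,ds$, and integrating over $y\in[0,\pi/\varepsilon]$ gives $-\int_{-1/2}^{1/2}\bigl[\log|1-e^{-\varepsilon s}|-\log(1+e^{-\varepsilon s})\bigr]\diff s=\log(1/\varepsilon)+O(1)$. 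So replacing $\Phi_\varepsilon$ by its upper envelope in one integral and its lower envelope in the other necessarily costs $O(\log(1/\varepsilon))$, which is incompatible with a constant independent of $\varepsilon$. Concretely, the natural pointwise bound $|\Phi_\varepsilon(y+a)|-|\Phi_\varepsilon(y)|\le\max|\Phi_\varepsilon|-|\Phi_\varepsilon(y)|$ reduces the claim to $\int_0^{y_*}(\max|\Phi_\varepsilon|-|\Phi_\varepsilon(y)|)\,\diff y=O(1)$ with $y_*\approx\varepsilon^{-1/2}$; this is true because the genuine $\Phi_\varepsilon$ approaches its minimum like $-\tfrac{\pi}{2}\tanh(\pi y)$ (exponentially fast), but the envelope $G_\varepsilon(\tfrac12,y)$ approaches $-\pi/2$ only like $\arctan(1/(2y))\sim1/(2y)$, and $\int^{y_*}\diff y/(2y)=\tfrac14\log(1/\varepsilon)$. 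Your case (i) is also not a ``direct $O(1)$ estimate'': $a+c\le\pi/(2\varepsilon)$ still permits both integrals to be of size $\varepsilon^{-1}$, so a Taylor expansion at $y=0$ plus the sup bound $|G_\varepsilon|\le\pi/2$ proves nothing. And case (ii) rests on a misreading: since $b=a+c\le\pi/\varepsilon$ by hypothesis, no argument ever wraps past $\pi/\varepsilon$, so the reflection symmetry is irrelevant here (it was needed in \cref{lem:bound_ratio_gamma_a+b} only because $a+b$ could reach $2\pi/\varepsilon$).

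The paper sidesteps all of this with a structural reduction you are missing. For fixed $b$, the $a$-derivative of \eqref{eq:integal_Phi_b-a} is $\Phi_\varepsilon(a)-\Phi_\varepsilon(b-a)$, so the expression is maximized on the diagonal $a=b/2$, reducing the lemma to the one-parameter bound $|\Gamma_q(\tfrac12+\mathrm{i}a)^2/\Gamma_q(\tfrac12+2\mathrm{i}a)|\le C$ of \cref{lem:ratio_qgamma_a}. That bound is then obtained \emph{algebraically}, via the $q$-duplication formula $\Gamma_q(2z)=\mathsf{c}_{q,z}\Gamma_{q^2}(z)\Gamma_{q^2}(\tfrac12+z)$ combined with \cref{lem:bound_ratio_qgamma} and \cref{lem:ratio_qgamma_q2gamma}; the delicate cancellation you are trying to extract by real-variable estimates is absorbed into an exact identity. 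If you insist on your direct route, you would need a genuinely sharper lower bound on $|\Phi_\varepsilon|$ than the integral envelope provides --- for instance comparing the sum $\sum_{k\ge0}\phi_\varepsilon(\tfrac12+k,y)$ with $-\tfrac{\pi}{2}\tanh(\pi y)$ and controlling the Riemann-sum error by $O(\varepsilon y)$ --- and that is a substantial piece of analysis your sketch does not contain.
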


\begin{proof}
    When $b<a$ \eqref{eq:bound_ratio_gamma_b-a} is straightforward, as we have $|\Gamma_q(\frac{1}{2} + \mathrm{i}a)| < |\Gamma_q(\frac{1}{2} + \mathrm{i}b)|$ and $\Gamma_q(\frac{1}{2} + \mathrm{i}(b-a))$ is bounded. Therefore, from now on, we assume $b>a$. Using the same notation introduced in the proof of \cref{lem:bound_ratio_gamma_a+b}, we write
    \begin{equation} \label{eq:integal_Phi_b-a}
        \log \left| \frac{\Gamma_q(\frac{1}{2} + \mathrm{i} (b-a)) \Gamma_q(\frac{1}{2} + \mathrm{i} a)}{\Gamma_q(\frac{1}{2} + \mathrm{i} b)} \right| = \log | \Gamma_q(\frac{1}{2}) | + \left( \int_0^a - \int_{b-a}^b \right) \Phi_\varepsilon(y) \diff y.
    \end{equation}
    A study of the derivatives of the right hand side shows that that \eqref{eq:integal_Phi_b-a} has a global maximum along the line $b=2a$. This shows that
    \begin{equation}
        \left| \frac{\Gamma_q(\frac{1}{2} + \mathrm{i} (b-a)) \Gamma_q(\frac{1}{2} + \mathrm{i} a)}{\Gamma_q(\frac{1}{2} + \mathrm{i} b)} \right| \le \left| \frac{ \Gamma_q(\frac{1}{2} + \mathrm{i} a)^2 }{\Gamma_q(\frac{1}{2} + \mathrm{i} 2a )} \right|,
    \end{equation}
    for all $0\le a < b < \frac{\pi}{\varepsilon}$ and invoking \cref{lem:ratio_qgamma_a} we conclude the proof.
\end{proof}

We are finally ready to prove the main technical bound of the subsection.

\begin{proof}[Proof of \cref{prop:bound_qgamma_k11}]
    We rewrite the left hand side of \eqref{eq:bound_ratio_many_qGamma} as
    \begin{multline} \label{eq:ratio_many_qGamma_rewritten}
        \left|
        \frac{\Gamma_q(-2Z)}{\Gamma_q(\frac{1}{2}-2\mathrm{i}u)} \frac{\Gamma_q(-2W)}{\Gamma_q(\frac{1}{2}-2\mathrm{i}v)}
        \frac{\Gamma_q(1+Z+W)}{\Gamma_q(\frac{1}{2}+\mathrm{i}(u+v))}
        \frac{\Gamma_q(-Z-W)}{\Gamma_q(\frac{1}{2}-\mathrm{i}(u+v))}
        \frac{\Gamma_q(\frac{1}{2}-\mathrm{i}(u-v)) }{\Gamma_q(W-Z)}\frac{\Gamma_q(\frac{1}{2}+\mathrm{i}(u-v))}{\Gamma_q(1+Z-W)} \right|
        \\
        \times
        \left|
        \frac{\Gamma_q(\frac{1}{2}-2\mathrm{i}u) \Gamma_q(\frac{1}{2}-2\mathrm{i}v) \Gamma_q(\frac{1}{2}+\mathrm{i}(u+v)) \Gamma_q(\frac{1}{2}-\mathrm{i}(u+v))}{\Gamma_q(\frac{1}{2}-\mathrm{i}(u-v)) \Gamma_q(\frac{1}{2}+\mathrm{i}(u-v))} \right|.
    \end{multline}
    This manipulation is convenient, since, through \cref{lem:bound_ratio_qgamma} we can estimate the ratio of $q$-Gamma functions with arguments having equal imaginary parts. Using \eqref{eq:bound_ratio_qgamma}  repeatedly, we obtain
    \begin{multline*}
        \left|
        \frac{\Gamma_q(-2Z)}{\Gamma_q(\frac{1}{2}-2\mathrm{i}u)} \frac{\Gamma_q(-2W)}{\Gamma_q(\frac{1}{2}-2\mathrm{i}v)}
        \frac{\Gamma_q(1+Z+W)}{\Gamma_q(\frac{1}{2}+\mathrm{i}(u+v))}
        \frac{\Gamma_q(-Z-W)}{\Gamma_q(\frac{1}{2}-\mathrm{i}(u+v))}
        \frac{\Gamma_q(\frac{1}{2}-\mathrm{i}(u-v)) }{\Gamma_q(W-Z)}\frac{\Gamma_q(\frac{1}{2}+\mathrm{i}(u-v))}{\Gamma_q(1+Z-W)} \right|
        \\
        \le
        C
        (1+2|u|)^{-2d -1/2} (1+2|v|)^{-2d -1/2}.
    \end{multline*}
    In order to estimate the factor in the second line of \eqref{eq:ratio_many_qGamma_rewritten}, we utilize the change of variables $a=u+v, b=u-v$ and write it in the slighly simpler expression
    \begin{equation}
        \left|
        \frac{\Gamma_q\left(\frac{1}{2} + \mathrm{i} ( a+b) \right) \Gamma_q \left(\frac{1}{2} + \mathrm{i}a \right)^2  \Gamma_q \left(\frac{1}{2} + \mathrm{i}(b-a) \right)}{\Gamma_q \left(\frac{1}{2} +\mathrm{i} b\right)^2 } \right|.    
    \end{equation}
    Such function is symmetric with respect to the transformations $(a,b) \mapsto (\pm a, \pm b)$, so that we only need to show that it is bounded for $a,b \in [0,\varepsilon^{-1}\pi]$, in which case such property follows from \cref{lem:bound_ratio_gamma_a+b,lem:bound_ratio_gamma_b-a}. This completes the proof.
\end{proof}

\bibliographystyle{alpha}
\bibliography{bib}
\end{document}